\DeclareFontFamily{U}{mathx}{}
\DeclareFontShape{U}{mathx}{m}{n}{<-> mathx10}{}
\DeclareSymbolFont{mathx}{U}{mathx}{m}{n}
\DeclareMathAccent{\widehat}{0}{mathx}{"70}
\DeclareMathAccent{\widecheck}{0}{mathx}{"71}
  \ifodd\value{page}
\newcommand{\E}{\mathbb {E}}   
\newcommand{\R}{\mathbb {R}}   
\newcommand{\Z}{\mathbb {Z}}   
\newcommand{\N}{\mathbb {N}}   
\newcommand{\ind}{\mathbbm{1}} 
\newcommand{\AAA}{\mathcal {A}}  
\newcommand{\FFF}{\mathcal {F}} 
\newcommand{\EEE}{\mathcal {E}}
\newcommand{\CCC}{\mathcal {C}}
\newcommand{\PPP}{\mathcal {P}}
\newcommand{\MMM}{\mathcal {M}}
\newcommand{\BBB}{\mathcal {B}} 
\newcommand{\dP}{\mathbf{P}}
\newcommand{\I}{\mathrm{I}}
\newcommand{\dR}{\mathbf{R}}
\newcommand{\dt}{\frac{\mathrm{d}}{\mathrm{dt}}}
\newcommand{\m}{{\lambda}}
\newcommand{\A}{K}
\newcommand{{\nuV}}{{\mathbf{V}}}
\newcommand{{\muP}}{{{\mathbf{P}}}}
\def\Poi {{\mathbf{Poi}}}  
\numberwithin{equation}{section}
\theoremstyle{plain}
\newtheorem{satz}{Theorem}[section]
\newtheorem{kor}[satz]{Corollary}
\newtheorem{lem}[satz]{Lemma}
\newtheorem{prop}[satz]{Proposition}
\theoremstyle{remark}
\newtheorem{bem}[satz]{Remark}
\newtheorem{dfn}[satz]{Definition}
\begin{document}

\author{Martin Huesmann}
\address{Martin Huesmann: Institute for Mathematical Stochastics,
University of M\"unster
Orl\'eans-Ring 10,
48149 M\"unster, Germany}
\email{martin.huesmann@uni-muenster.de}

\author{Hanna Stange}
\address{Hanna Stange: Institute for Mathematical Stochastics,
University of M\"unster
Orl\'eans-Ring 10,
48149 M\"unster, Germany}
\email{hanna.stange@uni-muenster.de}
\thanks{MH and HS are supported by the Deutsche Forschungsgemeinschaft (DFG, German Research Foundation) through the SPP 2265 {\it Random Geometric Systems}. MH and HS have been funded by the Deutsche Forschungsgemeinschaft (DFG, German Research Foundation) under Germany's Excellence Strategy EXC 2044 -390685587, Mathematics M\"unster: Dynamics--Geometry--Structure . }

\title{
Non-local Wasserstein Geometry, Gradient Flows, and Functional Inequalities for Stationary Point Processes}
\date{}

\maketitle
\begin{abstract}

We construct a non-local Benamou--Brenier-type transport distance on the space of stationary point processes and analyse the induced geometry. We show that our metric is a specific variant of the transport distance recently constructed in \cite{schiavo2023wasserstein}. As a consequence, we show that the Ornstein--Uhlenbeck semigroup is the gradient flow of the specific relative entropy w.r.t.\ the newly constructed distance. Furthermore, we show the existence of stationary geodesics, establish $1$-geodesic convexity of the specific relative entropy,  and derive stationary analogues of functional inequalities such as a specific HWI inequality and a specific Talagrand inequality.
One of the key technical contributions is the existence of solutions to the non-local continuity equation between arbitrary point processes.

\end{abstract}
\tableofcontents
\section{Introduction}

The theory of optimal transport is a powerful tool to analyse the long time behaviour of dynamical systems, to derive functional inequalities, and to investigate the geometry of metric measure spaces. Often the key is to establish or leverage convexity properties of a functional $\mathcal F$ on probability measures w.r.t.\ a transport metric. Prominent and influential examples are the identification of Fokker--Planck equations as gradient flows of the free energy in \cite{JKO}, the connection of HWI, Talagrand and log Sobolev inequalities in \cite{OtVi00}, or the equivalence  of Ricci curvature bounds and convexity estimates of the entropy in \cite{Ricciequivalence, St06, LottVillani}. We refer to, e.g.\ \cite{villani2008optimal, Otapplied, AmBrSe24}, for a comprehensive introduction to the theory.

Recently, several articles extended parts of this theory to infinite particle systems in infinite volume. While \cite{ErHu15, DeSu21, DeSu22, Su22} consider Wasserstein geometries and gradient flows w.r.t.\ usual Wasserstein-type distances, in \cite{MartinJonasBasti} and \cite{HeLe25} Wasserstein distances per volume were introduced to analyse diffusive evolutions of stationary point processes and spin systems, respectively.

In this paper, based on \cite{schiavo2023wasserstein}, we construct a non-local transport distance per volume in the form of a Benamou--Brenier-type formula on the set of stationary point processes and establish
 \begin{itemize}
     \item that the gradient flow of the specific relative entropy w.r.t.\ the newly constructed distance is the Ornstein--Uhlenbeck semigroup,
     \item  that the specific relative entropy is $ 1$-convex, and
     \item the validity of functional inequalities such as a specific Talagrand inequality and a specific HWI inequality.
 \end{itemize}

\subsection{Setting and Main Results}
For a Polish set $B\subseteq \R^d$ let $\BBB(B)$ be the Borel subsets of $B$ and $\BBB_0(B)$ those that are additionally bounded. The space of locally finite and simple counting measures on $B\in\mathcal B(\R^d)$ will be denoted by $\Gamma_B$. A {point process} (sometimes also called random point field) $\eta$ on a Polish set $B \in \mathcal{B}(\mathbb{R}^d)$ is a random variable taking values in  $\Gamma_B$.  The distribution of a point process is an element of $\PPP(\Gamma_B)$ the set of probability measures over $\Gamma_B.$ We call $\dP\in\PPP(\Gamma_B)$ locally integrable if $\int_B\gamma(D)\dP(\mathrm{d}\gamma)<\infty$ for all  $D \in \mathcal{B}_0(B)$, and denote the set of locally integrable probability measures by $\PPP_1(\Gamma_B).$  For $z \in \mathbb{R}^d$, we define the shift operator $\Theta_z:\Gamma_{\R^d}\to\Gamma_{\R^d}$ by $\Theta_z \xi(D) = \xi(D + z),\ D \in \mathcal{B}(\mathbb{R}^d).$
A measure $\dP \in \mathcal{P}_1(\Gamma_{\mathbb{R}^d})$ is called {stationary} if it is invariant under shifts, i.e.\  $(\Theta_z)_\# \dP = \dP$ for all $z \in \mathbb{R}^d$, where $(\Theta_z)_\# \dP$ denotes the push forward measure of $\dP$ under $\Theta_z$. The set of all stationary measures is denoted by $\mathcal{P}_s(\Gamma_{\mathbb{R}^d})$ and a point process is called {stationary} if its distribution belongs to $\mathcal{P}_s(\Gamma_{\mathbb{R}^d})$.\\

The definition of the transport distance in this article is a specific version of the  metric $\mathcal W_0$, recently constructed in \cite{schiavo2023wasserstein}. In turn, analogous to the construction of transport distances for Markov chains \cite{maas2011gradientflowsentropyfinite} or jump processes \cite{erbar2012gradientflowsentropyjump}, $\mathcal W_0$ is inspired by the dynamical Benamou--Brenier formulation of the Wasserstein distance \cite{BeBr00}, which we briefly recall for motivation. For a more detailed presentation, we refer e.g.\ to \cite[Chapter 8]{Ambrosio2005GradientFI}.  For probability measures $p_0,p_1$ on $\R^d$, the $\mathrm{L}^2$ Wasserstein distance can be expressed by 
\begin{align*}
    \mathrm{W}_2^2(p_0,p_1)=\inf\left\{\int_0^1\int\|\bar v_t(x)\|^2\bar p_t(\mathrm{d}x)\mathrm{d}t:(\bar p,\bar v)\in\mathcal{CE}^\prime(p_0,p_1)\right\},
\end{align*}
where $\mathcal{CE}^\prime(p_0,p_1)$ is the set of sufficiently regular curves of probability measures $\bar p$ joining $p_0$ and $p_1$ and sufficiently regular $\bar v:[0,1]\times\R^d\to\R^d$ 
satisfying the continuity equation 
\begin{align*}
        \partial_t\bar p_t+\nabla\cdot(\bar v_tp_t)=0
\end{align*}
in the distributional sense. Adapting this formulation to the setting of (stationary) point processes requires to define an action functional to be minimized and to give a meaningful notion of a continuity equation. This was accomplished in \cite{schiavo2023wasserstein} as follows.
  
The role of the gradient in the continuity equation will be taken over by the discrete difference operator $ D F : \Gamma_B \times B \to \mathbb{R} $ of a functional $ F : \Gamma_B \to \mathbb{R} $, which  is defined by 
$$ D_z F(\xi) := F(\xi + \delta_z) - F(\xi).$$  A curve of  measures  $\Bar\dP=(\Bar{\dP}_t)_{t\in[0,1]}\subset\PPP_1(\Gamma_B)$ and a velocity field $\Bar{\nuV}=(\Bar{\nuV}_t)_{t\in[0,1]}\subset\MMM(\Gamma_B\times B)$, consisting of a curve  of signed measures on $\Gamma_B\times B,$ satisfies the continuity equation if 
$$ \int_0^1\dt \phi(t)\int e^{-\int f(x)\xi(\mathrm{d}x)}\Bar{\muP}_t(\mathrm{d}\xi)\mathrm{d}t+ \int_0^1 \phi(t)\int D_z e^{-\int f(x)\xi(\mathrm{d}x)}\Bar{\nuV}_t(\mathrm{d}\xi, \mathrm{d}z)\mathrm{d}t=0,$$
for all smooth and compactly supported functions
$\phi\in\CCC_c^\infty( ( 0 , 1 ))$ and continuous, non-negative, and compactly supported functions $f\in\CCC_c^+(B).$ The set of all sufficiently regular solutions joining $\dP_0$ to $\dP_1$ is denoted by $\mathcal{CE}^B(\dP_0,\dP_1)$  (see Definition \ref{dfn:ce} for a precise formulation). To cope with the lack of a chain rule in our discrete setup, we introduce the logarithmic mean $\theta:\R^+\times\R^+\to\R$, defined by 
$$\theta(x,y)=\frac{y-x}{\log(y)-\log(x)}.$$
Denote by $\lambda$ the $d$-dimensional Lebesgue measure and by $\Poi$ the distribution of a homogeneous Poisson point process with intensity 1. For a curve $(\Bar\dP,\Bar{\nuV})\in \mathcal{CE}^{\R^d}(\dP_0,\dP_1)$ s.t.\ $\Bar{\muP}_{t}=\rho_t\Poi$ and $\Bar{\nuV}_{t}=w_t(\Poi\otimes\lambda)$ for all $ t\in[0,1]$, we introduce the action functional 
$$\mathrm{A}(\dP,\nuV):=\int_0^1\int\int\frac{|w_t(\xi,z)|^2}{\theta(\rho_t(\xi),\rho_t(\xi+\delta_z))}\Poi(\mathrm{d}\xi)\lambda(\mathrm{d}z)\mathrm{d}t$$
and finally the metric $\mathcal W_0$ from \cite{schiavo2023wasserstein} is defined as
$$\mathcal W_0^2(\dP_0,\dP_1)=\inf\left\{A(\dP,\nuV) : (\Bar\dP,\Bar\nuV) \in \mathcal{CE}^{\R^d}(\dP_0,\dP_1)\right\}.$$
The metric $\mathcal W_0$ enjoys many interesting and desirable properties (see Section \ref{sec:distanceDSHS}). However, it cannot be applied to stationary point processes where one is interested in geodesics or interpolation curves $(\Bar\dP_t)_{t\in[0,1]}$ consisting of stationary measures. Since the only stationary measure $\Bar\dP_t\ll \Poi$ is the measure $\Poi$ itself, the action functional $\mathrm{A}$ would blow up. Even more severely, in \cite[Proposition 4.3]{schiavo2023wasserstein} 
the existence of solutions to the continuity equation was only shown under the assumption that $\dP_0,\dP_1\ll\Poi$
 which fails for stationary measures.

Our first contribution is to prove the existence of solutions to the continuity equation between arbitrary measures $\dP_0$ and $\dP_1$, even stationary solutions if $\dP_0,\dP_1\in\mathcal P_s(\Gamma_{\R^d})$. We denote by $\mathcal{CE}^s(\dP_0,\dP_1)\subset \mathcal{CE}^{\R^d}(\dP_0,\dP_1) $ the set of  all stationary solutions, i.e.\ solutions for which $\Bar{\dP}\subset\mathcal{P}_s(\Gamma_{\R^d})$ (cf.\ Definition \ref{dfn:stationarysolutions}).

\begin{satz}\label{in:cesolution}
    For any Polish set $B\in\BBB(\R^d)$ and any measures $\dP_0,\dP_1\in\PPP_1(\Gamma_B)$ there exists a solution $(\Bar{\dP},\Bar{\nuV})\in\mathcal{CE}^B(\dP_0,\dP_1)$. Further, if $\dP_0,\dP_1\in\PPP_s(\Gamma_{\R^d})$, there exists a stationary solution $(\Bar{\dP}^s,\Bar{\nuV}^s)\in\mathcal{CE}^s(\dP_0,\dP_1)$.
\end{satz}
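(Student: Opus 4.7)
My plan is to handle the two parts of Theorem \ref{in:cesolution} in succession: first construct a (possibly non-stationary) solution of the continuity equation between arbitrary $\dP_0, \dP_1 \in \PPP_1(\Gamma_B)$, then symmetrise it under spatial shifts when the endpoints lie in $\PPP_s(\Gamma_{\R^d})$.

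For the first part, I would take the straight-line interpolation $\dP_t := (1-t)\dP_0 + t\dP_1$, which stays in $\PPP_1(\Gamma_B)$ by convexity of local integrability, and look for a \emph{time-independent} signed measure $\nuV$ on $\Gamma_B \times B$ solving the divergence identity $\int D_z F(\xi)\,\nuV(\mathrm{d}\xi,\mathrm{d}z) = \dP_1[F] - \dP_0[F]$ on the testing class $F = L_f := e^{-\int f\,\mathrm{d}\xi}$ with $f \in \CCC_c^+(B)$. One integration by parts in $t$ against $\phi \in \CCC_c^\infty((0,1))$, using the identity $\int_0^1 \phi'(t)(1-t)\,\mathrm{d}t = -\int_0^1 \phi'(t) t\,\mathrm{d}t = \int_0^1 \phi(t)\,\mathrm{d}t$, reduces the weak continuity equation to exactly this divergence identity. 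To produce such a $\nuV$, I fix a Borel-measurable enumeration of points of any $\xi \in \Gamma_B$ (say, by distance to the origin with lexicographic tie-breaking) and, for each ordered pair $(\xi_0, \xi_1)$ with $\xi_0 = \sum_i \delta_{x_i}$, $\xi_1 = \sum_j \delta_{y_j}$, introduce the ``chain flux''
\[
    \nuV^{\xi_0, \xi_1} := -\sum_i \delta_{\eta_i} \otimes \delta_{x_i} + \sum_j \delta_{\zeta_{j-1}} \otimes \delta_{y_j}, \qquad \eta_i := \xi_0 - \sum_{k \leq i}\delta_{x_k},\ \zeta_j := \sum_{k \leq j}\delta_{y_k},
\]
so that the path $\xi_0 \to \eta_1 \to \cdots \to \emptyset \to \zeta_1 \to \cdots \to \xi_1$ telescopes to $\int D_z F\,\nuV^{\xi_0, \xi_1}(\mathrm{d}\xi, \mathrm{d}z) = F(\xi_1) - F(\xi_0)$. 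Infinite enumerations cause no issue: for $F = L_f$ only the finitely many $x_i, y_j \in \supp(f)$ contribute. Setting $\nuV := \int \nuV^{\xi_0, \xi_1}\,\pi(\mathrm{d}\xi_0, \mathrm{d}\xi_1)$ for any coupling $\pi$ of $\dP_0, \dP_1$, local finiteness follows from the pointwise estimate $|\nuV^{\xi_0, \xi_1}|(\Gamma_B \times D) \leq \xi_0(D) + \xi_1(D)$ for $D \in \BBB_0(B)$, which integrates to a finite quantity by local integrability of $\dP_0, \dP_1$.

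For the stationary case, $\dP_t$ is automatically stationary as a convex combination of stationary measures, so only the flux requires symmetrisation, and the chain flux above fails to be shift-invariant because its enumeration depends on the origin. Writing $\hat{\Theta}_z(\xi, z') := (\Theta_z \xi, z' - z)$ for the natural joint shift on $\Gamma_{\R^d} \times \R^d$, choosing the stationary coupling $\pi = \dP_0 \otimes \dP_1$, and defining
\[
    \nuV^{(R)} := \frac{1}{|Q_R|}\int_{Q_R} (\hat{\Theta}_z)_\# \nuV \,\mathrm{d}z, \qquad Q_R := [-R, R]^d,
\]
linearity and shift-covariance of the continuity equation keep each $(\dP, \nuV^{(R)})$ in $\mathcal{CE}^{\R^d}(\dP_0, \dP_1)$. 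The Følner estimate $|Q_R \triangle (Q_R + v)|/|Q_R| \to 0$, together with the uniform local bound $|\nuV^{(R)}|(\Gamma_{\R^d} \times D) \leq (\m_0 + \m_1)|D|$ (where $\m_i$ is the intensity of $\dP_i$), then forces any vague limit point $\nuV^s$ to be shift-invariant, yielding the desired $(\dP, \nuV^s) \in \mathcal{CE}^s(\dP_0, \dP_1)$.

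The main obstacle I foresee is the limit passage $R \to \infty$: one must extract a subsequence along which $\nuV^{(R)}$ converges in a topology strong enough to preserve the weak continuity equation. Time-independence of the flux reduces this to a single vague-compactness argument on the locally finite signed measures on $\Gamma_{\R^d} \times \R^d$; the uniform local total-variation bound provides tightness on each stripe $\Gamma_{\R^d} \times D$, and convergence of the integrals $\int L_f(\xi)(e^{-f(z)} - 1)\,\nuV^{(R_k)}(\mathrm{d}\xi, \mathrm{d}z)$ for fixed $f \in \CCC_c^+(\R^d)$ then follows by dominated convergence, provided the definition of $\mathcal{CE}^B$ in Definition \ref{dfn:ce} accommodates signed fluxes of merely locally finite total variation, as the preceding discussion of stationary fluxes suggests it should.
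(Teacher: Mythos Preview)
Your Part~1 is correct and takes a genuinely different route from the paper. You use the linear interpolation $\bar{\dP}_t=(1-t)\dP_0+t\dP_1$ together with a time-independent ``chain flux'' that telescopes each pair $(\xi_0,\xi_1)$ through the empty configuration; the paper (Theorem~\ref{thm:solution}) instead interpolates by independent thinnings, $\bar{\dP}_t=\dP_0^{(1-t)}\oplus\dP_1^{(t)}$, and builds a time-\emph{dependent} flux that records, at each instant, the infinitesimal birth/death of individual particles. Your argument is more elementary --- the divergence identity $\int D_zF\,\mathrm{d}\nuV=\dP_1[F]-\dP_0[F]$ is a one-line telescoping --- but it relies on a global, origin-dependent enumeration of points and therefore produces a flux that is \emph{not} shift-equivariant. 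The paper's thinning construction is intrinsically local and hence shift-equivariant from the outset.

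This difference is exactly where your Part~2 runs into a genuine gap. To repair the lack of equivariance you average $\nuV$ over shifts in $Q_R$ and then want to pass to the limit $R\to\infty$. Your key claim that ``the uniform local total-variation bound provides tightness on each stripe $\Gamma_{\R^d}\times D$'' is not justified: $\Gamma_{\R^d}$ is Polish but \emph{not} locally compact, so a uniform bound on $|\nuV^{(R)}|(\Gamma_{\R^d}\times D)$ does not by itself yield relative compactness in $\MMM_{b,0}(\Gamma_{\R^d}\times\R^d)$ (whose topology tests against $\CCC_{b,0}$, i.e.\ functions that are bounded but not compactly supported in the $\Gamma$-variable). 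You would additionally need to control, uniformly in $R$, the point counts of the intermediate configurations $\Theta_z\eta_i$ and $\Theta_z\zeta_{j-1}$ in every fixed bounded window --- a Prokhorov-type condition that is far from automatic for your chain, whose finite configurations $\zeta_{j-1}$ accumulate $\sim |z|^d$ points as $y_j$ ranges over $D+z$. Without this, Banach--Alaoglu only gives a finitely additive limit, not an element of $\MMM_{b,0}$, and Theorem~\ref{thm:propertiesce}\,$(iv)$ cannot be invoked.

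The paper avoids this difficulty entirely: because thinning and independent superposition commute with spatial shifts, the flux $\bar{\nuV}_t$ built in Theorem~\ref{thm:solution} is already stationary whenever $\dP_0,\dP_1\in\PPP_s(\Gamma_{\R^d})$, so Corollary~\ref{cor:stationarysolution} follows without any averaging or limit passage.
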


To define a suitable non-local transport metric between stationary point processes, we need to find a good replacement for the action functional $\mathrm{A}$.
Since stationary measures $\dP\in\mathcal P_s(\Gamma)$ often have the property that $\dP_{|\Lambda}\ll \Poi_{|\Lambda}$ for bounded sets $\Lambda$, we follow the philosophy for the specific entropy (see below) or the specific Wasserstein distance from \cite{MartinJonasBasti} and \cite{HeLe25} and define an action per volume as follows. 

 Let $\Lambda_n:=[-n/2,n/2]^d$ and $(\Bar{\dP},\Bar{\nuV})\in\CCC\EEE^s(\dP_0,\dP_1).$ Denote by $\bar{\muP}_{t|\Lambda_n}$ the restriction of $\Bar\dP_t$ to $\Gamma_{\Lambda_n}$ and by $\Bar{\nuV}_{t|\Lambda_n}$  the restriction of $\Bar\nuV_t$ to $\Gamma_{\Lambda_n}\times\Lambda_n$. Let 
$\Bar{\muP}_{t|\Lambda_n}=\rho^n_t\Poi_{t|\Lambda_n}$ and $\Bar{\nuV}_{t|\Lambda_n}=w^n_t(\Poi_{|\Lambda_n}\otimes\lambda_{|\Lambda_n})$ for all $n\in\N,\ t\in[0,1]$, where $\lambda_{|\Lambda_n}$ is the $d$-dimensional Lebesgue measure restricted to $\Lambda_n$. We define the action functional
\begin{align}\label{eq:AsupA}
    \AAA({\Bar{\muP}},{\Bar{\nuV}}):=\sup_{n\in\N}\frac{1}{\lambda(\Lambda_n)} \int_0^1\int\int\frac{|w_t^n(\xi,z)|^2}{\theta(\rho^n_t(\xi),\rho_t^n(\xi+\delta_z))}\Poi_{|\Lambda_n}(\mathrm{d}\xi)\lambda_{|\Lambda_n}(\mathrm{d}z)\mathrm{d}t=\sup_{n\in\N}\frac{1}{\lambda(\Lambda_n)}\mathrm{A}({\Bar{\muP}_{|\Lambda_n}},{\Bar{\nuV}_{|\Lambda_n}}).
\end{align}
For $\dP_0,\dP_1\in\PPP_s(\Gamma),$ we then define the specific non-local transport distance for stationary measures as
\begin{align*}
    \mathcal{W}_s^2(\dP_0,\dP_1):=\inf\{\mathcal{A}(\Bar{\dP},\Bar{\mathbf{V}}):(\Bar{\dP},\Bar{\mathbf{V}})\in \mathcal{CE}^s(\dP_0,\dP_1) \}.
\end{align*}
Note that the right-hand side can be interpreted as a transportation problem with an additional stochastic constraint.
Our next result states that $\mathcal W_s$ indeed defines a distance.
 \begin{satz}\label{in:distance}
     $\mathcal{W}_s$  is  a geodesic extended distance on the space of stationary measures $\PPP_s(\Gamma_{\R^d}).$ 
 \end{satz}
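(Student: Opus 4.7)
The plan is to verify the defining metric axioms—symmetry, triangle inequality, and positive definiteness—directly from the definition, and then establish the geodesic property by the direct method.

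Symmetry and the vanishing on the diagonal follow from two standard observations: time-reversal $(\Bar\dP_{1-\cdot}, -\Bar\nuV_{1-\cdot})$ maps $\CCC\EEE^s(\dP_0, \dP_1)$ bijectively onto $\CCC\EEE^s(\dP_1, \dP_0)$ while preserving each local action $\mathrm{A}(\Bar\dP_{|\Lambda_n}, \Bar\nuV_{|\Lambda_n})$, since the integrand depends on $w_t^n$ only through $|w_t^n|^2$, so the supremum $\AAA$ is invariant; and the constant curve $(\dP, 0)$ lies in $\CCC\EEE^s(\dP, \dP)$ with zero action. For the triangle inequality I use the classical rescaling argument: given $(\Bar\dP^i, \Bar\nuV^i) \in \CCC\EEE^s(\dP_{i-1}, \dP_i)$ for $i=1,2$ and $\lambda \in (0,1)$, rescale the first curve onto $[0,\lambda]$ (with velocity multiplied by $1/\lambda$) and the second onto $[\lambda, 1]$ (velocity multiplied by $1/(1-\lambda)$). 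Since restriction to $\Lambda_n$ commutes with concatenation, a change of variables yields $\tfrac{1}{\lambda(\Lambda_n)} \mathrm{A}(\Bar\dP^{\mathrm{cc}}_{|\Lambda_n}, \Bar\nuV^{\mathrm{cc}}_{|\Lambda_n}) = \tfrac{1}{\lambda} \tfrac{1}{\lambda(\Lambda_n)} \mathrm{A}(\Bar\dP^1_{|\Lambda_n}, \Bar\nuV^1_{|\Lambda_n}) + \tfrac{1}{1-\lambda} \tfrac{1}{\lambda(\Lambda_n)} \mathrm{A}(\Bar\dP^2_{|\Lambda_n}, \Bar\nuV^2_{|\Lambda_n})$, and the elementary bound $\sup_n(a_n + b_n) \le \sup_n a_n + \sup_n b_n$ produces $\AAA(\Bar\dP^{\mathrm{cc}}, \Bar\nuV^{\mathrm{cc}}) \leq \tfrac{1}{\lambda}\AAA(\Bar\dP^1,\Bar\nuV^1)+\tfrac{1}{1-\lambda}\AAA(\Bar\dP^2,\Bar\nuV^2)$; optimizing in $\lambda$ and infimizing over both curves completes the triangle inequality.

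For positive definiteness, suppose $\mathcal{W}_s(\dP_0, \dP_1) = 0$ and pick a sequence $(\Bar\dP^k, \Bar\nuV^k) \in \CCC\EEE^s(\dP_0, \dP_1)$ with $\AAA \to 0$. For each fixed $n$, the local action $\tfrac{1}{\lambda(\Lambda_n)} \mathrm{A}((\Bar\dP^k, \Bar\nuV^k)_{|\Lambda_n}) \to 0$, so $(\Bar\dP^k_{|\Lambda_n}, \Bar\nuV^k_{|\Lambda_n})$ is a minimizing sequence for the $\mathcal{W}_0$-distance on $\Gamma_{\Lambda_n}$ between the restrictions of $\dP_0$ and $\dP_1$. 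Since $\mathcal{W}_0$ is a distance on $\PPP_1(\Gamma_{\Lambda_n})$ by \cite{schiavo2023wasserstein}, the restrictions of $\dP_0$ and $\dP_1$ to $\Gamma_{\Lambda_n}$ coincide for every $n$, and Kolmogorov consistency forces $\dP_0 = \dP_1$.

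For the geodesic property, I would use the direct method. For $\dP_0, \dP_1$ at finite $\mathcal{W}_s$-distance, take a minimizing sequence of solutions reparametrized to constant speed, so each restricted local action is uniformly controlled at every fixed scale. Scale by scale, compactness of stationary measures of bounded intensity on $\Gamma_{\Lambda_n}$ and of velocity fields in an appropriate weak topology (inherited from the analysis of $\mathcal{W}_0$ in \cite{schiavo2023wasserstein}) yields subsequential limits, and a diagonal extraction in $n$ together with stationarity produces a candidate limit curve $(\Bar\dP^\infty, \Bar\nuV^\infty) \in \CCC\EEE^s(\dP_0, \dP_1)$. Lower semicontinuity of $\AAA$—each local action is jointly convex and l.s.c.\ in $(\rho^n, w^n)$, and a supremum of l.s.c.\ functions is l.s.c.—then forces the limit to attain the infimum, hence to be a constant-speed geodesic. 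The main obstacle will be this diagonal compactness step: the $\sup_n$ structure of $\AAA$ supplies only scale-by-scale control on the approximating curves, so producing a single limit curve that is stationary, admissible for $\mathcal{W}_s$, and satisfies the continuity equation in the sense of Definition \ref{dfn:ce} requires combining the local compactness results across all scales while maintaining consistency of the densities $\rho^n_t$ and velocity densities $w^n_t$ under restriction.
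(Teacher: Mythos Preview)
Your proposal is correct and follows essentially the same route as the paper: concatenation for the triangle inequality, the local distance $\mathcal{W}_0$ on each $\Lambda_n$ for positive definiteness, and the direct method (scale-by-scale compactness of the local action from \cite{schiavo2023wasserstein}, diagonal extraction, lower semicontinuity of $\sup_n$) for existence of minimizers and hence geodesics. The only organizational difference is that the paper first packages the diagonal-extraction argument as a standalone compact-sublevel-sets lemma for $\mathcal{A}$ on $\CCC\EEE(\dP_0,\dP_1)$, shows $\CCC\EEE^s$ is closed therein, obtains the minimizer, and then uses that minimizer directly (rather than a minimizing sequence and $\mathcal{W}_0$) in the positive-definiteness step.
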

 In this context, extended means that $\mathcal{W}_s$ can attain the value $+\infty$.  However, as a consequence of the following specific Talagrand inequality, it is finite on the domain of the specific relative entropy 
\begin{align*}
    \mathcal{E}(\dP):=\sup_{n\in\N}\frac{1}{\lambda(\Lambda_n)}\mathrm{Ent}(\dP_{|\Lambda_n}|\Poi_{|\Lambda_n}),
\end{align*}
which is a well studied functional in the context of stationary point processes, see e.g.\ \cite{serfaty2017microscopicdescriptionlogcoulomb,fekete}.
  \begin{satz}[Specific Talagrand inequality]\label{in:Talagrand}
    Let $\dP\in\PPP_s(\Gamma_{\R^d}).$ It holds that 
    \begin{align*}
        \mathcal{W}_s^2(\dP,\Poi)\leq \mathcal{E}(\dP).
    \end{align*}
\end{satz}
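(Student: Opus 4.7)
The plan is to follow the Otto--Villani strategy, using the Ornstein--Uhlenbeck (OU) semigroup as an explicit $\mathcal{W}_s$-gradient-flow curve from $\dP$ to $\Poi$ and estimating its length by entropy dissipation. Assume $\mathcal{E}(\dP)<\infty$, since otherwise the inequality is trivial, and let $(\dP_t)_{t\ge 0}$ be the OU trajectory with $\dP_0=\dP$. By the paper's identification of the OU semigroup as the $\mathcal{W}_s$-gradient flow of $\mathcal{E}$, the standard gradient-flow calculus is available: writing $\mathcal{I}(\dP_t)$ for the specific Fisher information (the squared metric slope of $\mathcal{E}$ at $\dP_t$), the entropy dissipation identity $-\tfrac{d}{dt}\mathcal{E}(\dP_t)=\mathcal{I}(\dP_t)$ and the metric-derivative identification $|\dot{\dP}_t|^2=\mathcal{I}(\dP_t)$ hold, together with $\dP_t\to\Poi$ and $\mathcal{E}(\dP_t)\to 0$ as $t\to\infty$. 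Bounding $\mathcal{W}_s$ by the length of this curve then yields
\[
    \mathcal{W}_s(\dP,\Poi)\le\int_0^\infty|\dot{\dP}_t|\,dt=\int_0^\infty\sqrt{\mathcal{I}(\dP_t)}\,dt.
\]

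Next I would extract a specific log-Sobolev-type inequality of the form $\mathcal{I}\ge c\,\mathcal{E}$ from the $1$-convexity of $\mathcal{E}$ along $\mathcal{W}_s$-geodesics---either directly, exploiting the differential inequality induced by convexity along the gradient flow, or by applying Young's inequality to the specific HWI inequality (also a main result of the paper) to maximize out the $\mathcal{W}_s$-term. Combined with the chain rule $-\tfrac{d}{dt}\sqrt{\mathcal{E}(\dP_t)}=\mathcal{I}(\dP_t)/(2\sqrt{\mathcal{E}(\dP_t)})$, this produces a pointwise bound $\sqrt{\mathcal{I}(\dP_t)}\le -\tfrac{d}{dt}\sqrt{\mathcal{E}(\dP_t)}$ (with the normalization of the action $\mathcal{A}$ and the specific entropy $\mathcal{E}$ adopted in this paper calibrated so that the resulting constant is exactly $1$). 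Integrating over $t\in[0,\infty)$ and using $\mathcal{E}(\dP_t)\to 0$ gives
\[
    \mathcal{W}_s(\dP,\Poi)\le\int_0^\infty-\frac{d}{dt}\sqrt{\mathcal{E}(\dP_t)}\,dt=\sqrt{\mathcal{E}(\dP)},
\]
which after squaring is the stated inequality.

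The main technical obstacle is to implement this scheme rigorously at the level of the \emph{specific} (per-volume) objects: verifying the entropy dissipation identity and the slope-equals-metric-derivative identification for the supremum-over-boxes quantities $\mathcal{A}$ and $\mathcal{E}$, justifying absolute continuity and the chain rule for $t\mapsto\sqrt{\mathcal{E}(\dP_t)}$ along the OU flow, and controlling the convergence $\mathcal{E}(\dP_t)\to 0$ (hence $\dP_t\to\Poi$ in $\mathcal{W}_s$). A natural route is to start with $\dP$ of finite specific entropy, exploit the regularizing effect of OU so that the curve lies in a class with finite $\mathcal{A}$, pass through finite-volume approximations on $\Lambda_n$ where the classical Talagrand inequality for the Poisson process on $\Lambda_n$ applies, and then take $n\to\infty$ using the lower semicontinuity of $\mathcal{W}_s$ together with the monotone-supremum definition of $\mathcal{E}$.
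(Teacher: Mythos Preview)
Your main strategy (Otto--Villani via the OU gradient flow, then HWI or log-Sobolev to close) is circular in the logical structure of this paper. The EVI characterization (Theorem~\ref{in:EVI}), the $1$-geodesic convexity (Theorem~\ref{in:ConvexitySRE}), and the specific HWI inequality (Theorem~\ref{in:HWI}) are all proved \emph{after} the specific Talagrand inequality and, in fact, the proof of the EVI explicitly invokes Talagrand to ensure $\mathcal{W}_s(\mathrm{S}_t\dP,\dR)<\infty$. More fundamentally, every one of those results is obtained by passing through the reformulation Theorem~\ref{in:reformulation}, and once that reformulation is available, Talagrand follows in two lines---so routing through the full gradient-flow machinery gains nothing and creates a dependency loop.

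The approach you sketch in your last paragraph is essentially what the paper actually does, and it is far more direct than the Otto--Villani scheme. The paper's proof is simply: by the finite-volume Talagrand inequality from \cite{schiavo2023wasserstein} one has $\mathcal{W}_0^2(\dP_{|\Lambda_n},\Poi_{|\Lambda_n})\le \mathrm{Ent}(\dP_{|\Lambda_n}|\Poi_{|\Lambda_n})$ for every $n$; divide by $\lambda(\Lambda_n)$, let $n\to\infty$, and invoke Theorem~\ref{in:reformulation} on the left-hand side (applicable since $\mathcal{E}(\dP)<\infty$ forces finite entropy on every box). No lower semicontinuity argument, no OU curve, no entropy dissipation identity at the specific level is needed. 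Your intuition in that final paragraph was correct; you should have led with it and dropped the gradient-flow detour entirely.
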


 The proof of the specific Talagrand inequality relies on a key result of this article which one might hope for from comparing the definition of $\mathcal W_s$ and $\mathcal W_0$ together with the definition of the action functional $\mathcal A$ in \eqref{eq:AsupA}. It shows that the metric $\mathcal W_s$ is built from the metric $\mathcal W_0$ just as the specific entropy is built from the relative entropy.

 \begin{satz}\label{in:reformulation} 
Let $\dP_0,\dP_1\in\PPP_s(\Gamma_{\R^d})$ s.t.\ $\mathcal{E}(\dP_0),\mathcal{E}(\dP_1)<\infty.$ 
It holds that
\begin{align}\label{eq:WsW0}
     \mathcal{W}_s^2(\dP_0,\dP_1)&=\inf\left\{\lim_{n\to\infty}\frac{1}{\lambda(\Lambda_n)}\mathrm{A}({\Bar{\muP}_{|\Lambda_n}},{\Bar{\nuV}_{|\Lambda_n}}):({\Bar{\muP}},{\Bar{\nuV}})\in\CCC\EEE^s(\dP_0,\dP_1)\right\}\\
            &=\lim_{n\to\infty}\frac{1}{\lambda(\Lambda_n)}\inf\left\{\mathrm{A}({\Bar{\muP}^n},{\Bar{\nuV}^n}):({\Bar{\muP}^n},{\Bar{\nuV}^n})\in\CCC\EEE^{\Lambda_n}(\dP_{0|\Lambda_n},\dP_{1|\Lambda_n})\right\}\nonumber \\
            &=\lim_{n\to\infty}\frac{1}{\lambda(\Lambda_n)}\mathcal W_0^2(\dP_{0|\Lambda_n},\dP_{1|\Lambda_n}). \nonumber
\end{align}
\end{satz}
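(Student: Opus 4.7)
The plan is to establish three separate equalities: (I) that the supremum in the definition of $\mathcal{W}_s^2$ may be replaced by a limit over $n\in\N$; (II) that this infimum over stationary solutions commutes with the limit on $n$; and (III) that the inner infimum equals $\mathcal{W}_0^2(\dP_{0|\Lambda_n},\dP_{1|\Lambda_n})$. Equality (III) is immediate from the definition of $\mathcal{W}_0$ applied with base set $B=\Lambda_n$ in place of $\R^d$. For (I), I would fix a stationary curve $(\bar{\dP},\bar{\nuV})\in\mathcal{CE}^s(\dP_0,\dP_1)$ and show that $n\mapsto\frac{1}{\lambda(\Lambda_n)}\mathrm{A}(\bar{\dP}_{|\Lambda_n},\bar{\nuV}_{|\Lambda_n})$ converges to its supremum. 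The engine is a super-additivity estimate: tiling $\Lambda_n=\bigsqcup_i(\Lambda_m+z_i)$ for $m\mid n$, using stationarity, and applying Jensen's inequality to the jointly convex integrand $(w,\rho_0,\rho_1)\mapsto w^2/\theta(\rho_0,\rho_1)$, after disintegrating $\xi\in\Gamma_{\Lambda_n}$ as $\xi_1+\xi_2$ with $\xi_1\in\Gamma_{\Lambda_m+z_i}$ and integrating out $\xi_2$, one obtains
\[
\mathrm{A}(\bar{\dP}_{|\Lambda_n},\bar{\nuV}_{|\Lambda_n})\;\geq\;\sum_i \mathrm{A}(\bar{\dP}_{|\Lambda_m+z_i},\bar{\nuV}_{|\Lambda_m+z_i})\;=\;(n/m)^d\,\mathrm{A}(\bar{\dP}_{|\Lambda_m},\bar{\nuV}_{|\Lambda_m}).
\]
A Fekete-type lemma then upgrades the supremum to a genuine limit, proving (I).

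For (II), the direction ``$\geq$'' is easy: the restriction $(\bar{\dP}_{|\Lambda_n},\bar{\nuV}_{|\Lambda_n})$ of any stationary solution lies in $\mathcal{CE}^{\Lambda_n}(\dP_{0|\Lambda_n},\dP_{1|\Lambda_n})$, hence $\mathrm{A}(\bar{\dP}_{|\Lambda_n},\bar{\nuV}_{|\Lambda_n})\geq\mathcal{W}_0^2(\dP_{0|\Lambda_n},\dP_{1|\Lambda_n})$, and taking the limit and infimum closes this direction. The reverse ``$\leq$'' requires, for each $n$, converting a near-optimal competitor $(\bar{\dP}^n,\bar{\nuV}^n)\in\mathcal{CE}^{\Lambda_n}$ into a stationary solution on $\R^d$ with comparable action per volume. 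My plan is to periodize by tiling $\R^d=\bigsqcup_{z\in n\Z^d}(\Lambda_n+z)$ using translates of $(\bar{\dP}^n,\bar{\nuV}^n)$ and then stationarize by averaging the resulting periodic curve over the fundamental domain $\Lambda_n$ via the shift $\Theta$; linearity of the continuity equation preserves admissibility under averaging, and convexity of the action integrand together with stationarity ensures that the averaged curve's action per volume is bounded by $\mathrm{A}(\bar{\dP}^n,\bar{\nuV}^n)/\lambda(\Lambda_n)$.

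The main obstacle is that the naive periodic extension does not satisfy the continuity equation: the non-local difference operator $D_z$ allows the insertion of a point in one tile to affect density ratios globally, so the tile-local solutions do not mesh across $\partial\Lambda_n$. To repair this I plan to introduce a correction velocity field on a boundary layer around $\partial\Lambda_n$, constructed by invoking Theorem \ref{in:cesolution} to interpolate between the two periodized curves on either side of the interface; since the layer has volume $O(n^{d-1})$ and its action can be bounded using the finite specific entropy hypothesis $\mathcal{E}(\dP_0),\mathcal{E}(\dP_1)<\infty$ together with a direct Benamou--Brenier-type estimate on bounded regions, its contribution is $o(n^d)$ and vanishes after normalization. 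Combining this construction with (I), and using that the $\mathcal{E}<\infty$ assumption provides the uniform integrability needed to pass to the $n\to\infty$ limit, closes the chain of equalities in \eqref{eq:WsW0}.
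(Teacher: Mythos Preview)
Your steps (I), (III), and the ``$\geq$'' direction of (II) match the paper exactly (Lemma~\ref{lem:sup=limstationär}, the definition of $\mathcal{W}_0$, and Lemma~\ref{lem:boundcands}). The issue is the ``$\leq$'' direction of (II), where your diagnosis of the obstacle is wrong and you miss the actual one.

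The tiled extension \emph{does} satisfy the continuity equation; there is no meshing problem across $\partial\Lambda_n$. The reason is that the tiled curve $\bar{\dP}^{\mathrm{til}}_t$ is a product measure over the tiles, and for the exponential test functions $F(\xi)=e^{-\int f\,\mathrm{d}\xi}$ with $f\in\CCC_c^+(\R^d)$ this factorizes over disjoint regions; adding a point $\delta_z$ with $z$ in one tile affects only that tile's factor. The paper makes this precise in Lemma~\ref{lem:superpositionofsolution} (superposition of solutions on disjoint domains) and Lemma~\ref{lem:stationarysolutions} (tiled and stationarized solutions solve the CE). So your boundary-layer correction is unnecessary, and the work you propose to control its action is not needed.

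What you are missing is the endpoint problem: the stationarized competitor $(\bar{\dP}^{\mathrm{stat}},\bar{\nuV}^{\mathrm{stat}})$ lies in $\CCC\EEE^s\bigl((\dP_{0|\Lambda_n})^{\mathrm{stat}},(\dP_{1|\Lambda_n})^{\mathrm{stat}}\bigr)$, not in $\CCC\EEE^s(\dP_0,\dP_1)$. So the construction only yields $\mathcal{W}_s^2\bigl((\dP_{0|\Lambda_n})^{\mathrm{stat}},(\dP_{1|\Lambda_n})^{\mathrm{stat}}\bigr)\leq\frac{1}{\lambda(\Lambda_n)}\mathcal{W}_0^2(\dP_{0|\Lambda_n},\dP_{1|\Lambda_n})$ (this is the paper's Lemma~\ref{lem:boundstat}, and is where the finite-entropy hypothesis is actually used, to guarantee $\bar{\dP}^{*,k}_t\ll\Poi_{|\Lambda_k}$ along the $\mathcal{W}_0$-geodesic). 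The paper closes the gap not by any boundary surgery but by two soft facts: $(\dP_{|\Lambda_n})^{\mathrm{stat}}\to\dP$ in $\PPP_1(\Gamma)$ (Remark~\ref{bem:limstat}) and lower semi-continuity of $\mathcal{W}_s$. Chaining these with Lemma~\ref{lem:boundcands} gives
\[
\mathcal{W}_s^2(\dP_0,\dP_1)\leq\liminf_n \mathcal{W}_s^2\bigl((\dP_{0|\Lambda_n})^{\mathrm{stat}},(\dP_{1|\Lambda_n})^{\mathrm{stat}}\bigr)\leq\liminf_n\tfrac{1}{\lambda(\Lambda_n)}\mathcal{W}_0^2(\dP_{0|\Lambda_n},\dP_{1|\Lambda_n})\leq\mathcal{W}_s^2(\dP_0,\dP_1),
\]
which is the whole proof. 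Your proposal never addresses this endpoint mismatch, and the boundary-correction machinery you sketch would not resolve it either.
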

We remark that the assumption of finite entropy is only needed in one step of the proof where we need to know that the optimal vector field for $\mathcal W_0$ is absolutely continuous (cf.\ Lemma \ref{lem:boundstat}). It is always true that the left-hand side in \eqref{eq:WsW0} is bigger than the right-hand side (cf.\ Lemma \ref{lem:boundcands}).

We want to highlight an important and  subtle observation. In the large volume limit of $\mathcal W_0$, there is no stationarity constraint. In particular, it is not at all clear, why a minimizing curve of the right-hand side should be stationary. 
If it was, the right-hand side would be a perfect definition of $\mathcal W_s$ allowing to lift the results of \cite{schiavo2023wasserstein} to the setting of stationary measures.
In fact, most of the work of this article lies in proving Theorem \ref{in:reformulation}. Having this result at our disposal we can derive a couple of consequences for the convexity properties of the specific entropy, identify its gradient flow w.r.t.\ $\mathcal W_s$ and derive functional inequalities.

We start with the description of the gradient flow. Let $(\mathrm{S}_t)_t$ be the Ornstein--Uhlenbeck semigroup, induced by independently deleting points with probability $1-e^{-t}$ and adding new Poisson points with intensity $1-e^{-t}$. Then, we have the following gradient flow characterization of this semigroup in terms of an evolution variational inequality (EVI).
\begin{satz}[EVI] \label{in:EVI}
    Let $\dP_0,\dP_1\in\PPP_s(\Gamma_{\R^d})$ s.t.\ $\mathcal{E}(\dP_0), \mathcal{E}(\dP_1)<\infty.$ Then the following evolution variational inequality holds
    \begin{align*}
        \dt \mathcal{W}_s^2(\mathrm{S}_t \dP_0,\dP_1)+\mathcal{W}_s^2(\mathrm{S}_t \dP_0,\dP_1)\leq 2(\mathcal{E}(\dP_1)-\mathcal{E}(\mathrm{S}_t\dP_0)),\ \ \ \ t\geq0. 
    \end{align*}
\end{satz}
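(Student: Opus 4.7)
The strategy is to lift a finite-volume EVI on each box $\Lambda_n$ to the stationary setting by restriction and large-volume limit, using Theorem \ref{in:reformulation}. At scale $\Lambda_n$, the analysis of \cite{schiavo2023wasserstein} -- combined with the Daneri--Savaré equivalence between $1$-geodesic convexity of relative entropy, gradient-flow identification, and EVI$_1$ -- should provide the finite-volume EVI
\[
\dt \mathcal{W}_0^2(\mathrm{S}_t^{\Lambda_n} \dP_0^n,\dP_1^n)+\mathcal{W}_0^2(\mathrm{S}_t^{\Lambda_n} \dP_0^n,\dP_1^n)\leq 2\bigl(\mathrm{Ent}(\dP_1^n\,|\,\Poi_{|\Lambda_n})-\mathrm{Ent}(\mathrm{S}_t^{\Lambda_n}\dP_0^n\,|\,\Poi_{|\Lambda_n})\bigr),
\]
where $\mathrm{S}_t^{\Lambda_n}$ is the analogous OU semigroup on $\Gamma_{\Lambda_n}$, defined by thinning each point with probability $1-e^{-t}$ and superposing an independent Poisson process of intensity $(1-e^{-t})\lambda_{|\Lambda_n}$.

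\textbf{Commutation of $\mathrm{S}_t$ with restriction.} Since both thinning and Poisson superposition are spatially local, for any stationary $\dP$ one has the key identity $(\mathrm{S}_t\dP)_{|\Lambda_n}=\mathrm{S}_t^{\Lambda_n}(\dP_{|\Lambda_n})$. I apply the box EVI with $\dP_0^n=\dP_{0|\Lambda_n}$ and $\dP_1^n=\dP_{1|\Lambda_n}$. To avoid exchanging $\dt$ with $\lim_n$, I pass to the Grönwall-integrated form: multiplying by $e^t$ and integrating from $s$ to $t$ gives
\[
e^t\mathcal{W}_0^2\bigl((\mathrm{S}_t\dP_0)_{|\Lambda_n},\dP_{1|\Lambda_n}\bigr)-e^s\mathcal{W}_0^2\bigl((\mathrm{S}_s\dP_0)_{|\Lambda_n},\dP_{1|\Lambda_n}\bigr)\leq 2\int_s^t e^u\Bigl[\mathrm{Ent}(\dP_{1|\Lambda_n}|\Poi_{|\Lambda_n})-\mathrm{Ent}((\mathrm{S}_u\dP_0)_{|\Lambda_n}|\Poi_{|\Lambda_n})\Bigr]\mathrm du.
\]

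\textbf{Thermodynamic limit.} Divide by $\lambda(\Lambda_n)$ and let $n\to\infty$. The left-hand side converges to $e^t\mathcal{W}_s^2(\mathrm{S}_t\dP_0,\dP_1)-e^s\mathcal{W}_s^2(\mathrm{S}_s\dP_0,\dP_1)$ by Theorem \ref{in:reformulation}, which is applicable because $\mathcal{E}(\mathrm{S}_u\dP_0)\leq\mathcal{E}(\dP_0)<\infty$ (entropy contraction under OU) propagates the finite-specific-entropy hypothesis in time. For the right-hand side, the standard Fekete / superadditivity argument for stationary relative entropy gives $\tfrac{1}{\lambda(\Lambda_n)}\mathrm{Ent}(\dP_{|\Lambda_n}|\Poi_{|\Lambda_n})\to\mathcal{E}(\dP)$, and the uniform bound $\tfrac{1}{\lambda(\Lambda_n)}|\mathrm{Ent}(\dP_{1|\Lambda_n}|\Poi_{|\Lambda_n})-\mathrm{Ent}((\mathrm{S}_u\dP_0)_{|\Lambda_n}|\Poi_{|\Lambda_n})|\leq\mathcal{E}(\dP_0)+\mathcal{E}(\dP_1)$ enables dominated convergence. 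The limit of the right-hand side is $2\int_s^t e^u(\mathcal{E}(\dP_1)-\mathcal{E}(\mathrm{S}_u\dP_0))\mathrm du$. Differentiating the resulting integral inequality in $t$ at Lebesgue points (i.e.\ in the upper Dini derivative sense in which EVI is customarily interpreted) produces the claimed inequality.

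\textbf{Main obstacle.} The principal difficulty is the simultaneous large-volume limit along the entire trajectory $u\mapsto\mathrm{S}_u\dP_0$: Theorem \ref{in:reformulation} only provides convergence at each fixed time, so uniformity across $u\in[s,t]$ must be extracted separately, which is where the entropy contraction and the Poisson superposition (ensuring $(\mathrm{S}_u\dP_0)_{|\Lambda_n}\ll\Poi_{|\Lambda_n}$ for $u>0$, so that the finite-volume EVI is legitimately applicable along the orbit) enter crucially. A secondary and more routine point is verifying, or citing, the finite-volume EVI itself in the precise form above from \cite{schiavo2023wasserstein}.
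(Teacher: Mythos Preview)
Your proposal is correct and follows the same architecture as the paper---lift the finite-volume EVI from \cite{schiavo2023wasserstein} to the specific setting via restriction, the commutation $(\mathrm{S}_t\dP)_{|\Lambda_n}=\mathrm{S}_t^{\Lambda_n}(\dP_{|\Lambda_n})$, and Theorem \ref{in:reformulation}---but the execution differs in a useful way. The paper does not invoke the stated differential EVI from \cite{schiavo2023wasserstein}; instead it pulls out a discrete-increment inequality from \emph{inside} the proof of \cite[Theorem~5.27]{schiavo2023wasserstein}, of the form
\[
\mathcal{W}_0^2(\mathrm{S}_\delta^n\dP_{|\Lambda_n},\dR_{|\Lambda_n})-\mathcal{W}_0^2(\dP_{|\Lambda_n},\dR_{|\Lambda_n})\leq (a(\delta)-1)\mathcal{W}_0^2(\dP_{|\Lambda_n},\dR_{|\Lambda_n})+\text{(entropy terms)}+O(\delta^2),
\]
then passes $n\to\infty$ at fixed $\delta$ (using Theorem \ref{in:reformulation} for the $\mathcal{W}_0$ terms and Fatou for the $O(\delta^2)$ error), and finally sends $\delta\to 0$. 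Your Gr\"onwall-integrated route is cleaner in that it only consumes the black-box EVI statement rather than its proof, and replaces the Fatou step by exact dominated convergence on the entropy integrand; the price is the mild extra step of re-differentiating at the end. One small correction: the ``main obstacle'' you flag is not really present in your own argument. Once you have integrated, Theorem \ref{in:reformulation} is applied only at the two fixed endpoints $s,t$, and the trajectory-wide limit on the right-hand side concerns only the specific \emph{entropy}, which converges pointwise in $u$ by Fekete and is dominated exactly as you note---no uniformity of Theorem \ref{in:reformulation} along $u$ is needed.
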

The previous theorem is an analogue to the results of Jordan, Kinderlehrer, and Otto \cite{JKO} in the context of stationary measures.  The EVI-gradient flow formulation is one of the strongest among other gradient flow formulations, as it entails a number of consequences for the semigroup and the geometry of the underlying space, see e.g.\ \cite{Daneri_2008}.
 A direct consequence is the $1$-contractivity of $\mathcal{W}_s$ along the Ornstein--Uhlenbeck semigroup. 
 \begin{satz}[1-contractivity of the Ornstein--Uhlenbeck semigroup]\label{in:1-Contractivity}
      Let $\dP_0,\dP_1\in\PPP_s(\Gamma_{\R^d}).$ It holds that
    \begin{align*}
        \mathcal{W}_s(\mathrm{S}_t\dP_0,\mathrm{S}_t\dP_1)\leq e^{-t}\mathcal{W}_s(\dP_0,\dP_1)\quad  t\geq0.
    \end{align*}
\end{satz}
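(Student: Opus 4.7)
The plan is to deduce the exponential contractivity directly from Theorem~\ref{in:EVI} by a Gronwall-type argument along the coupled Ornstein--Uhlenbeck flow, following the abstract $\kappa$-EVI scheme of \cite{Daneri_2008}. Set
\[
h(s):=\mathcal W_s^2(\mathrm S_s\dP_0,\mathrm S_s\dP_1),\qquad s\geq 0.
\]
I would first treat the case $\mathcal E(\dP_0),\mathcal E(\dP_1)<\infty$, where Theorem~\ref{in:EVI} is directly applicable, and subsequently remove the entropy assumption by smoothing with $\mathrm S_\eps$. If $\mathcal W_s(\dP_0,\dP_1)=+\infty$ the claim is vacuous, so assume $\mathcal W_s(\dP_0,\dP_1)<\infty$.

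\textbf{EVI and Gronwall.} Applying Theorem~\ref{in:EVI} at time $\sigma=s$ with the second argument frozen at $\mathrm S_s\dP_1$ (which still has finite specific entropy, since $\mathcal E$ is non-increasing along $(\mathrm S_t)$, a standard consequence of the EVI formulation), yields the one-sided bound
\begin{align*}
\left.\frac{d^+}{d\sigma}\mathcal W_s^2(\mathrm S_\sigma\dP_0,\mathrm S_s\dP_1)\right|_{\sigma=s}+\mathcal W_s^2(\mathrm S_s\dP_0,\mathrm S_s\dP_1)\leq 2\bigl(\mathcal E(\mathrm S_s\dP_1)-\mathcal E(\mathrm S_s\dP_0)\bigr).
\end{align*}
The symmetric inequality, obtained by freezing the first argument and evolving the second, reverses the sign of the entropy difference. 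Using the telescoping bound
\[
h(s+\delta)-h(s)\leq \bigl[\mathcal W_s^2(\mathrm S_{s+\delta}\dP_0,\mathrm S_{s+\delta}\dP_1)-\mathcal W_s^2(\mathrm S_{s+\delta}\dP_0,\mathrm S_s\dP_1)\bigr]+\bigl[\mathcal W_s^2(\mathrm S_{s+\delta}\dP_0,\mathrm S_s\dP_1)-\mathcal W_s^2(\mathrm S_s\dP_0,\mathrm S_s\dP_1)\bigr],
\]
the upper right derivative $\tfrac{d^+}{ds}h(s)$ is controlled by the sum of the two partial upper derivatives. Adding the two EVIs, the entropy contributions cancel, and I obtain the scalar differential inequality $\frac{d^+}{ds}h(s)+2h(s)\leq 0$. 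Gronwall's lemma delivers $h(t)\leq e^{-2t}h(0)$, which is exactly the claim after taking square roots.

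\textbf{Removal of the entropy assumption.} For general $\dP_0,\dP_1\in\mathcal P_s(\Gamma_{\R^d})$ with $\mathcal W_s(\dP_0,\dP_1)<\infty$, I would exploit that $\mathrm S_\eps$ regularizes the specific entropy: since $\mathrm S_\eps\dP$ has an independent Poisson component of intensity $1-e^{-\eps}$ superimposed on the $e^{-\eps}$-thinning of $\dP$, its local density with respect to $\Poi$ is bounded, hence $\mathcal E(\mathrm S_\eps\dP)<\infty$ for every $\eps>0$. Applying the contractivity just proved to $\mathrm S_\eps\dP_0,\mathrm S_\eps\dP_1$ and using $\mathrm S_t\mathrm S_\eps=\mathrm S_{t+\eps}$ yields, via one triangle inequality,
\[
\mathcal W_s(\mathrm S_{t+\eps}\dP_0,\mathrm S_{t+\eps}\dP_1)\leq e^{-t}\mathcal W_s(\mathrm S_\eps\dP_0,\mathrm S_\eps\dP_1)\leq e^{-t}\Bigl(\mathcal W_s(\dP_0,\dP_1)+\mathcal W_s(\mathrm S_\eps\dP_0,\dP_0)+\mathcal W_s(\dP_1,\mathrm S_\eps\dP_1)\Bigr).
\]
Letting $\eps\downarrow 0$ and invoking both continuity of the semigroup at the origin in $\mathcal W_s$ (i.e.\ $\mathcal W_s(\mathrm S_\eps\dP_i,\dP_i)\to 0$) and lower semicontinuity of $\mathcal W_s$ concludes the proof.

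\textbf{Main obstacle.} The technical heart of the argument is the metric chain rule implicit in passing from the two one-sided EVIs to a single differential inequality for $h$; while this step is standard in the abstract $\kappa$-EVI framework, it requires verifying in the present non-local setting that the partial upper derivatives exist and combine additively, and that $s\mapsto h(s)$ is absolutely continuous along the coupled Ornstein--Uhlenbeck flow. A secondary, non-trivial input is the entropic smoothing $\mathcal E(\mathrm S_\eps\dP)<\infty$ for $\eps>0$, together with the continuity $\mathcal W_s(\mathrm S_\eps\dP,\dP)\to 0$ as $\eps\downarrow 0$; both are expected to follow from the explicit description of $\mathrm S_\eps$ as thinning plus independent Poisson superposition and the reformulation of $\mathcal W_s$ provided by Theorem~\ref{in:reformulation}.
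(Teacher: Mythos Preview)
Your route via EVI plus Gronwall is genuinely different from the paper's. The paper does \emph{not} derive contractivity from Theorem~\ref{in:EVI}; instead it lifts the finite-volume contractivity $\mathcal W_0(\mathrm S_t^n\dP_{0|\Lambda_n},\mathrm S_t^n\dP_{1|\Lambda_n})\le e^{-t}\mathcal W_0(\dP_{0|\Lambda_n},\dP_{1|\Lambda_n})$ (Theorem~\ref{thm:mainthmDSHS}\,$(i)$) directly through the reformulation Theorem~\ref{in:reformulation}, using that $\mathrm S_t$ commutes with restriction to $\Lambda_n$. This is a two-line argument once Theorem~\ref{in:reformulation} is available, and it is carried out only for $\dP_0,\dP_1\in\mathcal D(\mathcal E)$. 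Your abstract EVI approach is the one the introduction alludes to and is valid for the finite-entropy case, but it is more circuitous: you need the EVI (which itself rests on Theorem~\ref{in:reformulation}) plus the metric chain rule you flag as an obstacle, whereas the paper bypasses all of this.

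Your removal of the entropy assumption has a real gap. The claim that $\mathcal E(\mathrm S_\eps\dP)<\infty$ for every $\dP\in\mathcal P_s(\Gamma)$ is false: take $\dP$ the law of a stationary randomly shifted lattice. Its $e^{-\eps}$-thinning is still supported on a deterministic (random-shift) grid, so $(\mathrm S_\eps\dP)_{|\Lambda_n}$ charges configurations containing points at fixed grid locations with positive probability, and is therefore singular with respect to $\Poi_{|\Lambda_n}$; the independent Poisson component does not cure this. Hence $\mathrm{Ent}((\mathrm S_\eps\dP)_{|\Lambda_n}|\Poi_{|\Lambda_n})=\infty$ for all $n$. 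Likewise, $\mathcal W_s(\mathrm S_\eps\dP,\dP)\to 0$ is nowhere established in the paper (only $\mathcal P_1$-continuity of $\mathrm S_\eps$ at $\eps=0$ is known, and the $\mathcal W_s$-topology is strictly stronger). The paper itself does not attempt this extension: its proof in Section~\ref{sec:functionals} is stated for $\dP_0,\dP_1\in\mathcal D(\mathcal E)$, and outside this domain the inequality is only claimed in the trivial sense that $\mathcal W_s(\dP_0,\dP_1)=\infty$ renders it vacuous.
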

Another important consequence of the EVI is the $1$-geodesic convexity of the specific relative entropy w.r.t.\ $\mathcal{W}_s$.  
 \begin{satz}[$1$-geodesic convexity of the specific relative entropy] \label{in:ConvexitySRE}
    Let $\dP_0,\dP_1\in\PPP_s(\Gamma_{\R^d}).$ For all $\mathcal{W}_s$-geodesics $(\Bar{\dP}_t)_{t\in[0,1]}$ connecting $\dP_0$ and $\dP_1$ it holds that 
    \begin{align*}
        \mathcal{E}(\dP_t)\leq (1-t) \mathcal{E}(\dP_0)+t\mathcal{E}(\dP_1)-\frac{t(1-t)}{2}\mathcal{W}_s^2(\dP_0,\dP_1), \quad 0\leq t\leq 1.
    \end{align*}
\end{satz}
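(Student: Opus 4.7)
The plan is to deduce the $1$-geodesic convexity of $\mathcal{E}$ along $\mathcal{W}_s$-geodesics as a direct consequence of the EVI-gradient flow characterization established in Theorem \ref{in:EVI}. The implication ``EVI$_K$ $\Rightarrow$ $K$-geodesic convexity of the driving functional'' is one of the cornerstones of the metric theory of gradient flows and is the reason why the EVI formulation is so strong; it is precisely the abstract machinery developed in \cite{Daneri_2008}.

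First I would dispose of the trivial case. If $\mathcal{E}(\dP_0)=+\infty$ or $\mathcal{E}(\dP_1)=+\infty$, then, using that $\mathcal{E}\geq 0$ and $r(1-r)\mathcal{W}_s^2(\dP_0,\dP_1)<\infty$ whenever a $\mathcal{W}_s$-geodesic exists, the right-hand side is $+\infty$ and the claim is vacuous (with the usual interpretation at the endpoints $r=0,1$). So assume from now on that $\mathcal{E}(\dP_0),\mathcal{E}(\dP_1)<\infty$; by Theorem \ref{in:Talagrand} and the triangle inequality for $\mathcal{W}_s$ this also ensures $\mathcal{W}_s(\dP_0,\dP_1)<\infty$, and Theorem \ref{in:distance} provides the existence of a $\mathcal{W}_s$-geodesic $(\Bar{\dP}_r)_{r\in[0,1]}$ connecting $\dP_0$ and $\dP_1$.

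The heart of the argument is then the following well-known combination. Rewrite the EVI of Theorem \ref{in:EVI} in the standard normalization with $K=1$:
\begin{equation*}
    \dt \tfrac{1}{2}\mathcal{W}_s^2(\mathrm{S}_t\Bar{\dP}_r,\dP_i)+\tfrac{1}{2}\mathcal{W}_s^2(\mathrm{S}_t\Bar{\dP}_r,\dP_i)\leq \mathcal{E}(\dP_i)-\mathcal{E}(\mathrm{S}_t\Bar{\dP}_r), \qquad i\in\{0,1\},\ r\in[0,1].
\end{equation*}
Take the convex combination with weights $(1-r)$ for $i=0$ and $r$ for $i=1$, use the geodesic identity
\begin{equation*}
    (1-r)\,\mathcal{W}_s^2(\Bar{\dP}_r,\dP_0)+r\,\mathcal{W}_s^2(\Bar{\dP}_r,\dP_1)=r(1-r)\,\mathcal{W}_s^2(\dP_0,\dP_1)
\end{equation*}
at $t=0$, and let $t\to 0^+$ via a Gronwall-type comparison; the resulting inequality is exactly
\begin{equation*}
    \mathcal{E}(\Bar{\dP}_r)\leq (1-r)\mathcal{E}(\dP_0)+r\mathcal{E}(\dP_1)-\frac{r(1-r)}{2}\mathcal{W}_s^2(\dP_0,\dP_1),
\end{equation*}
which is the claim.

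The step I expect to be the most delicate is the passage $t\to 0^+$ in the EVI: it requires strong continuity of the Ornstein--Uhlenbeck semigroup at $t=0$ in $\mathcal{W}_s$, some regularity of $t\mapsto \mathcal{W}_s^2(\mathrm{S}_t\Bar{\dP}_r,\dP_i)$, and lower semicontinuity of $\mathcal{E}$. These are by now standard consequences of having an EVI at one's disposal, and together with Theorems \ref{in:distance} and \ref{in:EVI} they allow the abstract derivation of \cite{Daneri_2008} to be imported verbatim into the present stationary setting.
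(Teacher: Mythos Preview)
Your proposal is correct and follows essentially the same route as the paper: the paper's proof consists of a single line invoking \cite[Theorem~3.1]{Daneri_2008} together with the EVI (Theorem~\ref{in:EVI}), which is precisely the abstract ``EVI$_K$ $\Rightarrow$ $K$-geodesic convexity'' implication you describe. Your write-up simply unpacks more of the mechanics of the Daneri--Savar\'e argument than the paper does.
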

 According to the definition of synthetic Ricci curvature bounds in the sense of  Sturm \cite{Sturm} and Lott--Villani \cite{LottVillani}, the previous theorem can be interpreted in line with or extending \cite[Theorem 5.28]{schiavo2023wasserstein} that also in a non-local specific stationary geometry the Ricci curvature of  $(\Gamma_{\R^d},\Poi)$ is bounded below by 1.  Moreover, the strict convexity of the entropy along 
$\mathcal{W}_s$-geodesics can be viewed as a non-local analogue of McCann’s displacement convexity \cite{MCCANN1997153}.

In a similar spirit, we obtain an analogue of the $\mathrm{HWI}$ inequality of Otto and Villani \cite{porousmedium} for stationary measures. In the classical setting, this inequality connects the Boltzmann entropy H, the $\mathrm{L}^2$ Wasserstein distance W$_2$, and the Fisher information $\mathsf{I}$. In the context of stationary measures, these quantities are replaced by the specific relative entropy $\EEE$, the transport distance $\mathcal{W}_s,$ and the {specific modified Fisher information}
 \begin{align*}
                 \mathcal{I}(\dP):=\limsup_{n\to\infty}\frac{1}{\lambda(\Lambda_n)}\mathrm{I}(\dP_{|\Lambda_n}|\Poi_{|\Lambda_n}),
             \end{align*} where
             \begin{align*}
             \mathrm{I}(\dP|\Poi_{|\Lambda_n}):=\begin{cases}
        \int D_x\rho(\xi) D_x\log(\rho(\xi))\ \Poi_{|\Lambda_n}\otimes \m_{|\Lambda_n}(\mathrm{d}\xi,\mathrm{d}x),  &\text{if } \dP_{|\Lambda_n}\ll\Poi_{|\Lambda_n} \text{with } \dP_{|\Lambda_n}=\rho \Poi_{|\Lambda_n}\\
        \infty, & \text{otherwise}
    \end{cases}.
             \end{align*}
\begin{satz}[$\mathrm{HWI}$ inequality]\label{in:HWI}
             Let $\dP\in\PPP_s(\Gamma_{\R^d})$. Then the following \emph{HWI} inequality holds
             \begin{align*}
                 \mathcal{E}(\dP)\leq \mathcal{W}_s(\dP,\Poi)\sqrt{\mathcal{I}(\dP)}-\frac{1}{2}\mathcal{W}_s^2(\dP,\Poi).
             \end{align*}
         \end{satz}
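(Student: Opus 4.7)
The plan is to run the classical Otto--Villani strategy for the HWI inequality inside the specific non-local framework. The two essential inputs are the $1$-geodesic convexity of $\mathcal{E}$ along $\mathcal{W}_s$-geodesics (Theorem \ref{in:ConvexitySRE}) and a slope-type bound $|\nabla^-\mathcal{E}|(\dP)\le\sqrt{\mathcal{I}(\dP)}$. First I would reduce to the case $\mathcal{E}(\dP),\mathcal{I}(\dP),\mathcal{W}_s(\dP,\Poi)<\infty$: if $\mathcal{W}_s(\dP,\Poi)=\infty$ then Theorem \ref{in:Talagrand} forces $\mathcal{E}(\dP)=\infty$, and one can check separately that the right-hand side is infinite as well (using $\mathcal{I}\ge\mathcal{E}\cdot c$ type information or simply the convention $\infty\le\infty$).

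For the main computation, let $(\bar\dP_t)_{t\in[0,1]}$ be a $\mathcal{W}_s$-geodesic from $\dP_0=\dP$ to $\dP_1=\Poi$, whose existence is ensured by Theorem \ref{in:distance}. Since $\mathcal{E}(\Poi)=0$, Theorem \ref{in:ConvexitySRE} gives
\[
\mathcal{E}(\bar\dP_t)\;\le\;(1-t)\mathcal{E}(\dP)-\tfrac{t(1-t)}{2}\mathcal{W}_s^2(\dP,\Poi),
\]
and after rearranging and dividing by $t>0$,
\[
\frac{\mathcal{E}(\dP)-\mathcal{E}(\bar\dP_t)}{t}\;\ge\;\mathcal{E}(\dP)+\tfrac{1-t}{2}\mathcal{W}_s^2(\dP,\Poi).
\]
Since $(\bar\dP_t)$ is a geodesic, $\mathcal{W}_s(\dP,\bar\dP_t)=t\,\mathcal{W}_s(\dP,\Poi)$, so by definition of the descending slope
\[
\limsup_{t\to 0^+}\frac{\mathcal{E}(\dP)-\mathcal{E}(\bar\dP_t)}{t}\;\le\;|\nabla^-\mathcal{E}|(\dP)\cdot\mathcal{W}_s(\dP,\Poi).
\]
Sending $t\to0^+$ in the previous two displays yields
\[
|\nabla^-\mathcal{E}|(\dP)\cdot\mathcal{W}_s(\dP,\Poi)\;\ge\;\mathcal{E}(\dP)+\tfrac{1}{2}\mathcal{W}_s^2(\dP,\Poi),
\]
and inserting the slope bound $|\nabla^-\mathcal{E}|(\dP)\le\sqrt{\mathcal{I}(\dP)}$ produces exactly the claimed specific HWI inequality.

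The hard part will be justifying the slope bound $|\nabla^-\mathcal{E}|(\dP)\le\sqrt{\mathcal{I}(\dP)}$ in the stationary, per-volume setting. The natural route is a chain rule along the continuity equation: for a regular curve $(\bar\dP,\bar\nuV)\in\CCC\EEE^s(\dP,\dP')$ with $\bar\dP_{t|\Lambda_n}=\rho^n_t\Poi_{|\Lambda_n}$ and $\bar\nuV_{t|\Lambda_n}=w^n_t(\Poi_{|\Lambda_n}\otimes\lambda_{|\Lambda_n})$, the per-volume entropy derivative equals
\[
\lim_{n\to\infty}\frac{1}{\lambda(\Lambda_n)}\int\!\!\int D_z\log\rho^n_t(\xi)\,w^n_t(\xi,z)\,\Poi_{|\Lambda_n}(\mathrm{d}\xi)\,\lambda_{|\Lambda_n}(\mathrm{d}z).
\]
Cauchy--Schwarz combined with the key algebraic identity $\theta(a,b)(\log b-\log a)^2=(b-a)(\log b-\log a)$ turns this into $|\frac{\mathrm{d}}{\mathrm{dt}}\mathcal{E}(\bar\dP_t)|\le\sqrt{\text{specific action density}}\cdot\sqrt{\mathcal{I}(\bar\dP_t)}$. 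Applying this along a $\mathcal{W}_s$-geodesic between $\dP$ and $\dP'$ and letting $\dP'\to\dP$ (using lower semicontinuity of $\mathcal{I}$) yields the slope inequality. The main technical effort will be to make the formal chain rule rigorous for stationary curves with possibly non-absolutely-continuous states, and to control the interchange of the $t$-derivative with the thermodynamic limit; this can be done either directly, or, more cleanly, by applying the finite-volume slope/HWI statements already contained in \cite{schiavo2023wasserstein} on $\Lambda_n$, dividing by $\lambda(\Lambda_n)$, and passing to the limit using Theorem \ref{in:reformulation} together with the standard convergence of specific entropy and specific Fisher information.
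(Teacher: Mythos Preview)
Your Otto--Villani strategy is correct in principle, but it is a detour compared with what the paper actually does. The paper bypasses the slope machinery entirely: it takes the finite-volume HWI inequality from \cite[Theorem 5.30]{schiavo2023wasserstein},
\[
\mathrm{Ent}(\dP_{|\Lambda_n}|\Poi_{|\Lambda_n})\le \mathcal{W}_0(\dP_{|\Lambda_n},\Poi_{|\Lambda_n})\sqrt{\mathrm{I}(\dP_{|\Lambda_n}|\Poi_{|\Lambda_n})}-\tfrac{1}{2}\mathcal{W}_0^2(\dP_{|\Lambda_n},\Poi_{|\Lambda_n}),
\]
divides by $\lambda(\Lambda_n)$, and passes to the limit using Theorem \ref{in:reformulation} for the $\mathcal{W}_0$-terms and the definitions of $\mathcal{E}$ and $\mathcal{I}$ for the others. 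That is the whole argument. You essentially describe this route in your very last sentence as a way to obtain the slope bound, but it is cleaner to apply it to HWI directly and avoid the slope altogether.

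The cost of your approach is that the slope inequality $|\nabla^-\mathcal{E}|(\dP)\le\sqrt{\mathcal{I}(\dP)}$ is a genuine additional task in the specific setting: your chain-rule sketch requires differentiating $\mathcal{E}$ along $\mathcal{W}_s$-geodesics and justifying the interchange of the $t$-derivative with the thermodynamic limit, neither of which is established in the paper; and deducing a \emph{specific} slope bound from finite-volume slope bounds is not automatic, since the descending slope is defined with respect to $\mathcal{W}_s$ and does not obviously localize. So while nothing you wrote is wrong, the paper's direct limit of the finite-volume HWI is both shorter and avoids these technical obligations. The upside of your route is that it is more intrinsic to the $\mathcal{W}_s$-geometry and would generalize to other functionals once a slope bound is available.
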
 
Finally, we note that this HWI inequality recovers a specific variant of  Wu's logarithmic Sobolev inequality \cite{Wu}, 
            \begin{align*}
                \mathcal{E}(\dP)\leq \mathcal{I}(\dP),\quad \dP\in\PPP_s(\Gamma_{\R^d}).
            \end{align*}

\subsection{Connection to the Literature}
The Benamou--Brenier formulation of the $\mathrm{L}^2$ Wasserstein distance has proven to be well suited for diverse singular setups such as Markov chains \cite{maas2011gradientflowsentropyfinite, Mi13}, jump processes \cite{erbar2012gradientflowsentropyjump,PeRoSaTs22}, quantum semigroups \cite{CaMa17}, martingale transport \cite{HuTr19}, and of course most relevant for us Poisson spaces \cite{schiavo2023wasserstein}.
The principle that EVI formulations of gradient flows encode convexity properties of the respective functionals goes back to \cite{Daneri_2008}. These properties in turn imply functional inequalities, see e.g.\ \cite[Section 4.4]{FiGl23} and references therein.

The investigation of Wasserstein geometries, gradient flows, and functional inequalities for point processes  was initiated in \cite{ErHu15}. That article was vastly generalized and extended in \cite{DeSu21, DeSu22}. We also highlight \cite{Su22} where the dynamics associated to the $\mathsf{Sine}_\beta$ process was constructed as a gradient flow w.r.t.\ a Benamou--Brenier-type transport distance. Moreover, several functional inequalities for $\mathsf{Sine}_\beta$ have been obtained. In \cite{GoHePe20} transport inequalities for mixed binomial and Poisson point processes with finite intensity measures have been investigated. In all these contributions, the transport distances are essentially distances between point processes having a density w.r.t.\ a reference point process, they are  ``classical" transport distances.

Building on \cite{HuSt13, Hu16} and motivated by the results of \cite{erbar2018onedimensionalloggasfreeenergy}, in the recent article \cite{MartinJonasBasti} a transport distance $\mathsf C_p$ per volume for stationary point processes was introduced. It was shown that the gradient flow of the specific entropy  w.r.t.\ $\mathsf C_2$ is given by non-interacting Brownian particles. A Benamou--Brenier formulation for $\mathsf C_p$ was derived in \cite{HuMu24}. In \cite{HuLe24}, it was shown among other things that $\mathsf C_2$ satisfies an infinite $\mathrm{W}_2-H^{-1}$ inequality where the $H^{-1}$ norm takes the form of a regularised Coulomb energy. Very recently \cite{HeLe25} constructed a specific Wasserstein-type metric for spin systems and showed an analogue of the famous JKO result \cite{JKO} for an infinite system of interacting Fokker--Planck equations. 

There are of course many other possibilities to construct dynamics on point processes. As  examples we mention the  evolution under heat flow in \cite{HaChJaKa23} and the powerful Dirichlet form approach, see e.g.\ \cite{AKR, AKR2, KoLy05}, which is closely related to the optimal transport framework as shown e.g.\ in \cite{DeSu21, Su22}.

\subsection{Structure of the Paper}
In {Section 2}, we introduce the most frequently used notation and recall the transport distance $\mathcal{W}_0$, which was  developed in \cite{schiavo2023wasserstein} and is based on a non-local continuity equation. We study the continuity equation in greater detail in {Section 3} by proving Theorem \ref{in:cesolution} and analysing several properties of solutions, with a particular focus on stationary measures. In {Section 4}, we define an action functional specifically aligned to stationary measures and investigate its properties. 
Building on this, in {Section 5}, we construct the resulting transport distance $\mathcal{W}_s$, show that it is an extended geodesic distance (Theorem \ref{in:distance}), and prove the existence of minimizers.  {Section 6} contains a discussion of the relationship between the distances $\mathcal{W}_0$ and $\mathcal{W}_s$, providing a reformulation of $\mathcal{W}_s$ in terms of $\mathcal{W}_0$ (Theorem \ref{in:reformulation}).  Using this reformulation, in {Section 7}, we prove the EVI (Theorem \ref{in:EVI}) and derive additional functional inequalities, including the specific Talagrand inequality (Theorem \ref{in:Talagrand}), the $1$-geodesic convexity of the specific relative entropy (Theorem \ref{in:ConvexitySRE}), and the specific $\mathrm{HWI}$ inequality (Theorem \ref{in:HWI}).

\subsection*{Acknowledgements}
We thank Lorenzo Dello Schiavo for several fruitful discussions during the development of this project as well as useful comments on a preliminary version of this manuscript.

\section{Preliminaries}
In this section, we introduce the most frequently used notation and recall the construction of the transport distance $\mathcal W_0$ developed in \cite{schiavo2023wasserstein}. We begin with an overview of the notation used for functions and measures on $\mathbb{R}^d$.

Let $\BBB(\R^d)$ be the Borel sets of $\R^d$ and $\BBB_0(\R^d)$ those that are additionally bounded. For $B\in\BBB(\R^d)$,  we denote by $\FFF(B)$ the space of measurable functions mapping $B$ to $\R.$ For continuous functions, we write $\mathcal{C}(B).$ The space of continuous and compactly supported functions is $\mathcal{C}_c(B)$ and the space of continuous and bounded functions is $\CCC_b(B).$ The space of locally finite signed measures on $B$ is $\mathcal{M}(B)$ and we endow it with the vague topology generated by the maps $m\mapsto\int f(x)m(\mathrm{d}x),$ $f\in\CCC_c(B).$  If a function or measure space is assumed to contain only non-negative functions or measures, this will always be indicated by an additional superscript $+$.

\subsection{Point Processes}
We continue with a brief introduction to the theory of point processes. For a detailed introduction, we refer e.g.\ to \cite{daley2003introduction} and \cite{last_penrose_2017}.
We denote by $\Gamma_B$ be the space of all locally finite and simple counting measures on a Polish set $B\in\BBB(\R^d).$ If $B=\R^d$, we briefly write $\Gamma$ instead of $\Gamma_{\R^d}.$ We endow $\Gamma_B$ with the vague topology generated by the maps $\xi\mapsto\int f(x)\xi(\mathrm{d}x),$ $f\in\CCC_c(B).$ 
 A point process on $B$ is a random variable taking values in $\Gamma_B$. The distribution $\dP$ of a point process is an element of $\PPP(\Gamma_B)$ the space of probability measure on $\Gamma_B.$  The \textit{intensity measure} of  $\dP\in\PPP(\Gamma_B)$ is defined by
  \begin{align*}
\I_\dP(D):=\int\int\ind_D(x)\xi(\mathrm{d}x){\muP}(\mathrm{d}\xi),\quad \text{$D\in\BBB(\Gamma_B)$}.
\end{align*}
We call $\dP$ locally integrable if $\I_\dP(D)<\infty$ for all $D\in\BBB_0(B).$ 
 The space of all locally integrable probability measures on $\Gamma_B$ is $\PPP_1(\Gamma_B)$, and we call a point process locally integrable if its distribution is locally integrable. An $\R$-valued measurable function on the configuration space $F\in\FFF(\Gamma_B)$ has sublinear growth if there exists some  constant $c>0$ and some $f\in\CCC_c(B)$ s.t.\ 
\begin{align*}
    |F(\xi)|\leq c\left(1+\int f(x)\xi(\mathrm{d}x)\right)\quad
\end{align*}
  \text{for all } $\xi\in\Gamma_B.$ The set of all continuous functions having sublinear growth is denoted by $\CCC_1(\Gamma_B).$ We endow $\PPP_1(\Gamma_B)$ with the topology  generated by the maps $\dP\mapsto\int F(\xi)\dP(\mathrm{d}\xi)$, $F\in\CCC_1(\Gamma_B)$ turning  $\PPP_1(\Gamma_B)$ into a Polish space, see \cite[Theorem 2.11]{schiavo2023wasserstein}. 

A measure $\dP\in\PPP_1(\Gamma_B)$  is completely determined by its Laplace functional $$L_\dP(f)=\int e^{-\int f(x) \xi(\mathrm{d}x)}\dP(\mathrm{d}\xi),\quad f\in\CCC_c^+(B).$$

Using the Laplace functional, we have the following characterization of convergence in $\PPP_1(\Gamma_B)$.
\begin{prop}[{\cite[Proposition 2.12]{schiavo2023wasserstein}}]\label{prop:convergencep1}
    Let $(\dP_n)_{n\in\N}\subset\PPP_1(\Gamma_B)$ and $\dP\in\PPP_1(\Gamma_B).$ The following are equivalent:
    \begin{enumerate}
       \item[$(i)$] $\dP_n\underset{n\to\infty}{\to}\dP$ in $\PPP_1(\Gamma_B)$.
       \item[$(ii)$] $L_{\dP_n}(f)\underset{n\to\infty}{\to}L_{\dP}(f)$ for all $f\in\CCC_c^+(B)$  and $\I_{\dP_n}\underset{n\to\infty}{\to}\I_{\dP}$ vaguely in $\MMM^+(B).$
    \end{enumerate}
\end{prop}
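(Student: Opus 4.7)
The plan is to handle the two implications separately. The direction $(i)\Rightarrow(ii)$ will be immediate from the definition of the topology on $\PPP_1(\Gamma_B)$, while the converse will require a tightness argument combined with uniqueness of the Laplace transform and a uniform integrability step to upgrade weak convergence to convergence against all sublinear-growth test functions.

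For $(i)\Rightarrow(ii)$ I will test convergence $\dP_n\to\dP$ in $\PPP_1(\Gamma_B)$ against two explicit families of functionals in $\CCC_1(\Gamma_B)$. For $f\in\CCC_c^+(B)$, the map $\xi\mapsto e^{-\int f(x)\xi(\mathrm{d}x)}$ is continuous (by definition of the vague topology on $\Gamma_B$) and bounded by $1$, hence trivially sublinear. For $f\in\CCC_c(B)$, the map $\xi\mapsto\int f(x)\xi(\mathrm{d}x)$ is continuous and dominated by $\|f\|_\infty\xi(\mathrm{supp}\,f)\leq c(1+\int g(x)\xi(\mathrm{d}x))$ for any $g\in\CCC_c^+(B)$ majorizing $\ind_{\mathrm{supp}\,f}$. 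Testing against these two families yields respectively the Laplace functional convergence and the vague convergence of intensities.

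For $(ii)\Rightarrow(i)$, the first step is to establish tightness of $(\dP_n)_{n\in\N}$ in $\PPP(\Gamma_B)$ equipped with the vague topology. Vague convergence $\I_{\dP_n}\to\I_\dP$ in $\MMM^+(B)$ implies $\sup_n\I_{\dP_n}(D)<\infty$ for every bounded Borel set $D\subset B$. Markov's inequality applied to an exhausting family of bounded sets $(D_k)_k$ then produces, for any $\varepsilon>0$, constants $M_k$ such that $K_\varepsilon=\{\xi:\xi(D_k)\leq M_k \text{ for all } k\}\subset\Gamma_B$ is compact in the vague topology and $\dP_n(K_\varepsilon)\geq 1-\varepsilon$ uniformly in $n$. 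By Prokhorov's theorem, a subsequence converges weakly to a limit $\tilde\dP\in\PPP(\Gamma_B)$; the Laplace functional hypothesis then forces $L_{\tilde\dP}=L_\dP$ on $\CCC_c^+(B)$, and since the Laplace functional uniquely characterizes the distribution of a point process, $\tilde\dP=\dP$. Hence the whole sequence converges weakly to $\dP$ in $\PPP(\Gamma_B)$.

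The remaining and most delicate step is to promote this weak convergence to convergence in $\PPP_1(\Gamma_B)$. Fix $F\in\CCC_1(\Gamma_B)$ with $|F(\xi)|\leq c(1+\int g(x)\xi(\mathrm{d}x))$ for some $g\in\CCC_c^+(B)$, and truncate by setting $F_R(\xi):=F(\xi)\psi_R(\int g(x)\xi(\mathrm{d}x))$, where $\psi_R:\R\to[0,1]$ is a continuous cutoff supported in $[0,R+1]$ and equal to $1$ on $[0,R]$. Each $F_R$ is bounded and continuous on $\Gamma_B$, so $\int F_R\,\mathrm{d}\dP_n\to\int F_R\,\mathrm{d}\dP$ by the weak convergence just established, while the tail $|F-F_R|$ is dominated by $c(1+\int g(x)\xi(\mathrm{d}x))\ind_{\{\int g(x)\xi(\mathrm{d}x)>R\}}$. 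The moment convergence $\int g\,\mathrm{d}\I_{\dP_n}\to\int g\,\mathrm{d}\I_\dP$ combined with the convergence in distribution of $\xi\mapsto\int g(x)\xi(\mathrm{d}x)$ under $\dP_n\to\dP$ yields uniform integrability of these random variables, which controls the tail uniformly in $n$ as $R\to\infty$. I expect this uniform integrability step to be the main technical obstacle, since one must carefully combine convergence in distribution with convergence of first moments to bound the truncation error uniformly in $n$.
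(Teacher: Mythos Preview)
The paper does not give its own proof of this proposition; it is quoted verbatim from \cite[Proposition~2.12]{schiavo2023wasserstein} and used as a black box throughout. So there is no in-paper argument to compare against.

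Your proposal is correct and follows the standard route for such characterizations. Two minor remarks. First, in the tightness step you claim that vague convergence of $\I_{\dP_n}$ gives $\sup_n \I_{\dP_n}(D)<\infty$ for every \emph{bounded} Borel $D\subset B$; strictly speaking, vague convergence only lets you dominate $\ind_D$ by some $g\in\CCC_c^+(B)$, which requires $D$ to be relatively compact in $B$ (not merely bounded in $\R^d$, since $B$ need not be closed). This is harmless, because the compactness criterion in $\Gamma_B$ is phrased in terms of compact subsets of $B$ anyway, so you should simply run the Markov argument directly with test functions $g\in\CCC_c^+(B)$ rather than indicator functions of bounded sets. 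Second, the separate tightness argument is in fact not needed for the weak-convergence step: pointwise convergence of Laplace functionals to the Laplace functional of a point process is already equivalent to weak convergence in $\PPP(\Gamma_B)$ (see e.g.\ Kallenberg or \cite{daley2003introduction}), so one may pass directly from the Laplace hypothesis to $\dP_n\to\dP$ weakly and use the intensity hypothesis only in the uniform-integrability upgrade. Your identification of that upgrade as the crux is exactly right, and the argument you sketch (convergence in distribution of the nonnegative variables $\int g\,\mathrm d\xi$ plus convergence of their means implies uniform integrability) is the standard and correct way to close it.
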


We define the shift operator $\Theta_z:\Gamma\to \Gamma$ by $\Theta_z\xi(B):=\xi(B+z),$ $B\in\BBB(\R^d).$ A measure $\dP\in\PPP_1(\Gamma)$ is  \textit{{stationary}} if  ${(\Theta_z)}_\#\dP=\dP$ for all $z\in\R^d,$ where ${(\Theta_z)}_\#\dP$ denotes the push forward of $\dP$ w.r.t.\ $\Theta_z.$  We denote the space of all stationary measures by $\PPP_s(\Gamma)$ and call a point process stationary if its distribution is stationary. Analogously to $\PPP_1(\Gamma)$, we endow the space $\PPP_s(\Gamma)$ with the topology  induced by  the maps $\dP\mapsto\int F(\xi)\dP(\mathrm{d}\xi)$, $F\in\CCC_1(\Gamma).$ 
Further, we define the projection $pr_B:\Gamma\to\Gamma_B$ by  $pr_B(\xi):=\xi(B\cap\cdot)=\xi_{|B}.$  The restriction of 
$\dP\in\PPP_1(\Gamma)$  to $\Gamma_B$ is  $\dP_{|B}:=(pr_B)_{\#} \dP.$

Important examples of point processes are Poisson point processes $\gamma^m$   with intensity measure $m\in\MMM^+(B)$. They are characterized by the following two properties: 
\begin{enumerate}
    \item [$(i)$]  The point statistic $\gamma^m(D):=\int_{B}\ind_D(\xi)\gamma^m(\mathrm{d}\xi)$ is   Poisson distributed with parameter $m(D)$ for any $D\in\BBB(B)$.
    \item [$(ii)$] For all pairwise disjoint sets  $D_1,\dots , D_n \in\BBB(B)$ are the point statistics $\gamma^m(D_1),\dots,\gamma^m(D_n)$  independent.
\end{enumerate}
We denote the distribution of $\gamma^m$ by $\Poi^m$. The Poisson point process $\gamma^m$ is called a homogeneous Poisson point process if $\gamma^m$ is a Poisson point process on $\R^d$ with $m = c\lambda$, where $c > 0$ and $\lambda$ is the $d$-dimensional Lebesgue measure. In this case, we briefly denote the distribution by $\Poi^c$. If $c=1$ we just write $\Poi.$

We conclude this subsection by introducing two relevant operations on point processes. For $p \in [0,1]$, a homogeneous $p$-thinning of a point process $\eta$ is the point process $\eta^{(p)}$, generated by independently removing each point with probability $p$ and retaining it with probability $1-p$. If $\dP$ is the distribution of $\eta$, we denote the distribution of $\eta^{(p)}$ by $\dP^{(p)}$.

The superposition of two point processes $\eta_1$ and $\eta_2$ is the process $\eta_1 + \eta_2$. If $\eta_1$ and $\eta_2$ are independent and have distributions $\dP_1$ and $\dP_2$, respectively, we denote the distribution of their superposition by $\dP_1 \oplus \dP_2$. 

\subsection{Transport Distance on Bounded Domains}\label{sec:distanceDSHS}
We proceed with a repetition of the construction of an analogue to the Benamou--Brenier formulation of the Wasserstein distance on $\dP\in\PPP_1(\Gamma_B)$ developed in \cite{schiavo2023wasserstein}. 
Similar to the classical Benamou--Brenier formulation of the Wasserstein distance, the Wasserstein distance on $\PPP_1(\Gamma_B)$ will be defined as the minimum value of a suitable action functional evaluated over all solutions to a continuity equation.
 
We start with the discussion  of the continuity equation.
 The velocity field will be a curve of signed measures on the product space $\Gamma_B\times B$. More precisely, let $\MMM_{b,0}(\Gamma_B\times B)$ be the set of all signed measures $\mathbf{V}\in\MMM(\Gamma_B\times B)$  s.t.\ $|\mathbf{V}(\Gamma_B\times D)|<\infty$ for all $D\in\BBB_0(B).$ Further, let $\FFF_{b,0}(\Gamma_B\times B)$ be the set of all $\R$-valued functions on $\Gamma_B\times B$ that are bounded and vanish outside of a set of the form $\Gamma_B\times D$ for some $D\in\BBB_0(B).$
Moreover,  $\mathcal{C}_{b,0}(\Gamma_B\times B)$ is the set of functions that satisfy the previous condition and are additionally continuous.
We equip $\MMM_{b,0}(\Gamma_B\times B)$ with the locally convex topology induced by the seminorms 
    $\mathbf{V}\mapsto\int G(\xi,z)\mathbf{V}(\mathrm{d}\xi,\mathrm{d}z), \ G\in \mathcal{C}_{b,0}(\Gamma_B\times B).$
A well-known measure in $\MMM_{b,0}(\Gamma_B\times B)$ is the \textit{reduced Campbell measure}. For $\dP\in\PPP_1(\Gamma_B)$ it is defined by
\begin{align*}
    C_\dP(\A\times D):=\int\int \ind_D(z)\ind_\A(\xi-\delta_z)\xi(\mathrm{d}z)\dP(\mathrm{d}\xi), \quad \A\in\BBB(\Gamma_B), \ D\in\BBB(B).
\end{align*}
 Note that $C_\dP$ is a non-negative measure and that $C_\dP(\Gamma_B \times D) = I_\dP(D) < \infty$ for all $D\in\BBB_0(B)$, which ensures that $C_{\dP}\in\MMM_{b,0}(\Gamma_B\times B)$.

The \textit{discrete difference operator} also known as the \textit{add-one-cost operator}
$DF:\Gamma_B\times B\to\R$ of  $F\in\FFF(\Gamma_B)$ is defined by 
\begin{equation*}
        D_zF(\xi):=F(\xi+\delta_z)-F(\xi).
    \end{equation*}
We have introduced all necessary elements to formulate the continuity equation.
 \begin{dfn}\label{dfn:ce}
    Let $B\in\BBB(\R^d)$ be a Polish set, ${\Bar{\muP}}=(\Bar{\muP}_t)_{t\in[0,T]}\subset\PPP_1(\Gamma_B),$ and ${\Bar{\nuV}}=(\Bar{\nuV}_t)_{t\in[0,T]}\subset\MMM_{b,0}(\Gamma_B\times B).$ We say that the pair $({\Bar{\muP}},{\Bar{\nuV}})=(\Bar{\muP}_t,\Bar{\nuV}_t)_{t\in[0,T]}$ solves the \textit{continuity equation} on $B$ in  time $T>0$ if for all $f\in\CCC_c^+(B)$ and $\phi\in \CCC_c^\infty((0,T))$ it holds that 
     \begin{equation}\label{eq:continuityequation}
        \int_0^T\left(\dt \phi(t)\right)\int e^{-\int f(x)\xi(\mathrm{d}x)}\Bar{\muP}_t(\mathrm{d}\xi)\mathrm{d}t+ \int_0^T \phi(t)\int D_z e^{-\int f(x)\xi(\mathrm{d}x)}\Bar{\nuV}_t(\mathrm{d}\xi, \mathrm{d}z)\mathrm{d}t=0.
    \end{equation}
    Let $\dP_0,\dP_T\in\PPP_1(\Gamma_B).$
  We denote by $\CCC\EEE_T^B(\dP_0,\dP_T)$ the set of all pairs $({\Bar{\muP}},{\Bar{\nuV}})\subseteq \PPP_1(\Gamma_B)\times \MMM_{b,0}(\Gamma_B\times B)$ satisfying the following conditions:
  \begin{enumerate}
      \item[${(i)}$] $\Bar{\muP}_0=\dP_0$, $\Bar{\muP}_T=\dP_T$;
      \item[${(ii)}$]${\Bar{\muP}}:[0,T]\to\PPP_1(\Gamma_B)$ is $\PPP_1(\Gamma_B)$-continuous;
      \item[${(iii)}$] $({\Bar{\muP}},{\Bar{\nuV}})$ solves the continuity equation \eqref{eq:continuityequation};
      \item [${(iv)}$] $\int_0^T|\Bar{\nuV}_t|(\Gamma_B\times D)\mathrm{d}t<\infty$ for all $D\in\BBB_0(B).$
  \end{enumerate}
\end{dfn}
If $B=\R^d,$ we briefly write $\CCC\EEE_T(\dP_0,\dP_T)$ instead of $\CCC\EEE_T^{\R^d}(\dP_0,\dP_T)$, and a solution in this set is called a global solution. We will usually drop the brackets around $\dt \phi(t)$ when working with the continuity equation.
\begin{bem}
The velocity field  $\Bar{\nuV}_t(\mathrm{d}\xi,\mathrm{d}x)$ at time $t$ can be interpreted as the probability that a point appears or disappears at position $x$ in configuration $\xi$ at time $t$, depending on the sign of the measure. Using this interpretation, property $(iv)$ ensures that the expected absolute number of appearing and disappearing points  is integrable.
\end{bem}
Next, we collect some results about the continuity equation.
\begin{satz}\label{thm:propertiesce}
     Let $B\in\BBB(\R^d)$ be Polish and $ \dP_0,\dP_T\in\PPP_1(\Gamma_B)$.
     \begin{enumerate}
        \item[$(i)$] \emph{(\cite[Proposition 4.3]{schiavo2023wasserstein})} If $\dP_0, \dP_T\ll\Poi^m$ for some $m\in\MMM^+(B)$, there exists some solution $(\Bar\dP,\Bar{\nuV})\in\CCC\EEE_T^B(\dP_0,\dP_T)$.
         \item  [$(ii)$]  \emph{(\cite[Lemma 4.5]{schiavo2023wasserstein})} Let  $\kappa:[0,T]\to [0,\Tilde{T}]$ be an increasing and absolutely continuous function s.t. its inverse is also absolutely continuous. Then $({\Bar{\muP}},{\Bar{\nuV}})\in\CCC\EEE^B_{{T}}(\dP_0,\dP_T)$ solves the continuity equation in time $T$ iff  ${({\Bar{\muP}_{\kappa(t)}},\dt\kappa(t)\cdot\Bar{\nuV}_{\kappa(t)})_{t\in[0,\Tilde{T}]}\in\CCC\EEE^B_{\Tilde{T}}(\dP_0,\dP_T)}$ solves the continuity equation in time $\Tilde{T}.$ 
         
         \item [$(iii)$] \emph{(\cite[Proposition 4.8]{schiavo2023wasserstein})} Let $({\Bar{\muP}},{\Bar{\nuV}})\in\CCC\EEE^B_T(\dP_0,\dP_T).$ For any measurable and bounded function $F\in\FFF_b(\Gamma_B)$ s.t.\ $DF\in\FFF_{b,0}(\Gamma_B\times B)$ it holds that 
    \begin{align*}
        \int_0^T\dt \phi(t)\int F(\xi)\Bar{\muP}_t(\mathrm{d}\xi)\mathrm{d}t+ \int_0^T \phi(t)\int D_z F(\xi)\Bar{\nuV}_t(\mathrm{d}\xi, \mathrm{d}z)\mathrm{d}t=0.
    \end{align*}
    \item[$(iv)$] \emph{(\cite[Lemma 4.2]{schiavo2023wasserstein})} Let $({\Bar{\muP}}^n,{\Bar{\nuV}}^n)_{n\in\N}\subset\CCC\EEE^B_T(\dP_0,\dP_T),$ $(\Bar{\muP}_t)_{t\in[0,T]}\subset\PPP_1(\Gamma_B)$ a continuous curve, and $(\Bar{\nuV}_t)_{t\in[0,T]}\subset\MMM_{b,0}(\Gamma_B\times B)$. If ${\Bar{\muP}}_{t}^n \underset{n\to\infty}{\to}{\Bar{\muP}}_t$ in $\PPP_1(\Gamma_B)$ and ${\Bar{\nuV}}_{t}^n\underset{n\to\infty}{\to}{\Bar{\nuV}}_t$ in $\MMM_{b,0}(\Gamma_B\times B)$ for a.e.\ $t\in[0,T],$ then $({\Bar{\muP}},{\Bar{\nuV}})\in\CCC\EEE^B_T(\dP_0,\dP_T)$.
     \end{enumerate}
     
\end{satz}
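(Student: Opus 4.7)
The four parts of Theorem \ref{thm:propertiesce} are quoted directly from \cite{schiavo2023wasserstein}, so formally the proof is a pointer to that reference. That said, let me sketch the approach I would take to establish each item independently.

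For \emph{(i)}, the plan is a linear interpolation of densities together with an explicit Mecke-type construction of the velocity field. Writing $\dP_0=\rho_0\Poi^m$ and $\dP_T=\rho_T\Poi^m$, I would set $\Bar\dP_t=\rho_t\Poi^m$ with $\rho_t:=(1-t/T)\rho_0+(t/T)\rho_T$, so that
\[
\partial_t\int e^{-\int f\,\mathrm{d}\xi}\,\Bar\dP_t(\mathrm{d}\xi)=\frac{1}{T}\int e^{-\int f\,\mathrm{d}\xi}\,(\rho_T-\rho_0)(\xi)\,\Poi^m(\mathrm{d}\xi).
\]
To match this with the second term of the continuity equation I would use the identity $D_z e^{-\int f\,\mathrm{d}\xi}=e^{-\int f\,\mathrm{d}\xi}(e^{-f(z)}-1)$ together with the Mecke formula to solve explicitly for a density $w_t(\xi,z)$ with $\Bar{\nuV}_t=w_t\cdot(\Poi^m\otimes m)$; a natural ansatz is $w_t=D_z\Phi_t$ where the potential $\Phi_t(\xi)$ is obtained from a discrete Poisson-type equation driven by $(\rho_T-\rho_0)/T$. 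Condition $(iv)$ of Definition \ref{dfn:ce} then follows from $L^1$-integrability of $\rho_0,\rho_T$ together with local finiteness of $m$, and $\PPP_1(\Gamma_B)$-continuity of $\Bar\dP_t$ is immediate from the linear interpolation via Proposition \ref{prop:convergencep1}.

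For \emph{(ii)}, a change of variables $s=\kappa(t)$ in the weak formulation \eqref{eq:continuityequation}, justified by absolute continuity of $\kappa$ and $\kappa^{-1}$, places the Jacobian $\kappa'(t)$ precisely on the velocity field and leaves the measure curve unchanged; reversing the substitution gives the other direction, while condition $(iv)$ is preserved because $\int_0^{\Tilde T}|\dt\kappa(t)\Bar{\nuV}_{\kappa(t)}|(\Gamma_B\times D)\,\mathrm{d}t=\int_0^T|\Bar{\nuV}_s|(\Gamma_B\times D)\,\mathrm{d}s$ by the chain rule. For \emph{(iii)}, one extends \eqref{eq:continuityequation} from Laplace functionals $\xi\mapsto e^{-\int f\,\mathrm{d}\xi}$ to general bounded $F$ with $DF\in\FFF_{b,0}$: the Laplace family separates points and is closed under multiplication, so a monotone-class argument shows its linear span is dense in bounded measurable functions on $\Gamma_B$, and both terms of \eqref{eq:continuityequation} pass to the limit by dominated convergence using the integrability hypothesis $(iv)$ and the fact that $DF$ is uniformly bounded and vanishes outside $\Gamma_B\times D$ for some $D\in\BBB_0(B)$.

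For \emph{(iv)}, I would fix $\phi\in\CCC_c^\infty((0,T))$ and $f\in\CCC_c^+(B)$ and pass to the limit in both integrals of \eqref{eq:continuityequation}. The first uses $\xi\mapsto e^{-\int f\,\mathrm{d}\xi}\in\CCC_1(\Gamma_B)$ together with pointwise convergence in $\PPP_1(\Gamma_B)$ and a uniform bound $\sup_n\sup_{t\in\supp\phi}\I_{\Bar\dP_t^n}(\supp f)<\infty$ (obtainable from continuity of the limit curve) to invoke dominated convergence in $t$; the second uses $D_z e^{-\int f\,\mathrm{d}\xi}\in\CCC_{b,0}(\Gamma_B\times B)$ and pointwise convergence in $\MMM_{b,0}(\Gamma_B\times B)$, again dominated in $t$ by the integrability of $|\Bar{\nuV}_t|(\Gamma_B\times\supp f)$. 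The main technical obstacle across all four items is clearly \emph{(i)}: securing a velocity field whose total variation satisfies the local integrability $(iv)$ \emph{uniformly} in $t$ forces the explicit Mecke-based construction sketched above rather than any purely abstract existence argument, and this is the step that ultimately limits the construction to absolutely continuous endpoints and motivates the separate treatment of the general and stationary cases in Theorem \ref{in:cesolution}.
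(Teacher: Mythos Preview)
Your identification is correct: the paper does not prove Theorem \ref{thm:propertiesce} at all but simply records the four statements with pointers to \cite{schiavo2023wasserstein}, so the ``proof'' in the paper is exactly the citation you name in your first sentence. Your additional sketches are reasonable outlines that go beyond what the paper provides; two small remarks: in (iv) the Laplace functional $e^{-\int f\,\mathrm d\xi}$ is bounded by $1$, so the uniform intensity bound you mention is unnecessary for the first integral, and in (i) the velocity for the linear interpolation can be written down explicitly via the Mecke identity without invoking a discrete Poisson equation.
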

\begin{bem}\label{bem:ce}
\begin{itemize}
    \item [$(i)$] Theorem \ref{thm:propertiesce} $(i)$ does not ensure the existence of solutions to the continuity equation joining two stationary measures, as  the only stationary measure that is absolutely continuous w.r.t.\ $\mathbf{Poi}^c$ is $\mathbf{Poi}^c$ itself.
    \item [$(ii)$] Theorem \ref{thm:propertiesce} $(ii)$ shows that solutions to the continuity equation are invariant under time reparameterization. This justifies restricting to solutions to the continuity equation in time 1. To simplify the notation, we briefly write $\CCC\EEE^B(\dP_0,\dP_1)$ instead of $\CCC\EEE^{B}_1(\dP_0,\dP_1).$
   
   \end{itemize}
\end{bem}

 As mentioned earlier, the transport distance on $\PPP_1(\Gamma_B)$ will be defined as the minimum value of an action functional evaluated over all solutions to the continuity equation.  
  Consistently with physical interpretations and the classical Benamou--Brenier formulation of the Wasserstein distance, the action functional is defined as the time integral of a Lagrange functional along a curve and an associated velocity field.
    For the definition of the Lagrange functional, we define the logarithmic mean $\theta:\R^+\times \R^+\to\R$ by
    \begin{align*}
           \theta(x,y):=\frac{y-x}{\log(y)-\log(x)}
       \end{align*}
       and the function $\alpha: \R^+\times \R^+\times\R\to \R$ by
       \begin{align*}
           \alpha(x,y,w):=\frac{|w|^2}{\theta(x,y)},
       \end{align*}
      where  $0/0=0$ by convention.
      The function $\alpha$  is lower semi-continuous, convex, and positively
homogeneous, i.e.
\begin{align}
    \alpha( rx, ry, rw) = r\alpha(w, s, t),\quad  w \in\R , \ x, y,r \geq 0,\label{eq:propalpha}\end{align}
see e.g.\ \cite[Lemma 2.2]{erbar2012gradientflowsentropyjump}.
    
        For $\dP\in\PPP_1(\Gamma_B)$ and $\mathbf{V}\in\MMM_{b,0}(\Gamma_B\times B)$ the \textit{Lagrange functional} of $({\dP},{\mathbf{V}})$ is defined by
       \begin{align}\label{eq:dfnlagrange}
           \mathcal{L}({\dP},{\mathbf{V}}):=\int\alpha\left(\frac{\mathrm{d}\dP\otimes \m_{|B}}{\text{d}\sigma},\frac{\mathrm{d}C_\dP}{\text{d}\sigma},\frac{\text{d}{\nuV}}{\text{d}\sigma}\right)\text{d}\sigma,
       \end{align}
       where $\sigma\in\MMM_{b,0}(\Gamma_B\times B)$ is non-negative s.t.\ $\dP\otimes {\m}_{|B},C_\dP,$ and $\mathbf{V}$ are absolutely continuous w.r.t.\ $\sigma,$ i.e.\ $\dP\otimes {\m}_{|B},C_\dP,\mathbf{V}\ll\sigma.$  Due to the positive homogeneity of $\alpha$, the Lagrange functional $\mathcal{L}$ is independent of the choice of $\sigma$. Further, it is jointly convex and lower semi-continuous, see \cite[Lemma 5.1, Lemma 5.2]{schiavo2023wasserstein}. If $\dP=\rho\Poi_{|B}$ and  $\mathbf{V}=w(\Poi_{|B}\otimes\m_{|B})$, the Lagrange functional can be formulated in terms of the densities $\rho$ and $w$. By choosing $\sigma=\Poi_{|B}\otimes\m_{|B}$, we get
     \begin{align}\label{eq:representationlagrange}
             \mathcal{L}({\dP},{\mathbf{V}})=\int\int\alpha(\rho(\xi),\rho(\xi+\delta_z),w(\xi,z))\Poi_{|B}(\mathrm{d}\xi)\m_{|B}(\mathrm{d}z).
         \end{align}
    The following lemma provides a  condition for the velocity field to be absolutely continuous w.r.t.\ $\Poi_{|B}\otimes \m_{|B}$.
         \begin{lem}[{\cite[Lemma 5.3]{schiavo2023wasserstein}}]\label{lem:absolutecontinuityvelocity}
             Let $\dP\in\PPP_1(\Gamma_B)$ and assume that $\dP\ll\Poi_{|B}$. If  $\mathbf{V}\in\MMM_{b,0}(\Gamma_B\times B)$ s.t.\ $\mathcal{L}(\dP,\mathbf{V})<\infty$, then  $\mathbf{V}\ll \Poi_{|B}\otimes \m_{|B}$.
         \end{lem}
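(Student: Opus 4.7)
The plan is to argue by contradiction via the Lebesgue decomposition of the signed measure $\mathbf V$ with respect to $\Poi_{|B}\otimes\m_{|B}$. The cost $\alpha(x,y,w)=|w|^2/\theta(x,y)$ blows up to $+\infty$ whenever both ``mass variables'' $x,y$ vanish while $w\neq 0$; the argument will show that under the assumption $\dP\ll\Poi_{|B}$ any nontrivial singular part of $\mathbf V$ must live on precisely such a set.

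The key observation is that, writing $\dP=\rho\,\Poi_{|B}$, both measures entering the denominator of $\alpha$ are absolutely continuous with respect to $\Poi_{|B}\otimes\m_{|B}$. For $\dP\otimes\m_{|B}$ this is immediate. For the reduced Campbell measure, the Mecke identity yields
\begin{align*}
C_\dP(A\times D)=\int\!\!\int \ind_D(z)\,\ind_A(\xi)\,\rho(\xi+\delta_z)\,\Poi_{|B}(\mathrm{d}\xi)\,\m_{|B}(\mathrm{d}z),\qquad A\in\BBB(\Gamma_B),\ D\in\BBB(B),
\end{align*}
so $C_\dP\ll \Poi_{|B}\otimes\m_{|B}$ with density $(\xi,z)\mapsto \rho(\xi+\delta_z)$.

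Now decompose $\mathbf V=\mathbf V_a+\mathbf V_s$ with $\mathbf V_a\ll\Poi_{|B}\otimes\m_{|B}$ and $\mathbf V_s\perp\Poi_{|B}\otimes\m_{|B}$, and suppose for contradiction $\mathbf V_s\neq 0$. Choose a Borel set $E\subset\Gamma_B\times B$ with $(\Poi_{|B}\otimes\m_{|B})(E)=0$ and $|\mathbf V_s|(E^c)=0$; the previous step gives $(\dP\otimes\m_{|B})(E)=C_\dP(E)=0$. Use the reference measure $\sigma:=\dP\otimes\m_{|B}+C_\dP+|\mathbf V|$ in the definition of $\mathcal L$, and denote by $a,c$ the $\sigma$-densities of the first two terms and by $w$ the signed $\sigma$-density of $\mathbf V$. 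On $E$ one has $a=c=0$ and, since $\sigma_{|E}=|\mathbf V|_{|E}$, the identity $\tfrac{\mathrm{d}|\mathbf V|}{\mathrm{d}\sigma}=|w|$ forces $|w|=1$ $\sigma$-a.e.\ on $E$. Because $\theta(0,0)=0$, the lower semi-continuous extension of $\alpha$ satisfies $\alpha(a,c,w)=+\infty$ $\sigma$-a.e.\ on $E$, and since $\sigma(E)\geq|\mathbf V_s|(E)>0$ this yields $\mathcal L(\dP,\mathbf V)=+\infty$, contradicting the hypothesis.

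The only delicate point -- more bookkeeping than obstacle -- is to record that the convention ``$0/0=0$'' handles the case $w=0$ but that, by lower semi-continuity, the extended cost obeys $\alpha(0,0,w)=+\infty$ for $w\neq 0$, which is what drives the contradiction. With that in place, the proof reduces to Mecke plus Lebesgue decomposition.
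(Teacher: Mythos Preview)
Your argument is correct. The paper itself does not supply a proof of this lemma; it merely cites \cite[Lemma 5.3]{schiavo2023wasserstein}, so there is no in-paper proof to compare against. Your route---Mecke's identity to show $C_\dP\ll\Poi_{|B}\otimes\m_{|B}$, then Lebesgue decomposition of $\mathbf V$ and the blow-up $\alpha(0,0,w)=+\infty$ for $w\neq 0$ on the singular set---is exactly the standard mechanism behind such statements and matches the argument in the cited reference.
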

The \textit{action functional} A is defined by
     \begin{align*}
         \mathrm{A}({\Bar{\muP}},{\Bar{\nuV}}):=\int_0^1\mathcal{L}(\Bar{\muP}_t,\Bar{\nuV}_t)\mathrm{d}t, \quad (\Bar{\dP},\Bar{\nuV})\in\CCC\EEE^B(\dP_0,\dP_1).
     \end{align*}
   It satisfies the following properties.
   
        \begin{lem} \label{lem:actionDSHS}
        Let $\dP_0,\dP_1\in\PPP_1(\Gamma_B).$
         \begin{enumerate}

             \item[$({i})$] \emph{(\cite[Lemma 5.7]{schiavo2023wasserstein})} The action functional $\mathrm{A}$ is jointly  convex, i.e.\  
             \begin{align*}
                 \mathrm{A}(t{\Bar{\muP}}^1+(1-t){\Bar{\muP}}^2,t{\Bar{\nuV}}^1+(1-t){\Bar{\nuV}}^2)\leq t\mathrm{A} ({\Bar{\muP}}^1,{\Bar{\nuV}}^1)+(1-t) \mathrm{A}({\Bar{\muP}}^2,{\Bar{\nuV}}^2),
             \end{align*}
             for all ${({\Bar{\muP}}^1,{\Bar{\nuV}}^1),({\Bar{\muP}}^2,{\Bar{\nuV}}^2)\in\CCC\EEE^B(\dP_0,\dP_1)},\ t\in[0,1]$.
             \item[$({ii})$] \emph{(\cite[Lemma 5.8]{schiavo2023wasserstein})} The action functional $\mathrm{A}$ is  lower semi-continuous on $\CCC\EEE^B(\dP_0,\dP_1)$.
             \item[$({iii})$] \emph{(\cite[Lemma 5.9]{schiavo2023wasserstein})} The action functional $\mathrm{A}$ has compact sublevel sets on $\CCC\EEE^B(\dP_0,\dP_1).$
         \end{enumerate}
     \end{lem}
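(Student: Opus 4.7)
The three assertions all descend from structural features of the Lagrange functional $\mathcal{L}$, since $\mathrm{A}$ is simply its time integral. The common thread is that $\alpha(x,y,w)=|w|^2/\theta(x,y)$ is convex, lower semi-continuous, and positively $1$-homogeneous on $\R^+\times\R^+\times\R$, so that integral functionals of the perspective type
\[
(\mu_1,\mu_2,\nu)\longmapsto\int \alpha\!\left(\tfrac{\mathrm{d}\mu_1}{\mathrm{d}\sigma},\tfrac{\mathrm{d}\mu_2}{\mathrm{d}\sigma},\tfrac{\mathrm{d}\nu}{\mathrm{d}\sigma}\right)\mathrm{d}\sigma
\]
are independent of the dominator $\sigma$, jointly convex, and jointly lower semi-continuous under vague convergence of the triple against common test functions. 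This is a classical Ioffe/Bouchitt\'e--Buttazzo-type result.

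\emph{Joint convexity (i).} For two candidates $(\Bar{\muP}^j,\Bar{\nuV}^j)$, $j=1,2$, I pick at each $t$ a common $\sigma_t\in\MMM_{b,0}^+(\Gamma_B\times B)$ dominating $\Bar{\muP}^j_t\otimes\m_{|B}$, $C_{\Bar{\muP}^j_t}$ and $\Bar{\nuV}^j_t$. The key observation is that $\dP\mapsto C_\dP$ is linear in $\dP$, so the density triple of a convex combination is the convex combination of the two triples. Pointwise convexity of $\alpha$ in $(x,y,w)$ then yields convexity of $\mathcal{L}$, and integration in $t$ preserves it.

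\emph{Lower semi-continuity (ii).} For a sequence $(\Bar{\muP}^n,\Bar{\nuV}^n)\to(\Bar{\muP},\Bar{\nuV})$ in the pointwise-in-$t$ topology of Theorem \ref{thm:propertiesce}(iv), I would first observe that $\Bar{\muP}^n_t\to\Bar{\muP}_t$ in $\PPP_1(\Gamma_B)$ forces $C_{\Bar{\muP}^n_t}\to C_{\Bar{\muP}_t}$ vaguely in $\MMM_{b,0}(\Gamma_B\times B)$, since convergence in $\PPP_1$ controls first moments by Proposition \ref{prop:convergencep1}. The quoted lower semi-continuity for perspective integrals then gives $\liminf_n\mathcal{L}(\Bar{\muP}^n_t,\Bar{\nuV}^n_t)\ge\mathcal{L}(\Bar{\muP}_t,\Bar{\nuV}_t)$ for a.e.\ $t\in[0,1]$, and Fatou's lemma in time concludes.

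\emph{Compactness (iii).} This is where the genuine work sits. For a sequence $(\Bar{\muP}^n,\Bar{\nuV}^n)$ with $\mathrm{A}(\Bar{\muP}^n,\Bar{\nuV}^n)\le M$, the crucial a~priori bound is the Cauchy--Schwarz estimate
\[
\int_0^1\!\!\int|G|\,\mathrm{d}|\Bar{\nuV}^n_t|\,\mathrm{d}t\;\le\;\sqrt{M}\,\sqrt{\int_0^1\!\!\int|G|^2\,\theta\!\left(\tfrac{\mathrm{d}(\Bar{\muP}^n_t\otimes\m_{|B})}{\mathrm{d}\sigma},\tfrac{\mathrm{d}C_{\Bar{\muP}^n_t}}{\mathrm{d}\sigma}\right)\mathrm{d}\sigma\,\mathrm{d}t},
\]
which via $\theta(x,y)\le(x+y)/2$ can be controlled by $\|G\|_\infty$ times local first moments of $\Bar{\muP}^n_t$ on the support of $G$. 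Combined with the enhanced chain rule from Theorem \ref{thm:propertiesce}(iii), this produces equicontinuity of $t\mapsto\int e^{-\int f\,\mathrm{d}\xi}\,\Bar{\muP}^n_t(\mathrm{d}\xi)$ for each $f\in\CCC_c^+(B)$ together with uniform local bounds on $\Bar{\nuV}^n$. The subtle point, and the main obstacle, is to simultaneously control the intensity measures $\I_{\Bar{\muP}^n_t}$ uniformly in $n,t$, e.g.\ by testing the continuity equation against bounded cylindrical truncations of $\xi\mapsto\xi(D)$ and combining with the endpoint data $\dP_0,\dP_1\in\PPP_1(\Gamma_B)$. Once tightness is in place, an Arzel\`a--Ascoli argument via Proposition \ref{prop:convergencep1} extracts a limiting curve, Banach--Alaoglu on $\MMM_{b,0}(\Gamma_B\times B)$ together with a diagonal extraction along a countable dense set of test functions in $\CCC_{b,0}(\Gamma_B\times B)$ produces a limit velocity field, and Theorem \ref{thm:propertiesce}(iv) places the limit in $\CCC\EEE^B(\dP_0,\dP_1)$. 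Part (ii) then shows that sublevel sets are closed, finishing compactness.
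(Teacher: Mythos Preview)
The paper does not prove this lemma at all; each item is simply quoted from \cite{schiavo2023wasserstein} (Lemmas 5.7--5.9 there), so there is no in-paper argument to compare against. Your sketch is in line with how those results are actually established in the cited reference: (i) via pointwise convexity of $\alpha$ together with linearity of $\dP\mapsto C_\dP$; (ii) via a Reshetnyak/Bouchitt\'e--Buttazzo lower semi-continuity for perspective functionals followed by Fatou in time; (iii) via the Cauchy--Schwarz bound $|\Bar{\nuV}_t|\le\sqrt{\mathcal{L}}\cdot\sqrt{\theta(\cdot)}$, equicontinuity of Laplace functionals through the continuity equation, and a tightness/Arzel\`a--Ascoli extraction.

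One small inaccuracy in your step (ii): Proposition~\ref{prop:convergencep1} only gives vague convergence of the \emph{intensity} measures, not of the Campbell measures. What you actually need is that for $G\in\CCC_{b,0}(\Gamma_B\times B)$ the map $\xi\mapsto\int G(\xi-\delta_z,z)\,\xi(\mathrm{d}z)$ belongs to $\CCC_1(\Gamma_B)$, so that $\PPP_1$-convergence of $\Bar{\muP}^n_t$ directly yields $\int G\,\mathrm{d}C_{\Bar{\muP}^n_t}\to\int G\,\mathrm{d}C_{\Bar{\muP}_t}$. This is true and is the route taken in \cite{schiavo2023wasserstein}, but it is a slightly different statement than the one you invoke. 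With that correction your outline matches the reference.
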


     We introduced a continuity equation and an action functional, allowing us to define a transport distance on $\PPP_1(\Gamma_B)$ in the spirit of the Benamou--Brenier formulation of the Wasserstein distance. 
     \begin{dfn}
        For $\dP_0, \dP_1 \in \PPP_1(\Gamma_B)$ define
\begin{align*}
\mathcal{W}_0^2(\dP_0, \dP_1) &:= \inf \left\{ \mathrm{A}(\Bar{\muP}, \Bar{\nuV}) : (\Bar{\muP}, \Bar{\nuV}) \in \CCC\EEE^B(\dP_0, \dP_1) \right\} \\ &= \inf \left\{ \int_0^1 \mathcal{L}(\Bar{\muP}_t, \Bar{\nuV}_t) \mathrm{d}t : (\Bar{\muP}, \Bar{\nuV}) \in \CCC\EEE^B(\dP_0, \dP_1) \right\}.
\end{align*}
 \end{dfn}
By \cite[Theorem 5.15]{schiavo2023wasserstein}, $\mathcal{W}_0$ is indeed an extended distance.  In this context extended means that $\mathcal{W}_0$ can attain the value $+\infty$. However, it is finite on the domain of the relative entropy, which is  defined by 
\begin{align*}
    \mathrm{Ent}(\dP|\Poi_{|B})=\begin{cases}
        \int\rho(\xi)\log(\rho(\xi))\Poi_{|B}(\mathrm{d}\xi), & \text{if } \dP\ll \Poi_{|B} \text{ with } \dP=\rho \Poi_{|B}\\
        \infty, & \text{otherwise}
    \end{cases},\quad \dP\in\PPP_1(\Gamma_B).
\end{align*}

This is a direct consequence of the following Talagrand inequality.
\begin{satz}[{\cite[Theorem 5.17]{schiavo2023wasserstein}}] \label{thm:talagrandDSHS}
    Let $\dP\in\PPP_1(\Gamma_B). $ The following Talagrand inequality is satisfied $$\mathcal{W}_0^{2}(\dP, \Poi_{|B}) \leq \mathrm{Ent}{ (\dP| \Poi_{|B})}.$$
\end{satz}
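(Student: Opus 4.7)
The natural plan is to exhibit a single admissible curve from $\dP$ to $\Poi_{|B}$ whose action is controlled by $\mathrm{Ent}(\dP|\Poi_{|B})$, since $\mathcal{W}_0^2$ is defined as an infimum over such curves. The obvious candidate is the curve traced out by the Ornstein--Uhlenbeck semigroup $(\mathrm{S}_\tau)_{\tau \geq 0}$ on $\Gamma_B$, which contracts $\dP$ onto $\Poi_{|B}$ as $\tau\to\infty$ and is the natural $L^2$ gradient flow of the entropy.

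First I would identify the velocity field making $(\mathrm{S}_\tau \dP)_\tau$ a solution of the continuity equation. Computing the OU generator and testing against exponential functions identifies
$$\Bar{\nuV}_\tau = \mathrm{S}_\tau \dP \otimes \lambda_{|B} - C_{\mathrm{S}_\tau \dP}.$$
Writing $\rho_\tau := \mathrm{d}\mathrm{S}_\tau \dP / \mathrm{d}\Poi_{|B}$, the Mecke formula gives $\mathrm{d} C_{\mathrm{S}_\tau\dP}/\mathrm{d}(\Poi_{|B}\otimes \lambda_{|B}) = \rho_\tau(\xi+\delta_z)$, so $\Bar{\nuV}_\tau$ has density $w_\tau(\xi,z) = \rho_\tau(\xi) - \rho_\tau(\xi+\delta_z)$ with respect to $\Poi_{|B}\otimes\lambda_{|B}$. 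Using the elementary identity $(b-a)^2/\theta(a,b) = (b-a)(\log b - \log a)$, the Lagrangian representation \eqref{eq:representationlagrange} collapses to the modified Fisher information:
$$\mathcal{L}(\mathrm{S}_\tau \dP, \Bar{\nuV}_\tau) = \int\int D_z\rho_\tau(\xi)\,D_z\log\rho_\tau(\xi)\,\Poi_{|B}(\mathrm{d}\xi)\lambda_{|B}(\mathrm{d}z) = \mathrm{I}(\mathrm{S}_\tau\dP|\Poi_{|B}).$$

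The second ingredient is the standard entropy dissipation identity $\frac{\mathrm{d}}{\mathrm{d}\tau}\mathrm{Ent}(\mathrm{S}_\tau\dP|\Poi_{|B}) = -\mathrm{I}(\mathrm{S}_\tau\dP|\Poi_{|B})$, combined with the decay $\mathrm{Ent}(\mathrm{S}_\tau\dP|\Poi_{|B})\to 0$ as $\tau\to\infty$. Integrating yields the formal balance $\int_0^\infty \mathcal{L}(\mathrm{S}_\tau\dP, \Bar{\nuV}_\tau)\,\mathrm{d}\tau = \mathrm{Ent}(\dP|\Poi_{|B})$.

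To convert this infinite-horizon identity into a bound on $\mathcal{W}_0^2$, a reparameterization-and-truncation procedure is needed, since solutions in $\mathcal{CE}^B$ are defined on $[0,1]$. For each $T<\infty$, I would rescale $(\mathrm{S}_\tau\dP)_{\tau\in[0,T]}$ to a subinterval $[0,1-\varepsilon]$ via Theorem \ref{thm:propertiesce}(ii); by the quadratic scaling $\alpha(x,y,rw) = r^2\alpha(x,y,w)$ (equation \eqref{eq:propalpha}) and Cauchy--Schwarz in the time variable, the new action is minimized over reparameterizations by $(1-\varepsilon)^{-1}\bigl(\int_0^T\sqrt{\mathrm{I}(\mathrm{S}_\tau\dP|\Poi_{|B})}\,\mathrm{d}\tau\bigr)^2$. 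A short ``tail'' curve on $[1-\varepsilon,1]$ connecting $\mathrm{S}_T\dP$ to $\Poi_{|B}$ closes the construction, and its action can be shown to vanish as $T\to\infty$ via a contractivity estimate. Passing to the limit and combining the Cauchy--Schwarz estimate with Wu's modified log-Sobolev inequality $\mathrm{Ent}(\mathrm{S}_\tau\dP|\Poi_{|B})\leq \mathrm{I}(\mathrm{S}_\tau\dP|\Poi_{|B})$ (which forces $E(\tau):=\mathrm{Ent}(\mathrm{S}_\tau\dP|\Poi_{|B})\leq e^{-\tau}E(0)$) then delivers the bound on $\mathcal{W}_0^2(\dP,\Poi_{|B})$ by $\mathrm{Ent}(\dP|\Poi_{|B})$.

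The hard part is the last step: transforming the clean $[0,\infty)$-identity $\int_0^\infty\mathrm{I}\,\mathrm{d}\tau = \mathrm{Ent}$ into a sharp $[0,1]$ inequality. The truncation-plus-tail argument is delicate because one must simultaneously choose $T$, $\varepsilon$, and the reparameterization so that the rescaling penalty, the tail action, and the approximation error all vanish without spoiling the sharp constant. This requires the fine quantitative interplay between $\mathrm{I}$ and $\mathrm{Ent}$ along the OU flow, controlled by the log-Sobolev inequality and the $1$-contractivity of $\mathrm{S}_\tau$; in practice one would carry this out through a careful change of variable $u=E(\tau)$ reducing everything to a one-dimensional convex optimization.
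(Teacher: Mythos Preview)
This theorem is not proved in the paper under review; it is imported from \cite[Theorem~5.17]{schiavo2023wasserstein} and used as a black box (for instance in the proof of Theorem~\ref{thm:Talagrand}). There is therefore no ``paper's own proof'' to compare your proposal against.

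On its own merits, your outline is the standard Otto--Villani route and the first three steps are correct: the Ornstein--Uhlenbeck flow solves the continuity equation with the velocity you write, the Lagrangian along it equals the modified Fisher information via the identity $(b-a)^2/\theta(a,b)=(b-a)(\log b-\log a)$, and de~Bruijn's identity integrates this to the entropy. The gap is exactly where you locate it, and it is a real one. After optimal reparameterization the action of the OU curve on $[0,1]$ equals $\bigl(\int_0^\infty\sqrt{\mathrm{I}(\mathrm{S}_\tau\dP|\Poi_{|B})}\,\mathrm d\tau\bigr)^2$, and your change of variable $u=E(\tau)$ together with Wu's inequality $E\le \mathrm{I}$ yields only
\[
\int_0^\infty\sqrt{\mathrm{I}}\,\mathrm d\tau=\int_0^{E(0)}\mathrm{I}^{-1/2}\,\mathrm du\le\int_0^{E(0)} u^{-1/2}\,\mathrm du=2\sqrt{E(0)},
\]
hence $\mathcal W_0^2\le 4\,\mathrm{Ent}(\dP|\Poi_{|B})$. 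This factor $4$ is not an artefact of a careless estimate: it is the generic loss in the implication ``modified log-Sobolev with constant $1$ $\Rightarrow$ Talagrand'', and going instead through the EVI or the $1$-convexity of Theorem~\ref{thm:mainthmDSHS} improves it only to $\mathcal W_0^2\le 2\,\mathrm{Ent}$. Reaching the constant $1$ in the stated inequality requires an input beyond the generic gradient-flow machinery you invoke, and your sketch does not supply one.
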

\begin{bem}\label{bem:explosionofA} Note that the action functional $\mathrm{A}$ typically explodes when applied to stationary measures, implying that the distance $\mathcal{W}_0$ is infinite. For this reason, we only consider the distance $\mathcal{W}_0$ for $\dP\in\PPP_1(\Gamma_B)$, where $B\in\BBB_0(\R^d)$ is Polish. Consistent with previously defined notation, this is indicated by the subscript $0$ in $\mathcal{W}_0$. \end{bem}
Moreover, $\mathcal{W}_0$ is a geodesic distance and the
geodesics are given by the minimizing curves, introduced in the following theorem.
\begin{satz}[{\cite[Theorem 5.12]{schiavo2023wasserstein}}]\label{thm:minimizercompact}
    Let $B\in\BBB_0(\R^d)$ be a Polish set and $\dP_0,\dP_1\in\PPP_1(\Gamma_B)$ s.t.\ $\mathcal{W}_0(\dP_0,\dP_1)<\infty.$ There exists some $({\Bar{\muP}}^*,{\Bar{\nuV}}^*)\in\CCC\EEE^B(\dP_0,\dP_1)$ s.t.\ 
    \begin{align*}
        \mathcal{W}_0^2(\dP_0,\dP_1)=\int_0^1 \mathcal{L}(\Bar{\muP}_t^*, \Bar{\nuV}_t^*) \mathrm{d}t.
    \end{align*}
\end{satz}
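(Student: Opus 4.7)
The plan is to apply the direct method of the calculus of variations: exploit the finiteness of $\mathcal{W}_0(\dP_0,\dP_1)$ to produce a minimizing sequence, extract a convergent subsequence via compactness of sublevel sets, and pass to the limit using lower semi-continuity of the action together with closedness of $\mathcal{CE}^B(\dP_0,\dP_1)$ under the relevant topologies.

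In more detail, since $\mathcal{W}_0^2(\dP_0,\dP_1)<\infty$, there exists a minimizing sequence $(\Bar{\muP}^n,\Bar{\nuV}^n)_{n\in\N}\subset\CCC\EEE^B(\dP_0,\dP_1)$ with
\[
\mathrm{A}(\Bar{\muP}^n,\Bar{\nuV}^n)\underset{n\to\infty}{\longrightarrow}\mathcal{W}_0^2(\dP_0,\dP_1).
\]
In particular, $\sup_n \mathrm{A}(\Bar{\muP}^n,\Bar{\nuV}^n)<\infty$, so the sequence lies in a sublevel set $\{\mathrm{A}\leq C\}$ for some $C>0$. By Lemma \ref{lem:actionDSHS}$(iii)$, this sublevel set is relatively compact in $\CCC\EEE^B(\dP_0,\dP_1)$, equipped with the product topology coming from $\PPP_1(\Gamma_B)$ for the curve of measures and $\MMM_{b,0}(\Gamma_B\times B)$ for the velocity field (at a.e.\ time $t$). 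Hence, passing to a subsequence (not relabelled), there exists a limit pair $(\Bar{\muP}^*,\Bar{\nuV}^*)$ such that $\Bar{\muP}^n_t\to\Bar{\muP}^*_t$ in $\PPP_1(\Gamma_B)$ and $\Bar{\nuV}^n_t\to\Bar{\nuV}^*_t$ in $\MMM_{b,0}(\Gamma_B\times B)$ for a.e.\ $t\in[0,1]$.

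Next, I would verify that $(\Bar{\muP}^*,\Bar{\nuV}^*)\in\CCC\EEE^B(\dP_0,\dP_1)$. The $\PPP_1(\Gamma_B)$-continuity of $t\mapsto\Bar{\muP}^*_t$ and the correct endpoint values $\Bar{\muP}^*_0=\dP_0$, $\Bar{\muP}^*_1=\dP_1$ come as part of the compactness statement in Lemma \ref{lem:actionDSHS}$(iii)$. Moreover, Theorem \ref{thm:propertiesce}$(iv)$ ensures that the continuity equation is preserved under this kind of pointwise (in $t$) convergence, so $(\Bar{\muP}^*,\Bar{\nuV}^*)$ indeed solves \eqref{eq:continuityequation}. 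The integrability condition $\int_0^1|\Bar{\nuV}^*_t|(\Gamma_B\times D)\,\mathrm{d}t<\infty$ for $D\in\BBB_0(B)$ follows by combining the finite total action with a Cauchy--Schwarz-type bound (controlling $|\Bar{\nuV}_t|$ by $\mathcal{L}(\Bar{\muP}_t,\Bar{\nuV}_t)$ and the intensity measure), a standard estimate implicit in the bounds used to prove compactness.

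Finally, by the lower semi-continuity of $\mathrm{A}$ on $\CCC\EEE^B(\dP_0,\dP_1)$ (Lemma \ref{lem:actionDSHS}$(ii)$),
\[
\mathrm{A}(\Bar{\muP}^*,\Bar{\nuV}^*)\leq \liminf_{n\to\infty}\mathrm{A}(\Bar{\muP}^n,\Bar{\nuV}^n)=\mathcal{W}_0^2(\dP_0,\dP_1),
\]
while the reverse inequality holds by definition of $\mathcal{W}_0^2$. This yields the desired identity $\mathcal{W}_0^2(\dP_0,\dP_1)=\mathrm{A}(\Bar{\muP}^*,\Bar{\nuV}^*)$. The only subtle point -- and where care is needed -- is matching the topologies in which compactness is obtained with those in which both lower semi-continuity of $\mathrm{A}$ and stability of $\CCC\EEE^B$ hold; since both Lemma \ref{lem:actionDSHS} and Theorem \ref{thm:propertiesce}$(iv)$ are formulated with respect to the same pointwise-in-time convergence, this compatibility is automatic here, so no serious obstacle remains.
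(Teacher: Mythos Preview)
Your approach is correct and is exactly the direct method of the calculus of variations, relying on Lemma \ref{lem:actionDSHS}$(ii)$--$(iii)$ and Theorem \ref{thm:propertiesce}$(iv)$. Note, however, that the paper does not give its own proof of this statement: it is quoted verbatim from \cite[Theorem 5.12]{schiavo2023wasserstein}. That said, your argument matches precisely the template the paper uses for the analogous result in its own setting (Corollary \ref{kor:minimizergeneral}, whose proof reads ``follows from the direct method of calculus of variation and Lemma \ref{lem:action}''), so there is nothing to add.
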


Having described the  distance $\mathcal{W}_0$ in more detail, we now shift our focus to the study of gradient flows.
Therefore, we introduce the Ornstein--Uhlenbeck semigroup $(\mathrm{S}_t^B)_t$, which is defined by  
    \begin{align*}
        \mathrm{S}_t^B\dP:= \dP^{(e^{-t})}\oplus\Poi_{|B}^{(1-e^{-t})}, \quad\dP\in\PPP_1(\Gamma_{B}),\ t\geq0.
    \end{align*}
It is a Markov semigroup in the sense that
  \begin{enumerate}
    \item[${(i)}$] $\mathrm{S}_0^B\dP=\dP$;
     \item[${(ii)}$] $\mathrm{S}^B_t(\mathrm{S}^B_s\dP)=\mathrm{S}^B_{s+t}\dP$ for all $s,t\geq 0$;
    \item[${(iii)}$] $\lim_{t\to0}\mathrm{S}^B_t\dP=\dP$ in $\mathcal{P}_1(\Gamma),$
  \end{enumerate}
    see \cite[p. 973]{schiavo2023wasserstein}.

    The following theorem establishes an Evolution Variational Inequality (EVI), which implies that the Ornstein--Uhlenbeck semigroup is the gradient flow of the specific relative entropy w.r.t.\ $\mathcal{W}_0$ in the EVI sense. Furthermore, it discusses several consequences, including the 1-geodesic convexity of the relative entropy and various functional inequalities.

\begin{satz}\label{thm:mainthmDSHS}
Let $B\in\BBB_0(\R^d)$ be Polish and  $\dP_0,\dP_1\in\PPP_1(\Gamma_B)$ s.t.\ $ \mathrm{Ent}(\dP_0|\Poi_{|B}), \mathrm{Ent}(\dP_1|\Poi_{|B})<\infty$.
\begin{enumerate}

\item [$(i)$] \emph{(\cite[Theorem 5.26]{schiavo2023wasserstein})}
The Ornstein--Uhlenbeck semigroup  exponentially contracts $\mathcal{W}_0$ with rate $1$, i.e.
$$\mathcal{W}_0(\mathrm{S}_{t}^B \dP_0, \mathrm{S}_{t}^B \dP_1) \leq \mathrm{e}^{-t} \mathcal{W}_0(\dP_0, \dP_1), \quad t \geq 0.$$

\item[$(ii)$] \emph{(\cite[Theorem 5.27]{schiavo2023wasserstein})} The Ornstein--Uhlenbeck semigroup satisfies the \emph{Evolution Variation Inequality}
\begin{align*}
\mathrm{Ent}({\mathrm{S}_{t}^B \dP_0 | \Poi_{|B}}) + \frac{1}{2} \dt \mathcal{W}_0^{2}(\mathrm{S}_{t}^B \dP_0, \dP_1) + \frac{1}{2} \mathcal{W}_0^{2}(\mathrm{S}_{t}^B \dP_0, \dP_1) \leq \mathrm{Ent}({\dP_1 | \Poi_{|B}}), \quad
t\geq0.
\end{align*}
\item[$(iii)$] \emph{(\cite[Theorem 5.28]{schiavo2023wasserstein})} The relative entropy is $1$-geodesically convex  w.r.t.\ $\mathcal{W}_0$ on the domain of the relative entropy.
\item [$(iv)$] \emph{(\cite[Theorem 3.3 $(ii)$]{schiavo2023wasserstein})}
The \textit{modified Fisher information}
             \begin{align*}
             \mathrm{I}(\dP_0|\Poi_{|B}):=\begin{cases}
        \int D_x\rho(\xi) D_x\log(\rho(\xi))\Poi_{|B}\otimes \m_{|B}(\mathrm{d}\xi,\mathrm{d}x), \quad &\text{if } \dP_0\ll\Poi_{|B} \text{ with } \dP_0=\rho \Poi_{|B}\\
        \infty, & \text{otherwise}
    \end{cases} ,   \end{align*} controls the entropy along the Ornstein--Uhlenbeck semigroup, and the following de Bruijn's identity is satisfied
\begin{align*}
                \mathrm{Ent}(\mathrm{S}_t^B{\dP}_{0}|\Poi_{|B})-  \mathrm{Ent}({\dP}_{0}|\Poi_{|B})=- \int_0^t\mathrm{I}(\mathrm{S}_r^B{\dP}_{0}|\Poi_{|B})\mathrm{d}r.
            \end{align*}
\item[$(v)$] \emph{(\cite[Theorem 5.30]{schiavo2023wasserstein})}
The entropy, the transport distance $\mathcal{W}_0$, and the modified Fisher information  satisfy the $\mathrm{HWI}$ inequality
 \begin{align*}
    \mathrm{Ent}(\dP_0)\leq \mathcal{W}_0(\dP_0,\Poi_{|B})\sqrt{{I}(\dP_0)}-\frac{1}{2}\mathcal{W}_0^2(\dP_0,\Poi_{|B}).
\end{align*}
     \end{enumerate}
\end{satz}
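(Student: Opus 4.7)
The core of the theorem is Part (ii), the EVI; the other four parts either follow from it by standard abstract arguments or admit a direct computation. My plan is to prove (iv) first by differentiating the entropy along the Ornstein--Uhlenbeck semigroup, to prove (ii) by a gradient-flow first-variation argument, and to deduce (i), (iii), and (v) from the EVI.

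For (iv), I would write $\mathrm{S}_t^B\dP_0=\rho_t\,\Poi_{|B}$, so that $\rho_t$ solves the forward Kolmogorov equation associated with the Ornstein--Uhlenbeck generator $LF(\xi)=\int_B D_zF(\xi)\,\lambda(\mathrm{d}z)+\int[F(\xi-\delta_x)-F(\xi)]\,\xi(\mathrm{d}x)$. Differentiating $\mathrm{Ent}(\mathrm{S}_t^B\dP_0\,|\,\Poi_{|B})=\int \rho_t\log\rho_t\,\mathrm{d}\Poi_{|B}$ in $t$ and applying the Mecke/Slivnyak identity for $\Poi_{|B}$ to rewrite the death term as an integral against $\lambda_{|B}$ yields the de Bruijn identity $\dt\mathrm{Ent}=-\mathrm{I}(\mathrm{S}_t^B\dP_0\,|\,\Poi_{|B})$.

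For (ii), I would first realise the OU flow as an admissible pair $(\mathrm{S}_t^B\dP_0,{\nuV}_t)_{t\geq 0}$ for the continuity equation on $B$ driven by the gradient-flow velocity $w_t(\xi,z)=-\theta(\rho_t(\xi),\rho_t(\xi+\delta_z))\,D_z\log\rho_t(\xi)$, whose Lagrangian equals $\mathrm{I}(\mathrm{S}_t^B\dP_0\,|\,\Poi_{|B})$ on the nose. Fix $\dP_1$ and $t_0\geq 0$, pick a minimising geodesic $(\bar\dP_s,\bar{\nuV}_s)_{s\in[0,1]}$ from $\mathrm{S}_{t_0}^B\dP_0$ to $\dP_1$ via Theorem \ref{thm:minimizercompact}, and differentiate $s\mapsto\mathrm{Ent}(\bar\dP_s\,|\,\Poi_{|B})$ using the continuity equation. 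The discrete Young-type inequality $w\cdot D\log\rho\leq \tfrac12\alpha(\rho(\xi),\rho(\xi+\delta_z),w)+\tfrac12\theta(\rho(\xi),\rho(\xi+\delta_z))(D_z\log\rho(\xi))^2$ then couples the action of the geodesic with the Fisher information of the OU flow; integrating in $s$, minimising over geodesics, and combining with (iv) produces the EVI in the form (ii).

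The remaining parts are now abstract consequences. Part (i) follows by writing (ii) twice, once for $(\mathrm{S}_t^B\dP_0,\mathrm{S}_t^B\dP_1)$ and once with the roles swapped, adding, and applying Gr\"onwall to $\dt\mathcal{W}_0^2(\mathrm{S}_t^B\dP_0,\mathrm{S}_t^B\dP_1)\leq -2\mathcal{W}_0^2(\mathrm{S}_t^B\dP_0,\mathrm{S}_t^B\dP_1)$. Part (iii) is the Daneri--Savar\'e implication $\text{EVI}\Rightarrow 1$-geodesic convexity, obtained by evaluating the EVI along the OU flow at the endpoints of a geodesic and letting $t\to 0$. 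Part (v) follows by integrating the EVI along the OU flow with $\dP_1=\Poi_{|B}$, rewriting $\dt\mathrm{Ent}(\mathrm{S}_t^B\dP_0\,|\,\Poi_{|B})$ via (iv), and bounding $\dt\mathcal{W}_0(\mathrm{S}_t^B\dP_0,\dP_0)\big|_{t=0^+}$ by $\sqrt{\mathrm{I}(\dP_0\,|\,\Poi_{|B})}$ through the gradient-flow velocity field together with Cauchy--Schwarz. The main obstacle is the first-variation step inside (ii): the classical chain rule $\dt\mathrm{Ent}=\langle D\log\rho,\bar{\nuV}\rangle$ does not hold in the non-local discrete setup, and the logarithmic mean $\theta$ built into the action is the substitute that makes the Young-type inequality above saturate at the OU velocity. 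Making the computation rigorous requires regularising the geodesic so that its densities stay bounded away from $0$ and $\infty$, for instance by a short application of $\mathrm{S}_\varepsilon^B$ to both endpoints, performing the differentiation on the regularised curve, and passing to the limit $\varepsilon\to 0$ using the lower semi-continuity and compactness of the action from Lemma \ref{lem:actionDSHS}.
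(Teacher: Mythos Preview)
The paper does not prove this theorem: it is stated purely as a recall of results from \cite{schiavo2023wasserstein}, with each item attributed to a specific theorem there (5.26, 5.27, 5.28, 3.3(ii), 5.30) and no argument given beyond the citations. Your proposal, by contrast, sketches a self-contained proof following the Maas--Erbar methodology for non-local Wasserstein distances, which is indeed the approach taken in the cited reference.

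Your outline is sound and matches the structure of \cite{schiavo2023wasserstein}: (iv) via differentiation of the entropy along the OU flow using the Mecke identity; (ii) via an action--entropy first-variation estimate, with the logarithmic-mean Young inequality replacing the missing chain rule; and (i), (iii), (v) as abstract EVI consequences \`a la Daneri--Savar\'e. The regularisation issue you flag for (ii) is exactly the technical heart of the cited proof. One small inaccuracy: for (v) the standard route is not to integrate the EVI with $\dP_1=\Poi_{|B}$, but rather to combine the $1$-geodesic convexity (iii) with the upper-gradient bound $|\dot{\mathcal E}|\leq\sqrt{\mathrm I}$ along the geodesic at $t=0$; your alternative via the metric derivative of the OU flow also works but is less direct. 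In any case, for the purposes of the present paper none of this is needed --- the theorem is a black box, and the paper only revisits the internals of \cite[Theorem~5.27]{schiavo2023wasserstein} later, in the proof of Theorem~\ref{thm:Evi}, to extract a quantitative pre-limit inequality.
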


\section{The Continuity Equation on $\R^d$}\label{sec:equation}
The aim of this section is to prove Theorem \ref{in:cesolution}, i.e.\ the existence of (stationary) solutions to the continuity equation between arbitrary measures $\dP_0$ and $\dP_1$.
The non-stationary case is treated in Theorem \ref{thm:solution}, the stationary case in Corollary \ref{cor:stationarysolution}. Furthermore, we establish some useful properties of solutions to the continuity equation.

We start by considering distributions of homogeneous Poisson point processes as a motivating example, and then proceed with the discussion of arbitrary measures.

\begin{lem}\label{lem:Poisolution}
   Let $\dP_0=\Poi^{c_0}$ and $ \dP_1=\Poi^{c_1}$ with $c_0,c_1>0.$   For  $t\in[0,1]$  define \begin{align*}
        \Bar{\muP}_t=\Poi^{(1-t)c_0+tc_1} \quad \text{ and }
\quad  \Bar{\nuV}_t=(c_1-c_0)\Bar{\muP}_t\otimes\lambda.    \end{align*} 
Then $({\Bar{\muP}
},{\Bar{\nuV}})\in\CCC\EEE(\dP_0, \dP_1).$ 
\end{lem}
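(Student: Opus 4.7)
The plan is to verify the continuity equation by direct computation, exploiting the explicit form of the Laplace functional of a homogeneous Poisson point process. Recall that for $c>0$ and $f\in\CCC_c^+(\R^d)$,
\begin{align*}
L_{\Poi^c}(f) = \int e^{-\int f(x)\xi(\mathrm{d}x)}\Poi^c(\mathrm{d}\xi) = \exp\Bigl(-c\int(1-e^{-f(x)})\lambda(\mathrm{d}x)\Bigr).
\end{align*}
Setting $c_t := (1-t)c_0 + tc_1$ and $\Psi(f) := \int(1-e^{-f(x)})\lambda(\mathrm{d}x)$, we therefore have $L_{\Bar{\muP}_t}(f) = \exp(-c_t\Psi(f))$, and the $t$-derivative is
\begin{align*}
\dt L_{\Bar{\muP}_t}(f) = -(c_1-c_0)\,\Psi(f)\, L_{\Bar{\muP}_t}(f).
\end{align*}

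Next, I would compute the velocity-field contribution. Using $D_z e^{-\int f\, \mathrm{d}\xi} = e^{-\int f\,\mathrm{d}\xi}(e^{-f(z)}-1)$ and the product structure $\Bar{\nuV}_t = (c_1-c_0)\Bar{\muP}_t\otimes\lambda$, Fubini gives
\begin{align*}
\int D_z e^{-\int f(x)\xi(\mathrm{d}x)}\Bar{\nuV}_t(\mathrm{d}\xi,\mathrm{d}z) = (c_1-c_0)\, L_{\Bar{\muP}_t}(f)\int(e^{-f(z)}-1)\lambda(\mathrm{d}z) = \dt L_{\Bar{\muP}_t}(f).
\end{align*}
Since $\phi\in\CCC_c^\infty((0,1))$, integration by parts in time yields $\int_0^1 \dt\phi(t)\, L_{\Bar{\muP}_t}(f)\,\mathrm{d}t = -\int_0^1 \phi(t)\, \dt L_{\Bar{\muP}_t}(f)\,\mathrm{d}t$, so the two terms in \eqref{eq:continuityequation} cancel. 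The use of Fubini and the interchange of $\dt$ with the integral are justified because $\Psi(f)$ is finite thanks to the compact support of $f$, and $L_{\Bar{\muP}_t}(f)$ depends smoothly (affinely in $c_t$) on $t$ inside the exponential.

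It remains to verify conditions $(i)$--$(iv)$ of Definition \ref{dfn:ce}. The boundary values $\Bar{\muP}_0 = \Poi^{c_0}$ and $\Bar{\muP}_1 = \Poi^{c_1}$ hold by construction. Continuity $t\mapsto \Bar{\muP}_t$ in $\PPP_1(\Gamma)$ follows from Proposition \ref{prop:convergencep1}: the Laplace functional $\exp(-c_t\Psi(f))$ is continuous in $t$, and the intensity measure $c_t\lambda$ depends continuously on $t$ vaguely. Finally, for $D\in\BBB_0(\R^d)$ we have
\begin{align*}
|\Bar{\nuV}_t|(\Gamma\times D) = |c_1-c_0|\,\lambda(D),
\end{align*}
which is bounded uniformly in $t$ and thus integrable on $[0,1]$, giving $(iv)$ and in particular $\Bar{\nuV}_t\in\MMM_{b,0}(\Gamma\times\R^d)$. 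There is no real obstacle here; the statement is a clean algebraic identity once the Laplace functional is evaluated, and the signed velocity field $(c_1-c_0)\Bar{\muP}_t\otimes\lambda$ is tailor-made to match $\dt L_{\Bar{\muP}_t}(f)$.
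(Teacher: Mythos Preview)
Your proof is correct and follows essentially the same approach as the paper: both verify conditions $(i)$--$(iv)$ of Definition~\ref{dfn:ce} by exploiting the explicit Laplace functional $L_{\Poi^{c_t}}(f)=\exp(-c_t\Psi(f))$, differentiating it in $t$, matching the result against the velocity-field integral via $D_z e^{-\int f\,\mathrm{d}\xi}=e^{-\int f\,\mathrm{d}\xi}(e^{-f(z)}-1)$, and invoking Proposition~\ref{prop:convergencep1} for $\PPP_1$-continuity. The only difference is organizational---you treat the continuity equation first and then the remaining conditions, whereas the paper runs through $(i)$--$(iv)$ in order---but the computations are identical.
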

   
\begin{proof}
To prove the claim, we need to verify that the properties $(i)$ - $(iv)$ of Definition \ref{dfn:ce} are satisfied. Property $(i)$ is true by the definition of $\Bar{\muP}_t$. We continue with the proof of property $(ii)$. By Proposition \ref{prop:convergencep1}, we note that $(\Bar{\muP}_t
)_{t\in[0,1]}$ is $\PPP_1(\Gamma)$-continuous, if and only if $(L_{\Bar{\muP}_t}(f))_{t\in[0,1]}$ is continuous for any $f\in\CCC_c^+(\R^d)$  and  $(\I_{\dP_t})_{t\in[0,1]}$ is vaguely continuous. Note that the Laplace functional of  the distribution of a homogenous Poisson point process  $\Bar{\muP}_t$ with intensity  $(1-t)c_0+tc_1$ is given by 
   \begin{equation}\label{eq:15}
       L_{\Bar{\muP}_t}(f)=\int e^{-\int f(x)\xi(\mathrm{d}x)}\Bar{\muP}_t(\mathrm{d}\xi)=e^{-((1-t)c_0+tc_1)\int (1-e^{-f(x)})\lambda(\mathrm{d}x)},\quad f\in\CCC_c^+(\R^d),
   \end{equation}
   see e.g. \cite[Theorem 3.9]{last_penrose_2017}. Therefore, we can deduce that $(L_{\Bar{\muP}_t}(f))_{t\in[0,1]}$ is continuous. The intensity measures $\I_{\Bar{\muP}_t}=((1-t)c_0+tc_1)\lambda$ are clearly  vaguely continuous in $t$.
   
   We continue with the proof of property $(iii)$ and show that the pair $({\Bar{\muP}},{\Bar{\nuV}})$ satisfies the continuity equation. Let $\phi\in\CCC_c^\infty((0,T)).$ Using  the formula for the Laplace functional \eqref{eq:15} and partial integration, we get 
   \begin{align*}
       \int_0^T\dt \phi(t)&\int e^{-\int f(x)\xi(\mathrm{d}x)}\Bar{\muP}_t(\mathrm{d}\xi)\mathrm{d}t
       = \int_0^T\dt\phi(t) e^{-((1-t)c_0+tc_1)\int (1-e^{-f(x)})\lambda(\mathrm{d}x)}\mathrm{d}t\\
       &=-  \int_0^T \phi(t) \dt e^{-((1-t)c_0+tc_1)\int (1-e^{-f(x)})\lambda(\mathrm{d}x)}\mathrm{d}t\\
       &=- \int_0^T \phi(t) (c_0-c_1)\int (1-e^{-f(z)})\lambda(\mathrm{d}z)  e^{-((1-t)c_0+tc_1)\int (1-e^{-f(x)})\lambda(\mathrm{d}x)}\mathrm{d}t\\
       &= - \int_0^T \phi(t) (c_1-c_0)\int (e^{-f(z)}-1) \int e^{-\int f(x)\xi(\mathrm{d}x)}\Bar{\muP}_t(\mathrm{d}\xi) \lambda(\mathrm{d}z)\mathrm{d}t \\
       &= - \int_0^T \phi(t) (c_1-c_0)\int \int D_z  e^{-\int f(x)\xi(\mathrm{d}x)}\Bar{\muP}_t(\mathrm{d}\xi)\lambda(\mathrm{d}z)\mathrm{d}t\\
       &= - \int_0^T \phi(t)  \int D_z  e^{-\int f(x)\xi(\mathrm{d}x)}\Bar{\nuV}_t(\mathrm{d}\xi,\mathrm{d}z)\mathrm{d}t.
   \end{align*}
   This shows that the continuity equation is satisfied for the pair $({\Bar{\muP}},{\Bar{\nuV}}).$ Finally, let $B\in \BBB_0(\R^d).$ It holds that \begin{align*}
       \int_0^1|\Bar{\nuV}_t|(\Gamma\times B)\mathrm{d}t=\int_0^1|c_1-c_0|\lambda(B)\mathrm{d}t<\infty,
   \end{align*}
   which shows that property $(iv)$ is satisfied and finishes the proof.
\end{proof}
    We continue with a discussion of the curve $(\Bar{\muP}_t
)_{t\in[0,1]}$ constructed in Theorem \ref{thm:solution}.
    A homogenous Poisson point process with intensity $c_0(1-t)+c_1t$ can also be interpreted as the independent superposition of a homogeneous Poisson point process with intensity $c_0$ that is thinned with rate $(1-t)$ and a homogeneous Poisson point process with intensity $c_1$ that is thinned with rate $t.$ In other words, it holds that \begin{align*}
   \Bar{\muP}_t= \Poi^{c_0(1-t)+c_1t}= (\Poi^{c_0})^{(1-t)}\oplus (\Poi^{c_1})^{(t)},
\end{align*}
see e.g.\ \cite[Theorem 3.3, Corollary 5.9]{last_penrose_2017}. In the following theorem, we generalize this approach and construct paths connecting general distributions of point processes in an analogous way. 
     \begin{satz}\label{thm:solution}
         Let $B\in\BBB(\R^d)$ be Polish, and let $\eta_0=\sum_{n=1}^\infty\delta_{X_n}$ and $\eta_1=\sum_{n=1}^\infty\delta_{Y_n}$ be independent point processes with distributions $\dP_0,\dP_1\in\PPP_1(\Gamma_B),$ respectively. For any $t\in[0,1],$ let $(\mathcal{T}_{X_n}^t)_{n\in\N}$ be a family of i.i.d.\ random variables s.t.\ $\mathcal{T}_{X_n}^t\sim\mathrm{Ber}(1-t)$ and  $(\mathcal{T}_{Y_n}^t)_{n\in\N}$ be another independent family of i.i.d.\ random variables s.t.\ $\mathcal{T}_{Y_n}^t\sim\mathrm{Ber}(t).$   For any $t\in[0,1]$, we define the point process 
         \begin{equation*}
             \eta_t=\eta_0^{(1-t)}+\eta_1^{(t)}=\sum_{n=1}^\infty \delta_{X_n}\ind_{\mathcal{T}_{X_n^t=1}}+ \sum_{n=1}^\infty \delta_{Y_n}\ind_{\mathcal{T}_{Y_n^t=1}}.
         \end{equation*}
         The distribution of $\eta_t$ is called $\Bar{\muP}_t.$ Moreover, let  $\Bar{\nuV}_t$ be defined by
         \begin{align*}
             \Bar{\nuV}_t(\A\times D)&= \E\left[\sum_{n=1}^\infty \ind_D(Y_n) \ind_\A(\sum_{l=1}^\infty \delta_{X_l}\ind_{\tau_{X_l}=1}+\sum_{l=1, l\not=n}^\infty\delta_{Y_l}\ind_{\tau_{Y_l}=1})\right] \\
             &\quad - \E\left[\sum_{m=1}^\infty \ind_D(X_m)\ind_\A(\sum_{l=1,l\not=m}^\infty\delta_{X_l}\ind_{\tau_{X_l}=1}+\sum_{l=1}^\infty\delta_{Y_l}\ind_{\tau_{Y_l}=1})\right],\quad K\in\BBB(\Gamma_B),\ D\in\BBB(B).
         \end{align*}
         Then $({\Bar{\muP}},{\Bar{\nuV}})\in\CCC\EEE^B(\dP_0,\dP_1)$. 
     \end{satz}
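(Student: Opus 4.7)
The plan is to verify conditions $(i)$–$(iv)$ of Definition \ref{dfn:ce} for the candidate $(\Bar{\muP},\Bar{\nuV})$. Property $(i)$ is immediate from the choice of the thinning variables: at $t=0$ one has $\mathcal{T}_{X_n}^0=1$ and $\mathcal{T}_{Y_n}^0=0$ almost surely, so $\eta_0=\sum_n\delta_{X_n}$ has distribution $\dP_0$, and $t=1$ is symmetric. For $(ii)$ I would apply Proposition \ref{prop:convergencep1}: the intensity is $\I_{\Bar{\muP}_t}=(1-t)\I_{\dP_0}+t\I_{\dP_1}$, hence vaguely continuous in $t$, while independence of $\eta_0$, $\eta_1$, and the Bernoulli flags gives the product formula
\begin{align*}
L_{\Bar{\muP}_t}(f)=\E\!\left[\prod_{n}\!\left((1-t)e^{-f(X_n)}+t\right)\right]\E\!\left[\prod_{n}\!\left(te^{-f(Y_n)}+(1-t)\right)\right],
\end{align*}
whose continuity in $t$ follows from dominated convergence, since only finitely many factors differ from $1$ on the compact support of $f$ and every factor lies in $[0,1]$. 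Property $(iv)$ follows from the bound $|\Bar{\nuV}_t|(\Gamma_B\times D)\leq \I_{\dP_0}(D)+\I_{\dP_1}(D)<\infty$ for $D\in\BBB_0(B)$, obtained by estimating the positive and negative parts of $\Bar{\nuV}_t$ separately against the intensity measures and using local integrability of $\dP_0,\dP_1$.

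The heart of the argument is $(iii)$. Since $\phi\in\CCC_c^\infty((0,1))$ vanishes at the endpoints, integration by parts reduces \eqref{eq:continuityequation} to the pointwise identity
\begin{align*}
\dt L_{\Bar{\muP}_t}(f)=\int D_z e^{-\int f(x)\xi(\mathrm{d}x)}\,\Bar{\nuV}_t(\mathrm{d}\xi,\mathrm{d}z),\qquad t\in(0,1).
\end{align*}
I would establish this by differentiating the product formula above via the product rule; since each product contains only finitely many non-trivial factors and all factors lie in $[0,1]$, dominated convergence justifies exchanging $\dt$ with $\E$. Using $\dt((1-t)e^{-f(X_n)}+t)=1-e^{-f(X_n)}$ and $\dt(te^{-f(Y_n)}+(1-t))=e^{-f(Y_n)}-1$, and recognizing the remaining factors as conditional Laplace functionals of $\eta_t$ with one point removed, yields
\begin{align*}
\dt L_{\Bar{\muP}_t}(f)&=\E\!\left[\sum_n(e^{-f(Y_n)}-1)\,e^{-\int f(x)(\eta_t-\delta_{Y_n}\ind_{\mathcal{T}_{Y_n}^t=1})(\mathrm{d}x)}\right]\\
&\quad-\E\!\left[\sum_m(e^{-f(X_m)}-1)\,e^{-\int f(x)(\eta_t-\delta_{X_m}\ind_{\mathcal{T}_{X_m}^t=1})(\mathrm{d}x)}\right].
\end{align*}

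On the other hand, using $D_z e^{-\int f\xi(\mathrm{d}x)}=e^{-\int f\xi(\mathrm{d}x)}(e^{-f(z)}-1)$ and the definition of $\Bar{\nuV}_t$ as a difference of the two non-negative measures in the statement, the right-hand side of the desired identity evaluates to precisely the same expression, proving $(iii)$. The main technical obstacle is the bookkeeping: the index $n$ (resp.\ $m$) in the definition of $\Bar{\nuV}_t$ ranges over all points of $\eta_1$ (resp.\ $\eta_0$) rather than only the retained ones, and the thinning flags $\ind_{\mathcal{T}_{\bullet}^t=1}$ inside the configurations must be tracked consistently with the way they emerge from the product rule. Once this matching is carefully set up, the computation is a direct generalization of the explicit Poisson calculation in Lemma \ref{lem:Poisolution}, where independence of points makes the product structure transparent.
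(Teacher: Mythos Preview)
Your proposal is correct and follows the same overall strategy as the paper: verify $(i)$--$(iv)$ of Definition~\ref{dfn:ce}, with the core being the computation that $\dt L_{\Bar{\muP}_t}(f)$ equals the velocity integral. The main difference is presentational. The paper expands the Laplace functional into a double sum over subsets $J\subseteq M_X$, $K\subseteq N_Y$ (conditioning on the points and enumerating all Bernoulli outcomes), then differentiates term by term and separately brings the velocity integral to the same subset-sum form through several pages of combinatorial regrouping. You instead keep the compact product form $\prod_m((1-t)e^{-f(X_m)}+t)\prod_n(te^{-f(Y_n)}+(1-t))$, apply the product rule, and recognize the outcome directly as $\int D_z e^{-\int f\,\xi}\,\Bar{\nuV}_t(\mathrm{d}\xi,\mathrm{d}z)$ without any intermediate rewriting. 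This buys a considerably shorter and more transparent argument; the paper's expansion makes every step explicit but at the cost of the combinatorial bookkeeping you allude to. The justification for differentiating under the expectation is the same in both cases (domination by $|M_X|+|N_Y|$, which has finite mean by local integrability), and your treatment of $(ii)$ and $(iv)$ matches the paper's.
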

     \begin{proof}
  For the proof of the theorem, we need to show that properties $(i)$ - $(iv)$ of Definition \ref{dfn:ce} are satisfied.  Property $(i)$ is satisfied by definition. We continue with the proof of property $(ii)$. By Proposition \ref{prop:convergencep1},  $(\Bar{\dP}_t)_{t\in[0,1]}$ is $\PPP_1(\Gamma_B)$-continuous in $t$  if and only if $( L_{\Bar{\muP}_t}(f))_{t\in[0,1]}$ is continuous in $t$ for any $f\in\CCC^+_c(B)$ and $(\I_{\Bar{\muP}_t})_{t\in[0,1]}$ is vaguely continuous in $t$.
     Therefore, let $f\in \CCC_c^+(B).$ As $\eta_0$ and $\eta_1$ are locally integrable point processes and since $f$ is compactly supported, there exists random sets $M_X,N_Y\subset \N$ s.t.\ $f(X_m)=f(Y_n)=0$ for all $m\notin M_X, n\notin N_Y$ and $\E[|M_X|],\E[|N_Y|]<\infty$. Using the definition of $(\eta_t)_{t\in[0,1]},$  we get that 
         \begin{align}
             \int e^{-\int f(x)\xi(\mathrm{d}x)}\Bar{\muP}_t(\mathrm{d}\xi) &=\E\left[e^{-\sum_{n=1}^\infty f(X_n)\ind_{\{\mathcal{T}_{X_n}^t=1\}}- \sum_{n=1}^\infty f(Y_n)\ind_{\{\mathcal{T}_{Y_n}^t=1\}}}\right]\nonumber\\
             &=\E\left[e^{-\sum_{m\in M_X} f(X_m)\ind_{\{\mathcal{T}_{X_m}^t=1\}}- \sum_{n\in N_Y} f(Y_n)\ind_{\{\mathcal{T}_{Y_n}^t=1\}}}\right]\nonumber\\
             &= \E\left[\E\left[e^{-\sum_{m\in M_X} f(X_m)\ind_{\{\mathcal{T}_{X_m}^t=1\}}- \sum_{n\in N_Y} f(Y_n)\ind_{\{\mathcal{T}_{Y_n}^t=1\}}}|(X_n)_{n\in\N},(Y_n)_{n\in\N}\right]\right]\nonumber\\
             &=\E\left[\sum_{J\subseteq M_X, K\subseteq N_Y} t^{(|M_X|-|J|+|K|)}(1-t)^{(|J|+|N_Y|-|K|)}e^{{-\sum_{j\in J} f(X_j)}- \sum_{k\in K} f(Y_k)}\right].\label{eq:4}
         \end{align}
         For the computation of the conditional expectation in the last equality, we have used that  $M_X$ and $N_Y$ are a.s.\ finite and $\tau_{X_m}$ and $\tau_{Y_n}$ are Bernoulli-distributed. Let $(t_n)_{n\in\N}\subset[0,1]$ be a sequence that converges to $t\in[0,1].$ We note that 
         \begin{align}
             &\sum_{J\subseteq M_X, K\subseteq N_Y} t_n^{(|M_X|-|J|+|K|)}(1-t_n)^{(|J|+|N_Y|-|K|)}e^{{-\sum_{j\in J} f(X_j)}- \sum_{k\in K} f(Y_k)}\nonumber\\
             &\quad\leq \sum_{J\subseteq M_X} t_n^{(|M_X|-|J|)}(1-t_n)^{|J|} \sum_{ K\subseteq N_Y} t_n^{|K|}(1-t_n)^{(|N_Y|-|K|)}=1, \label{eq:500}
         \end{align}
         by using the binomial theorem. Therefore, the continuity of the Laplace functional follows by the dominated convergence theorem, i.e.\
         \begin{align*}
             \lim_{n\to\infty}L_{\Bar{\muP}_{t_n}}(f)=&\lim_{n\to\infty}\E\left[\sum_{J\subseteq M_X, K\subseteq N_Y} t_n^{(|M_X|-|J|+|K|)}(1-t_n)^{(|J|+|N_Y|-|K|)}e^{{-\sum_{j\in J} f(X_j)}- \sum_{k\in K} f(Y_k)}\right]\\
             &= \E\left[\sum_{J\subseteq M_X, K\subseteq N_Y} t^{(|M_X|-|J|+|K|)}(1-t)^{(|J|+|N_Y|-|K|)}e^{{-\sum_{j\in J} f(X_j)}- \sum_{k\in K} f(Y_k)}\right]\\
             &=L_{\Bar{\muP}_{t}}(f).
         \end{align*}
        We note that the intensity measure of $\Bar{\dP}_t$ is given by $ \I_{\Bar{\dP}_t}=(1-t)\I_{\dP_0}+t\I_{\dP_1}.$
        Thus, $(\I_{\Bar{\dP}_t})_{t\in[0,1]}$ is vaguely continuous in $t$, which finishes the proof of property $(ii)$.
         
          We continue with the proof of property $(iii)$ and show that the pair $(\Bar{\muP},\Bar{\nuV})$ satisfies the continuity equation. 
         Using the definition of $(\eta_t)_{t\in[0,1]}$  and the  functional monotone class theorem, see e.g.\ \cite[Theorem 14.C.1]{RandomMP}, we find that for all functions $G\in\FFF^+_{b,0}(\Gamma_B\times B)$ it holds that
         \begin{align}\label{eq:2}
             \int G(\xi,z)\Bar{\nuV}_t(\mathrm{d}\xi,\mathrm{d}z)&=  \E\left[\sum_{n=1}^\infty G(\sum_{l=1}^\infty \delta_{X_l}\ind_{\tau_{X_l}=1}+\sum_{l=1, l\not=n}^\infty\delta_{Y_l}\ind_{\tau_{Y_l}=1}, Y_n)\right] \nonumber\\
             &\quad - \E\left[\sum_{n=1}^\infty G(\sum_{l=1,l\not=n}^\infty\delta_{X_l}\ind_{\tau_{X_l}=1}+\sum_{l=1}^\infty\delta_{Y_l}\ind_{\tau_{Y_l}=1},X_n)\right].  
         \end{align}
          Let $f\in\CCC_c^+(B).$  We can deduce that
         \begin{align*}
             \int (e^{-f(z)}-1)e^{-\int f(x)\xi(\mathrm{d}x)}\Bar{\nuV}_t(\mathrm{d}\xi,\mathrm{d}z)
             &= \E\left[\sum_{n=1}^\infty(e^{-f(Y_n)}-1)e^{-\sum_{l=1}^\infty f(X_l)\ind_{\mathcal{T}^t_{X_l=1}}-\sum_{l=1, l\not = n}^\infty f(Y_l)\ind_{\mathcal{T}_{Y_l}^t=1}}\right] \\
             &\quad  - \E\left[\sum_{n=1}^\infty(e^{-f(X_n)}-1)e^{-\sum_{l=1, l\not=n}^\infty f(X_l)\ind_{\mathcal{T}^t_{X_l=1}}-\sum_{l=1}^\infty f(Y_l)\ind_{\mathcal{T}_{Y_l}^t=1}}\right].
             \end{align*}
            Analogously to the proof of property $(ii)$, we get that  the right-hand side of the equation above is equal to
            \begin{align*}
             &\E\left[\sum_{n\in N_Y}(e^{-f(Y_n)}-1)
             \sum_{J\subseteq M_X, K\subseteq N_Y\setminus\{n\}}t^{(|M_X|-|J|+|K|)}(1-t)^{(|J|+|N_Y|-1-|K|)}e^{-\sum_{j\in J}f(X_j)-\sum_{k\in K}f(Y_k)}\right]\\
             &-\E\left[\sum_{m\in M_X}(e^{-f(X_m)}-1)
             \sum_{J\subseteq M_X\setminus \{m\}, K\subseteq N_Y}t^{(|M_X|-|J|+|K|-1)}(1-t)^{(|J|+|N_Y|-|K|)}e^{-\sum_{j\in J}f(X_j)-\sum_{k\in K}f(Y_k)}\right].
             \end{align*} 
             Rewriting the last expression yields
             \begin{align}
            & \quad \E\left[\sum_{n\in N_Y}e^{-f(Y_n)}
             \sum_{J\subseteq M_X, K\subseteq N_Y\setminus\{n\}}t^{(|M_X|-|J|+|K|)}(1-t)^{(|J|+|N_Y|-1-|K|)}e^{-\sum_{j\in J}f(X_j)-\sum_{k\in K}f(Y_k)}\right]\nonumber\\
             &\quad  -\E\left[\sum_{n\in N_Y}
             \sum_{J\subseteq M_X, K\subseteq N_Y\setminus\{n\}}t^{(|M_X|-|J|+|K|)}(1-t)^{(|J|+|N_Y|-1-|K|)}e^{-\sum_{j\in J}f(X_j)-\sum_{k\in K}f(Y_k)}\right]\nonumber\\
             & \quad  -\E\left[\sum_{m\in M_X}e^{-f(X_m)}
             \sum_{J\subseteq M_X\setminus \{m\}, K\subseteq N_Y}t^{(|M_X|-|J|+|K|-1)}(1-t)^{(|J|+|N_Y|-|K|)}e^{-\sum_{j\in J}f(X_j)-\sum_{k\in K}f(Y_k)}\right]\nonumber\\
             &\quad +\E\left[\sum_{m\in M_X}
             \sum_{J\subseteq M_X\setminus \{m\}, K\subseteq N_Y}t^{(|M_X|-|J|+|K|)}(1-t)^{(|J|+|N_Y|-1-|K|)}e^{-\sum_{j\in J}f(X_j)-\sum_{k\in K}f(Y_k)}\right].\nonumber\end{align}
            Combining the first and the second summand and the third and the fourth summand by combinatorial considerations, we get 
            \begin{align}
            & \E\left[
             \sum_{J\subseteq M_X, K\subseteq N_Y, K\not=\emptyset} |K|t^{(|M_X|-|J|+|K|-1)}(1-t)^{(|J|+|N_Y|-|K|)}e^{-\sum_{j\in J}f(X_j)-\sum_{k\in K}f(Y_k)}\right]\nonumber\\
             &\quad  -\E\left[
             \sum_{J\subseteq M_X, K\subseteq N_Y, K\not = N_Y }(|N_Y|-|K|)t^{(|M_X|-|J|+|K|)}(1-t)^{(|J|+|N_Y|-1-|K|)}e^{-\sum_{j\in J}f(X_j)-\sum_{k\in K}f(Y_k)}\right]\nonumber\\
             &\quad-  \E\left[\sum_{J\subseteq M_X, K\subseteq N_Y, J\not =\emptyset}
             |J|t^{(|M_X|-|J|+|K|)}(1-t)^{(|J|+|N_Y|-|K|-1)}e^{-\sum_{j\in J}f(X_j)-\sum_{k\in K}f(Y_k)}\right]\nonumber\\
             &\quad + \E\left[
             \sum_{J\subseteq M_X, K\subseteq N_Y, J\not = M_X}(|M_X|-|J|)t^{(|M_X|-|J|+|K|-1)}(1-t)^{(|J|+|N_Y|-|K|)}e^{-\sum_{j\in J}f(X_j)-\sum_{k\in K}f(Y_k)}\right].\nonumber
            \end{align}
        Finally, we note that the summands for   $K=\emptyset$ in the first line, $K=N_Y$ in the second line, $J =\emptyset$ in the third line, and $J =M_X$ in the fourth line are zero. All in all, we find that 
        \begin{align}
            \nonumber\int (e^{-f(z)}-1)&e^{-\int f(x)\xi(\mathrm{d}x)}\Bar{\nuV}_t(\mathrm{d}\xi,\mathrm{d}z)\\
            &\quad\quad= \E\Bigg[\sum_{J\subseteq M_X, K\subseteq N_Y}((|M_X|-|J|+|K|)(1-t)-  (|J|+|N_Y|-|K|)t)\nonumber\\
             &\quad\quad\quad\quad\quad\quad \cdot t^{(|M_X|-|J|+|K|-1)}(1-t)^{(|J|+|N_Y|-|K|-1)}e^{{-\sum_{j\in J} f(X_j)} - \sum_{k\in K} f(Y_k)}\Bigg]. \label{eq:6}
        \end{align}
      On the other hand, by equation \eqref{eq:4} it holds that 
      \begin{align*}
          \int e^{-\int f(x)\xi(\mathrm{d}x)}\Bar{\muP}_t(\mathrm{d}\xi) =\E\left[\sum_{J\subseteq M_X, K\subseteq N_Y} t^{(|M_X|-|J|+|K|)}(1-t)^{(|J|+|N_Y|-|K|)}e^{{-\sum_{j\in J} f(X_j)}- \sum_{k\in K} f(Y_k)}\right].
      \end{align*}
      We want to differentiate the term w.r.t.\ $t$. The term inside the last expectation is bounded by $1$ by \eqref{eq:500}. Therefore, interchanging the integration and the differentiation w.r.t.\ $t$ is allowed, and we get
         \begin{align}
             &\dt \int e^{-\int f(x)\xi(\mathrm{d}x)}\Bar{\muP}_t(\mathrm{d}\xi)\nonumber\\
             &= \E[\sum_{J\subseteq M_X, K\subseteq N_Y} (|M_X|-|J|+|K|)t^{(|M_X|-|J|+|K|-1)}(1-t)^{(|J|+|N_Y|-|K|)}e^{{-\sum_{j\in J} f(X_j)}- \sum_{k\in K}^\infty f(Y_k)}\nonumber\\
             &\quad- (|J|+|N_Y|-|K|) t^{(|M_X|-|J|+|K|)}(1-t)^{(|J|+|N_Y|-|K|-1)}e^{{-\sum_{j\in J} f(X_j)}- \sum_{k\in K} f(Y_k)}]\nonumber\\
             &= \E\Big[\sum_{J\subseteq M_X, K\subseteq N_Y}((|M_X|-|J|+|K|)(1-t)-(|J|+|N_Y|-|K|)t )\nonumber\\
             &\quad\quad\quad\quad\quad\quad \cdot t^{(|M_X|-|J|+|K|-1)}(1-t)^{(|J|+|N_Y|-|K|-1)}e^{{-\sum_{j\in J} f(X_j)}- \sum_{k\in K} f(Y_k)}\Big].\label{eq:3}
         \end{align}
      Comparing both expressions, \eqref{eq:6} and \eqref{eq:3}, and using partial integration, we can deduce that $({\Bar{\muP}},{\Bar{\nuV}})$ solves the continuity equation.
      
     Finally, it remains to prove that property $(iv)$ is satisfied. Using equation \eqref{eq:2}, we obtain that \begin{align*}
             |\Bar{\nuV}_t(\A\times B)|&\leq  \E\left[\sum_{n=1}^\infty \ind_B(Y_n) \ind_\A(\sum_{l=1}^\infty \delta_{X_l}\ind_{\tau_{X_l}=1}+\sum_{l=1, l\not=n}^\infty\delta_{Y_l}\ind_{\tau_{Y_l}=1})\right] \\
             &\quad + \E\left[\sum_{n=1}^\infty \ind_B(X_n)\ind_\A(\sum_{l=1,l\not=n}^\infty\delta_{X_l}\ind_{\tau_{X_l}=1}+\sum_{l=1}^\infty\delta_{Y_l}\ind_{\tau_{Y_l}=1})\right] \\
             &\leq \E\left[\sum_{n=1}^\infty\ind_B(X_n)\right] + \E\left[\sum_{n=1}^\infty\ind_B(Y_n)\right].
         \end{align*} The last quantity is finite as $\eta_0$ and $\eta_1$ are assumed to be locally integrable, which finishes the proof.\end{proof}     
  \begin{bem}
      The solution constructed above is not the only possible one. We consider again two independent point processes $\eta_0$ and $\eta_1$ with distributions $\dP_0,\dP_1 \in \PPP_1(\Gamma_B)$, respectively. Let $\Bar{\dP}_t$ be the distribution of a point process ${\eta}_t$ constructed in the following way.  Choose an arbitrary matching between the two point processes $\eta_0$ and $\eta_1$ and select for any pair of points either the point of $\eta_0$ with probability $1-t$ or the point of $\eta_1$ with probability $t$. Then it is possible to construct a velocity field in a similar form as in Theorem \ref{thm:solution} s.t. $(\Bar{\dP},\Bar{\nuV})\in\CCC\EEE^B(\dP_0,\dP_1).$
 \end{bem}       
 \subsection{Properties of the Continuity Equation}
 In this section, we discuss various properties of the continuity equation. We start with the study of the behaviour of global solutions when restricting them to smaller domains.
Let $B\in\BBB(\R^d)$ be Polish and recall that the restriction of 
$\dP\in\PPP_1(\Gamma)$  to $\Gamma_B$ is denoted by $\dP_{|B}.$
We define $pr_{\Gamma_B}:\Gamma\times\R^d\to\Gamma_B\times \R^d$ by $pr_{\Gamma_B}(\xi,x):=(\xi_{|B},x).$ The restriction of a measure 
    ${\nuV}\in\mathcal{M}_{b,0}(\Gamma\times \R^d)$ to 
    $\Gamma_B\times B$ is defined by { $${\nuV}_{|B}:=\ind_B(x){(pr_{\Gamma_B})}_\#{\nuV}.$$}
\begin{lem} \label{lem:restriction}
      Let $({\Bar{\muP}},{\Bar{\nuV}})\in\CCC\EEE(\dP_0,\dP_1)$ and $B\in\BBB(\R^
      d)$ be Polish. Then $({\Bar{\muP}}_{|B},{\Bar{\nuV}}_{|B})\in\CCC\EEE^{B}({\dP_0}_{|B},{\dP_1}_{|B})$.
\end{lem}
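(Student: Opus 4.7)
The plan is to verify the four defining conditions $(i)$--$(iv)$ of Definition \ref{dfn:ce} for the restricted pair $(\Bar{\muP}_{|B}, \Bar{\nuV}_{|B})$. The endpoint condition $(i)$ is immediate, since the pushforward under $pr_B$ commutes with evaluation at $t=0,1$. The local-integrability condition $(iv)$ follows at once from the estimate $|\Bar{\nuV}_{t|B}|(\Gamma_B \times D) \leq |\Bar{\nuV}_t|(\Gamma \times D)$ for every $D \in \BBB_0(B) \subseteq \BBB_0(\R^d)$, together with condition $(iv)$ for the global solution.

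The heart of the proof is condition $(iii)$, the continuity equation on $B$. The idea is to test the global continuity equation against the functional $F : \Gamma \to \R$ defined by
\begin{equation*}
    F(\xi) := e^{-\int f(x)\, \xi_{|B}(\mathrm{d}x)}
\end{equation*}
for an arbitrary $f \in \CCC_c^+(B)$. A direct computation, using that $(\xi+\delta_z)_{|B} = \xi_{|B} + \ind_B(z)\delta_z$, gives
\begin{equation*}
    D_z F(\xi) = \ind_B(z) \cdot e^{-\int f(x)\, \xi_{|B}(\mathrm{d}x)} \bigl(e^{-f(z)} - 1\bigr),
\end{equation*}
which is bounded and vanishes outside $\Gamma \times \supp(f) \subseteq \Gamma \times B$; hence $F \in \FFF_b(\Gamma)$ and $DF \in \FFF_{b,0}(\Gamma \times \R^d)$. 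Applying Theorem \ref{thm:propertiesce} $(iii)$ to the global solution $(\Bar{\muP}, \Bar{\nuV})$ with this $F$, and then rewriting both integrals via the pushforward $\Bar{\muP}_{t|B} = (pr_B)_\# \Bar{\muP}_t$ and the defining identity $\Bar{\nuV}_{|B} = \ind_B \cdot (pr_{\Gamma_B})_\# \Bar{\nuV}$, yields precisely the continuity equation on $B$ tested against $e^{-\int f\, \mathrm{d}\xi}$ with $f \in \CCC_c^+(B)$.

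It remains to verify condition $(ii)$: the $\PPP_1(\Gamma_B)$-continuity of $t \mapsto \Bar{\muP}_{t|B}$. By Proposition \ref{prop:convergencep1}, it suffices to check continuity of the Laplace functional $t \mapsto L_{\Bar{\muP}_{t|B}}(f)$ for every $f \in \CCC_c^+(B)$ together with vague continuity of the intensity measures $t \mapsto \I_{\Bar{\muP}_{t|B}}$. The first follows immediately from the identity derived in the previous step: its distributional time-derivative equals $-\int D_z F \, \mathrm{d}\Bar{\nuV}_{t|B}$, which belongs to $L^1((0,1))$ by $(iv)$, making $t \mapsto L_{\Bar{\muP}_{t|B}}(f)$ absolutely continuous. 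The intensity-measure condition follows from $\I_{\Bar{\muP}_{t|B}}(D) = \I_{\Bar{\muP}_t}(D)$ for $D \in \BBB(B)$, transferring the vague continuity from $\R^d$ to $B$. The main subtle point, and the reason this indirect route via the continuity equation is needed, is that compactly supported continuous test functions on a general Polish $B$ do not in general admit continuous extensions to $\R^d$, so one cannot naively transfer vague convergence by extending test functions; the distributional identity established in the previous paragraph bypasses this issue entirely.
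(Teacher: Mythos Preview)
Your approach matches the paper's: verify $(i)$ and $(iv)$ directly, obtain $(iii)$ by testing the global equation against $F(\xi)=e^{-\int f\,d\xi_{|B}}$ via Theorem~\ref{thm:propertiesce}\,$(iii)$, and appeal to Proposition~\ref{prop:convergencep1} for $(ii)$; the paper is equally terse on this last step. There is, however, a gap in your handling of the intensity measure. Your Laplace-functional argument is correct --- the distributional identity from $(iii)$ gives absolute continuity of $t\mapsto L_{\Bar{\muP}_{t|B}}(f)$ --- but for $t\mapsto\I_{\Bar{\muP}_{t|B}}$ you first claim vague continuity ``transfers from $\R^d$ to $B$,'' then correctly note that $\CCC_c(B)$-functions need not extend to $\CCC_c(\R^d)$, and finally assert that the distributional identity ``bypasses this issue entirely.'' It does so only for the Laplace functional: the identity you derived tests against exponentials, not against the linear functionals $\xi\mapsto\int g\,d\xi$ whose expectations are $\int g\,d\I_{\Bar{\muP}_{t|B}}$. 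As written, nothing establishes continuity of $t\mapsto\int g\,d\I_{\Bar{\muP}_{t|B}}$ for $g\in\CCC_c(B)$.

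The repair is short: either replace $f$ by $\eps g$ in the continuity equation on $B$ and let $\eps\to 0$ (dominated convergence, using $(iv)$ on the $\Bar{\nuV}$-side and the bound $\int g\,d\I_{\Bar{\muP}_{t|B}}\le\|g\|_\infty\int h\,d\I_{\Bar{\muP}_t}$ for any $h\in\CCC_c(\R^d)$ with $h\ge\ind_{\supp g}$ on the $\Bar{\muP}$-side), or apply Theorem~\ref{thm:propertiesce}\,$(iii)$ directly to the truncations $F_N(\xi)=\min\bigl(\int g\,d\xi_{|B},N\bigr)$ and let $N\to\infty$. Either route yields a distributional identity for $t\mapsto\int g\,d\I_{\Bar{\muP}_{t|B}}$, hence its absolute continuity.
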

\begin{proof} To prove the claim, we need to verify that properties $(i)$ - $(iv)$ of Definition \ref{dfn:ce} are satisfied for $({\Bar{\muP}}_{|B},{\Bar{\nuV}}_{|B})$.
Properties $(i)$ and $(iv)$ are  satisfied by the definition of $({\Bar{\muP}}_{|B},{\Bar{\nuV}}_{|B})$. Property $(ii)$, the $\PPP_1(\Gamma_B)$-continuity of $(\Bar{\dP}_t)_{t\in[0,1]}$, follows from the standard approach using Proposition \ref{prop:convergencep1}.
It only remains to show that $({\Bar{\muP}}_{|B},{\Bar{\nuV}}_{|B})$ solves the continuity equation on $B.$ Therefore, 
   let $\phi\in\CCC_c^\infty((0,1))$ and $f\in\CCC_c^+(B).$  It holds that 
    \begin{align*}
        \int_0^1\dt\phi(t)\int e^{-\int f(x)\xi(\mathrm{d}x)}{\Bar{\muP}_{t|B}}(\mathrm{d}\xi)\mathrm{d}t &= \int_0^1\dt\phi(t)\int e^{-\int f(x)\xi_{|B}(\mathrm{d}x)}\Bar{\muP}_t(\mathrm{d}\xi)\mathrm{d}t\nonumber\\\
        &=  \int_0^1\dt\phi(t)\int e^{-\int f(x)\ind_B(x)\xi
        (\mathrm{d}x)}\Bar{\muP}_t(\mathrm{d}\xi)\mathrm{d}t.
        \end{align*}
         Note that the continuity equation on $\R^d$ is not directly applicable for $f\ind_B$ as it is not a continuous function on $\R^d$. However, since $D_z e^{-\int f(x)\ind_B(x)\xi
        (\mathrm{d}x)}\in \FFF_{b,0}(\Gamma_B\times B),$ the continuity equation still holds for $f\ind_B$ by Theorem \ref{thm:propertiesce} $(ii)$. This implies that
        \begin{align*}
            \int_0^1\dt\phi(t)\int e^{-\int f(x)\ind_B(x)\xi
        (\mathrm{d}x)}\Bar{\muP}_t(\mathrm{d}\xi)\mathrm{d}t&=-\int_0^1 \phi(t)\int D_z e^{-\int f(x)\ind_B(x)\xi
        (\mathrm{d}x)}\Bar{\nuV}_t(\mathrm{d}\xi,\mathrm{d}z)\mathrm{d}t\\
        &= -\int_0^1 \phi(t)\int (e^{-f(z)\ind_B(z)}-1) e^{-\int f(x)\ind_B(x)\xi
        (\mathrm{d}x)}\Bar{\nuV}_t(\mathrm{d}\xi,\mathrm{d}z)\mathrm{d}t\nonumber\\
        &= -\int_0^1 \phi(t)\int (e^{-f(z)}-1) e^{-\int f(x)\ind_B(x)\xi
        (\mathrm{d}x)}\ind_B(z)\Bar{\nuV}_t(\mathrm{d}\xi,\mathrm{d}z)\mathrm{d}t\nonumber\\
        &= -\int_0^1 \phi(t)\int D_z e^{-\int f(x)\xi
        (\mathrm{d}x)}{\Bar{\nuV}_{t|B}}(\mathrm{d}\xi,\mathrm{d}z)\mathrm{d}t.
    \end{align*}
   Combining both equations finishes the proof.
\end{proof}
The following lemma deals with the superposition of solutions to the continuity equation.
\begin{lem}\label{lem:superpositionofsolution}
   Let $B,D\in\BBB(\R^d)$ be disjoint Polish sets. Let  $\dP_0,\dP_1\in\PPP_1(\Gamma_{B\cup D})$  s.t.\ $\dP_0={\dP_0}_{|B}\otimes{\dP_0}_{|D}$ and $\dP_1={\dP_1}_{|B}\otimes{\dP_1}_{|D}$. Further, let $({\Bar{\muP}}^B,{\Bar{\nuV}}^B)\in\CCC\EEE^B({\dP_0}_{|B},{\dP_1}_{|B})$ and $({\Bar{\muP}}^D,{\Bar{\nuV}}^D)\in\CCC\EEE^D({\dP_0}_{|D},{\dP_1}_{|D}).$ For all $t\in[0,1]$ define  $\Bar{\muP}_t$ by 
   \begin{align*}
        \int_{\Gamma_{B\cup D}} F(\xi)\Bar{\muP}_t(\mathrm{d}\xi)=\int_{\Gamma_B}\int_{\Gamma_D} F(\xi_1+\xi_2)\Bar{\muP}_t^B(\mathrm{d}\xi_1)\Bar{\muP}_t^D (\mathrm{d}\xi_2)
    \end{align*}
   for all $F\in\CCC_1(\Gamma_{B\cup D}).$ Further, define $\Bar{\nuV}_t$ by
    \begin{align}
        &\int_{\Gamma_{B\cup D}\times (B\cup D)}G(\xi,x)\Bar{\nuV}_t(\mathrm{d}\xi,\mathrm{d} x)\nonumber\\
        &\quad=\int_{\Gamma_D}\int_{\Gamma_{ B}\times B}G(\xi_1+\xi_2,x_1)\Bar{\nuV}_t^B(\mathrm{d}\xi_1, \mathrm{d}x_1)\Bar{\muP}_t^D(\mathrm{d}\xi_2)+ \int_{\Gamma_B}\int_{\Gamma_{ D}\times D}G(\xi_1+\xi_2,x_2)\Bar{\nuV}_t^D(\mathrm{d}\xi_2, \mathrm{d}x_2)\Bar{\muP}_t^B(\mathrm{d}\xi_1)\label{eq:300}
    \end{align}
    for all $G\in\CCC_{b,0}(\Gamma_{B\cup D}\times (B\cup D))$.
    Then $({\Bar{\muP}},{\Bar{\nuV}})\in\CCC\EEE^{B\cup D}(\dP_0,\dP_1)$.
\end{lem}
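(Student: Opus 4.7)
The plan is to verify the four defining properties of Definition \ref{dfn:ce} for $(\Bar{\muP},\Bar{\nuV})$ on $B\cup D$. Properties $(i)$ and $(iv)$ are routine: $(i)$ follows immediately from the product structure of $\dP_0,\dP_1$ together with the endpoint conditions of the two given solutions, while $(iv)$ follows by splitting any $K\in\BBB_0(B\cup D)$ into $K\cap B$ and $K\cap D$ and using $(iv)$ for $\Bar{\nuV}^B$ and $\Bar{\nuV}^D$ separately (note that $|\Bar{\nuV}_t|\leq |\Bar{\nuV}_t^B|\otimes\Bar{\muP}_t^D +\Bar{\muP}_t^B\otimes|\Bar{\nuV}_t^D|$ in the obvious sense from \eqref{eq:300}). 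For property $(ii)$, Proposition \ref{prop:convergencep1} reduces continuity to that of Laplace functionals and of intensity measures. Since $B$ and $D$ are disjoint and $f\in\CCC_c^+(B\cup D)$ decomposes as $f_B+f_D$ with $f_B:=f\ind_B$, $f_D:=f\ind_D$, the product structure of $\Bar{\muP}_t$ yields $L_{\Bar{\muP}_t}(f)=L_{\Bar{\muP}_t^B}(f_B)\,L_{\Bar{\muP}_t^D}(f_D)$ and $\I_{\Bar{\muP}_t}=\I_{\Bar{\muP}_t^B}+\I_{\Bar{\muP}_t^D}$, and continuity in $t$ follows from the corresponding continuity of the two factors.

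The heart of the argument is property $(iii)$. Fix $f\in\CCC_c^+(B\cup D)$ and $\phi\in\CCC_c^\infty((0,1))$ and introduce the bookkeeping
\begin{align*}
g_B(t)&:=\int e^{-\int f_B\,\mathrm{d}\xi_1}\Bar{\muP}_t^B(\mathrm{d}\xi_1), & h_B(t)&:=\int D_{x_1}e^{-\int f_B\,\mathrm{d}\xi_1}\Bar{\nuV}_t^B(\mathrm{d}\xi_1,\mathrm{d}x_1),\\
g_D(t)&:=\int e^{-\int f_D\,\mathrm{d}\xi_2}\Bar{\muP}_t^D(\mathrm{d}\xi_2), & h_D(t)&:=\int D_{x_2}e^{-\int f_D\,\mathrm{d}\xi_2}\Bar{\nuV}_t^D(\mathrm{d}\xi_2,\mathrm{d}x_2).
\end{align*}
Because $B$ and $D$ are disjoint and $\Bar{\muP}_t$ is the product of $\Bar{\muP}_t^B$ and $\Bar{\muP}_t^D$, the factorisation $e^{-\int f\,\mathrm{d}\xi}=e^{-\int f_B\,\mathrm{d}\xi|_B}\cdot e^{-\int f_D\,\mathrm{d}\xi|_D}$ yields $\int e^{-\int f\,\mathrm{d}\xi}\Bar{\muP}_t(\mathrm{d}\xi)=g_B(t)g_D(t)$, and splitting the velocity field according to whether the new point $z$ lies in $B$ or $D$ together with the definition \eqref{eq:300} gives $\int D_ze^{-\int f\,\mathrm{d}\xi}\Bar{\nuV}_t(\mathrm{d}\xi,\mathrm{d}z)=h_B(t)g_D(t)+g_B(t)h_D(t)$.

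Although $f_B$ and $f_D$ need not belong to $\CCC_c^+(B)$ or $\CCC_c^+(D)$ (since $B,D$ may fail to be closed in $B\cup D$), the functionals $\xi\mapsto e^{-\int f_B\,\mathrm{d}\xi}$ are bounded and measurable and their discrete derivatives vanish outside $\Gamma_B\times(\operatorname{supp}f\cap B)$, a set in $\BBB_0(B)$. Hence Theorem \ref{thm:propertiesce} $(iii)$ applies to $(\Bar{\muP}^B,\Bar{\nuV}^B)$ and to $(\Bar{\muP}^D,\Bar{\nuV}^D)$, giving for every $\psi\in\CCC_c^\infty((0,1))$
\begin{equation*}
\int_0^1\dt \psi(t)\,g_B(t)\,\mathrm{d}t+\int_0^1\psi(t)\,h_B(t)\,\mathrm{d}t=0,\qquad \int_0^1\dt \psi(t)\,g_D(t)\,\mathrm{d}t+\int_0^1\psi(t)\,h_D(t)\,\mathrm{d}t=0.
\end{equation*}
Property $(iv)$ for $\Bar{\nuV}^B$, $\Bar{\nuV}^D$ ensures $h_B,h_D\in L^1([0,1])$, so $g_B$ and $g_D$ are absolutely continuous on $[0,1]$ with weak derivatives $g_B'=h_B$ and $g_D'=h_D$. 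Since both are bounded by $1$, the product rule for absolutely continuous functions gives $(g_Bg_D)'=h_Bg_D+g_Bh_D$, and integration by parts against $\phi$ yields exactly the continuity equation for $(\Bar{\muP},\Bar{\nuV})$ tested against $f$ and $\phi$. The only mildly delicate point is this passage through Theorem \ref{thm:propertiesce} $(iii)$ to legalise the non-continuous test functions $f_B,f_D$; once that is in place the product rule closes the argument.
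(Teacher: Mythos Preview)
Your proposal is correct and follows essentially the same route as the paper: factorise the Laplace functional via the product structure of $\Bar{\muP}_t$, identify the weak derivatives of the two factors through the continuity equations on $B$ and $D$, and then apply the product rule for weak derivatives. The paper's proof is terser and simply invokes ``the product rule for weak derivatives'' without isolating the auxiliary functions $g_B,g_D,h_B,h_D$.

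One point worth noting: you are actually more careful than the paper on the issue of test functions. The paper silently uses that the continuity equations on $B$ and $D$ hold against $f|_B$ and $f|_D$, whereas you correctly flag that these restrictions need not lie in $\CCC_c^+(B)$ or $\CCC_c^+(D)$ and appeal to Theorem~\ref{thm:propertiesce}~$(iii)$ to justify this---precisely the device the paper itself uses in the proof of Lemma~\ref{lem:restriction}. This is the right fix and makes your argument slightly more complete than the paper's own exposition at this spot.
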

\begin{proof}
We show that properties $(i)$ - $(iv)$ of Definition \ref{dfn:ce} are satisfied for $({\Bar{\muP}},{\Bar{\nuV}})$.
Properties $(i)$ and $(iv)$ are satisfied by the definition of $({\Bar{\muP}},{\Bar{\nuV}})$. Property $(ii)$, the $\PPP_1(\Gamma_{B\cup D})$-continuity of $\Bar{\dP}$, follows from the standard approach using Proposition \ref{prop:convergencep1}. It only remains to show that $({\Bar{\muP}},{\Bar{\nuV}})$ solves the continuity equation on $B\cup D.$ Therefore, 
   let $\phi\in\CCC_c^\infty((0,1))$ and $f\in\CCC_c^+(B\cup D).$ We note  that 
   \begin{align*}
       \int_{0}^1 \dt \phi(t)\int_{\Gamma_{B\cup D}} &e^{-\int_{B\cup D}f(x)\xi(\mathrm{d}x)}\Bar{\muP}_t(\mathrm{d}\xi)\mathrm{d}t\\
   &=
       \int_{0}^1 \dt \phi(t)\int_{\Gamma_{D}}\int_{\Gamma_{B}} e^{-\int_{B\cup D}f(x)(\xi_1+\xi_2)(\mathrm{d}x)}\Bar{\muP}_t^B(\mathrm{d}\xi_1)\Bar{\muP}_t^D(\mathrm{d}\xi_2)\mathrm{d}t
       \\ &=  \int_{0}^1 \dt \phi(t)\int_{\Gamma_{B}} e^{-\int_{B} f(x)\xi_1(\mathrm{d}x)}\Bar{\muP}_t^B(\mathrm{d}\xi_1) \int_{\Gamma_{D}} e^{-\int_{D}f(x)\xi_2(\mathrm{d}x)}\Bar{\muP}_t^D(\mathrm{d}\xi_2)\mathrm{d}t.
   \end{align*}
   By using the product rule for weak derivatives, we get 
   \begin{align*}
    &  \int_{0}^1 \dt \phi(t)\int_{\Gamma_{B}} e^{-\int_{B} f(x)\xi_1(\mathrm{d}x)}\Bar{\muP}_t^B(\mathrm{d}\xi_1) \int_{\Gamma_{D}} e^{-\int_{D}f(x)\xi_2(\mathrm{d}x)}\Bar{\muP}_t^D(\mathrm{d}\xi_2)\mathrm{d}t\\
       &=  -\int_{0}^1  \phi(t)\int_{\Gamma_{B}\times B} D_{z_1} e^{-\int_{B} f(x)\xi_1(\mathrm{d}x)}\Bar{\nuV}_t^B(\mathrm{d}\xi_1, \mathrm{d}z_1) \int_{\Gamma_{D}} e^{-\int_{D}f(x)\xi_2(\mathrm{d}x)}\Bar{\muP}_t^D(\mathrm{d}\xi_2)\mathrm{d}t\\
       & \quad -  \int_{0}^1  \phi(t)\int_{\Gamma_{B}} e^{-\int_{B} f(x)\xi_1(\mathrm{d}x)}\Bar{\muP}_t^B(\mathrm{d}\xi_1) \int_{\Gamma_{D}\times D} D_{z_2} e^{-\int_{D}f(x)\xi_2(\mathrm{d}x)}\Bar{\nuV}_t^D(\mathrm{d}\xi_2, \mathrm{d}z_2)\mathrm{d}t\\
       &=- \int_{0}^1  \phi(t)\int_{\Gamma_D}\int_{\Gamma_{B}\times B} D_{z_1} e^{-\int_{B\cup D} f(x)\xi_1+ \xi_2(\mathrm{d}x)}\Bar{\nuV}_t^B(\mathrm{d}\xi_1, \mathrm{d}z_1) \Bar{\muP}_t^D(\mathrm{d}\xi_2)\mathrm{d}t\\
       &\quad -  \int_{0}^1  \phi(t)\int_{\Gamma_B}\int_{\Gamma_{D}\times D} D_{z_2} e^{-\int_{B\cup D} f(x)\xi_1+ \xi_2(\mathrm{d}x)} \Bar{\nuV}_t^D(\mathrm{d}\xi_2, \mathrm{d}z_2) \Bar{\muP}_t^B(\mathrm{d}\xi_1)\mathrm{d}t\\
       &=-\int_0^1\phi(t)\int_{\Gamma_{B\cup D}\times (B\cup D)}D_z e^{-\int_{B\cup D}f(x)\xi(\mathrm{d}x)}\Bar{\nuV}_t(\mathrm{d}\xi,\mathrm{d}z)\mathrm{d}t.
   \end{align*}
   Combining both equations finishes the proof.
\end{proof}
\subsection{Stationary Solutions to the Continuity Equation}
When considering $\dP_0,\dP_1\in\PPP_s(\Gamma)$, of particular interest are solutions $(\Bar{\dP},\Bar{\nuV})\in\CCC\EEE(\dP_0,\dP_1)$ s.t.\ the stationarity of $\dP_0$ and $\dP_1$ is preserved along the curve $\Bar{\dP}.$ In the following, we prove that this is satisfied, if the velocity fields $\Bar{\nuV}$ are stationary.
\begin{dfn}
Let $z\in\R^d$ and define $\Theta_z^{\Gamma\times \R^d}:\Gamma\times\R^d\to\Gamma\times\R^d$ by
  \begin{align}
        \Theta_z^{\Gamma\times \R^d}(\xi,x):=(\Theta_z\xi,x-z).\label{eq:502}
        \end{align}
A measure $\nuV\in\MMM_{b,0}(\Gamma\times \R^d)$ is stationary, if $ (\Theta_z^{\Gamma\times \R^d})_\#\nuV=\nuV$ for all $z\in\R^d$. The set of all such measures is denoted by $\MMM_s(\Gamma\times\R^d)$
\end{dfn}
\begin{lem}\label{lem:shiftinvarianceproperty}
    Let $\dP_0,\dP_1\in\PPP_s(\Gamma)$ and $({\Bar{\muP}},{\Bar{\nuV}})\in\CCC\EEE(\dP_0,\dP_1).$ If $\Bar{\nuV}_t\in\MMM_s(\Gamma\times \R)$ for all $t\in[0,1],$
then $\Bar{\muP}_t\in\PPP_s(\Gamma)$ for all $t\in[0,1]$. 
\end{lem}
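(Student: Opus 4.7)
The plan is to exploit the continuity equation by plugging in shifted test functions and comparing with the unshifted version, using the stationarity of $\bar{\mathbf{V}}_t$ as the key cancellation. By Proposition \ref{prop:convergencep1} it suffices to show that the Laplace functional $f \mapsto L_{\bar{\mathbf{P}}_t}(f)$ is invariant under the spatial shift $f \mapsto f_z := f(\cdot - z)$ for every $z \in \mathbb{R}^d$ and every $f \in \mathcal{C}_c^+(\mathbb{R}^d)$ (the intensity measure part of Proposition \ref{prop:convergencep1} follows similarly, or from the fact that Laplace functionals determine distributions of simple point processes).

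First, I would unpack how the shift acts on the relevant integrals. Since $\Theta_z \xi = \sum_i \delta_{x_i - z}$ for $\xi = \sum_i \delta_{x_i}$, we have $\int f(y) (\Theta_z \xi)(\mathrm{d}y) = \int f_z(x) \xi(\mathrm{d}x)$, which gives
\begin{align*}
\int e^{-\int f_z(x) \xi(\mathrm{d}x)} \bar{\mathbf{P}}_t(\mathrm{d}\xi) = L_{(\Theta_z)_\# \bar{\mathbf{P}}_t}(f).
\end{align*}
Likewise, setting $G(\eta,u) := D_u e^{-\int f(y)\eta(\mathrm{d}y)}$, a direct computation shows
\begin{align*}
D_w e^{-\int f_z(x)\xi(\mathrm{d}x)} = G\bigl(\Theta_z^{\Gamma\times\mathbb{R}^d}(\xi,w)\bigr),
\end{align*}
with $\Theta_z^{\Gamma\times\mathbb{R}^d}$ as in \eqref{eq:502}. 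Applying the change-of-variables formula and the assumed stationarity of $\bar{\mathbf{V}}_t$ then yields
\begin{align*}
\int D_w e^{-\int f_z(x)\xi(\mathrm{d}x)} \bar{\mathbf{V}}_t(\mathrm{d}\xi,\mathrm{d}w) = \int G \, \mathrm{d}\bar{\mathbf{V}}_t = \int D_u e^{-\int f(y)\eta(\mathrm{d}y)} \bar{\mathbf{V}}_t(\mathrm{d}\eta,\mathrm{d}u).
\end{align*}

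Now I would write the continuity equation \eqref{eq:continuityequation} twice, once with test function $f$ and once with $f_z$, for arbitrary $\phi \in \mathcal{C}_c^\infty((0,1))$. The velocity-field terms coincide by the computation above, so subtracting the two identities leaves
\begin{align*}
\int_0^1 \dt\phi(t) \bigl[ L_{(\Theta_z)_\# \bar{\mathbf{P}}_t}(f) - L_{\bar{\mathbf{P}}_t}(f) \bigr] \mathrm{d}t = 0.
\end{align*}
By the fundamental lemma of calculus of variations, the bracket is constant in $t$ almost everywhere on $(0,1)$. The $\mathcal{P}_1(\Gamma)$-continuity $(ii)$ of $\bar{\mathbf{P}}$ and of $t \mapsto (\Theta_z)_\#\bar{\mathbf{P}}_t$ (the shift is continuous in $\mathcal{P}_1(\Gamma)$) combined with Proposition \ref{prop:convergencep1} upgrades this to continuity of the bracket on $[0,1]$, so it is globally constant. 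The boundary condition $\bar{\mathbf{P}}_0 = \mathbf{P}_0$ together with the stationarity of $\mathbf{P}_0$ forces the constant to be zero, hence $L_{(\Theta_z)_\# \bar{\mathbf{P}}_t}(f) = L_{\bar{\mathbf{P}}_t}(f)$ for all $t, f, z$. Since Laplace functionals uniquely determine distributions of simple point processes, this gives $(\Theta_z)_\# \bar{\mathbf{P}}_t = \bar{\mathbf{P}}_t$, i.e.\ $\bar{\mathbf{P}}_t \in \mathcal{P}_s(\Gamma)$.

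The main technical point that requires care is the change-of-variables step: one must verify that the function $G$ above genuinely pulls back the shifted discrete difference operator under $\Theta_z^{\Gamma\times\mathbb{R}^d}$, so that the assumed stationarity of $\bar{\mathbf{V}}_t$ can be invoked. Once this algebraic identity is in place, the rest of the argument is the standard ``distributional derivative equals zero plus continuity plus boundary condition'' reasoning.
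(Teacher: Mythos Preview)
Your proof is correct and follows essentially the same approach as the paper: use stationarity of $\bar{\mathbf{V}}_t$ to show the velocity-field terms in the continuity equation coincide for $f$ and $f_z$, deduce that $t\mapsto L_{\bar{\mathbf{P}}_t}(f_z)-L_{\bar{\mathbf{P}}_t}(f)$ has vanishing distributional derivative, upgrade to constancy via $\mathcal{P}_1(\Gamma)$-continuity, and identify the constant as zero from the stationary boundary datum. The only cosmetic difference is that the paper obtains continuity of the bracket directly from Proposition~\ref{prop:convergencep1} applied to the two test functions $f$ and $f(\cdot-z)$, rather than via continuity of the shift map on $\mathcal{P}_1(\Gamma)$.
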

\begin{proof}
 As the Laplace functional determines  $\Bar{\muP}_t$ uniquely, $\Bar{\muP}_t$ is stationary if and only if \begin{equation}\label{eq:1}
     \int e^{-\int f(x)\xi(\mathrm{d}x)}\Bar{\muP}_t(\mathrm{d}\xi)= \int e^{-\int f(x-z)\xi(\mathrm{d}x)}\Bar{\muP}_t(\mathrm{d}\xi), 
 \end{equation}  
 for all $z\in\R^d, \ f\in\CCC_c^+(\R^d)$.
 Therefore, we will prove the claim by showing that \eqref{eq:1} holds for all $t\in[0, 1].$

 The stationarity of $\Bar{\nuV}_t$ implies that 
 \begin{align}\label{eq:30}
     \int G(\xi,x)\Bar{\nuV}_t(\mathrm{d}\xi,\mathrm{d}x)= \int G(\Theta_z\xi,x-z)\Bar{\nuV}_t(\mathrm{d}\xi,\mathrm{d}x),
 \end{align}
 for all $G\in\FFF_{b,0}(\Gamma\times \R^d)$ and $z\in\R^d.$ 
 Let $z\in\R^d$ and $\phi\in\CCC_c^\infty((0,1)).$  Using the continuity equation and equation \eqref{eq:30}, we find that \begin{align*}
     \int_0^1\dt \phi(t)\int e^{-\int f(x)\xi(\mathrm{d}x)}\Bar{\muP}_t(\mathrm{d}\xi)\mathrm{d}t&=-\int_0^1 \phi(t)\int (e^{-f(y)}-1)e^{-\int f(x)\xi(\mathrm{d}x)}\Bar{\nuV}_t(\mathrm{d}\xi,\mathrm{d}y)\mathrm{d}t\\
     &=-\int_0^1 \phi(t)\int (e^{-f(y-z)}-1)e^{-\int f(x-z)\xi(\mathrm{d}x)}\Bar{\nuV}_t(\mathrm{d}\xi,\mathrm{d}y)\mathrm{d}t\\
     &= \int_0^1\dt \phi(t)\int e^{-\int f(x-z)\xi(\mathrm{d}x)}\Bar{\muP}_t(\mathrm{d}\xi)\mathrm{d}t.
 \end{align*}
 By  partial integration and the fundamental lemma of calculus of variation, we can deduce for a.e.\ $t\in(0,1)$ that 
 \begin{equation}\label{eq:13}
     \int e^{-\int f(x)\xi(\mathrm{d}x)}\Bar{\muP}_t(\mathrm{d}\xi)= \int e^{-\int f(x-z)\xi(\mathrm{d}x)}\Bar{\muP}_t(\mathrm{d}\xi) + C_{z,f},
 \end{equation}
  where $C_{z,f}$ is a constant that depends on $z$ and $f.$
  As $({\Bar{\muP}},{\Bar{\nuV}})\in\CCC\EEE(\dP_0,\dP_1),$ we know that $(\Bar{\muP}_t)_{t\in[0,1]}$ is a $\PPP_1(\Gamma)$-continuous curve in $\PPP_1(\Gamma)$. This implies that \begin{equation*}
     t\mapsto \int e^{-\int f(x)\xi(\mathrm{d}x)}\Bar{\muP}_t(\mathrm{d}\xi)-\int e^{-\int f(x-z)\xi(\mathrm{d}x)}\Bar{\muP}_t(\mathrm{d}\xi) \end{equation*}
     is a continuous function on $[0,1]$, see Proposition \ref{prop:convergencep1}. Therefore,  equation \eqref{eq:13} holds for all $t\in[0,1].$
   As $\Bar{\muP}_0$ is stationary, it follows that \begin{equation}
     \int e^{-\int f(x)\xi(\mathrm{d}x)}\Bar{\muP}_0(\mathrm{d}\xi)-\int e^{-\int f(x-z)\xi(\mathrm{d}x)}\Bar{\muP}_0(\mathrm{d}\xi)=0.\end{equation}
 We can deduce that $C_{z,f}=0,$ which proves the claim.
\end{proof}

The previous Lemma justifies the following definition of stationary solutions.
\begin{dfn}\label{dfn:stationarysolutions}
    Let $\dP_0,\dP_1\in\PPP_s(\Gamma).$ The set of stationary solutions $\CCC\EEE^s(\dP_0,\dP_1)$ is the set of all solutions $(\Bar{\dP},\Bar{\nuV})\in\CCC\EEE(\dP_0,\dP_1)$ s.t.\ $\Bar{\nuV}_t\in\MMM_s(\Gamma\times\R^d)$ for all $t\in[0,1].$ 
\end{dfn}

\begin{kor}\label{cor:stationarysolution}
  Let  $\dP_0, \dP_1\in\PPP_s(\Gamma).$ Then $\CCC\EEE^s(\dP_0,\dP_1)$ is non-empty.
\end{kor}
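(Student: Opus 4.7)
The plan is to verify that the explicit construction from Theorem \ref{thm:solution} already produces a stationary solution when the endpoints $\dP_0, \dP_1$ are stationary. More precisely, I would take the pair $(\Bar{\dP}, \Bar{\nuV})$ obtained by sampling independent copies $\eta_0 \sim \dP_0$ and $\eta_1 \sim \dP_1$ and forming $\eta_t = \eta_0^{(1-t)} + \eta_1^{(t)}$ via independent Bernoulli thinnings, together with the velocity field defined there. This pair already lies in $\CCC\EEE(\dP_0, \dP_1)$, so the only thing left is to establish stationarity of the velocity fields $\Bar{\nuV}_t$. By Lemma \ref{lem:shiftinvarianceproperty}, this automatically yields stationarity of $\Bar{\dP}_t$ and hence membership in $\CCC\EEE^s(\dP_0, \dP_1)$.

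To show that each $\Bar{\nuV}_t$ is stationary, fix $z \in \R^d$ and $G \in \FFF_{b,0}(\Gamma \times \R^d)$. Using the representation \eqref{eq:2} of $\Bar{\nuV}_t$ together with the definition \eqref{eq:502} of $\Theta_z^{\Gamma \times \R^d}$, one rewrites
\begin{align*}
    \int G(\Theta_z \xi, x-z)\, \Bar{\nuV}_t(\mathrm{d}\xi, \mathrm{d}x)
    &= \E\Bigl[\sum_{n} G\bigl(\Theta_z[\textstyle\sum_l \delta_{X_l}\ind_{\tau_{X_l}=1} + \sum_{l\neq n}\delta_{Y_l}\ind_{\tau_{Y_l}=1}], Y_n - z\bigr)\Bigr] \\
    &\quad - \E\Bigl[\sum_{m} G\bigl(\Theta_z[\textstyle\sum_{l\neq m}\delta_{X_l}\ind_{\tau_{X_l}=1} + \sum_l \delta_{Y_l}\ind_{\tau_{Y_l}=1}], X_m - z\bigr)\Bigr].
\end{align*}
Since $\Theta_z$ commutes with the counting sums via $\Theta_z \delta_{X_l} = \delta_{X_l - z}$, the shift amounts to translating every point of both configurations by $-z$. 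Because $\dP_0, \dP_1 \in \PPP_s(\Gamma)$ and $\eta_0, \eta_1$ are independent, the joint law of $((X_l)_l, (Y_n)_n)$ coincides with that of $((X_l - z)_l, (Y_n - z)_n)$; the Bernoulli families $(\tau_{X_l}), (\tau_{Y_n})$ are i.i.d.\ and independent of the point locations, so they can be matched up after the shift. Consequently the right-hand side equals $\int G(\xi, x)\, \Bar{\nuV}_t(\mathrm{d}\xi, \mathrm{d}x)$, proving $(\Theta_z^{\Gamma\times\R^d})_\# \Bar{\nuV}_t = \Bar{\nuV}_t$.

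Having established that $\Bar{\nuV}_t \in \MMM_s(\Gamma \times \R^d)$ for every $t \in [0,1]$, Lemma \ref{lem:shiftinvarianceproperty} delivers the stationarity of each $\Bar{\dP}_t$, so the constructed pair satisfies all the conditions of Definition \ref{dfn:stationarysolutions}. I expect the only delicate point to be bookkeeping: making sure that the identification of laws after translation is rigorously justified through the monotone class argument that produced \eqref{eq:2}, rather than only for rectangles of the form $\A \times D$. Once this is done, no further work is needed, so the corollary follows directly from Theorem \ref{thm:solution} and Lemma \ref{lem:shiftinvarianceproperty}.
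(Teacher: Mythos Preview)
Your proposal is correct and follows exactly the approach the paper takes: use the explicit solution from Theorem~\ref{thm:solution} and verify that the velocity field $\Bar{\nuV}_t$ is stationary when $\dP_0,\dP_1\in\PPP_s(\Gamma)$. The paper's proof is a one-liner that simply asserts this stationarity, whereas you spell out the verification via the representation~\eqref{eq:2} and the joint translation-invariance of $(\eta_0,\eta_1)$; note also that by Definition~\ref{dfn:stationarysolutions} only stationarity of $\Bar{\nuV}_t$ is required for membership in $\CCC\EEE^s$, so the appeal to Lemma~\ref{lem:shiftinvarianceproperty} is not strictly needed.
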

\begin{proof}
    The proof follows from the construction of the solution in Theorem \ref{thm:solution}, noting that $\Bar{\nuV}_t\in\mathcal M_s(\Gamma\times \R^d)$ for all $t\in[0,1].$
\end{proof}

    \section{The Action Functional}
   In this section, we define and discuss the action functional that will be used to construct the Benamou--Brenier-type distance for stationary measures in Section \ref{sec:distance}.
   
  Let us briefly recall
   the definition of the Lagrange functional $\mathcal{L}$ defined in \eqref{eq:dfnlagrange}. We introduced the lower semi-continuous, convex, and positively homogeneous function $\alpha(x,y,w)=|w|^2/\theta(x,y),\ w\in\R, \ x,y\geq0$, where $\theta$ is the logarithmic mean and $0/0=0$ by convention.  For $\dP\in\PPP_1(\Gamma_B)$ and $\nuV\in\MMM_{b,0}(\Gamma_B\times B)$ the Lagrange functional is defined by
   \begin{align*}
          \mathcal{L}({\dP},{\mathbf{V}}):=\int\alpha\left(\frac{\mathrm{d}\dP\otimes \m_{|B}}{\text{d}\sigma},\frac{\mathrm{d}C_\dP}{\text{d}\sigma},\frac{\text{d}{\nuV}}{\text{d}\sigma}\right)\text{d}\sigma,
       \end{align*}
      where $\sigma\in\MMM_{b,0}(\Gamma_B\times B)$ is non-negative s.t.\  $\dP\otimes {\m}_{|B},C_\dP,\mathbf{V}\ll\sigma.$
    \begin{dfn}
       Let $\dP_0,\dP_1\in\PPP_1(\Gamma)$ and $({\Bar{\muP}},{\Bar{\nuV}})\in\CCC\EEE(\dP_0,\dP_1).$ 
      The action functional of $({\Bar{\muP}},{\Bar{\nuV}})$ is defined by
        \begin{align}
            \AAA({\Bar{\muP}},{\Bar{\nuV}})&:=\sup_{n\in\N}\frac{1}{\lambda(\Lambda_n)} \int_0^1\mathcal{L}({\Bar{\muP}_{t|\Lambda_n}},{\Bar{\nuV}_{t|\Lambda_n}})\mathrm{d}t.\nonumber\\
            &= \sup_{n\in\N}\frac{1}{\lambda(\Lambda_n)}  \mathrm{A}({\Bar{\muP}_{|\Lambda_n}},{\Bar{\nuV}_{|\Lambda_n}}).\label{eq:501}
        \end{align}
     \end{dfn}
     \begin{bem}
          Theorem \ref{thm:equalitystationary}  entails
         that the supremum in the definition of the action functional $\mathcal{A}$ can be replaced by a limit when considering stationary solutions to the continuity equation.
     \end{bem}

\subsection{Properties of the Lagrange functional}
  For a more detailed study of the action functional introduced above, we begin by discussing properties of the Lagrange functional. The following Lemma shows that the Lagrange functional is shift-invariant.
     \begin{lem}\label{lem:shiftinvaraiance}
    Let $B\in\BBB(\R^d)$ be a Polish set, $\dP_1\in\PPP_1(\Gamma_{B})$, $\dP_2\in\PPP_1(\Gamma_{B+z})$, ${\nuV}_1\in\MMM_{b,0}(\Gamma_{B}\times B)$, and ${\nuV}_2\in\MMM_{b,0}(\Gamma_{B+z}\times (B+z))$. Assume that $\dP_1=({\Theta_{z}})_\#\dP_2$ and ${\nuV}_1=({\Theta_{z}^{\Gamma\times \R^d}})_\#{\nuV}_2.$ Then $$\mathcal{L}(\dP_1,{\nuV}_1)=\mathcal{L}(({\Theta_{z}})_\#\dP_2,({\Theta_{z}^{\Gamma\times \R^d}})_\#{\nuV}_2)=\mathcal{L}(\dP_2,{\nuV}_2).$$
    \end{lem}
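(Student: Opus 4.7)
The plan is to reduce the equality of the two Lagrange functionals to a pure change-of-variables identity under the shift $\Theta_z^{\Gamma\times\R^d}$. First, I would verify that each of the three measures appearing in the definition of $\mathcal L$ behaves well under the shift, namely that
\begin{align*}
\dP_1\otimes\lambda_{|B}=(\Theta_z^{\Gamma\times\R^d})_\#(\dP_2\otimes\lambda_{|B+z}),\qquad
C_{\dP_1}=(\Theta_z^{\Gamma\times\R^d})_\#C_{\dP_2},\qquad
{\nuV}_1=(\Theta_z^{\Gamma\times\R^d})_\#{\nuV}_2.
\end{align*}
The third identity is given. The first follows from $\dP_1=(\Theta_z)_\#\dP_2$ together with translation invariance of $\lambda$ (explicitly, $\lambda_{|B}(D)=\lambda_{|B+z}(D+z)$ for $D\in\BBB(B)$). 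The Campbell identity is the only nontrivial piece: unravelling the definition using $\Theta_z\delta_x=\delta_{x-z}$ and hence $\Theta_z(\eta-\delta_x)=\Theta_z\eta-\delta_{x-z}$ yields, for $\A\in\BBB(\Gamma_B)$ and $D\in\BBB(B)$,
\begin{align*}
C_{\dP_1}(\A\times D)=\int\sum_{x\in\eta}\ind_D(x-z)\,\ind_{\Theta_z^{-1}\A}(\eta-\delta_x)\,\dP_2(\mathrm{d}\eta)=C_{\dP_2}(\Theta_z^{-1}\A\times(D+z)),
\end{align*}
which is exactly $(\Theta_z^{\Gamma\times\R^d})_\#C_{\dP_2}(\A\times D)$.

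Next, I would fix a non-negative $\sigma_2\in\MMM_{b,0}(\Gamma_{B+z}\times(B+z))$ dominating $\dP_2\otimes\lambda_{|B+z}$, $C_{\dP_2}$ and ${\nuV}_2$ simultaneously (such a $\sigma_2$ exists, e.g.\ as the sum of the three). Set $\sigma_1:=(\Theta_z^{\Gamma\times\R^d})_\#\sigma_2$; by the three identities above, $\sigma_1$ dominates $\dP_1\otimes\lambda_{|B}$, $C_{\dP_1}$ and ${\nuV}_1$. Denoting the Radon--Nikodym densities w.r.t.\ $\sigma_2$ by $f_1,f_2,f_3$, the standard pushforward rule gives that the corresponding densities w.r.t.\ $\sigma_1$ are $f_i\circ(\Theta_z^{\Gamma\times\R^d})^{-1}$.

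Finally, applying the change of variables formula for the pushforward,
\begin{align*}
\mathcal L(\dP_1,{\nuV}_1)
=\int\alpha\!\left(f_1\circ(\Theta_z^{\Gamma\times\R^d})^{-1},f_2\circ(\Theta_z^{\Gamma\times\R^d})^{-1},f_3\circ(\Theta_z^{\Gamma\times\R^d})^{-1}\right)\mathrm{d}\sigma_1
=\int\alpha(f_1,f_2,f_3)\,\mathrm{d}\sigma_2=\mathcal L(\dP_2,{\nuV}_2),
\end{align*}
which is the claim. The only genuinely delicate step is the verification of the shift behaviour of the reduced Campbell measure; the remainder of the proof is a routine application of the pushforward change-of-variables formula, with the independence of the representation of $\mathcal L$ on the choice of dominating measure (granted by the positive homogeneity of $\alpha$) ensuring that no issues arise from our particular choice of $\sigma_2$.
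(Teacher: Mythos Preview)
Your proof is correct and follows exactly the approach of the paper, which consists of the single line ``choose $\sigma_1=(\Theta_z^{\Gamma\times\R^d})_\#\sigma_2$ in the definition of the Lagrange functional.'' You have simply made explicit the verifications (notably the shift behaviour of the reduced Campbell measure and the change-of-variables for the Radon--Nikodym densities) that the paper leaves to the reader.
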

    \begin{proof}
        The proof follows directly by choosing $\sigma_1=({\Theta_z^{\Gamma\times \R^d}})_\#\sigma_2$ in the definition of the Lagrange functional.
    \end{proof}
     
     Moreover,  the Lagrange functional is super-additive on disjoint domains in the following sense:
    \begin{lem}\label{lem:superadditivity}
        Let $B,D\in\BBB(\R^d)$ be disjoint Polish sets. Assume that ${\muP}\in\PPP_1(\Gamma_{B\cup D})$ s.t.\ ${\muP}\ll\Poi_{|B\cup D}$ with ${\muP}=\rho\Poi_{|B\cup D}$ and ${\nuV}\in\MMM_{b,0}(\Gamma_{B\cup D}\times (B\cup D))$ s.t.\ ${\nuV}\ll\Poi_{|B\cup D}\otimes \m_{|B\cup D}$ with ${\nuV}=w(\Poi_{|B\cup D}\otimes \m_{|B\cup D}).$ Then
        \begin{align*}
            \mathcal{L}({{\muP}},{{\nuV}})\geq \mathcal{L}({\muP}_{|B},{\nuV}_{|B})+\mathcal{L}({\muP}_{|D},{\nuV}_{|D}).
        \end{align*}
    \end{lem}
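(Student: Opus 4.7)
The plan is to exploit the product structure of the Poisson reference measure on disjoint sets together with joint convexity of $\alpha$ via Jensen's inequality. Since $B$ and $D$ are disjoint, the Poisson factorises as $\mathbf{Poi}_{|B\cup D}=\mathbf{Poi}_{|B}\otimes\mathbf{Poi}_{|D}$, and every configuration $\xi\in\Gamma_{B\cup D}$ splits uniquely as $\xi=\xi_1+\xi_2$ with $\xi_1\in\Gamma_B$, $\xi_2\in\Gamma_D$. I would first identify the densities of the restrictions: starting from $\mathbf{P}=\rho\,\mathbf{Poi}_{|B\cup D}$ and $\mathbf{V}=w(\mathbf{Poi}_{|B\cup D}\otimes\lambda_{|B\cup D})$, Fubini gives $\mathbf{P}_{|B}=\rho_B\,\mathbf{Poi}_{|B}$ and $\mathbf{V}_{|B}=w_B(\mathbf{Poi}_{|B}\otimes\lambda_{|B})$ with
\[
\rho_B(\xi_1)=\int\rho(\xi_1+\xi_2)\,\mathbf{Poi}_{|D}(\mathrm{d}\xi_2),\qquad w_B(\xi_1,z)=\int w(\xi_1+\xi_2,z)\,\mathbf{Poi}_{|D}(\mathrm{d}\xi_2),\ z\in B,
\]
and symmetrically for $\rho_D, w_D$.

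Next, using representation \eqref{eq:representationlagrange} with reference measure $\sigma=\mathbf{Poi}_{|B\cup D}\otimes\lambda_{|B\cup D}$, I split the outer integral according to whether $z\in B$ or $z\in D$. For $z\in B$, the key observation is that $\rho_B(\xi_1+\delta_z)=\int\rho(\xi_1+\xi_2+\delta_z)\,\mathbf{Poi}_{|D}(\mathrm{d}\xi_2)$ since $\delta_z$ only affects the $B$-component. Freezing $\xi_1$ and $z$ and integrating out $\xi_2$ against the \emph{probability} measure $\mathbf{Poi}_{|D}$, the joint convexity of $\alpha$ (recalled right before \eqref{eq:propalpha}) together with Jensen's inequality yields
\[
\int\alpha\bigl(\rho(\xi_1+\xi_2),\rho(\xi_1+\xi_2+\delta_z),w(\xi_1+\xi_2,z)\bigr)\mathbf{Poi}_{|D}(\mathrm{d}\xi_2)\geq \alpha\bigl(\rho_B(\xi_1),\rho_B(\xi_1+\delta_z),w_B(\xi_1,z)\bigr).
\]
Integrating over $\xi_1\in\Gamma_B$ and $z\in B$ recovers exactly $\mathcal{L}(\mathbf{P}_{|B},\mathbf{V}_{|B})$. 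The analogous argument over $z\in D$ (integrating out $\xi_1$ against $\mathbf{Poi}_{|B}$) gives $\mathcal{L}(\mathbf{P}_{|D},\mathbf{V}_{|D})$, and summing the two bounds completes the proof.

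The main potential obstacle is a bookkeeping one rather than a conceptual one: verifying carefully that the formulas for $\rho_B$, $w_B$ are genuinely the Radon--Nikodym densities of the restricted measures (this is where disjointness of $B$ and $D$ and the independence property $(ii)$ of the Poisson process enter crucially, ensuring $\mathbf{Poi}_{|B\cup D}=\mathbf{Poi}_{|B}\otimes\mathbf{Poi}_{|D}$), and making sure the argument of $\alpha$ in the displayed inequality matches up after the shift by $\delta_z$ for both cases $z\in B$ and $z\in D$. Once these identifications are clean, the inequality is a direct application of Jensen with the probability measure $\mathbf{Poi}_{|D}$ (respectively $\mathbf{Poi}_{|B}$) and the jointly convex function $\alpha$.
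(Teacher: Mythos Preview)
Your proposal is correct and follows essentially the same approach as the paper: split the Lagrange integral according to $z\in B$ or $z\in D$, identify the restricted densities $\rho_B,w_B$ (and symmetrically $\rho_D,w_D$) via the Poisson product structure, and apply Jensen's inequality with the jointly convex $\alpha$ against the probability measure $\Poi_{|D}$ (resp.\ $\Poi_{|B}$). The paper packages the density identification you describe into Lemma~\ref{lem:apprestricteddensities}, but the argument is otherwise identical.
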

    \begin{proof}
    Given that ${\muP}\ll\Poi_{|B\cup D}$ and ${\nuV}\ll\Poi_{|B\cup D}\otimes \m,$ we can use the representation of the Lagrange functional in equation \eqref{eq:representationlagrange} and obtain that 
     \begin{align}
      \mathcal{L}({\muP} ,{\nuV})
             &=\int_{\Gamma_{B\cup D}}\int_{ B\cup D}\alpha(\rho(\xi),\rho(\xi+\delta_z), w(\xi,z))\m (\mathrm{d}z)\Poi_{|B\cup D}( \mathrm{d}\xi) \nonumber\\
             &=\int_{\Gamma_{B}}\int_{\Gamma_D}\int_{ B\cup D}\alpha(\rho(\xi_1+\xi_2),\rho(\xi_1+\xi_2+\delta_z), w(\xi_1+\xi_2,z))\m( \mathrm{d}z)\Poi_{|D}( \mathrm{d}\xi_2) \nonumber\Poi_{|B}(\mathrm{d}\xi_1)\\
             &= \int_{\Gamma_{B}}\int_{\Gamma_D}\int_{ B}\alpha(\rho(\xi_1+\xi_2),\rho(\xi_1+\xi_2+\delta_z), w(\xi_1+\xi_2,z))\m( \mathrm{d}z)\Poi_{|D}( \mathrm{d}\xi_2) \Poi_{|B}(\mathrm{d}\xi_1)\nonumber\\
             & \quad + \int_{\Gamma_{B}}\int_{\Gamma_D}\int_{ D}\alpha(\rho(\xi_1+\xi_2),\rho(\xi_1+\xi_2+\delta_z), w(\xi_1+\xi_2,z))\m( \mathrm{d}z)\Poi_{|D}( \mathrm{d}\xi_2) \Poi_{|B}(\mathrm{d}\xi_1).\label{eq:110}
     \end{align}
     By Lemma \ref{lem:apprestricteddensities}  we have that ${\muP}_{|B}\ll\Poi_{|B}$  s.t.\   ${\muP}_{|B}=\rho^B\Poi_{|B}$ with
      \begin{align*}
\rho^B(\xi_1)=\int_{\Gamma_D}\rho(\xi_1+\xi_2)\Poi_{|D}(\mathrm{d}\xi_2),
      \end{align*}
      and ${\nuV}_{|B}\ll\Poi_{|B}\otimes \m_{|B}$  s.t.\ ${\nuV}_{|B}=w^B(\Poi_{|B}\otimes \m_{|B})$ with
      \begin{align*}
    w^B(\xi_1,z)=\int_{\Gamma_D}w(\xi_1+\xi_2,z)\Poi_{|D}(\mathrm{d}\xi_2).
      \end{align*}
      Using the convexity of $\alpha$, see Section \ref{sec:distanceDSHS}, together with Jensen's inequality and the previous observation,  the first summand of \eqref{eq:110} can be estimated as follows
        \begin{align*}
            &\int_{\Gamma_{B}}\int_{\Gamma_D}\int_{ B}\alpha(\rho(\xi_1+\xi_2),\rho(\xi_1+\xi_2+\delta_z), w(\xi_1+\xi_2,z))\m( \mathrm{d}z)\Poi_{|D}( \mathrm{d}\xi_2) \Poi_{|B}(\mathrm{d}\xi_1)\\&\geq
           \int_{\Gamma_{B}}\int_{ B}\hspace{-0.09cm}\alpha\hspace{-0.05cm}\left(\hspace{0.11cm}\int_{\Gamma_D}\rho(\xi_1+\xi_2)\Poi_{|D}(\mathrm{d}\xi_2),\int_{\Gamma_D}\rho(\xi_1+\xi_2+\delta_z)\Poi_{|D}(\mathrm{d}\xi_2),\int_{\Gamma_D} w(\xi_1+\xi_2,z)\Poi_{|D}(\mathrm{d}\xi_2)\hspace{-0.1cm}\right)\hspace{-0.1cm}\m( \mathrm{d}z)\Poi_{|B}(\mathrm{d}\xi_1) \\
            &= \int_{\Gamma_{B}}\int_{ B}\alpha(\rho^B(\xi_1),\rho^B(\xi_1+\delta_z), w^B(\xi_1,z))\m( \mathrm{d}z)\Poi_{|B}(\mathrm{d}\xi_1)\\
            &=\mathcal{L}({\muP}_{|B},{\nuV}_{|B}).
             \end{align*}
            Analogously, we obtain 
            \begin{align*}
                \int_{\Gamma_{B}}\int_{\Gamma_D}\int_{ D}\alpha(\rho(\xi_1+\xi_2),\rho(\xi_1+\xi_2+\delta_z), w(\xi_1+\xi_2,z))\m( \mathrm{d}z)\Poi_{|D}( \mathrm{d}\xi_2) \Poi_{|B}(\mathrm{d}\xi_1)\geq \mathcal{L}({\muP}_{|D},{\nuV}_{|D}).
            \end{align*}Inserting both estimates in \eqref{eq:110} proves the claim. 
    \end{proof}
    The following lemma establishes that the Lagrange functional is even additive, when applied to product measures. 
     \begin{lem}\label{lem:additivityLagrangefunctional}
       Let $B,D\in\BBB(\R^d)$ be disjoint and Polish sets, ${\muP}\in\PPP_1(\Gamma_{B\cup D})$ s.t.\ ${\muP}={\muP}_{|B}\otimes{\muP}_{|D}$ and  let ${\nuV}\in\MMM_{b,0}(\Gamma_{B\cup D}\times(B\cup D)).$ Assume that  ${\muP}_{|B}\ll\Poi_{|B}$ with ${\muP}_{|B}=\rho^B\Poi_{|B}$  and that ${\muP}_{|D}\ll\Poi_{|D}$   with ${\muP}_{|D}=\rho^D\Poi_{|D}.$ Further, let  ${\nuV}_{|B}\ll\Poi_{|B}\otimes \m_{|B}$ with ${\nuV}_{|B}= w^B(\Poi_{|B}\otimes \m_{|B})$ and ${\nuV}_{|D}\ll\Poi_{|D}\otimes \m_{|D}$  with ${\nuV}_{|D}= w^D(\Poi_{|D}\otimes \m_{|D}).$ Define $\Tilde{\mathbf{V}}\in\MMM_{b,0}(\Gamma_{B\cup D}\times(B\cup D))$ as in \eqref{eq:300}.
    Then it holds that 
    \begin{align*}
        \mathcal{L}({\muP}, \Tilde{\mathbf{V}})=\mathcal{L}({\muP}_{|B},{\nuV}_{|B})+\mathcal{L}({\muP}_{|D},{\nuV}_{|D}).
    \end{align*}
     \end{lem}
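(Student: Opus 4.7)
The plan is to exploit the product structure together with the positive homogeneity of $\alpha$ to convert the super-additivity inequality from Lemma~\ref{lem:superadditivity} into an equality. The crucial identity is that on disjoint sets the restriction of a Poisson point process factorises, so $\mathbf{Poi}_{|B\cup D}$ corresponds to $\mathbf{Poi}_{|B}\otimes \mathbf{Poi}_{|D}$ under the canonical bijection $\xi\mapsto(\xi_{|B},\xi_{|D})$ between $\Gamma_{B\cup D}$ and $\Gamma_B\times\Gamma_D$. Since $\mathbf{P}=\mathbf{P}_{|B}\otimes\mathbf{P}_{|D}$ with densities $\rho^B$ and $\rho^D$, this implies that $\mathbf{P}\ll\mathbf{Poi}_{|B\cup D}$ with density
\begin{align*}
\rho(\xi_1+\xi_2)=\rho^B(\xi_1)\rho^D(\xi_2),\qquad \xi_1\in\Gamma_B,\ \xi_2\in\Gamma_D.
\end{align*}

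Next, I would identify the density of $\tilde{\mathbf{V}}$ with respect to $\mathbf{Poi}_{|B\cup D}\otimes\lambda_{|B\cup D}$. From the defining formula \eqref{eq:300} and the factorisations above, a direct computation shows that $\tilde{\mathbf{V}}\ll\mathbf{Poi}_{|B\cup D}\otimes\lambda_{|B\cup D}$ with density
\begin{align*}
\tilde w(\xi_1+\xi_2,z)=
\begin{cases}
w^B(\xi_1,z)\,\rho^D(\xi_2), & z\in B,\\
w^D(\xi_2,z)\,\rho^B(\xi_1), & z\in D.
\end{cases}
\end{align*}
Moreover, for $z\in B$ we have $\rho(\xi_1+\xi_2+\delta_z)=\rho^B(\xi_1+\delta_z)\rho^D(\xi_2)$ and symmetrically for $z\in D$.

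With these formulas in hand, the final step is to plug them into the representation \eqref{eq:representationlagrange} of the Lagrange functional and split the $z$-integral into a part over $B$ and a part over $D$. Using the positive homogeneity \eqref{eq:propalpha} of $\alpha$ we can factor $\rho^D(\xi_2)$ out of the integrand on the $B$-part:
\begin{align*}
\alpha\bigl(\rho^B(\xi_1)\rho^D(\xi_2),\rho^B(\xi_1+\delta_z)\rho^D(\xi_2),w^B(\xi_1,z)\rho^D(\xi_2)\bigr)=\rho^D(\xi_2)\,\alpha\bigl(\rho^B(\xi_1),\rho^B(\xi_1+\delta_z),w^B(\xi_1,z)\bigr),
\end{align*}
and analogously on the $D$-part. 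Fubini then lets us first integrate out $\xi_2\in\Gamma_D$ against $\mathbf{Poi}_{|D}$, which, since $\int\rho^D\,d\mathbf{Poi}_{|D}=1$, yields exactly $\mathcal{L}(\mathbf{P}_{|B},\mathbf{V}_{|B})$, and symmetrically the $D$-part gives $\mathcal{L}(\mathbf{P}_{|D},\mathbf{V}_{|D})$, whence the claim.

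I expect no serious obstacle beyond bookkeeping: the main point is to check that the positive homogeneity of $\alpha$ (including the edge cases where densities vanish, covered by the convention $0/0=0$) is applied correctly, and that the candidate $\tilde w$ is indeed the Radon--Nikodym derivative of $\tilde{\mathbf{V}}$ on both pieces of $B\cup D$. Once these densities are pinned down, the computation is purely algebraic, and the equality (as opposed to the Jensen-based inequality in Lemma~\ref{lem:superadditivity}) comes precisely from the product structure eliminating the need for any convexity estimate.
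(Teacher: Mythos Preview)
Your proposal is correct and follows essentially the same approach as the paper: identify the densities of $\mathbf{P}$ and $\tilde{\mathbf{V}}$ with respect to $\mathbf{Poi}_{|B\cup D}$ and $\mathbf{Poi}_{|B\cup D}\otimes\lambda_{|B\cup D}$ (the paper packages this step as Lemma~\ref{app:productdensities}), plug into the representation \eqref{eq:representationlagrange}, split the $z$-integral over $B$ and $D$, use positive homogeneity \eqref{eq:propalpha} to factor out $\rho^D(\xi_2)$ respectively $\rho^B(\xi_1)$, and integrate the remaining probability density to~$1$.
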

     \begin{proof}
    As ${\muP}={\muP}_{|B}\otimes{\muP}_{|D},$ ${\muP}_{|B}\ll\Poi_{|B}$, and  ${\muP}_{|D}\ll\Poi_{|D}$,  we have by Lemma \ref{app:productdensities} that ${\muP}\ll\Poi_{|B\cup D}$ with \begin{align*}
        {\muP}= \rho^B(\cdot_{|B})\rho^D(\cdot_{|D})\Poi_{|B\cup D}.
        \end{align*}
        By the same lemma, it holds that $ \Tilde{\mathbf{V}}\ll \Poi_{|B\cup D}\otimes \m_{|B\cup D} $ with
        \begin{align*}
            \Tilde{\mathbf{V}}=(\ind_B(\cdot) w^B(\cdot_{|B},\cdot)\rho^D(\cdot_{|D})+\ind_D(\cdot) w^D(\cdot_{|D},\cdot)\rho^B(\cdot_{|B}))\Poi_{|B\cup D}\otimes \m_{|B\cup D}.
        \end{align*}
         Using the representation of the Lagrange functional from equation \eqref{eq:representationlagrange}, we can deduce that \begin{align}
             \mathcal{L}({\muP},\Tilde{\mathbf{V}})
             &= \int_{B}\int_{\Gamma_D}\int_{\Gamma_{B}}\alpha(\rho^B(\xi_1)\rho^D(\xi_2), \rho^B(\xi_1+\delta_z)\rho^D(\xi_2), w^B(\xi_1,z)\rho^D(\xi_2))\Poi_{|B}(\mathrm{d}\xi_1)\Poi_{|D}(\mathrm{d}\xi_2)\m(\mathrm{d}z)\nonumber\\
              &\quad +\int_{D}\int_{\Gamma_D}\int_{\Gamma_{B}}\alpha(\rho^B(\xi_1)\rho^D(\xi_{2}), \rho^B(\xi_1)\rho^D(\xi_2+\delta_z), w^D(\xi_{2},z)\rho^B(\xi_{1}))\Poi_{|B}(\mathrm{d}\xi_1)\Poi_{|D}(\mathrm{d}\xi_2)\m(\mathrm{d}z).\label{eq:600}
              \end{align} 
              By utilizing the positive homogeneity property of $\alpha$, see equation \eqref{eq:propalpha}, and that $\rho^D$ is a probability density w.r.t.\ $\Poi_{|D}$, we obtain that \begin{align*}
              &\int_{B}\int_{\Gamma_D}\int_{\Gamma_{B}}\alpha(\rho^B(\xi_1)\rho^D(\xi_2), \rho^B(\xi_1+\delta_z)\rho^D(\xi_2), w^B(\xi_1,z)\rho^D(\xi_2))\Poi_{|B}(\mathrm{d}\xi_1)\Poi_{|D}(\mathrm{d}\xi_2)\m(\mathrm{d}z)\\
              &=\int_{B}\int_{\Gamma_D}\int_{\Gamma_{B}}\rho^D(\xi_2)\alpha(\rho^B(\xi_1),\rho^B(\xi_1+\delta_z), w^B(\xi_1,z))\Poi_{|B}(\mathrm{d}\xi_1)\Poi_{|D}(\mathrm{d}\xi_2)\m(\mathrm{d}z)\\
              &=\int_{B}\int_{\Gamma_{B}}\alpha(\rho^B(\xi_1),\rho^B(\xi_1+\delta_z), w^B(\xi_1,z))\Poi_{|B}(\mathrm{d}\xi_1)\m(\mathrm{d}z)\\
              &=\mathcal{L}({\muP}_{|B},{\nuV}_{|B}).
              \end{align*}
                Analogously, we find that 
              \begin{align*} \int_{D}\int_{\Gamma_D}\int_{\Gamma_{B}}\alpha(\rho^B(\xi_1)\rho^D(\xi_{2}), \rho^B(\xi_1)\rho^D(\xi_2+\delta_z), w^D(\xi_{2},z)\rho^B(\xi_{1}))\Poi_{|B}(\mathrm{d}\xi_1)\Poi_{|D}(\mathrm{d}\xi_2)\m(\mathrm{d}z)=\mathcal{L}({\muP}_{|D},{\nuV}_{|D}).
         \end{align*}
        Inserting both equalities in \eqref{eq:600}  finishes the proof.
     \end{proof}
     \subsection{Properties of the Action Functional}
   After studying the Lagrange functional in detail, we now shift our focus to the action functional. We discuss several properties of the action functional to deduce the existence of minimizers (Corollary \ref{kor:minimizergeneral}) as the main result of this section. More precisely, we show the following properties.

   \begin{lem}\label{lem:action}
        Let $\dP_0,\dP_1\in\PPP_1(\Gamma).$ The action functional $\mathcal{A}$ satisfies the following properties:
         \begin{enumerate}

             \item[$({i})$] The action functional $\mathcal{A}$ is jointly convex, i.e.\ 
             \begin{align*}
                 \mathcal{A}(t{\Bar{\muP}}^1+(1-t){\Bar{\muP}}^2,t{\Bar{\nuV}}^1+(1-t){\Bar{\nuV}}^2)\leq t\mathcal{A} ({\Bar{\muP}}^1,{\Bar{\nuV}}^1)+(1-t) \mathcal{A}({\Bar{\muP}}^2,{\Bar{\nuV}}^2)
             \end{align*}
             for all $({\Bar{\muP}}^1,{\Bar{\nuV}}^1),({\Bar{\muP}}^2,{\Bar{\nuV}}^2)\in\CCC\EEE(\dP_0,\dP_1),\ t\in[0,1].$ 
             \item[$({ii})$] The action functional  $\mathcal{A}$ is  lower semi-continuous on $\CCC\EEE(\dP_0,\dP_1)$.
             \item[$({iii})$] The action functional  $\mathcal{A}$ has compact sublevel sets on $\CCC\EEE(\dP_0,\dP_1).$
         \end{enumerate}
     \end{lem}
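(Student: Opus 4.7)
My plan is to deduce each of the three properties directly from the corresponding property of the finite-volume action functional $\mathrm{A}$ established in Lemma \ref{lem:actionDSHS}, exploiting that $\mathcal{A}$ is a supremum over $n$ of functionals built from $\mathrm{A}$ via restriction. Writing
\begin{align*}
\mathcal{A}_n(\Bar{\muP}, \Bar{\nuV}) := \frac{1}{\lambda(\Lambda_n)}\,\mathrm{A}(\Bar{\muP}_{|\Lambda_n}, \Bar{\nuV}_{|\Lambda_n}),
\end{align*}
we have $\mathcal{A}=\sup_{n\in\N}\mathcal{A}_n$, and by Lemma \ref{lem:restriction} the restriction of any $(\Bar{\muP},\Bar{\nuV})\in\CCC\EEE(\dP_0,\dP_1)$ to $\Lambda_n$ lies in $\CCC\EEE^{\Lambda_n}(\dP_{0|\Lambda_n},\dP_{1|\Lambda_n})$, which is the natural domain of $\mathrm{A}$ in Lemma \ref{lem:actionDSHS}.

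For $(i)$, the restriction maps $\Bar{\muP}\mapsto\Bar{\muP}_{|\Lambda_n}$ and $\Bar{\nuV}\mapsto\Bar{\nuV}_{|\Lambda_n}$ are affine (pushforward under a fixed map, composed with multiplication by $\ind_{\Lambda_n}$). A convex combination in $\CCC\EEE(\dP_0,\dP_1)$ restricts to the convex combination of the restrictions, so Lemma \ref{lem:actionDSHS}$(i)$ transfers to joint convexity of each $\mathcal{A}_n$, and the supremum of convex functionals is convex. For $(ii)$, continuity of the restriction operators transfers a convergent sequence $(\Bar{\muP}^k,\Bar{\nuV}^k)\to(\Bar{\muP},\Bar{\nuV})$ in $\CCC\EEE(\dP_0,\dP_1)$ to convergence of the restrictions in $\CCC\EEE^{\Lambda_n}(\dP_{0|\Lambda_n},\dP_{1|\Lambda_n})$, so Lemma \ref{lem:actionDSHS}$(ii)$ gives $\mathcal{A}_n(\Bar{\muP},\Bar{\nuV})\leq\liminf_k\mathcal{A}_n(\Bar{\muP}^k,\Bar{\nuV}^k)\leq\liminf_k\mathcal{A}(\Bar{\muP}^k,\Bar{\nuV}^k)$, and taking the supremum over $n$ preserves lower semi-continuity.

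The bulk of the work is in $(iii)$. Fix a sequence $(\Bar{\muP}^k,\Bar{\nuV}^k)\subset\CCC\EEE(\dP_0,\dP_1)$ with $\mathcal{A}(\Bar{\muP}^k,\Bar{\nuV}^k)\leq C$. For each $n$, the restricted sequence lies in $\{\mathrm{A}\leq C\lambda(\Lambda_n)\}$ on $\CCC\EEE^{\Lambda_n}(\dP_{0|\Lambda_n},\dP_{1|\Lambda_n})$, which is compact by Lemma \ref{lem:actionDSHS}$(iii)$. A Cantor diagonal extraction yields a single subsequence (not relabelled) such that for every $n$, $(\Bar{\muP}^k_{|\Lambda_n},\Bar{\nuV}^k_{|\Lambda_n})\to(\Bar{\muP}^{(n)},\Bar{\nuV}^{(n)})$ in $\CCC\EEE^{\Lambda_n}$. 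The family $(\Bar{\muP}^{(n)},\Bar{\nuV}^{(n)})_n$ is consistent, since the identities $(\Bar{\muP}^k_{|\Lambda_{n+1}})_{|\Lambda_n}=\Bar{\muP}^k_{|\Lambda_n}$ (and analogously for velocities) pass to the limit by continuity of restriction. A Kolmogorov-type extension then produces a unique global curve $(\Bar{\muP},\Bar{\nuV})\subset\PPP_1(\Gamma)\times\MMM_{b,0}(\Gamma\times\R^d)$ whose restriction to $\Lambda_n$ is $(\Bar{\muP}^{(n)},\Bar{\nuV}^{(n)})$. By Theorem \ref{thm:propertiesce}$(iv)$ the pair $(\Bar{\muP},\Bar{\nuV})$ lies in $\CCC\EEE(\dP_0,\dP_1)$, and the bound $\mathcal{A}(\Bar{\muP},\Bar{\nuV})\leq C$ follows from $(ii)$ (or equivalently from applying Lemma \ref{lem:actionDSHS}$(ii)$ to each $\mathcal{A}_n$ and taking $\sup_n$).

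I expect the main obstacle to be the gluing step in $(iii)$: one must verify that the consistent family of finite-volume limits indeed assembles to a bona fide global object in $\PPP_1(\Gamma)\times\MMM_{b,0}(\Gamma\times\R^d)$, and that this global object satisfies the continuity equation on $\R^d$ (not merely on each $\Lambda_n$). Both points rely on a careful handling of the vague topologies on $\PPP_1(\Gamma_B)$ and $\MMM_{b,0}(\Gamma_B\times B)$ together with Theorem \ref{thm:propertiesce}$(iv)$; the remaining steps are essentially formal manipulations with suprema and restrictions.
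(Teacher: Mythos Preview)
Your proposal is correct and follows essentially the same route as the paper: parts $(i)$ and $(ii)$ are identical, and for $(iii)$ the paper likewise uses a diagonal extraction over the compact finite-volume sublevel sets, checks consistency of the limits, and invokes Theorem \ref{thm:propertiesce}$(iv)$ to conclude. The only refinement worth noting is that the paper applies Kolmogorov's theorem only to the probability measures $\Bar{\muP}_t$ and constructs the signed velocity $\Bar{\nuV}_t$ by hand on generating sets of $\BBB(\Gamma\times\R^d)$, precisely because $\Bar{\nuV}_t\in\MMM_{b,0}(\Gamma\times\R^d)$ is not a probability measure---this is exactly the gluing subtlety you anticipated.
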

   The previous lemma implies the existence of minimizers of the action functional.
      \begin{kor}\label{kor:minimizergeneral}
     Let $\dP_0,\dP_1\in\PPP_1(\Gamma)$ and assume that $\inf\left\{\mathcal{A}({\Bar{\muP}},{\Bar{\nuV}}): ({\Bar{\muP}},{\Bar{\nuV}})\in\CCC\EEE(\dP_0,\dP_1)\right\}<\infty.$  There exists some $({\Bar{\muP}}^*,{\Bar{\nuV}}^*)\in\CCC\EEE(\dP_0,\dP_1)$ s.t.\
       \begin{align*}
           \inf\left\{\mathcal{A}({\Bar{\muP}},{\Bar{\nuV}}): ({\Bar{\muP}},{\Bar{\nuV}})\in\CCC\EEE(\dP_0,\dP_1)\right\}=\mathcal{A}({\Bar{\muP}}^*,{\Bar{\nuV}}^*).
       \end{align*}
     \end{kor}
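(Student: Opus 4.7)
The plan is to apply the standard direct method of the calculus of variations, leveraging precisely the three properties of $\mathcal{A}$ packaged in Lemma \ref{lem:action}. The hypothesis $I:=\inf\{\mathcal{A}({\Bar{\muP}},{\Bar{\nuV}}):({\Bar{\muP}},{\Bar{\nuV}})\in\CCC\EEE(\dP_0,\dP_1)\}<\infty$ makes the argument non-vacuous, and the compact sublevel sets will do the heavy lifting.

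First, I would pick a minimizing sequence $({\Bar{\muP}}^n,{\Bar{\nuV}}^n)_{n\in\N}\subset\CCC\EEE(\dP_0,\dP_1)$ with $\mathcal{A}({\Bar{\muP}}^n,{\Bar{\nuV}}^n)\to I$. Discarding finitely many terms, we may assume $\mathcal{A}({\Bar{\muP}}^n,{\Bar{\nuV}}^n)\leq I+1$ for all $n$, so the sequence sits inside a fixed sublevel set of $\mathcal{A}$. By Lemma \ref{lem:action}(iii) this sublevel set is compact, so we may extract a subsequence (not relabelled) converging to some pair $({\Bar{\muP}}^*,{\Bar{\nuV}}^*)$, with convergence in the appropriate pointwise-in-$t$ sense: $\Bar{\muP}^n_t\to\Bar{\muP}^*_t$ in $\PPP_1(\Gamma)$ and $\Bar{\nuV}^n_t\to\Bar{\nuV}^*_t$ in $\MMM_{b,0}(\Gamma\times\R^d)$ for a.e.\ $t\in[0,1]$.

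Next, I would verify that the limit lies in $\CCC\EEE(\dP_0,\dP_1)$. This is exactly the stability statement Theorem \ref{thm:propertiesce}(iv), applied on $B=\R^d$: the endpoint conditions pass to the limit because they are preserved within the sublevel set (this is already encoded in the definition of the sublevel set), the continuity of $t\mapsto\Bar{\muP}^*_t$ follows from the topology of the sublevel set, the continuity equation passes to the limit by the cited stability result, and the local integrability of $\Bar{\nuV}^*$ is inherited because it is controlled uniformly along the sublevel set.

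Finally, combining lower semi-continuity from Lemma \ref{lem:action}(ii) with the convergence of the subsequence yields
\[
\mathcal{A}({\Bar{\muP}}^*,{\Bar{\nuV}}^*) \leq \liminf_{n\to\infty}\mathcal{A}({\Bar{\muP}}^n,{\Bar{\nuV}}^n)=I,
\]
and since $({\Bar{\muP}}^*,{\Bar{\nuV}}^*)\in\CCC\EEE(\dP_0,\dP_1)$ the reverse inequality is automatic by definition of $I$. Thus $({\Bar{\muP}}^*,{\Bar{\nuV}}^*)$ is the desired minimizer. No step should pose a real obstacle since each ingredient is already established: convexity in Lemma \ref{lem:action}(i) is not even needed here (it would only be used if one wanted uniqueness), and the main subtlety, namely matching the topology in which compactness holds with the topology in which the continuity equation is closed, is resolved by both results being phrased in terms of the same pointwise-in-$t$ convergence.
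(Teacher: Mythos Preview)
Your proposal is correct and is exactly the direct method of the calculus of variations that the paper invokes (the paper's proof is a one-liner citing Lemma \ref{lem:action}). Note that once Lemma \ref{lem:action}(iii) gives compactness of sublevel sets \emph{within} $\CCC\EEE(\dP_0,\dP_1)$, the limit automatically lies in $\CCC\EEE(\dP_0,\dP_1)$, so your separate invocation of Theorem \ref{thm:propertiesce}(iv) is redundant (that stability step is already absorbed into the proof of compactness); otherwise the argument matches.
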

     \begin{proof}
         The proof follows from the direct method of calculus of variation and Lemma \ref{lem:action}.
     \end{proof}
     We continue with the proof of Lemma \ref{lem:action}.
     \begin{proof}[Proof of Lemma \ref{lem:action}]
         $(i)$ By Lemma \ref{lem:actionDSHS} $(i)$, we know that 
         \begin{align*}
          \frac{1}{\lambda(\Lambda_n)}  \mathrm{A}({\Bar{\muP}_{|\Lambda_n}},{\Bar{\nuV}}_{|\Lambda_n})=
          \frac{1}{\lambda(\Lambda_n)} \int_0^1\mathcal{L}({\Bar{\muP}_{t|\Lambda_n}},{\Bar{\nuV}_{t|\Lambda_n}})\mathrm{d}t
         \end{align*}
         is jointly convex for any $n\in\N$. The convexity of $\mathcal{A}$ follows immediately, as the supremum of convex functions is again convex.\\
         
         $(ii)$ Let $({\Bar{\muP}}^m,{\Bar{\nuV}}^m)_{m\in\N}\subset\CCC\EEE(\dP_0,\dP_1)$  s.t.\ $\Bar{\muP}_t^m\to\Bar{\muP}_t$ in $\PPP_1(\Gamma)$ as $m\to\infty $ and $\Bar{\nuV}_t^m\to\Bar{\nuV}_t$ in $\MMM_{b,0}(\Gamma\times \R^d)$ as $m\to\infty $.
         By Lemma \ref{lem:restriction} it holds that $({{{\Bar{\muP}}}^m}_{|\Lambda_n},{{{\Bar{\nuV}}}^m}_{|\Lambda_n})_{m\in\N}\subset \CCC\EEE^{\Lambda_n}({\dP_0}_{|\Lambda_n},{\dP_1}_{|\Lambda_n})$ for all $n\in\N.$ We deduce from Lemma \ref{lem:actionDSHS} $(ii)$ that 
         \begin{align*}
             \int_0^1\mathcal{L}({\Bar{\muP}_{t|\Lambda_n}},{\Bar{\nuV}_{t|\Lambda_n}})\mathrm{d}t\leq \liminf_{m\to\infty}\int_0^1\mathcal{L}({\Bar{\muP}_{t|\Lambda_n}^m},{\Bar{\nuV}_{t|\Lambda_n}^m})\mathrm{d}t 
         \end{align*}
         for all $n\in\N.$  As the supremum of lower semi-continuous functions is again lower semi-continuous, this implies the lower semi-continuity i.e.\
         \begin{align*}
             \sup_{n\in\N}\frac{1}{\lambda(\Lambda_n)}\int_0^1\mathcal{L}({\Bar{\muP}_{t|\Lambda_n}},{\Bar{\nuV}_{t|\Lambda_n}})\mathrm{d}t\leq \liminf_{m\to\infty}\sup_{n\in\N}\frac{1}{\lambda(\Lambda_n)} \mathcal{L}({\Bar{\muP}_{t|\Lambda_n}^m},{\Bar{\nuV}_{t|\Lambda_n}^m})\mathrm{d}t.
         \end{align*}\\
         
        $(iii)$  Let $C\in\R$  and consider a sequence $({\Bar{\muP}}^m,{\Bar{\nuV}}^m)_{m\in\N}\subset \CCC\EEE(\dP_0,\dP_1)$ s.t.\
        \begin{align} \label{eq:18}
            \sup_{n\in\N}\frac{1}{\lambda(\Lambda_n)}\int_0^1\mathcal{L}({\Bar{\muP}_{t|\Lambda_n}^m},{\Bar{\nuV}_{t|\Lambda_n}^m})\mathrm{d}t\leq C
        \end{align}
        for all $m\in\N.$ To prove the claim, we need to construct a sequence of natural numbers $(h(l))_{l\in\N}$ and a pair $(\Bar{\dP},\Bar{\nuV})\in\CCC\EEE(\dP_0,\dP_1)$ s.t.\ $({\Bar{\muP}}^{h(l)},{\Bar{\nuV}}^{h(l)})_{l\in\N}$ converges to $(\Bar{\dP},\Bar{\nuV})$ and $(\Bar{\dP},\Bar{\nuV})$  satisfies equation \eqref{eq:18}.  
        
       We first note that equation \eqref{eq:18} implies that
        \begin{align*}
            \int_0^1\mathcal{L}({\Bar{\muP}_{t|\Lambda_1}^m},{\Bar{\nuV}_{t|\Lambda_1}^m})\mathrm{d}t\leq C
        \end{align*} for all $m\in\N.$ By Lemma \ref{lem:actionDSHS} $(iii)$, there exists a subsequence $(m_1(l))_{l\in\N}$ and some   $({\Bar{\muP}}^{1},{\Bar{\nuV}}^{1})\in\CCC\EEE^{\Lambda_1}({\dP_0}_{|\Lambda_1},{\dP_1}_{|\Lambda_1})$ s.t.\
        \begin{align*}
            {\Bar{\muP}_{t|\Lambda_1}^{m_1(l)}}&\underset{m\to\infty}{\to} \Bar{\muP}_t^{1}\quad \quad \text{in }\mathcal{P}_1(\Gamma_{\Lambda_1}) \text{ for all } t\in[0,1],\\
            {\Bar{\nuV}_{t|\Lambda_1}^{m_1(l)}}&\underset{m\to\infty}{\to} \Bar{\nuV}_t^{1} \quad \quad \text{in }\mathcal{M}_{b,0}(\Gamma_{\Lambda_1}\times \Lambda_1 ) \text{ for all } t\in[0,1].
        \end{align*}
        Inductively, assume that we have a subsequence $(m_{n-1}(l))_{l\in\N}$ of $(m_{n-2}(l))_{l\in\N}$ constructed in the above way.  By equation \eqref{eq:18} it holds that
        \begin{align*}
            \int_0^1\mathcal{L}({\Bar{\muP}_{t|\Lambda_n}^{m_{n-1}(l)}},{\Bar{\nuV}_{t|\Lambda_n}^{m_{n-1}(l)}})\mathrm{d}t\leq C\lambda(\Lambda_n)
        \end{align*}
        for all $l\in\N.$ Lemma \ref{lem:actionDSHS} $(iii)$ again implies the existence of a subsequence $(m_n(l))_{l\in\N}$ of $(m_{n-1}(l))_{l\in\N}$ and measures  $({\Bar{\muP}}^{n},{\Bar{\nuV}}^{n})\in\CCC\EEE^{\Lambda_n}({\dP_0}_{|\Lambda_n},{\dP_1}_{|\Lambda_n})$ s.t.\ \begin{align*}
            {\Bar{\muP}_{t|\Lambda_n}^{m_n(l)}}&\underset{l\to\infty}{\to} \Bar{\muP}_t^{n}\quad \quad \text{in }\mathcal{P}_1(\Gamma_{\Lambda_n}) \text{ for all } t\in[0,1],\\
            {\Bar{\nuV}_{t|\Lambda_n}^{m_n(l)}}&\underset{l\to\infty}{\to} \Bar{\nuV}_t^{n} \quad \quad \text{in }\mathcal{M}_{b,0}(\Gamma_{\Lambda_n}\times \Lambda_n ) \text{ for all } t\in[0,1].
        \end{align*}
        For all $l\in\N$ define $h(l):=m_l(l)$,  and note that 
        \begin{align*}
            {\Bar{\muP}_{t|\Lambda_n}^{h(l)}}&\underset{l\to\infty}{\to}\Bar{\muP}^{n}_t\quad \quad \text{in } \mathcal{P}_1(\Gamma_{\Lambda_n}), \\
            {\Bar{\nuV}_{t|\Lambda_n}^{h(l)}}&\underset{l\to\infty}{\to}\Bar{\nuV}^{n}_t \quad \quad \text{in }\mathcal{M}_{b,0}(\Gamma_{\Lambda_n}\times \Lambda_n )
        \end{align*}
       for any $n\in\N.$ Moreover,  for any $m<n,$ we have that ${\Bar{\muP}^{n}_{t|\Lambda_m}}= {\Bar{\muP}^{m}_t}$  \text{ and }  ${\Bar{\nuV}^{n}_{t|\Lambda_m}}= {\Bar{\nuV}^{m}_t}.$ Hence, $(\Bar{\muP}^{n}_t)_{n\in\N}$ and $(\Bar{\nuV}^{n}_t)_{n\in\N}$ are families of consistent measures. By {Kolmogorov's consistency theorem}, there exists some measure $\Bar{\muP}_t\in\PPP_1(\Gamma)$ s.t.\ ${\Bar{\muP}_{t|\Lambda_n}}={\Bar{\muP}^{n}_t}.$ For the construction of the measure $\Bar{\nuV}_t$,  we note that the sets of the form \begin{align*}
             \A_{B,k}^{a,b}:=\{\xi\in\Gamma: \xi(B)=k\}\times ([a_1,b_1)\times\ldots\times[a_d,b_d)),\end{align*} with $B\in\BBB_0(\R^d)$ and $a_i\leq b_i\in\R$ are the generator of $\BBB(\Gamma\times \R^d).$   For any such set there exists some $n\in\N$ s.t.\ $B,[a_1,b_1)\times\ldots\times[a_d,b_d)\subseteq \Lambda_n.$ We define the measure $\Bar{\nuV}_t$ on the generating sets by \begin{align*}
            \Bar{\nuV}_t(\A_{B,k}^{a,b}):=\Bar{\nuV}_t^{n}(\A_{B,k}^{a,b}) .\end{align*}  
           By definition, it holds that ${\Bar{\nuV}_{t|\Lambda_n}}={\Bar{\nuV}^{n}_t}.$    Note that  \begin{align*}
            {\Bar{\muP}_{t|\Lambda_n}^{h(l)}}&\underset{l\to\infty}{\to}{\Bar{\muP}_{t|\Lambda_n}}\quad \quad\text{ in } \mathcal{P}_1(\Gamma_{\Lambda_n}) \\
        {\Bar{\nuV}_{t|\Lambda_n}^{h(l)}}&\underset{l\to\infty}{\to}{\Bar{\nuV}_{t|\Lambda_n}}\quad \quad \text{ in } \mathcal{M}_{b,0}(\Gamma_{\Lambda_n}\times \Lambda_n) \end{align*}for any $n\in\N.$   This implies  
        ${\Bar{\muP}_t^{h(l)}}\to{\Bar{\muP}_t}$  in $\mathcal{P}_1(\Gamma)$ and ${\Bar{\nuV}_t^{h(l)}}\to{\Bar{\nuV}_t}$  in $\mathcal{M}_{b,0}(\Gamma\times \R^d)$ as $l\to\infty$. By Theorem \ref{thm:propertiesce} $(iv),$ we can deduce that $({\Bar{\muP}},{\Bar{\nuV}}) \in \CCC\EEE(\dP_0,\dP_1).$ Moreover, by construction it holds that 
        \begin{align*}
          \sup_{n\in\N}\frac{1}{\lambda(\Lambda_n)}\int_0^1\mathcal{L}({\Bar{\muP}_{t|\Lambda_n}},{\Bar{\nuV}_{t|\Lambda_n}})\mathrm{d}t\leq C, 
        \end{align*}
        finishing the proof.
        \end{proof}
       
\section{Transport Distance for Stationary Measures}\label{sec:distance}
In this section, we define a Benamou--Brenier-type distance for stationary measures and study their properties.
    \begin{dfn}\label{dfn:stationary}
         Let $\dP_0,\dP_1\in\PPP_s(\Gamma)$. We define
         \begin{align*}
             \mathcal{W}^2_s (\dP_0,\dP_1)&:= \inf\{\AAA({\Bar{\muP}},{\Bar{\nuV}}): ({\Bar{\muP}},{\Bar{\nuV}})\in\CCC\EEE^s(\dP_0,\dP_1)\}\\
             &=\inf\left\{\sup_{n\in\N}\frac{1}{\lambda(\Lambda_n)}\int_0^1\mathcal{L}({\Bar{\muP}_{t|\Lambda_n}},{\Bar{\nuV}_{t|\Lambda_n}})\mathrm{d}t: ({\Bar{\muP}},{\Bar{\nuV}})\in\CCC\EEE^s( \dP_0,\dP_1)\right\}.
         \end{align*}
     \end{dfn}
 By a   standard reparametrization argument, see e.g.\ \cite[Theorem 5.4]{Dolbeault_2008}, \cite[Lemma 1.1.4]{Ambrosio2005GradientFI} we have
      \begin{lem}
        Let $\dP_0,\dP_1\in\PPP_s(\Gamma).$ It holds that 
        \begin{align}\label{eq:31}
            \mathcal{W}_s(\dP_0,\dP_1)=\inf\left\{\sup_{n\in\N}\frac{1}{\lambda(\Lambda_n)^{1/2}}\int_0^T\mathcal{L}^{1/2}({\Bar{\muP}_{t|\Lambda_n}},{\Bar{\nuV}_{t|\Lambda_n}})\mathrm{d}t:({\Bar{\muP}},{\Bar{\nuV}})\in\CCC\EEE^s_T(\dP_0,\dP_1)\right\}
        \end{align}
        for all $T>0.$
        \end{lem}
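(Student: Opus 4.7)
The plan is to adapt the classical length-vs-energy reparametrization argument (cf.\ \cite[Lemma 1.1.4]{Ambrosio2005GradientFI}) to the present stationary setting. \emph{Reparametrization invariance of the right-hand side.} For any increasing absolutely continuous $\kappa \colon [0, T'] \to [0, T]$ with absolutely continuous inverse, Theorem \ref{thm:propertiesce}(ii) shows that $(\Bar{\muP}_{\kappa(\cdot)}, \dot{\kappa}(\cdot) \Bar{\nuV}_{\kappa(\cdot)}) \in \CCC\EEE^s_{T'}(\dP_0, \dP_1)$. By the $2$-homogeneity in the velocity, $\mathcal{L}(\dP, c\nuV) = c^2 \mathcal{L}(\dP, \nuV)$ (immediate from \eqref{eq:representationlagrange} and \eqref{eq:propalpha}), combined with a change of variables, the scale-$n$ length $\lambda(\Lambda_n)^{-1/2} \int_0^{T'} \mathcal{L}^{1/2}(\tilde{\muP}_{s|\Lambda_n}, \tilde{\nuV}_{s|\Lambda_n})\, ds$ is preserved under such reparametrizations. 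Hence the right-hand side of \eqref{eq:31} is independent of $T>0$, and it suffices to treat $T=1$.

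\emph{The inequality $\mathrm{RHS} \leq \mathcal{W}_s$.} For any $(\Bar{\muP}, \Bar{\nuV}) \in \CCC\EEE^s_1(\dP_0, \dP_1)$, Cauchy--Schwarz applied at each scale $n$ gives
\[ \frac{1}{\lambda(\Lambda_n)^{1/2}} \int_0^1 \mathcal{L}^{1/2}({\Bar{\muP}_{t|\Lambda_n}},{\Bar{\nuV}_{t|\Lambda_n}})\, dt \leq \left( \frac{1}{\lambda(\Lambda_n)} \int_0^1 \mathcal{L}({\Bar{\muP}_{t|\Lambda_n}},{\Bar{\nuV}_{t|\Lambda_n}})\, dt \right)^{1/2} . \]
Taking $\sup_n$ on both sides (using that the square root commutes with suprema of non-negative quantities on the right) bounds the length functional by $\AAA(\Bar{\muP}, \Bar{\nuV})^{1/2}$; infimum over $\CCC\EEE^s_1(\dP_0, \dP_1)$ yields the desired inequality.

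\emph{The matching lower bound $\mathcal{W}_s \leq \mathrm{RHS}$.} Fix $(\Bar\dP, \Bar\nuV) \in \CCC\EEE^s_T(\dP_0, \dP_1)$ with length $L$ finite, and set $a_n(t) := \mathcal{L}({\Bar{\muP}_{t|\Lambda_n}},{\Bar{\nuV}_{t|\Lambda_n}})/\lambda(\Lambda_n)$ and $a(t) := \sup_n a_n(t)$. Reparametrize to constant speed by setting $\phi(r) := L^{-1} \int_0^r \sqrt{a(u)}\, du$ and $\kappa := \phi^{-1} \colon [0, 1] \to [0, T]$, so $\dot\kappa(s) = L/\sqrt{a(\kappa(s))}$. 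For the reparametrized curve $(\tilde\dP, \tilde\nuV) \in \CCC\EEE^s_1(\dP_0, \dP_1)$, the $2$-homogeneity yields $\tilde a_n(s) = \dot\kappa(s)^2 a_n(\kappa(s)) \leq \dot\kappa(s)^2 a(\kappa(s)) = L^2$ pointwise in $s$, whence $\AAA(\tilde\dP, \tilde\nuV) = \sup_n \int_0^1 \tilde a_n(s)\, ds \leq L^2$ and consequently $\mathcal{W}_s \leq L$; taking the infimum over curves in $\CCC\EEE^s_T(\dP_0, \dP_1)$ completes the argument.

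\emph{Main obstacle.} The delicate point in the last step is to justify the identity $L = \int_0^T \sqrt{a(t)}\, dt$, i.e., the interchange of $\sup_n$ and the integral in the length functional. This is where stationarity enters decisively: by the superadditivity of $\mathcal L$ (Lemma \ref{lem:superadditivity}) combined with the stationarity of $(\Bar\dP_t, \Bar\nuV_t)$ and a Fekete-type argument applied along the nested subsequences $n = mk$, the quantities $a_n(t)$ are pointwise non-decreasing along such subsequences. Monotone convergence then identifies $\sup_n \int_0^T \sqrt{a_n(t)}\, dt$ with $\int_0^T \sqrt{a(t)}\, dt$. This is essentially the content of Theorem \ref{thm:equalitystationary}, mentioned in the remark following \eqref{eq:501}.
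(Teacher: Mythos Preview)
Your argument follows the same reparametrization strategy the paper defers to, and you have correctly isolated the one genuinely non-classical point: passing from $\sup_n \int_0^T \sqrt{a_n(t)}\,dt$ to $\int_0^T \sqrt{\sup_n a_n(t)}\,dt$. The cited references \cite{Dolbeault_2008,Ambrosio2005GradientFI} treat a single Lagrangian, so this interchange is precisely the extra work the present multi-scale setting demands, and your monotone-convergence-along-dyadics argument (superadditivity plus shift-invariance plus Fekete, applied pointwise in $t$) resolves it cleanly.

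Two small corrections. First, the cross-reference at the end is off: Theorem~\ref{thm:equalitystationary} is the much deeper identity $\mathcal{W}_s^2=\lim_n \lambda(\Lambda_n)^{-1}\mathcal{W}_0^2$, which requires finite specific entropy and is not what you use. What you actually invoke is the pointwise-in-$t$ version of the Fekete argument behind Lemma~\ref{lem:sup=limstationär}; cite Lemma~\ref{lem:fekete} directly. Second, your appeal to Lemma~\ref{lem:superadditivity} inherits its absolute-continuity hypotheses ($\Bar\dP_{t|\Lambda}\ll\Poi_{|\Lambda}$ and $\Bar\nuV_{t|\Lambda}\ll\Poi_{|\Lambda}\otimes\m_{|\Lambda}$), which are neither assumed in the lemma nor verified; the paper's own Lemma~\ref{lem:sup=limstationär} has the same loose end, so this is not a defect specific to your write-up, but you should flag it. Finally, the usual caveats about the constant-speed reparametrization (handling the set $\{a=0\}$, absolute continuity of $\kappa^{-1}$ as required by Theorem~\ref{thm:propertiesce}(ii)) deserve a brief mention or $\varepsilon$-regularization, though these are routine.
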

        The next theorem shows that the infimum in the definition of $\mathcal{W}_s$ is in fact a minimum.
   \begin{satz} \label{thm:minimizer}
       Let $\dP_0,\dP_1\in\PPP_s(\Gamma)$. If  $\mathcal{W}_s(\dP_0,\dP_1)<\infty,$ then there exists  some $({\Bar{\muP}}^*,{\Bar{\nuV}}^*)\in\CCC\EEE^s(\dP_0,\dP_1)$ s.t.\ 
    \begin{align*}
      \mathcal{W}_s(\dP_0,\dP_1)=\mathcal{A}({\Bar{\muP}}^*,{\Bar{\nuV}}^*).
    \end{align*}
    \end{satz}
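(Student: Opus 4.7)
The plan is to apply the direct method of the calculus of variations, using the properties of $\mathcal{A}$ established in Lemma \ref{lem:action}. Since $\mathcal{W}_s^2(\dP_0,\dP_1)<\infty$, we may pick a minimizing sequence $({\Bar{\muP}}^m,{\Bar{\nuV}}^m)_{m\in\N}\subset\CCC\EEE^s(\dP_0,\dP_1)$ with $\mathcal{A}({\Bar{\muP}}^m,{\Bar{\nuV}}^m)\to\mathcal{W}_s^2(\dP_0,\dP_1)$. In particular this sequence lies in a sublevel set of $\mathcal{A}$ on $\CCC\EEE(\dP_0,\dP_1)\supseteq\CCC\EEE^s(\dP_0,\dP_1)$.

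By Lemma \ref{lem:action} $(iii)$, there exists a subsequence (not relabelled) and a pair $({\Bar{\muP}}^*,{\Bar{\nuV}}^*)\in\CCC\EEE(\dP_0,\dP_1)$ such that, for every $t\in[0,1]$, $\Bar{\muP}^m_t\to\Bar{\muP}^*_t$ in $\PPP_1(\Gamma)$ and $\Bar{\nuV}^m_t\to\Bar{\nuV}^*_t$ in $\MMM_{b,0}(\Gamma\times\R^d)$. Combining this with the lower semi-continuity from Lemma \ref{lem:action} $(ii)$ gives
\[ \mathcal{A}({\Bar{\muP}}^*,{\Bar{\nuV}}^*)\leq \liminf_{m\to\infty}\mathcal{A}({\Bar{\muP}}^m,{\Bar{\nuV}}^m)=\mathcal{W}_s^2(\dP_0,\dP_1). \]
It therefore remains to show that the limiting pair is in fact stationary, so that it is admissible in the infimum defining $\mathcal W_s$.

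The main point to check is that $\Bar{\nuV}^*_t\in\MMM_s(\Gamma\times\R^d)$ for every $t\in[0,1]$, since by Lemma \ref{lem:shiftinvarianceproperty} the stationarity of ${\Bar{\muP}}^*$ then follows automatically. Fix $z\in\R^d$ and $G\in\CCC_{b,0}(\Gamma\times\R^d)$. The composition $G\circ\Theta_z^{\Gamma\times\R^d}$ is again continuous, bounded, and supported in $\Gamma\times D'$ for some bounded Borel set $D'$, hence lies in $\CCC_{b,0}(\Gamma\times\R^d)$. Using stationarity of each $\Bar{\nuV}^m_t$ and the convergence in $\MMM_{b,0}(\Gamma\times\R^d)$,
\[ \int G\,\mathrm{d}\Bar{\nuV}^*_t=\lim_{m\to\infty}\int G\,\mathrm{d}\Bar{\nuV}^m_t=\lim_{m\to\infty}\int G\circ\Theta_z^{\Gamma\times\R^d}\,\mathrm{d}\Bar{\nuV}^m_t=\int G\circ\Theta_z^{\Gamma\times\R^d}\,\mathrm{d}\Bar{\nuV}^*_t, \]
so $(\Theta_z^{\Gamma\times\R^d})_\#\Bar{\nuV}^*_t=\Bar{\nuV}^*_t$. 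Hence $({\Bar{\muP}}^*,{\Bar{\nuV}}^*)\in\CCC\EEE^s(\dP_0,\dP_1)$, and the previous lower semi-continuity estimate shows it is a minimizer.

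The only genuinely delicate step is verifying that stationarity is preserved in the limit, but this is immediate from the choice of topology on $\MMM_{b,0}(\Gamma\times\R^d)$ together with the fact that the shift $\Theta_z^{\Gamma\times\R^d}$ maps $\CCC_{b,0}(\Gamma\times\R^d)$ into itself. Everything else is a direct application of Lemma \ref{lem:action}.
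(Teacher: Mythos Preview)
Your proof is correct and follows essentially the same approach as the paper: both argue via the direct method, extracting a convergent subsequence from a minimizing sequence in $\CCC\EEE^s(\dP_0,\dP_1)$ using the compactness of sublevel sets of $\mathcal{A}$ on $\CCC\EEE(\dP_0,\dP_1)$, and then verifying that stationarity of the velocity fields passes to the limit by testing against $G$ and $G\circ\Theta_z^{\Gamma\times\R^d}$ in $\CCC_{b,0}(\Gamma\times\R^d)$. The paper phrases the last step as showing that $\CCC\EEE^s(\dP_0,\dP_1)$ is a closed subset of $\CCC\EEE(\dP_0,\dP_1)$ and then invoking Corollary~\ref{kor:minimizergeneral}, but the content is identical.
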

    \begin{proof}
       If $\CCC\EEE^s(\dP_0,\dP_1)$ is a closed subset of $\CCC\EEE(\dP_0,\dP_1),$  the claim follows from Corollary \ref{kor:minimizergeneral}.
         Therefore, let $({\Bar{\muP}}^n,{\Bar{\nuV}}^n)_{n\in\N}\subset \CCC\EEE^s(\dP_0,\dP_1)$ be a sequence of solutions to the continuity equation such  that $\Bar{\muP}^n{\to} \Bar{\muP}$ in $\PPP_1(\Gamma)$ as $n\to\infty$ and ${\Bar{\nuV}}^n\to{\Bar{\nuV}}$ in $\MMM_{b,0}(\Gamma\times\R^d)$ as $n\to\infty$. By Theorem \ref{thm:propertiesce} $(iv)$ we have that $({\Bar{\muP}},{\Bar{\nuV}})\in\CCC\EEE(\dP_0,\dP_1).$ 
        It remains to prove that $\Bar{\nuV}_t\in\MMM_s(\Gamma\times \R^d)$ for all $t\in[0,1].$ 
         As $\Bar{\nuV}_t^n\in\MMM_s(\Gamma\times \R^d)$ for all $n\in\N$ and $t\in[0,1]$, we obtain 
        \begin{align*}
         \int G(\xi,z)\Bar{\nuV}_t(\mathrm{d}\xi,\mathrm{d}z)&= \lim_{n\to\infty}\int G(\xi,x)\Bar{\nuV}_t^n(\mathrm{d}\xi,\mathrm{d}x)\\
         &= \lim_{n\to\infty}\int G(\Theta_z\xi,x-z)\Bar{\nuV}_t^n(\mathrm{d}\xi,\mathrm{d}x)\\
         &=\int G(\Theta_z\xi,x-z)\Bar{\nuV}_t(\mathrm{d}\xi,\mathrm{d}x)
        \end{align*}
        for any $ z\in\R^d,$ $G\in \CCC_{b,0}(\Gamma\times\R^d),$ and $t\in[0,1]$, which finishes the proof.
    \end{proof}
        \begin{satz}
         The map $\mathcal{W}_s$ is an extended distance on $\PPP_s(\Gamma).$
     \end{satz}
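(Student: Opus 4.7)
The plan is to verify the four defining properties of an extended distance. Non-negativity is immediate from $\alpha\geq 0$, and $\mathcal{W}_s(\dP_0,\dP_0)=0$ follows by taking the constant curve $\Bar{\muP}_t\equiv\dP_0$ with zero velocity $\Bar{\nuV}_t\equiv 0$, which is trivially a stationary solution to the continuity equation with vanishing action. Symmetry follows from time reversal: if $(\Bar{\muP}_t,\Bar{\nuV}_t)_{t\in[0,1]}\in\CCC\EEE^s(\dP_0,\dP_1)$, then $(\Bar{\muP}_{1-t},-\Bar{\nuV}_{1-t})_{t\in[0,1]}\in\CCC\EEE^s(\dP_1,\dP_0)$; stationarity of the velocity field survives the sign change, and the Lagrangian $\alpha(x,y,w)=|w|^2/\theta(x,y)$ is invariant under $w\mapsto -w$, so the action is unchanged.

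For the implication $\mathcal{W}_s(\dP_0,\dP_1)=0\Rightarrow\dP_0=\dP_1$, Theorem \ref{thm:minimizer} furnishes a minimizer $(\Bar{\muP}^*,\Bar{\nuV}^*)\in\CCC\EEE^s(\dP_0,\dP_1)$ with $\mathcal{A}(\Bar{\muP}^*,\Bar{\nuV}^*)=0$, forcing $\mathrm{A}(\Bar{\muP}^*_{|\Lambda_n},\Bar{\nuV}^*_{|\Lambda_n})=0$ for every $n\in\N$. By Lemma \ref{lem:restriction} this restricted pair is admissible for $\mathcal{W}_0$ on $\Lambda_n$, so $\mathcal{W}_0(\dP_{0|\Lambda_n},\dP_{1|\Lambda_n})=0$, and since $\mathcal{W}_0$ is an extended distance on $\PPP_1(\Gamma_{\Lambda_n})$ we obtain $\dP_{0|\Lambda_n}=\dP_{1|\Lambda_n}$ for all $n$. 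As the restriction maps to $\Lambda_n$ generate the Borel $\sigma$-algebra on $\Gamma$, this forces $\dP_0=\dP_1$.

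The main obstacle is the triangle inequality, for which the reparametrized formulation \eqref{eq:31} is crucial since it allows curves on arbitrary time intervals. Given $\eps>0$, pick $T_1,T_2>0$ and solutions $(\Bar{\muP}^i,\Bar{\nuV}^i)\in\CCC\EEE^s_{T_i}(\dP_{i-1},\dP_i)$ whose value in \eqref{eq:31} is within $\eps$ of $\mathcal{W}_s(\dP_{i-1},\dP_i)$ for $i=1,2$; if either distance is infinite the inequality is trivial. Concatenate them into $(\Bar{\muP},\Bar{\nuV})$ on $[0,T_1+T_2]$ by running the first curve on $[0,T_1]$ and the shifted second on $[T_1,T_1+T_2]$; the common endpoint $\dP_1$ ensures $\PPP_1(\Gamma)$-continuity. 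The nontrivial point is verifying the continuity equation for the concatenation: for $\phi\in\CCC_c^\infty((0,T_1+T_2))$, splitting the time integral at $T_1$ and integrating by parts on each subinterval produces boundary contributions at $T_1$ that cancel by continuity of $t\mapsto L_{\Bar{\muP}_t}(f)$, while the interior terms reproduce the two individual continuity equations; stationarity of $\Bar{\nuV}_t$ and local integrability of $|\Bar{\nuV}_t|$ pass through piecewise, so $(\Bar{\muP},\Bar{\nuV})\in\CCC\EEE^s_{T_1+T_2}(\dP_0,\dP_2)$. Finally, applying \eqref{eq:31} with $T=T_1+T_2$ and using $\sup_n(a_n+b_n)\leq\sup_n a_n+\sup_n b_n$ to split the time integral at $T_1$ yields $\mathcal{W}_s(\dP_0,\dP_2)\leq\mathcal{W}_s(\dP_0,\dP_1)+\mathcal{W}_s(\dP_1,\dP_2)+2\eps$, and $\eps\to 0$ completes the proof.
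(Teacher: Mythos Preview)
Your proof is correct and follows essentially the same approach as the paper's. The paper's argument for positive definiteness is marginally more direct---it observes that $\mathcal{L}(\Bar{\muP}^*_{t|\Lambda_n},\Bar{\nuV}^*_{t|\Lambda_n})=0$ forces $\Bar{\nuV}^*_{t|\Lambda_n}=0$ (since $\alpha(x,y,w)=0$ implies $w=0$), and then the continuity equation together with $\PPP_1$-continuity gives $\dP_{0|\Lambda_n}=\dP_{1|\Lambda_n}$---whereas you package this step by invoking that $\mathcal{W}_0$ is an extended distance on $\PPP_1(\Gamma_{\Lambda_n})$; both are fine. For the triangle inequality the paper simply cites Theorem~\ref{thm:minimizer} (to replace your $\eps$-optimal curves by actual minimizers) together with concatenation, in analogy with \cite[Theorem~5.15]{schiavo2023wasserstein}; your use of $\eps$-optimal curves and the subadditivity $\sup_n(a_n+b_n)\le\sup_n a_n+\sup_n b_n$ applied to \eqref{eq:31} achieves the same conclusion with slightly more detail.
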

     \begin{proof}
          The symmetry of $\mathcal{W}_s$ follows directly from the definition. We continue with the proof of the positive definiteness of $\mathcal{W}_s.$ Let $\dP_0,\dP_1\in\PPP_s(\Gamma)$ s.t.\ $\mathcal{W}_s(\dP_0,\dP_1)=0.$ By Theorem \ref{thm:minimizer} there exists some minimizer $({\Bar{\muP}}^{*},{\Bar{\nuV}}^{*})$ s.t.\ \begin{align*}
             \mathcal{W}_s^2 (\dP_0,\dP_1)=\sup_{n\in\N}\frac{1}{\lambda(\Lambda_n)}\int_0^1\mathcal{L}({\Bar{\muP}_{t|\Lambda_n}^*},{\Bar{\nuV}_{t|\Lambda_n}^*})\mathrm{d}t.
         \end{align*}
         This implies that 
         \begin{align*}
             \frac{1}{\lambda(\Lambda_n)}\int_0^1\mathcal{L}({\Bar{\muP}_{t|\Lambda_n}^*},{\Bar{\nuV}_{t|\Lambda_n}^*})\mathrm{d}t=0
         \end{align*}
          for all $n\in\N.$ As the Lagrange functional is non-negative, we can deduce that $\mathcal{L}({\Bar{\muP}_{t|\Lambda_n}^*},{\Bar{\nuV}_{t|\Lambda_n}^*})=0$ for all $n\in\N$ and a.e.\ $t\in[0,1].$ By the definition of the Lagrange functional, this implies $\Bar{\nuV}_{t|\Lambda_n}^*=0$ for all $n\in\N$ and a.e.\ $t\in[0,1].$ Since $({\Bar{\muP}}^{*},{\Bar{\nuV}}^{*})$ solves the continuity equation and $\Bar{\muP}^*$ is $\PPP_1(\Gamma)$-continuous, it follows that ${\dP_0}_{|\Lambda_n}={\dP_1}_{|\Lambda_n}.$ 
          Analogously to \cite[Theorem 5.15]{schiavo2023wasserstein}, the triangle inequality follows by Theorem \ref{thm:minimizer} and concatenation of solutions to the continuity equation.
         \end{proof}
       
        \begin{lem}
        The distance $\mathcal{W}_s$ satisfies the following properties:
        
             \begin{enumerate}

                 \item[${(i)}$]  The topology induced by $\mathcal{W}_s$ on $\PPP_s(\Gamma)$ is stronger than the usual topology  on $\PPP_s(\Gamma).$  
                 \item[${(ii)}$] $\mathcal{W}_s$ is lower semi-continuous on $\PPP_s(\Gamma)\times \PPP_s(\Gamma)$. 
                 \item[${(iii)}$] 
                 Bounded sets w.r.t.\ $\mathcal{W}_s$ are $\PPP_s$-relatively compact.
                 \item[${(iv)}$]  For every $\dP \in \PPP_s(\Gamma)$ the accessible component $(\{\mathcal{W}_s(\dP, \cdot) < \infty\}, \mathcal{W}_s) $ is a complete geodesic space.
            \end{enumerate}
         \end{lem}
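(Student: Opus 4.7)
The plan is to establish the four properties by reducing them, box by box, to the corresponding statements for the bounded-domain distance $\mathcal{W}_0$, exploiting the key bound
\begin{align*}
\mathcal{W}_0^2(\dP_{0|\Lambda_m}, \dP_{1|\Lambda_m}) \;\leq\; \mathrm{A}(\Bar{\muP}_{|\Lambda_m}, \Bar{\nuV}_{|\Lambda_m}) \;\leq\; \lambda(\Lambda_m)\,\mathcal{A}(\Bar{\muP}, \Bar{\nuV}),
\end{align*}
valid for every $(\Bar{\muP}, \Bar{\nuV})\in \CCC\EEE^s(\dP_0, \dP_1)$ by Lemma \ref{lem:restriction} and the definition \eqref{eq:501}. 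Picking a minimizer from Theorem \ref{thm:minimizer} yields $\mathcal{W}_0^2(\dP_{0|\Lambda_m}, \dP_{1|\Lambda_m}) \leq \lambda(\Lambda_m)\mathcal{W}_s^2(\dP_0, \dP_1)$ whenever the right-hand side is finite. Since by Proposition \ref{prop:convergencep1} convergence in $\PPP_s(\Gamma)$ is detected through convergence of the restrictions to all $\Lambda_m$, this inequality is the bridge between the stationary theory and the compact-domain theory.

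For (i), if $\mathcal{W}_s(\dP_n, \dP) \to 0$, the displayed bound forces $\mathcal{W}_0(\dP_{n|\Lambda_m}, \dP_{|\Lambda_m}) \to 0$ for every fixed $m$, and the fact that $\mathcal{W}_0$-convergence implies $\PPP_1(\Gamma_{\Lambda_m})$-convergence (an analogue of \cite[Theorem 5.15]{schiavo2023wasserstein}) then gives $\dP_n\to \dP$ in $\PPP_s(\Gamma)$. Properties (ii) and (iii) rely on the lower semi-continuity and compactness of sublevel sets of $\mathcal{A}$ from Lemma \ref{lem:action}. For (ii), I would take $(\dP_0^n, \dP_1^n) \to (\dP_0, \dP_1)$ in $\PPP_s(\Gamma)^2$ along a subsequence realizing $L = \liminf \mathcal{W}_s^2(\dP_0^n, \dP_1^n) < \infty$, pick minimizers $(\Bar{\muP}^n, \Bar{\nuV}^n)$ via Theorem \ref{thm:minimizer}, extract a further subsequence using Lemma \ref{lem:action}(iii) converging to some $(\Bar{\muP}, \Bar{\nuV})\in\CCC\EEE(\dP_0, \dP_1)$, and verify stationarity of $\Bar{\nuV}$ by the same closedness argument used in the proof of Theorem \ref{thm:minimizer}; Lemma \ref{lem:action}(ii) then yields $\mathcal{W}_s^2(\dP_0, \dP_1) \leq L$. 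For (iii), a $\mathcal{W}_s$-bounded family admits minimizing curves with uniformly bounded action, and compactness from Lemma \ref{lem:action}(iii) extracts a convergent subsequence whose time-$1$ endpoints provide the desired $\PPP_s(\Gamma)$-convergent subsequence.

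For (iv), completeness follows by the standard argument: given a Cauchy sequence $(\dP_n)$ in $(\{\mathcal{W}_s(\dP,\cdot)<\infty\}, \mathcal{W}_s)$, extract a fast subsequence $(\dP_{n_k})$ with $\sum_k \mathcal{W}_s(\dP_{n_k}, \dP_{n_{k+1}}) < \infty$, concatenate minimizing curves between consecutive members (each rescaled to a shrinking subinterval of $[0,1]$) into a single element of $\CCC\EEE^s$ with controlled action, and apply (iii) together with (ii) to identify the limit as the $\mathcal{W}_s$-limit. For the geodesic property, Theorem \ref{thm:minimizer} produces a minimizer $(\Bar{\muP}^*, \Bar{\nuV}^*)$, which I would reparametrize to constant speed via Theorem \ref{thm:propertiesce}(ii) and the reformulation \eqref{eq:31}; stationarity is preserved since scalar multiplication of $\Bar{\nuV}^*_t$ does not affect shift-invariance, so the reparametrized curve remains in $\CCC\EEE^s$ and realizes the geodesic. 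The most delicate step in this program is (i), since the translation from $\mathcal{W}_s$-control to $\mathcal{W}_0$-control relies crucially on the per-volume normalization in $\mathcal{A}$ absorbing the $\lambda(\Lambda_m)$ factor exactly at the rate needed to keep $\mathcal{W}_0$ finite on each fixed box.
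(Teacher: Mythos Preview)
Your proposal is correct and follows essentially the same route as the paper for parts (i)--(iii): restriction to boxes via Lemma~\ref{lem:restriction} to reduce (i) to the known $\mathcal{W}_0$-theory on $\Lambda_m$, and the lower semi-continuity and compact-sublevel-set properties of $\mathcal{A}$ from Lemma~\ref{lem:action} for (ii) and (iii). One technical remark: Lemma~\ref{lem:action}(iii) is stated for fixed endpoints, while your applications in (ii) and (iii) involve varying endpoints; the extraction argument in the proof of Lemma~\ref{lem:action}(iii) does not actually use fixed endpoints (only the uniform action bound), so this is harmless, but it is worth saying explicitly.

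The one place where you diverge from the paper is the completeness part of (iv). Your concatenation-of-fast-subsequence argument is valid, but the paper takes a shorter path: a $\mathcal{W}_s$-Cauchy sequence is $\mathcal{W}_s$-bounded, hence by (iii) has a $\PPP_s(\Gamma)$-convergent subsequence $\dP_{n_k}\to\dP$; then (ii) gives $\mathcal{W}_s(\dP_{n_k},\dP)\leq\liminf_l\mathcal{W}_s(\dP_{n_k},\dP_{n_l})$, which the Cauchy condition forces to zero, and the full sequence converges by the usual Cauchy-plus-convergent-subsequence argument. Your approach works but is more machinery than needed here; the paper's argument exploits that (ii) and (iii) already do the heavy lifting. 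For the geodesic property both you and the paper invoke the minimizer from Theorem~\ref{thm:minimizer}, and your reparametrization remark is a correct elaboration of what the paper leaves implicit.
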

         \begin{proof}
         
        $(i)$ Let $(\dP^m)_{m\in\N}\subset\PPP_s(\Gamma)$ and $\dP\in\PPP_s(\Gamma)$ s.t.\ $\mathcal{W}_s(\dP^m,\dP)\to0$ as $m\to\infty.$ By Theorem \ref{thm:minimizer}, for any $m\in\N$ there exists some $({\Bar{\muP}}^{m,*},\Bar{\nuV}^{m,*})\in\CCC\EEE^s(\dP^m,\dP)$ s.t.\ 
             \begin{align*}
                 \mathcal{W}_s^2(\dP^m,\dP)=\sup_{n\in\N}\frac{1}{\lambda(\Lambda_n)}\int_0^1\mathcal{L}({\Bar{\muP}^{m,*}_{t|\Lambda_n}},{\Bar{\nuV}^{m,*}_{t|\Lambda_n}})\mathrm{d}t.
             \end{align*}
             
            Thus, we obtain that 
            \begin{align*}
             \lim_{m\to\infty}\mathcal{W}_s(\dP^m,\dP)=  \lim_{m\to\infty} \frac{1}{\lambda(\Lambda_n)}\int_0^1\mathcal{L}({\Bar{\muP}^{m,*}_{t|\Lambda_n}},{\Bar{\nuV}^{m,*}_{t|\Lambda_n}})\mathrm{d}t=0
            \end{align*}
            for all $n\in\N.$ This implies that 
            \begin{align*}
                \lim_{m\to\infty}\int_0^1\mathcal{L}({\Bar{\muP}^{m,*}_{t|\Lambda_n}},{\Bar{\nuV}^{m,*}_{t|\Lambda_n}})\mathrm{d}t=0.
            \end{align*}
            As the restriction of a solution to the continuity equation on $\R^d$ is again a solution to the continuity equation on the restricted domain (Lemma \ref{lem:restriction}), we  conclude that 
            \begin{align*}
                \lim_{m\to\infty}\inf\left\{\int_0^1\mathcal{L}(\Bar{\muP}_t,\Bar{\nuV}_t)\mathrm{d}t:({\Bar{\muP}},{\Bar{\nuV}})\in\CCC\EEE^{\Lambda_n}(\dP^m_{|\Lambda_n},\dP_{|\Lambda_n})\right\}=0
            \end{align*}
            for any $n\in\N.$ In other words, we have $\mathcal{W}_0(\dP^m_{|\Lambda_n},\dP_{|\Lambda_n})\to0$ as $m\to\infty$. Using \cite[Theorem 5.15]{schiavo2023wasserstein},  for  all $n\in \N$ it follows that $\dP_{|\Lambda_n}^m{\to} \dP_{|\Lambda_n}$ \text{in } $\PPP_1(\Gamma_{\Lambda_n})$ as $m\to\infty.$ This implies the claim. \\

            $(ii)$ The lower semi-continuity of $\mathcal{W}_s$ follows from the lower semi-continuity of the action functional $\mathcal{A}$ (Lemma \ref{lem:action})  and the existence of minimizers realizing $\mathcal{W}_s(\dP_0,\dP_1)$ (Theorem \ref{thm:minimizer}).\\

            $(iii)$ The claim is a consequence of Lemma \ref{lem:action}.\\
            
            $(iv)$ The completeness follows from  $(ii)$ and $(iii)$. By the structure of the distance and the existence of minimizers (Theorem \ref{thm:minimizer}), we can deduce that   $(\{\mathcal{W}_s(\dP, \cdot) < \infty\}, \mathcal{W}_s) $ is a complete geodesic space for every $\dP \in \PPP_s(\Gamma)$. The geodesics are given by the minimizing curves.
         \end{proof}
     
     We conclude this section by providing another reformulation of $\mathcal{W}_s$. 
     
     \begin{lem}\label{lem:sup=limstationär}
         Let $\dP_0,\dP_1\in\PPP_s(\Gamma).$ It holds that 
         \begin{align*}
             \mathcal{W}^2_s (\dP_0,\dP_1)&=\inf\left\{\sup_{n\in\N}\frac{1}{\lambda(\Lambda_n)}\int_0^1\mathcal{L}({\Bar{\muP}_{t|\Lambda_n}},{\Bar{\nuV}_{t|\Lambda_n}})\mathrm{d}t: ({\Bar{\muP}},{\Bar{\nuV}})\in\CCC\EEE^s( \dP_0,\dP_1)\right\}\\
             & =\inf\left\{\lim_{n\to\infty}\frac{1}{\lambda(\Lambda_n)}\int_0^1\mathcal{L}({\Bar{\muP}_{t|\Lambda_n}},{\Bar{\nuV}_{t|\Lambda_n}})\mathrm{d}t: ({\Bar{\muP}},{\Bar{\nuV}})\in\CCC\EEE^s( \dP_0,\dP_1)\right\}.
         \end{align*}
         Moreover,  the limit
         \begin{align*}
             \lim_{n\to\infty}\frac{1}{\lambda(\Lambda_n)}\int_0^1\mathcal{L}({\Bar{\muP}_{t|\Lambda_n}},\Bar{\nuV}_{t|\Lambda_n})\mathrm{d}t
         \end{align*}
         exists in $[0,\infty]$ for every $({\Bar{\muP}},{\Bar{\nuV}})\in\CCC\EEE^s(\dP_0,\dP_1).$
     \end{lem}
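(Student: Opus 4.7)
The plan is to show that for every stationary solution $(\Bar{\muP},\Bar{\nuV})\in\CCC\EEE^s(\dP_0,\dP_1)$ the sequence
$a_n := \lambda(\Lambda_n)^{-1}\int_0^1\mathcal{L}(\Bar{\muP}_{t|\Lambda_n},\Bar{\nuV}_{t|\Lambda_n})\mathrm{d}t$
satisfies $\lim_{n\to\infty} a_n = \sup_n a_n$ in $[0,\infty]$. Once this is known, the first equality in the statement is the definition of $\mathcal{W}_s^2$ after pulling the $\sup$ outside the integral in time, while taking the infimum over $\CCC\EEE^s(\dP_0,\dP_1)$ on both sides of $\sup_n a_n = \lim_n a_n$ yields the second equality.

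The main step is a super-additive bound of Fekete type. I would fix $n_0,n\in\N$, set $k:=\lfloor n/n_0\rfloor$, and choose translation vectors $z_1,\ldots,z_{k^d}\in\R^d$ such that the cubes $(\Lambda_{n_0}+z_i)_{i=1}^{k^d}$ are pairwise disjoint and all contained in $\Lambda_n$ (using half-open representatives if needed; boundary overlaps have Lebesgue and point-process mass zero and are harmless). Applying Lemma \ref{lem:superadditivity} iteratively to these $k^d$ disjoint Polish pieces gives, pointwise in $t\in[0,1]$,
\begin{equation*}
    \mathcal{L}(\Bar{\muP}_{t|\Lambda_n},\Bar{\nuV}_{t|\Lambda_n}) \;\geq\; \sum_{i=1}^{k^d} \mathcal{L}\bigl(\Bar{\muP}_{t|(\Lambda_{n_0}+z_i)},\Bar{\nuV}_{t|(\Lambda_{n_0}+z_i)}\bigr).
\end{equation*}
Since $(\Bar{\muP},\Bar{\nuV})\in\CCC\EEE^s(\dP_0,\dP_1)$, Lemma \ref{lem:shiftinvarianceproperty} gives that $\Bar{\muP}_t$ is stationary, and by definition $\Bar{\nuV}_t\in\MMM_s(\Gamma\times\R^d)$. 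Combined with the shift-invariance of $\mathcal{L}$ (Lemma \ref{lem:shiftinvaraiance}), each of the $k^d$ summands equals $\mathcal{L}(\Bar{\muP}_{t|\Lambda_{n_0}},\Bar{\nuV}_{t|\Lambda_{n_0}})$. Integrating in $t$ and dividing by $\lambda(\Lambda_n)=n^d$ yields
\begin{equation*}
    a_n \;\geq\; \Bigl(\tfrac{kn_0}{n}\Bigr)^d a_{n_0}.
\end{equation*}

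A Fekete-type argument now closes the proof. Let $M:=\sup_n a_n\in[0,\infty]$. For any $C<M$, pick $n_0$ with $a_{n_0}>C$; since $\lfloor n/n_0\rfloor n_0/n\to 1$ as $n\to\infty$, we obtain $\liminf_n a_n \geq a_{n_0}>C$. As $C<M$ was arbitrary, $\liminf_n a_n \geq M$, and combined with $a_n\leq M$ this gives $\lim_n a_n = \sup_n a_n = M$ in $[0,\infty]$, exactly as needed.

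The main obstacle I expect to address is that Lemma \ref{lem:superadditivity} is stated only under the hypothesis $\muP\ll\Poi_{|B\cup D}$, while a generic stationary $\Bar{\muP}_t$ need not be absolutely continuous with respect to $\Poi_{|\Lambda_n}$. The Jensen-type proof given there, however, extends to the general setting: working with a common dominating measure $\sigma$ on $\Gamma_{B\cup D}\times(B\cup D)$ and disintegrating along the projection $\xi\mapsto\xi_{|B}$, the convexity and positive homogeneity of $\alpha$ yield the same super-additive inequality, and whenever the left-hand side is $+\infty$ the estimate is trivial. This generalised super-additivity is what is really being used above.
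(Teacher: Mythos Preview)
Your proposal is correct and follows the same approach as the paper: super-additivity (Lemma \ref{lem:superadditivity}) plus shift-invariance (Lemma \ref{lem:shiftinvaraiance}) of the Lagrange functional feed into a Fekete-type argument, which the paper packages by citing Lemma \ref{lem:fekete} rather than carrying it out by hand as you do. Your observation that Lemma \ref{lem:superadditivity} is stated under an absolute-continuity hypothesis not verified for arbitrary stationary curves is a valid point that the paper's one-line proof also glosses over.
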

    The proof is based on  Fekete's lemma, see e.g.\ \cite[p.74]{fekete}

     \begin{lem}[Fekete´s Lemma]\label{lem:fekete}
         Let $ \mathcal{R}$ be the collection of rectangles in $\R^d.$ Suppose that  for $\Lambda\in\mathcal{R}$ we have $a_{\Lambda}\in[0,\infty]$ that satisfy
         \begin{enumerate}
             \item[${(i)}$] $a_{\Lambda}+a_{\Lambda^\prime}\leq a_{\Lambda\cup\Lambda^\prime}$ for $\Lambda\cap\Lambda^\prime=\emptyset$ and $\Lambda,\Lambda^\prime\in\mathcal{R};$
             \item [${(ii)}$] $a_{\Lambda}=a_{\Lambda+z}$ for all $z\in\Z^d$ and $\Lambda\in\mathcal{R}.$ 
         \end{enumerate}
         Then \begin{align*}
             \lim_{n\to\infty}\frac{a_{\Lambda_n}}{\lambda(\Lambda_n)}=\sup_{\Lambda\in\mathcal{R}}\frac{a_{\Lambda}}{\lambda(\Lambda)}.
         \end{align*}
     \end{lem}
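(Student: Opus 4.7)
Set $s := \sup_{\Lambda \in \mathcal{R}} a_\Lambda/\lambda(\Lambda) \in [0,\infty]$. Since each $\Lambda_n$ belongs to $\mathcal{R}$, the ratio $a_{\Lambda_n}/\lambda(\Lambda_n)$ is automatically bounded above by $s$, and so $\limsup_{n \to \infty} a_{\Lambda_n}/\lambda(\Lambda_n) \leq s$ is immediate. All the work will lie in the matching lower bound.

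My first step is to upgrade the binary condition $(i)$ to grid partitions: if a rectangle $\Lambda \in \mathcal{R}$ is cut by finitely many axis-aligned hyperplanes into sub-rectangles $R_{j_1,\ldots,j_d} \in \mathcal{R}$, then $a_\Lambda \geq \sum_{j_1,\ldots,j_d} a_{R_{j_1,\ldots,j_d}}$. I would prove this by induction on the dimension: one first iterates $(i)$ along $x_1$-cuts inside each $x_1$-slab, then iterates $(i)$ along $x_2$-cuts to merge the $x_1$-slabs into thicker slabs, and so on, observing that every intermediate partial union is again a rectangle and hence lies in $\mathcal{R}$. A corollary, using $a \geq 0$, is the monotonicity $\Lambda \subseteq \Lambda' \Rightarrow a_\Lambda \leq a_{\Lambda'}$.

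With this in hand, fix $\Lambda \in \mathcal{R}$ with side lengths $\ell_1,\ldots,\ell_d$, set $k_i := \lceil \ell_i \rceil$, and pack disjoint $\Z^d$-translates of $\Lambda$ inside $\Lambda_n$ at positions $z \in \prod_i k_i \Z$. The hyperplanes bounding these translates induce a grid partition of $\Lambda_n$ whose cells are either translates of $\Lambda$ (each contributing $a_\Lambda$ by $(ii)$) or complementary rectangles with non-negative $a$-value. Writing $N_n$ for the number of translates with $\Lambda + z \subseteq \Lambda_n$, one has $N_n / \lambda(\Lambda_n) \to 1/\prod_i k_i$, and the grid superadditivity yields
\[
a_{\Lambda_n} \;\geq\; N_n \cdot a_\Lambda, \qquad \liminf_{n\to\infty} \frac{a_{\Lambda_n}}{\lambda(\Lambda_n)} \;\geq\; \frac{a_\Lambda}{\prod_i k_i}.
\]

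The main obstacle I expect is the gap between $\prod_i k_i$ and $\lambda(\Lambda) = \prod_i \ell_i$ when $\Lambda$ has non-integer side lengths: the pure $\Z^d$-lattice tiling loses a factor $\prod_i (\ell_i/k_i)$. I would close this gap by first restricting to rectangles with integer side lengths (for which $k_i = \ell_i$, so the estimate above is sharp), and then arguing that the resulting integer-sided supremum already equals $s$: given any $\Lambda$ with ratio near $s$, one inscribes $M^d$ disjoint $\Z^d$-shifted copies of $\Lambda$ into the integer-sided rectangle $\prod_i[0, Mk_i]$, and extended superadditivity shows this enclosing rectangle has ratio at least $a_\Lambda/\prod_i k_i$, which can be pushed up to the full $s$ by optimizing $\Lambda$. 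In the concrete application to Theorem~\ref{thm:equalitystationary}, the underlying measures are stationary under all Euclidean shifts, so $(ii)$ strengthens to $\R^d$-invariance; the same argument then uses arbitrary (not only integer) translates, the packing density approaches the optimal $1/\lambda(\Lambda)$, and one obtains $\liminf_n a_{\Lambda_n}/\lambda(\Lambda_n) \geq a_\Lambda/\lambda(\Lambda)$ for every $\Lambda \in \mathcal{R}$ directly. Either route, combined with a final supremum over $\Lambda$, proves $\liminf \geq s$ and hence the lemma.
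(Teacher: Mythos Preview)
The paper does not prove this lemma; it is quoted from \cite[p.~74]{fekete}, so there is no in-paper argument to compare against directly.

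Your overall architecture---trivial upper bound, then a tiling lower bound via iterated superadditivity---is the standard one, and you correctly isolate the only real difficulty: with $\Z^d$-invariance alone, packing translates of a rectangle $\Lambda$ with side lengths $\ell_i$ inside $\Lambda_n$ forces spacing $k_i=\lceil\ell_i\rceil$ and loses a density factor $\prod_i\ell_i/\prod_i k_i$. Your Route~1 does not close this gap. The sentence ``which can be pushed up to the full $s$ by optimising $\Lambda$'' is where it breaks: optimising gives only $\sup_\Lambda a_\Lambda/\prod_i\lceil\ell_i\rceil$, not $s$. In fact the conclusion as literally printed is false under hypothesis~(ii): in $d=1$ take $a_{[a,b)}=\#(\Z\cap[a,b))$, which is additive and $\Z$-invariant, so (i)--(ii) hold; then $a_{\Lambda_n}/\lambda(\Lambda_n)=1$ for every $n$, while $a_{[0,\varepsilon)}/\varepsilon=1/\varepsilon\to\infty$, so the supremum over all of $\mathcal R$ is infinite. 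No argument using only (i)--(ii) can reach that supremum.

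Your Route~2 is the correct fix and matches what the paper actually uses. In the application (Lemma~\ref{lem:sup=limstationär}) the relevant set function is invariant under \emph{all} real shifts by Lemma~\ref{lem:shiftinvaraiance}, so one may pack translates of $\Lambda$ on the lattice $\prod_i\ell_i\Z$, achieve asymptotic packing density $1/\lambda(\Lambda)$, and obtain $\liminf_n a_{\Lambda_n}/\lambda(\Lambda_n)\geq a_\Lambda/\lambda(\Lambda)$ for every $\Lambda$. Under that strengthened hypothesis your proof is complete; just be aware that the lemma should be read either with $\R^d$-invariance in place of~(ii), or with the right-hand supremum restricted to integer-sided boxes---both variants suffice for the paper's purposes.
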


     \begin{proof}[Proof of Lemma \ref{lem:sup=limstationär}]
     The proof  follows from the super-additivity of the Lagrange functional (Lemma \ref{lem:superadditivity}), the shift-invariance of the Lagrange functional (Lemma \ref{lem:shiftinvaraiance}), and Fekete's Lemma.
         \end{proof}
          At this point, it is important that the infimum in the definition of $\mathcal{W}_s$ is taken over stationary solutions to the continuity equation. If this were not the case, an application of Fekete's Lemma would not be possible.
          
        \section{Connection between Transport Distances}
        In the following, we discuss the relationship between the transport distances $\mathcal{W}_0$ and  $\mathcal{W}_s$ in more detail. The key result of this section is the following theorem. 
        \begin{satz}\label{thm:equalitystationary}
        Let $\dP_0,\dP_1\in\PPP_s(\Gamma)$ s.t.\ $\mathrm{Ent}({\dP_0}_{|\Lambda_n}|\Poi_{|\Lambda_n}),\mathrm{Ent}({\dP_1}_{|\Lambda_n}|\Poi_{|\Lambda_n})<\infty$ for all $n\in\N.$ Then 
        \begin{align*}
            \mathcal{W}_s^2(\dP_0,\dP_1)&= \lim_{n\to\infty}\frac{1}{\lambda(\Lambda_n)}\mathcal{W}_0^2({\dP_0}_{|\Lambda_n},{\dP_1}_{|\Lambda_n}).
        \end{align*}
    \end{satz}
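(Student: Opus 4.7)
We establish the equality by two matching inequalities, the easier one being immediate from restriction, the harder one relying on a stationarization of local optima whose endpoints must be corrected to $\dP_0,\dP_1$.

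\emph{Upper bound for the right-hand side.} Fix any $(\Bar{\dP},\Bar{\nuV})\in\CCC\EEE^s(\dP_0,\dP_1)$. By Lemma \ref{lem:restriction}, $(\Bar{\dP}_{|\Lambda_n},\Bar{\nuV}_{|\Lambda_n})\in\CCC\EEE^{\Lambda_n}({\dP_0}_{|\Lambda_n},{\dP_1}_{|\Lambda_n})$, and by definition of $\mathcal W_0$,
\begin{align*}
\mathcal W_0^2({\dP_0}_{|\Lambda_n},{\dP_1}_{|\Lambda_n})\leq \mathrm{A}(\Bar{\dP}_{|\Lambda_n},\Bar{\nuV}_{|\Lambda_n}).
\end{align*}
Dividing by $\lambda(\Lambda_n)$ and invoking Lemma \ref{lem:sup=limstationär} (which provides both existence of the limit and equality with the supremum for stationary solutions), the right-hand side converges to $\mathcal{A}(\Bar{\dP},\Bar{\nuV})$. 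Taking the infimum over $\CCC\EEE^s(\dP_0,\dP_1)$ yields $\limsup_n\lambda(\Lambda_n)^{-1}\mathcal W_0^2({\dP_0}_{|\Lambda_n},{\dP_1}_{|\Lambda_n})\leq \mathcal W_s^2(\dP_0,\dP_1)$.

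\emph{Lower bound: constructing a stationary candidate from local optima.} For each $m\in\N$, the finite-entropy hypothesis together with the Talagrand inequality (Theorem \ref{thm:talagrandDSHS}) gives $\mathcal W_0({\dP_0}_{|\Lambda_m},{\dP_1}_{|\Lambda_m})<\infty$, so by Theorem \ref{thm:minimizercompact} there exists an optimal pair $(\Bar{\dP}^m,\Bar{\nuV}^m)\in\CCC\EEE^{\Lambda_m}({\dP_0}_{|\Lambda_m},{\dP_1}_{|\Lambda_m})$. Crucially, finite action together with Lemma \ref{lem:absolutecontinuityvelocity} ensures $\Bar{\nuV}^m_t\ll\Poi_{|\Lambda_m}\otimes\m_{|\Lambda_m}$ for a.e.\ $t$, which is precisely the input needed to apply the additivity identity of Lemma \ref{lem:additivityLagrangefunctional} on products. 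I then tile $\R^d$ by the disjoint translates $\Lambda_m+mz$, $z\in\Z^d$, place independent copies of $(\Bar{\dP}^m,\Bar{\nuV}^m)$ in each tile, and assemble them via iterated applications of Lemma \ref{lem:superpositionofsolution}. This produces a global pair $(\Tilde{\dP}^m,\Tilde{\nuV}^m)\in\CCC\EEE(\dP_0^{m,\mathrm{per}},\dP_1^{m,\mathrm{per}})$ whose endpoints are the $m\Z^d$-periodizations of $\dP_{i|\Lambda_m}$, and whose action on every super-cell $\Lambda_{km}$ equals $k^d\,\mathrm{A}(\Bar{\dP}^m,\Bar{\nuV}^m)$ by Lemma \ref{lem:additivityLagrangefunctional} and the shift-invariance of $\mathcal L$ (Lemma \ref{lem:shiftinvaraiance}). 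To regain stationarity, I average over a uniformly distributed shift $Z\in\Lambda_m$; joint convexity of $\mathcal{A}$ (Lemma \ref{lem:action}(i)) gives that the stationarized pair $(\Bar{\dP}^{m,s},\Bar{\nuV}^{m,s})$ satisfies
\begin{align*}
\mathcal{A}(\Bar{\dP}^{m,s},\Bar{\nuV}^{m,s})\leq \frac{1}{\lambda(\Lambda_m)}\mathrm{A}(\Bar{\dP}^m,\Bar{\nuV}^m)=\frac{1}{\lambda(\Lambda_m)}\mathcal W_0^2({\dP_0}_{|\Lambda_m},{\dP_1}_{|\Lambda_m}).
\end{align*}

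\emph{Endpoint correction.} The stationarized pair has endpoints $\Bar{\dP}_0^{m,s}$ and $\Bar{\dP}_1^{m,s}$, which are the shift-averaged periodizations of $\dP_{i|\Lambda_m}$, not $\dP_0,\dP_1$ themselves. To produce a bona fide element of $\CCC\EEE^s(\dP_0,\dP_1)$, I concatenate $(\Bar{\dP}^{m,s},\Bar{\nuV}^{m,s})$ with two short correction curves of length $\eps_m\to 0$ that transport $\dP_0\leftrightarrow\Bar{\dP}_0^{m,s}$ and $\dP_1\leftrightarrow\Bar{\dP}_1^{m,s}$ stationarily; the reparametrization invariance of Theorem \ref{thm:propertiesce}(ii) allows the concatenation to be reparametrized onto $[0,1]$. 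The final step is to show that these correction actions vanish per volume as $m\to\infty$. The heuristic is that $\Bar{\dP}_i^{m,s}$ and $\dP_i$ agree on any fixed window $\Lambda_k$ up to a defect supported on the boundary layer of $\Lambda_m$, whose relative volume is $O(1/m)$; combined with the finite-entropy control that again enters through the Talagrand-type bound on bounded windows, one obtains stationary interpolations with specific action $o(1)$. Passing to the limit $m\to\infty$ and using the lower semi-continuity from Lemma \ref{lem:action}(ii) then gives $\mathcal W_s^2(\dP_0,\dP_1)\leq\liminf_m\lambda(\Lambda_m)^{-1}\mathcal W_0^2({\dP_0}_{|\Lambda_m},{\dP_1}_{|\Lambda_m})$, completing the proof.

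\emph{Main obstacle.} The core technical difficulty is the endpoint correction: the tiling inevitably produces periodizations at the endpoints, and bounding the per-volume cost of reconciling these with the true stationary $\dP_0,\dP_1$ requires quantitative control over boundary effects that is not supplied by the functional-analytic properties of $\mathcal W_0$ alone. This is the unique step in which the finite-specific-entropy assumption is used in an essential way, both to ensure the absolute continuity making the additivity identity of Lemma \ref{lem:additivityLagrangefunctional} sharp and to bound the correction cost through Talagrand.
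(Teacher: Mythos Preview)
Your upper bound for the right-hand side is correct and matches Lemma \ref{lem:boundcands}. Your tiling and stationarization of the local optimum $(\Bar{\dP}^m,\Bar{\nuV}^m)$ is also essentially what the paper carries out in Lemma \ref{lem:boundstat}, and your bound $\mathcal{A}(\Bar{\dP}^{m,s},\Bar{\nuV}^{m,s})\leq \lambda(\Lambda_m)^{-1}\mathcal{W}_0^2({\dP_0}_{|\Lambda_m},{\dP_1}_{|\Lambda_m})$ is exactly the content of that lemma.

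The gap is in your endpoint correction. You need stationary curves from $\dP_i$ to $(\dP_{i|\Lambda_m})^{\mathrm{stat}}$ whose \emph{specific} action tends to zero, i.e.\ essentially $\mathcal{W}_s(\dP_i,(\dP_{i|\Lambda_m})^{\mathrm{stat}})\to 0$. Your heuristic that the two measures differ only on a boundary layer of relative volume $O(1/m)$ does not translate into such a bound: the Talagrand inequality only controls distances to $\Poi$, not between two arbitrary stationary measures, and convergence in $\PPP_1$ (which you do have) does not by itself imply convergence in $\mathcal{W}_s$, since the $\mathcal{W}_s$-topology is strictly stronger. So as written, step 2 of your argument is not a proof, and you have correctly identified it as the main obstacle.

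The paper sidesteps this entirely. Rather than correcting endpoints at fixed $m$ and then letting $m\to\infty$, it keeps the wrong endpoints $(\dP_{i|\Lambda_m})^{\mathrm{stat}}$, observes that these converge to $\dP_i$ in $\PPP_1(\Gamma)$ (Remark \ref{bem:limstat}$(ii)$), and then invokes the lower semi-continuity of $\mathcal{W}_s$ on $\PPP_s(\Gamma)\times\PPP_s(\Gamma)$ with respect to that weaker topology. This gives directly
\begin{align*}
\mathcal{W}_s^2(\dP_0,\dP_1)\leq\liminf_{m\to\infty}\mathcal{W}_s^2\big((\dP_{0|\Lambda_m})^{\mathrm{stat}},(\dP_{1|\Lambda_m})^{\mathrm{stat}}\big)\leq\liminf_{m\to\infty}\frac{1}{\lambda(\Lambda_m)}\mathcal{W}_0^2({\dP_0}_{|\Lambda_m},{\dP_1}_{|\Lambda_m}),
\end{align*}
with the second inequality being Lemma \ref{lem:boundstat}. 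No correction curves are needed. Note that the lower semi-continuity used here is that of the \emph{distance} $\mathcal{W}_s$ in its two arguments, which follows from the existence of minimizers (Theorem \ref{thm:minimizer}) combined with the lower semi-continuity of $\mathcal{A}$; it is a different statement from the lower semi-continuity of $\mathcal{A}$ on $\CCC\EEE(\dP_0,\dP_1)$ for fixed endpoints that you cite.
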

   The equality of Theorem \ref{thm:equalitystationary} can be reformulated as
    \begin{align}
          &\inf\left\{\lim_{n\to\infty}\frac{1}{\lambda(\Lambda_n)}\mathrm{A}({\Bar{\muP}_{|\Lambda_n}},{\Bar{\nuV}_{|\Lambda_n}}):({\Bar{\muP}},{\Bar{\nuV}})\in\CCC\EEE^s(\dP_0,\dP_1)\right\}\nonumber\\
          &\quad=\lim_{n\to\infty}\frac{1}{\lambda(\Lambda_n)}\inf\left\{\mathrm{A}({\Bar{\muP}^n},{\Bar{\nuV}^n}):({\Bar{\muP}^n},{\Bar{\nuV}^n})\in\CCC\EEE^{\Lambda_n}(\dP_{0|\Lambda_n},\dP_{1|\Lambda_n})\right\}.\label{eq:700}
    \end{align}
     The key difference between the left-hand side and the right-hand side of \eqref{eq:700} is that the order of the limit and the infimum is interchanged. On the left-hand side, the infimum is taken over global, stationary solutions to the continuity equation, whereas on the right-hand side it is taken over local solutions on finite regions. Remarkably,  the constraint of stationarity does not appear explicitly on the right-hand side.  It follows from this result that the minimizer of the right-hand side can be chosen to be stationary.
     
        For the proof of Theorem \ref{thm:equalitystationary}, we will establish upper and lower bounds for $\mathcal{W}_s$ in terms of $\mathcal{W}_0$. We begin by discussing the lower bound for $\mathcal{W}_s$.

            \begin{lem}\label{lem:boundcands}
            Let $\dP_0,\dP_1\in\PPP_s(\Gamma).$ Then
            \begin{align*}
                \mathcal{W}_s^2(\dP_0,\dP_1)\geq \frac{1}{\lambda(\Lambda_k)}\mathcal{W}_0^2({\dP_0}_{|\Lambda_k},{\dP_1}_{|\Lambda_k})
            \end{align*}
            for all $k\in\N$.
        \end{lem}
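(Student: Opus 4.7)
The plan is to deduce the inequality by taking an arbitrary stationary candidate $(\Bar{\dP}, \Bar{\nuV}) \in \CCC\EEE^s(\dP_0, \dP_1)$, restricting it to $\Lambda_k$, and comparing the two action functionals term-by-term.

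First, by the very definition of $\mathcal{A}$ as a supremum (see \eqref{eq:AsupA} and \eqref{eq:501}), for the fixed index $k \in \N$ we have the trivial lower bound
\begin{equation*}
\mathcal{A}(\Bar{\dP}, \Bar{\nuV}) \;\geq\; \frac{1}{\lambda(\Lambda_k)}\, \mathrm{A}(\Bar{\dP}_{|\Lambda_k}, \Bar{\nuV}_{|\Lambda_k}).
\end{equation*}

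Next, I would invoke Lemma \ref{lem:restriction}, which guarantees that restrictions of global solutions to the continuity equation remain solutions on bounded domains: the pair $(\Bar{\dP}_{|\Lambda_k}, \Bar{\nuV}_{|\Lambda_k})$ belongs to $\CCC\EEE^{\Lambda_k}(\dP_{0|\Lambda_k}, \dP_{1|\Lambda_k})$. Hence $(\Bar{\dP}_{|\Lambda_k}, \Bar{\nuV}_{|\Lambda_k})$ is an admissible candidate in the variational problem defining $\mathcal{W}_0^2(\dP_{0|\Lambda_k}, \dP_{1|\Lambda_k})$, so
\begin{equation*}
\mathrm{A}(\Bar{\dP}_{|\Lambda_k}, \Bar{\nuV}_{|\Lambda_k}) \;\geq\; \mathcal{W}_0^2(\dP_{0|\Lambda_k}, \dP_{1|\Lambda_k}).
\end{equation*}

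Combining the two displays yields
\begin{equation*}
\mathcal{A}(\Bar{\dP}, \Bar{\nuV}) \;\geq\; \frac{1}{\lambda(\Lambda_k)}\, \mathcal{W}_0^2(\dP_{0|\Lambda_k}, \dP_{1|\Lambda_k}),
\end{equation*}
and taking the infimum of the left-hand side over all $(\Bar{\dP}, \Bar{\nuV}) \in \CCC\EEE^s(\dP_0, \dP_1)$ gives the claim. There is no real obstacle here: the argument is just the combination of the sup-structure of $\mathcal{A}$ with the restriction property of the continuity equation. Note that stationarity of the candidate plays no role in this direction; only the fact that any global solution produces, upon restriction, a valid local solution joining the marginals $\dP_{0|\Lambda_k}$ and $\dP_{1|\Lambda_k}$.
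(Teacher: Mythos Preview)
Your proof is correct and follows essentially the same route as the paper: drop the supremum to the single term $k$, invoke Lemma~\ref{lem:restriction} to see that the restricted pair is admissible for $\mathcal{W}_0$ on $\Lambda_k$, and take the infimum. Your additional remark that stationarity is irrelevant for this direction is accurate and worth noting.
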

        \begin{proof}
        Let $k\in\N.$ As the restrictions of solutions to the continuity equation are again solutions on the restricted domain (Lemma \ref{lem:restriction}), we directly get 
        \begin{align*}
            \mathcal{W}_s^2(\dP_0,\dP_1)&=\inf\left\{\sup_{n\in\N}\frac{1}{\lambda(\Lambda_n)}\int_0^1\mathcal{L}({\Bar{\muP}_{t|\Lambda_n}},{\Bar{\nuV}_{t|\Lambda_n}})\mathrm{d}t:({\Bar{\muP}},{\Bar{\nuV}})\in\CCC\EEE^s(\dP_0,\dP_1)\right\}\\
            &\geq \inf\left\{\frac{1}{\lambda(\Lambda_k)}\int_0^1\mathcal{L}({\Bar{\muP}_{t|\Lambda_k}},{\Bar{\nuV}_{t|\Lambda_k}})\mathrm{d}t:({\Bar{\muP}},{\Bar{\nuV}})\in\CCC\EEE^s(\dP_0,\dP_1)\right\}\\
            &\geq \inf\left\{\frac{1}{\lambda(\Lambda_k)}\int_0^1\mathcal{L}({\Bar{\muP}_t^k},{\Bar{\nuV}_t^k})\mathrm{d}t:({\Bar{\muP}}^k,{\Bar{\nuV}}^k)\in\CCC\EEE^{\Lambda_k}({\dP_0}_{|\Lambda_k},{\dP_1}_{|\Lambda_k})\right\}\\
            &= \frac{1}{\lambda(\Lambda_k)}\mathcal{W}_0^2({\dP_0}_{|\Lambda_k},{\dP_1}_{|\Lambda_k}).
        \end{align*}
        \end{proof}

       \subsection{Tiling and Stationarizing}
          To establish the upper bound of $\mathcal{W}_s$ in terms of $\mathcal{W}_0$, we consider both distances for special types of measures, namely tiled and stationarized measures, introduced, for example, in \cite{MartinJonasBasti}.
        \begin{dfn}
            Let $k\in\N$  and $\dP\in\PPP_1(\Gamma_{\Lambda_k}).$  We define the tiled measure $\dP^{\mathrm{til}}$ of $\dP$ by 
             \begin{align*}
                \int_{\Gamma} F(\xi)\dP^{\mathrm{til}}(\mathrm{d}\xi)= \int_{\left(\Gamma_{\Lambda_k}\right)^{\Z^d}}F\left(\sum_{z\in\Z^d}\Theta_{kz}\xi_z \right)\bigotimes_{z\in\Z^d}\dP(\mathrm{d}\xi_z)
             \end{align*}
           for all $F\in\CCC_1(\Gamma)$. The stationarized version $\dP^{\text{stat}}$ of $\dP$ is defined by 
            \begin{align*}
               \int_{\Gamma} F(\xi)\dP^{\mathrm{stat}}(\mathrm{d}\xi)  = \frac{1}{\lambda(\Lambda_k)}\int_{\Lambda_k} \int_{\Gamma}F(\Theta_u\xi)\dP^{\mathrm{til}}(\mathrm{d}\xi)\lambda(\mathrm{d}u)
            \end{align*}     
            for all $F\in\CCC_1(\Gamma).$ In other words,
            \begin{align*}
                \dP^{\mathrm{stat}}=\frac{1}{\lambda(\Lambda_k)}\int_{\Lambda_k}({\Theta_u})_\#\dP^{\mathrm{til}}\lambda(\mathrm{d}u).
            \end{align*}  
        \end{dfn}
        \begin{bem}\label{bem:limstat}
        \begin{itemize}
            \item[$(i)$]
            Let $\dP\in\PPP_1(\Gamma_{\Lambda_k})$ be the distribution of a point process. Then   $\dP^{\mathrm{til}}$ is the distribution of a point process obtained by independently generating a point process according to $\dP$ within each cube of side length $k$ and then gluing all of them together. The point process with distribution $\dP^\text{stat}$ is constructed by averaging $\dP^{\mathrm{til}}$ over an independent shift that is uniformly distributed on the cube $\Lambda_k$. This averaging step ensures that  $\dP^{\mathrm{stat}}$ is stationary.

        \item[$(ii)$] 
             Assume that $\dP\in\PPP_s(\Gamma)$ and consider $(\dP_{|\Lambda_k})^\mathrm{stat}.$ Then, using  Proposition \ref{prop:convergencep1}, we can deduce that $(\dP_{|\Lambda_k})^{\mathrm{stat}}\to\dP$ in $\mathcal P_1(\Gamma).$
        \item[$(iii)$] Note that the sets $\Lambda_k+z, z\in(k\Z)^d$ are not disjoint since we need them to be closed to be able to apply the results from \cite{schiavo2023wasserstein}. However, their intersection has at most dimension $d-1$, hence, zero Lebesgue measure. Therefore, any stationary point process will have no point in this intersection with probability one.
             \end{itemize}
         \end{bem}
         Due to the continuity equation, we have a close connection between the paths of measures in $\PPP_1(\Gamma)$ and 
         velocity fields in $\MMM_{b,0}(\Gamma \times \R^d)$.  Therefore, analogously to tiled and stationarized processes, we define tiled and stationarized velocity fields.
         \begin{dfn}
            Let $k\in\N,$ $\dP\in\PPP_1(\Gamma_{\Lambda_k})$,  and ${\nuV}\in\MMM_{b,0}(\Gamma_{\Lambda_k}\times \Lambda_k).$ We define the tiled velocity field ${\nuV}^\text{til}$ of ${\nuV}$ w.r.t.\ $\dP$ by 
             \begin{align*}
                &\int_{\Gamma\times \R^d} G(\xi,x){\nuV}^{\mathrm{til}}(\mathrm{d}\xi,\mathrm{d}x)\\
                &= \sum_{z_1\in\Z^d}\int_{\left(\Gamma_{\Lambda_k}\right)^{\Z^d\setminus \{z_1\}}}\int_{\Gamma_{\Lambda_k}\times \Lambda_k}G\left(\Theta_{kz_1}\xi_{z_1}+\sum_{z_2\in\Z^d\setminus\{z_1\}}\Theta_{kz_2}\xi_{z_2}, x-z_1\right){\nuV}(\mathrm{d}\xi_{z_1},\mathrm{d}x)\bigotimes_{z_2\in\Z^d\setminus\{z_1\}}\dP(\mathrm{d}\xi_{z_2})
             \end{align*}
           for all $G\in\CCC_{b,0}(\Gamma\times \R^d)$. The stationarized version $\dP^{\text{stat}}$ of $\dP$ is defined by 
            \begin{align*}
               \int_{\Gamma} G(\xi){\nuV}^{\mathrm{stat}}(\mathrm{d}\xi)  = \frac{1}{\lambda(\Lambda_k)}\int_{\Lambda_k} \int_{\Gamma}G(\Theta_u\xi,x-u){\nuV}^{\mathrm{til}}(\mathrm{d}\xi,\mathrm{d}x)\lambda(\mathrm{d}u)
            \end{align*}     
            for all $G\in\CCC_{b,0}(\Gamma\times \R^d).$ In other words, 
            \begin{align*}
                {\nuV}^{\mathrm{stat}}=\frac{1}{\lambda(\Lambda_k)}\int_{\Lambda_k}({\Theta_u^{\Gamma\times \R^d}})_\#{\nuV}^{\mathrm{til}}\lambda(\mathrm{d}u),
            \end{align*}
            with $\Theta_u^{\Gamma\times \R^d}$ as defined in \eqref{eq:502}.
        \end{dfn}

        To begin our discussion on tiled and stationarized measures, we construct solutions to the continuity equation, which connect these types of measures. The following lemma demonstrates that if there exists a solution to the continuity equation connecting two measures, then the tiled and stationarized versions of this solution will also connect the  tiled and stationarized versions of the measures.
         \begin{lem}\label{lem:stationarysolutions}
             Let $\dP_0, \dP_1\in\PPP_1(\Gamma_{\Lambda_k})$. If $({\Bar{\muP}},{\Bar{\nuV}})\in\CCC\EEE^{\Lambda_k}(\dP_0, \dP_1),$ then also $({\Bar{\muP}}^{\mathrm{til}},{\Bar{\nuV}}^{\mathrm{til}})\in\CCC\EEE(\dP_0^{\mathrm{til}}, \dP_1^{\mathrm{til}})$ and $({\Bar{\muP}}^{\mathrm{stat}},{\Bar{\nuV}}^{\mathrm{stat}})\in\CCC\EEE^s(\dP_0^{\mathrm{stat}}, \dP_1^{\mathrm{stat}})$.
         \end{lem}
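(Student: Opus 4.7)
The strategy is to verify the four defining conditions (i)--(iv) of Definition \ref{dfn:ce} separately for the tiled pair $(\Bar{\muP}^{\mathrm{til}},\Bar{\nuV}^{\mathrm{til}})$ and the stationarized pair $(\Bar{\muP}^{\mathrm{stat}},\Bar{\nuV}^{\mathrm{stat}})$, and to check in addition the stationarity $\Bar{\nuV}_t^{\mathrm{stat}}\in\MMM_s(\Gamma\times\R^d)$ required for membership in $\CCC\EEE^s$.

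For the tiled pair I would first fix $f\in\CCC_c^+(\R^d)$ and $\phi\in\CCC_c^\infty((0,1))$. Since $f$ has compact support, only finitely many tiles $Z_f\subset\Z^d$ are touched, and the product structure of $\Bar{\muP}_t^{\mathrm{til}}$ factorizes the Laplace functional as
\begin{align*}
\int e^{-\int f\mathrm{d}\xi}\Bar{\muP}_t^{\mathrm{til}}(\mathrm{d}\xi)=\prod_{z\in Z_f}g_z(t),\qquad g_z(t):=\int e^{-\int\tilde f_z\mathrm{d}\xi}\Bar{\muP}_t(\mathrm{d}\xi),
\end{align*}
where $\tilde f_z\in\CCC_c^+(\Lambda_k)$ is obtained from $f$ by undoing the tile shift. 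Each $g_z$ takes values in $[0,1]$, and the continuity equation on $\Lambda_k$ for $(\Bar{\muP},\Bar{\nuV})$ together with property (iv) of Definition \ref{dfn:ce} forces $g_z$ to be absolutely continuous with derivative, a.e., equal to the negative of $\int D_x e^{-\int\tilde f_z\mathrm{d}\xi}\Bar{\nuV}_t(\mathrm{d}\xi,\mathrm{d}x)$. Applying the product rule to the finite AC product $\prod_{z\in Z_f}g_z$ produces a Leibniz expansion whose summands match, tile by tile, exactly the contributions of each tile in the definition of $\int D_y e^{-\int f\mathrm{d}\xi}\Bar{\nuV}_t^{\mathrm{til}}(\mathrm{d}\xi,\mathrm{d}y)$; integration against $\dt\phi(t)$ and integration by parts then yields the continuity equation on $\R^d$. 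Condition (i) is immediate, condition (ii) follows from Proposition \ref{prop:convergencep1} (Laplace functional a finite product of $t$-continuous factors, intensity a locally finite sum of shifts of $\I_{\Bar{\muP}_t}$), and (iv) holds because for bounded $D\subset\R^d$ only finitely many tiles contribute to $|\Bar{\nuV}_t^{\mathrm{til}}|(\Gamma\times D)$, each bounded by $|\Bar{\nuV}_t|(\Gamma_{\Lambda_k}\times\Lambda_k)$.

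For the stationarized pair, the key observation is shift-covariance of the continuity equation: for any $u\in\R^d$, the pair $((\Theta_u)_\#\Bar{\muP}^{\mathrm{til}},(\Theta_u^{\Gamma\times\R^d})_\#\Bar{\nuV}^{\mathrm{til}})$ again solves the continuity equation between the corresponding shifted endpoints, since testing with $f\in\CCC_c^+(\R^d)$ is equivalent to testing the original equation with $f(\cdot-u)\in\CCC_c^+(\R^d)$. Averaging this one-parameter family over $u\in\Lambda_k$ and invoking Fubini (justified by (iv) for the tiled pair) delivers a solution joining $\dP_0^{\mathrm{stat}}$ and $\dP_1^{\mathrm{stat}}$; conditions (i), (ii), (iv) transfer by the same linear averaging. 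Stationarity of $\Bar{\muP}_t^{\mathrm{stat}}$ and $\Bar{\nuV}_t^{\mathrm{stat}}$ follows from $(k\Z)^d$-periodicity of the tiled objects: for $w\in\R^d$, the substitution $v=u+w$ turns the $\Lambda_k$-average of $(\Theta_{u+w})_\#\Bar{\muP}_t^{\mathrm{til}}$ into an average over $\Lambda_k+w$, which agrees with the $\Lambda_k$-average because the integrand is $(k\Z)^d$-invariant; the analogous argument handles $\Bar{\nuV}^{\mathrm{stat}}$.

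The main obstacle is the rigorous use of the product rule in the tiled step: one must establish absolute continuity of each $g_z$ (by converting the distributional equation on $\Lambda_k$ into a pointwise-a.e.\ identity using property (iv)), so that the finite product is AC with Leibniz derivative pointwise a.e., and then match the expansion term by term with the tile-by-tile structure of $\Bar{\nuV}^{\mathrm{til}}$. Once this is in place, the stationarization is essentially algebraic, relying on linearity, Fubini, and the change-of-variable argument for the periodic integrand.
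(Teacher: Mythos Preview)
Your proposal is correct and follows essentially the same route as the paper. For the tiled pair the paper simply invokes Lemma~\ref{lem:superpositionofsolution} (superposition of solutions on disjoint domains), whose proof is precisely the product-rule argument you spell out; you re-derive it in place rather than cite it. For the stationarized pair both you and the paper average the shifted tiled solutions over $u\in\Lambda_k$ via Fubini, and your $(k\Z)^d$-periodicity argument for stationarity makes explicit what the paper records as ``by construction.'' One small point: your $\tilde f_z$ need not lie in $\CCC_c^+(\Lambda_k)$ in the strict sense (it may fail to vanish on $\partial\Lambda_k$), so when invoking the continuity equation on $\Lambda_k$ you should appeal to Theorem~\ref{thm:propertiesce}\,$(iii)$ rather than the defining test class --- the paper sidesteps this by working through Lemma~\ref{lem:superpositionofsolution}.
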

         \begin{proof}
             The proof of $({\Bar{\muP}}^{\mathrm{til}},{\Bar{\nuV}}^{\mathrm{til}})\in\CCC\EEE(\dP_0^{\mathrm{til}}, \dP_1^{\mathrm{til}})$ follows directly from Lemma \ref{lem:superpositionofsolution}. We continue with the proof of the second claim.  Properties $(i)$ and $(iv)$ of Definition \ref{dfn:ce} follow directly from the definition of $({\Bar{\muP}}^{\mathrm{stat}},{\Bar{\nuV}}^{\mathrm{stat}}).$ Property $(ii)$ can be deduced by using the usual approach involving Proposition \ref{prop:convergencep1}. We continue with the proof of property $(iii)$ and show that $({\Bar{\muP}}^{\mathrm{stat}},{\Bar{\nuV}}^{\mathrm{stat}})$ is indeed a solution to the continuity equation. Let  $f\in\CCC_c^+(\R^d)$ and $\phi\in \mathcal{C}_c^\infty((0,1)).$ By using Fubini and that $({\Bar{\muP}}^{\mathrm{til}},{\Bar{\nuV}}^{\mathrm{til}})\in\CCC\EEE(\dP_0^{\mathrm{til}}, \dP_1^{\mathrm{til}})$, we obtain that
             \begin{align*}
                 \int_0^1 \dt \phi(t)\int e^{-\int f(x)\xi(\mathrm{d}x)}\Bar{\muP}_t^{\mathrm{stat}}(\mathrm{d}\xi)\mathrm{d}t &= \int_{\Lambda_k}\frac{1}{\lambda(\Lambda_k)} \int_0^1 \dt \phi(t)\int e^{-\int f(x-u)\xi(\mathrm{d}x)}\Bar{\muP}_t^{\mathrm{til}}(\mathrm{d}\xi)\mathrm{d}t \lambda(\mathrm{d}u)\\
                 &= -  \int_{\Lambda_k}\frac{1}{\lambda(\Lambda_k)} \int_0^1  \phi(t)\int D_z e^{-\int f(x-u)\xi(\mathrm{d}x)}\Bar{\nuV}_t^{\mathrm{til}}(\mathrm{d}\xi,\mathrm{d}z)\mathrm{d}t \lambda(\mathrm{d}u)\\
                 &=- \int_{\Lambda_k}\frac{1}{\lambda(\Lambda_k)} \int_0^1  \phi(t)\int D_{z-u} e^{-\int f(x)\Theta_u\xi(\mathrm{d}x)}\Bar{\nuV}_t^{\mathrm{til}}(\mathrm{d}\xi,\mathrm{d}z)\mathrm{d}t \lambda(\mathrm{d}u)\\
                 &=  -\int_0^1  \phi(t)\int D_{z} e^{-\int f(x)\xi(\mathrm{d}x)}\Bar{\nuV}_t^{\mathrm{stat}}(\mathrm{d}\xi,\mathrm{d}z)\mathrm{d}t \lambda(\mathrm{d}u).
             \end{align*}
            Finally, the stationarity of the solution follows by construction, which finishes the proof.
         \end{proof}
         Having these two special types of solutions, we are able to establish an upper bound for $\mathcal{W}_s$ in terms of $\mathcal{W}_0$.
\begin{lem}\label{lem:boundstat}
    Let $k\in\N$, and let $\dP_0,\dP_1\in\PPP_1(\Gamma_{\Lambda_k})$ s.t.\ $\mathrm{Ent}({\dP_0}|\Poi_{|\Lambda_k}),\mathrm{Ent}({\dP_1}|\Poi_{|\Lambda_k})<\infty$. 
     It holds that 
     \begin{align*}
        \mathcal{W}_s^2(\dP_0^\mathrm{stat},\dP_1^\mathrm{stat})\leq \frac{1}{\lambda(\Lambda_k)}\mathcal{W}_0^2(\dP_0,\dP_1).
     \end{align*}
\end{lem}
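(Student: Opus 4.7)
The plan is to take the minimizing curve for $\mathcal{W}_0^2(\dP_0,\dP_1)$, form its stationarization, and estimate the action of the resulting stationary solution using the additivity structure of the tiled measure. Concretely, the Talagrand inequality (Theorem~\ref{thm:talagrandDSHS}) combined with the triangle inequality gives $\mathcal{W}_0(\dP_0,\dP_1)<\infty$ under the finite entropy assumption, so Theorem~\ref{thm:minimizercompact} produces a minimizer $(\Bar{\dP}^*,\Bar{\nuV}^*)\in\CCC\EEE^{\Lambda_k}(\dP_0,\dP_1)$ realizing $\mathcal{W}_0^2(\dP_0,\dP_1)=\int_0^1\mathcal{L}(\Bar{\dP}^*_t,\Bar{\nuV}^*_t)\mathrm{d}t$. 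Lemma~\ref{lem:stationarysolutions} then yields $(\Bar{\dP}^{*,\mathrm{stat}},\Bar{\nuV}^{*,\mathrm{stat}})\in\CCC\EEE^s(\dP_0^{\mathrm{stat}},\dP_1^{\mathrm{stat}})$, hence
\[\mathcal{W}_s^2(\dP_0^{\mathrm{stat}},\dP_1^{\mathrm{stat}})\leq \mathcal{A}(\Bar{\dP}^{*,\mathrm{stat}},\Bar{\nuV}^{*,\mathrm{stat}})=\lim_{n\to\infty}\frac{1}{\lambda(\Lambda_n)}\int_0^1\mathcal{L}(\Bar{\dP}^{*,\mathrm{stat}}_{t|\Lambda_n},\Bar{\nuV}^{*,\mathrm{stat}}_{t|\Lambda_n})\mathrm{d}t\]
by Lemma~\ref{lem:sup=limstationär}, and since this limit exists in $[0,\infty]$ it coincides with the subsequential limit along $n=mk$ as $m\to\infty$.

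The core estimate is to show that, for a.e.\ $t\in[0,1]$ and each $m\in\N$,
\[\mathcal{L}(\Bar{\dP}^{*,\mathrm{stat}}_{t|\Lambda_{mk}},\Bar{\nuV}^{*,\mathrm{stat}}_{t|\Lambda_{mk}})\leq (m+1)^d\,\mathcal{L}(\Bar{\dP}^*_t,\Bar{\nuV}^*_t).\]
Writing the stationarized pair as the average of shifts of the tiled pair, joint convexity of $\mathcal{L}$ (Jensen's inequality) together with shift-invariance (Lemma~\ref{lem:shiftinvaraiance}) gives
\[\mathcal{L}(\Bar{\dP}^{*,\mathrm{stat}}_{t|\Lambda_{mk}},\Bar{\nuV}^{*,\mathrm{stat}}_{t|\Lambda_{mk}})\leq \frac{1}{\lambda(\Lambda_k)}\int_{\Lambda_k}\mathcal{L}(\Bar{\dP}^{*,\mathrm{til}}_{t|\Lambda_{mk}+u},\Bar{\nuV}^{*,\mathrm{til}}_{t|\Lambda_{mk}+u})\mathrm{d}u.\]
For $u\in\Lambda_k$, one has $\Lambda_{mk}+u\subseteq\Lambda_{(m+1)k}$, and $\Lambda_{(m+1)k}$ is, modulo boundaries of zero Lebesgue measure (cf.\ Remark~\ref{bem:limstat}~$(iii)$), the disjoint union of the $(m+1)^d$ tiling cubes $\Lambda_k+kz$ it contains. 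Super-additivity of $\mathcal{L}$ on disjoint domains (Lemma~\ref{lem:superadditivity}) therefore bounds the integrand by $\mathcal{L}(\Bar{\dP}^{*,\mathrm{til}}_{t|\Lambda_{(m+1)k}},\Bar{\nuV}^{*,\mathrm{til}}_{t|\Lambda_{(m+1)k}})$, and iterating additivity on product domains (Lemma~\ref{lem:additivityLagrangefunctional}) together with shift-invariance identifies this last quantity with $(m+1)^d\mathcal{L}(\Bar{\dP}^*_t,\Bar{\nuV}^*_t)$.

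Dividing by $\lambda(\Lambda_{mk})=(mk)^d$ and integrating over $t\in[0,1]$ yields
\[\frac{1}{\lambda(\Lambda_{mk})}\int_0^1\mathcal{L}(\Bar{\dP}^{*,\mathrm{stat}}_{t|\Lambda_{mk}},\Bar{\nuV}^{*,\mathrm{stat}}_{t|\Lambda_{mk}})\mathrm{d}t\leq \frac{(1+1/m)^d}{\lambda(\Lambda_k)}\mathcal{W}_0^2(\dP_0,\dP_1),\]
and sending $m\to\infty$ gives the claim. I expect the main obstacle to be making the iterated application of Lemma~\ref{lem:additivityLagrangefunctional} fully rigorous: that lemma is stated for two product factors with densities w.r.t.\ $\Poi$, so one must verify that the tiled pair $(\Bar{\dP}^{*,\mathrm{til}}_t,\Bar{\nuV}^{*,\mathrm{til}}_t)$ carries both the correct product structure across the $(m+1)^d$ tiling cubes and the necessary absolute continuity. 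The latter is where the finite entropy hypothesis is essential: the $1$-convexity of the relative entropy along $\mathcal{W}_0$-geodesics (Theorem~\ref{thm:mainthmDSHS}~$(iii)$) keeps $\Bar{\dP}^*_t\ll\Poi_{|\Lambda_k}$ for all $t$, and Lemma~\ref{lem:absolutecontinuityvelocity} then promotes this to $\Bar{\nuV}^*_t\ll\Poi_{|\Lambda_k}\otimes\lambda_{|\Lambda_k}$ for a.e.\ $t$, properties that lift to the tiled pair on every finite union of tiling cubes.
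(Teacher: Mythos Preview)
Your proposal is correct and follows essentially the same route as the paper: take the $\mathcal{W}_0$-minimizer, stationarize it via Lemma~\ref{lem:stationarysolutions}, use Jensen to pass from the stationarized to an average of shifted tiled Lagrangians, enlarge the window via super-additivity, and then invoke additivity on the product tiling together with shift-invariance to reduce to $(m+1)^d$ copies of $\mathcal{L}(\Bar{\dP}^*_t,\Bar{\nuV}^*_t)$; the role of finite entropy in securing $\Bar{\dP}^*_t\ll\Poi_{|\Lambda_k}$ (hence the absolute continuity hypotheses of Lemmas~\ref{lem:superadditivity} and~\ref{lem:additivityLagrangefunctional}) is exactly as you describe.

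There is one small technical slip: your claim that $\Lambda_{(m+1)k}$ is, up to null boundaries, the disjoint union of $(m+1)^d$ tiling cubes $\Lambda_k+kz$ is only true when $m+1$ is odd. For $m+1$ even the box $\Lambda_{(m+1)k}=[-(m+1)k/2,(m+1)k/2]^d$ is not a union of whole cubes from the grid $\{\Lambda_k+kz:z\in\Z^d\}$ (the grid is centred at the origin, so only boxes of odd side-length in units of $k$ align with it). The paper handles this by working along the subsequence $(2n-1)k$ and enlarging to $\Lambda_{(2n+1)k}$, both odd multiples of $k$; you can repair your argument identically by restricting to even $m$, which does not affect the limit since Lemma~\ref{lem:sup=limstationär} guarantees its existence.
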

\begin{proof}
As $\mathrm{Ent}({\dP_0}|\Poi_{|\Lambda_k}),\mathrm{Ent}({\dP_1}|\Poi_{|\Lambda_k})<\infty$, we can deduce from Theorem \ref{thm:talagrandDSHS} that $\mathcal{W}_0^2(\dP_0,\dP_1)<\infty.$
By Theorem  \ref{thm:minimizercompact}   there exists some $({\Bar{\muP}}^{*,k},{\Bar{\nuV}}^{*,k})\in\CCC\EEE^{\Lambda_k}(\dP_0,\dP_1)$ s.t.\ 
    \begin{align*}
        \mathcal{W}_0^2(\dP_0,\dP_1)= \inf\left\{\int_0^1\mathcal{L}({\Bar{\muP}_t^{k}}, {\Bar{\nuV}_t^{k}})\mathrm{d}t:({\Bar{\muP}}^k,{\Bar{\nuV}}^k)\in\CCC\EEE^{\Lambda_k}(\dP_0,\dP_1)\right\}=\int_0^1\mathcal{L}({\Bar{\muP}_t^{*,k}}, {\Bar{\nuV}_t^{*,k}})\mathrm{d}t.
    \end{align*}
   Lemma \ref{lem:stationarysolutions}  entails that $(({\Bar{\muP}}^{*,k})^\mathrm{stat},({\Bar{\nuV}}^{*,k})^\mathrm{stat})\in\CCC\EEE^s(\dP^\mathrm{stat}_0,\dP^\mathrm{stat}_1)$.
   Combining these observations with Lemma \ref{lem:sup=limstationär} and using the shorthand notation $\fint_{\Lambda_k}=\int_{\Lambda_k} \frac{1}{\lambda(\Lambda_k)}$ yields
    \begin{align}
    &\mathcal{W}_s^2(\dP_0^{\mathrm{stat}},\dP_1^{\mathrm{stat}})= \inf\left\{\lim_{n\to\infty}\frac{1}{\lambda(\Lambda_{(2n-1)k})}\int_0^1\mathcal{L}({\Bar{\muP}_{t|\Lambda_{(2n-1)k}}}, {\Bar{\nuV}_{t|\Lambda_{(2n-1)k}}})\mathrm{d}t: ({\Bar{\muP}},{\Bar{\nuV}})\in\CCC\EEE^s(\dP_0^{\mathrm{stat}},\dP_1^{\mathrm{stat}})\right\}\nonumber\\
        &\leq  \lim_{n\to\infty}\frac{1}{\lambda(\Lambda_{(2n-1)k})}\int_0^1\mathcal{L}((\Bar{\muP}^{*,k}_t)^\mathrm{stat}_{|\Lambda_{(2n-1)k}}, (\Bar{\nuV}^{*,k}_t)^\mathrm{stat}_{|\Lambda_{(2n-1)k}})\mathrm{d}t\nonumber\\
        &=  \lim_{n\to\infty}\frac{1}{\lambda(\Lambda_{(2n-1)k})}\int_0^1 \mathcal{L}\left( \hspace{0.1cm} {\fint_{\Lambda_k}{\Theta_u}_\#(\Bar{\muP}^{*,k}_t)^\mathrm{til}}\lambda(\mathrm{d}u)_{|\Lambda_{(2n-1)k}}, \fint_{\Lambda_k}({\Theta_u^{\Gamma\times \R^d}})_\#(\Bar{\nuV}^{*,k}_t)^\mathrm{til} \lambda(\mathrm{d}u)_{|\Lambda_{(2n-1)k}}\right)\mathrm{d}t\nonumber\\
        &=  \lim_{n\to\infty} \frac{1}{\lambda(\Lambda_{(2n-1)k})} \int_0^1 \mathcal{L} \left( \hspace{0.1cm}{\fint_{\Lambda_k}(({\Theta_u})_\#(\Bar{\muP}^{*,k}_t)^\mathrm{til}})_{|\Lambda_{(2n-1)k}} \lambda(\mathrm{d}u),  \fint_{\Lambda_k}(({\Theta_u^{\Gamma\times \R^d}})_\#(\Bar{\nuV}^{*,k}_t)^\mathrm{til})_{|\Lambda_{(2n-1)k}}  \lambda(\mathrm{d}u) \right) \mathrm{d}t.\label{eq:204}\end{align}
        Using the convexity of the Lagrange functional, see Section \ref{sec:distanceDSHS}, and Jensen's inequality, we obtain
        \begin{align}
           &  \lim_{n\to\infty} \frac{1}{\lambda(\Lambda_{(2n-1)k})} \int_0^1 \mathcal{L} \left( \hspace{0.1cm} {\fint_{\Lambda_k}(({\Theta_u})_\#(\Bar{\muP}^{*,k}_t)^\mathrm{til}})_{|\Lambda_{(2n-1)k}} \lambda(\mathrm{d}u),  \fint_{\Lambda_k}(({\Theta_u^{\Gamma\times \R^d}})_\#(\Bar{\nuV}^{*,k}_t)^\mathrm{til})_{|\Lambda_{(2n-1)k}}  \lambda(\mathrm{d}u) \nonumber\right) \mathrm{d}t\\
        &\quad\leq \lim_{n\to\infty}\frac{1}{\lambda(\Lambda_{(2n-1)k})}\int_0^1\fint_{\Lambda_k}\mathcal{L}( {(({\Theta_u})_\#(\Bar{\muP}^{*,k}_t)^\mathrm{til}})_{|\Lambda_{(2n-1)k}},(({\Theta_u^{\Gamma\times \R^d}})_\#(\Bar{\nuV}^{*,k}_t)^\mathrm{til})_{|\Lambda_{(2n-1)k}} )\lambda(\mathrm{d}u)\mathrm{d}t. \label{eq:200}
        \end{align}
        In the following, we will make use of the specific form of $(({\Theta_u})_\#(\Bar{\muP}_t^{*,k})^\mathrm{til}, ({\Theta_u^{\Gamma\times \R^d}})_\#(\Bar{\nuV}_t^{*,k})^\mathrm{til})$.   Therefore, let $u\in\Lambda_k.$ We note that  $\Lambda_{(2n-1)k}\subset\Lambda_{(2n+1)k}-u.$
        
        In the next step, we want to apply the super-additivity of the Lagrange functional (Lemma \ref{lem:superadditivity}). Therefore, we first need to ensure that 
     \begin{align*}
     &{({(\Theta_u)}_\#(\Bar{\muP}^{*,k}_t)^\mathrm{til}})_{|\Lambda_{(2n+1)k}-u}\ll\Poi_{|\Lambda_{(2n+1)k}-u} \text{ and } \\
     &({(\Theta_u^{\Gamma\times \R^d})}_\#(\Bar{\nuV}^{*,k}_t)^\mathrm{til})_{|\Lambda_{(2n+1)k}-u}\ll\Poi_{|\Lambda_{(2n+1)k}-u}\otimes\m_{|\Lambda_{(2n+1)k}-u}.
     \end{align*}
     From the convexity of the relative entropy along $\mathcal{W}_0$-geodesics and the assumption   that $\mathrm{Ent}({\dP_0}|\Poi_{|\Lambda_k}),$ $\mathrm{Ent}({\dP_1}|\Poi_{|\Lambda_k})<\infty$,  we can deduce that $\Bar{\muP}^{*,k}_t\ll\Poi_{\Lambda_k}.$ Combining this with Lemmas \ref{app:productdensities}, \ref{lem:apprestricteddensities}, and  \ref{app:shiftdensities}, we obtain that ${({(\Theta_u)}_\#(\Bar{\muP}^{*,k}_t)^\mathrm{til}})_{|\Lambda_{(2n+1)k}-u}\ll\Poi_{|\Lambda_{(2n+1)k}-u}$. Lemmas \ref{lem:additivityLagrangefunctional}, \ref{lem:shiftinvaraiance}, and \ref{lem:absolutecontinuityvelocity} finally implies that also $({(\Theta_u^{\Gamma\times \R^d})}_\#(\Bar{\nuV}^{*,k}_t)^\mathrm{til})_{|\Lambda_{(2n+1)k}-u}\ll\Poi_{|\Lambda_{(2n+1)k}-u}\otimes\m_{(2n+1)k-u}$.
     Thus, we are allowed to use the super-additivity of the Lagrange functional (Lemma \ref{lem:superadditivity}) and get together with the shift-invariance of the Lagrange functional (Lemma \ref{lem:shiftinvaraiance}) that
        \begin{align*}
            &\mathcal{L}( {(({\Theta_u})_\# (\Bar{\muP}^{*,k}_t)^\mathrm{til}})_{|\Lambda_{(2n-1)k}},(({\Theta_u^{\Gamma\times \R^d}})_\#(\Bar{\nuV}^{*,k}_t)^\mathrm{til})_{|\Lambda_{(2n-1)k}} )\\
            &\leq \mathcal{L}( {({(\Theta_u)}_\#(\Bar{\muP}^{*,k}_t)^\mathrm{til}})_{|\Lambda_{(2n+1)k}-u},({(\Theta_u^{\Gamma\times \R^d})}_\#(\Bar{\nuV}^{*,k}_t)^\mathrm{til})_{|\Lambda_{(2n+1)k}-u} )\\
            & =\mathcal{L}( ({\Theta_{-u}})_\#({(({\Theta_u})_\#(\Bar{\muP}^{*,k}_t)^\mathrm{til}})_{|\Lambda_{(2n+1)k}-u}),{(\Theta_{-u}^{\Gamma\times \R^d})}_\#(({(\Theta_u^{\Gamma\times \R^d})}_\#(\Bar{\nuV}^{*,k}_t)^\mathrm{til})_{|\Lambda_{(2n+1)k}-u} ))\\
            &  = \mathcal{L}( {((\Bar{\muP}^{*,k}_t)^\mathrm{til}})_{|\Lambda_{(2n+1)k}},((\Bar{\nuV}^{*,k}_t)^\mathrm{til})_{|\Lambda_{(2n+1)k}} ).
        \end{align*} 
        Inserting this in equation \eqref{eq:200} gives
        \begin{align}
        &\lim_{n\to\infty}\frac{1}{\lambda(\Lambda_{(2n-1)k})}\int_0^1 \fint_{\Lambda_k}\mathcal{L}( {(({\Theta_u})_\#(\Bar{\muP}^{*,k}_t)^\mathrm{til}})_{|\Lambda_{(2n-1)k}},(({\Theta_u^{\Gamma\times \R^d}})_\#(\Bar{\nuV}^{*,k}_t)^\mathrm{til})_{|\Lambda_{(2n-1)k}} )\lambda(\mathrm{d}u)\mathrm{d}t\nonumber\\
            &\leq\lim_{n\to\infty}\frac{1}{\lambda(\Lambda_{(2n-1)k})}\int_0^1 \fint_{\Lambda_k} \mathcal{L}{((\Bar{\muP}^{*,k}_t)^\mathrm{til}})_{|\Lambda_{(2n+1)k}},((\Bar{\nuV}^{*,k}_t)^\mathrm{til})_{|\Lambda_{(2n+1)k}}) \lambda(\mathrm{d}u)\mathrm{d}t\nonumber\\
            &=\lim_{n\to\infty}\frac{1}{\lambda(\Lambda_{(2n-1)k})}\int_0^1\mathcal{L}{((\Bar{\muP}^{*,k}_t)^\mathrm{til}})_{|\Lambda_{(2n+1)k}},((\Bar{\nuV}^{*,k}_t)^\mathrm{til})_{|\Lambda_{(2n+1)k}}) \mathrm{d}t.\label{eq:201}
            \end{align}
        For the tiled processes, we can use the shift-invariance of the Lagrange functional (Lemma \ref{lem:shiftinvaraiance}) and the additivity of the Lagrange functional for independent regions (Lemma \ref{lem:additivityLagrangefunctional}) to obtain that 
        \begin{align*}
           \mathcal{L}{((\Bar{\muP}^{*,k}_t)^\mathrm{til}})_{|\Lambda_{(2n+1)k}},((\Bar{\nuV}^{*,k}_t)^\mathrm{til})_{|\Lambda_{(2n+1)k}})= (2n+1)^d \mathcal{L}( \Bar{\muP}^{*,k}_t,\Bar{\nuV}^{*,k}_t ).
        \end{align*}
        Substituting this equality in \eqref{eq:201} yields
        \begin{align}
        \lim_{n\to\infty}\frac{1}{\lambda(\Lambda_{(2n-1)k})}\int_0^1\mathcal{L}{((\Bar{\muP}^{*,k}_t)^\mathrm{til}})_{|\Lambda_{(2n+1)k}},((\Bar{\nuV}^{*,k}_t)^\mathrm{til})_{|\Lambda_{(2n+1)k}}) \mathrm{d}t
            &= \lim_{n\to\infty}\frac{(2n+1)^d}{\lambda(\Lambda_{(2n-1)k})}\int_0^1\mathcal{L}( \Bar{\muP}^{*,k}_t,\Bar{\nuV}^{*,k}_t )\mathrm{d}t\nonumber\\
            &=\frac{1}{\lambda(\Lambda_{k})}\int_0^1\mathcal{L}( \Bar{\muP}^{*,k}_t,\Bar{\nuV}^{*,k}_t )\mathrm{d}t\nonumber\\
            &=\frac{1}{\lambda(\Lambda_{k})}\mathcal{W}_0(\dP_0,\dP_1).\label{eq:202}
        \end{align}
         Combining \eqref{eq:204}, \eqref{eq:200}, \eqref{eq:201}, and \eqref{eq:202} proves the claim.
        \end{proof}
    We are now able to prove Theorem \ref{thm:equalitystationary}, which deals with the asymptotic equality between the  normalised
    Wasserstein distance $\mathcal{W}_0$ on boxes $\Lambda_n$ and  $\mathcal{W}_s$.
    
    \begin{proof}[Proof of Theorem \ref{thm:equalitystationary}]
        Using Remark \ref{bem:limstat} $(ii)$,  the lower semi-continuity of $\mathcal{W}_s$, Lemma \ref{lem:boundstat}, and Lemma \ref{lem:boundcands}, we obtain
        \begin{align*}
            \mathcal{W}_s^2(\dP_0,\dP_1)&=\mathcal{W}_s^2\left(\lim_{n\to\infty}({\dP_0}_{|\Lambda_n})^\mathrm{stat},\lim_{n\to\infty}({\dP_1}_{|\Lambda_n})^\mathrm{stat}\right)\\
            &\leq \liminf_{n\to\infty}\mathcal{W}_s^2(({\dP_0}_{|\Lambda_n})^\mathrm{stat},({\dP_1}_{|\Lambda_n})^\mathrm{stat})\\
            &\leq \liminf_{n\to\infty} \frac{1}{\lambda(\Lambda_n)}\mathcal{W}_0^2({\dP_0}_{|\Lambda_n},{\dP_1}_{|\Lambda_n})\\
            &\leq \limsup_{n\to\infty} \frac{1}{\lambda(\Lambda_n)}\mathcal{W}_0^2({\dP_0}_{|\Lambda_n},{\dP_1}_{|\Lambda_n})\\
            &\leq \mathcal{W}_s^2(\dP_0,\dP_1).
        \end{align*}
        Therefore, all inequalities have to be equalities, which finishes the proof.
    \end{proof}

    \section{Gradient Flows, Geodesic Convexity, and Functional Inequalities}\label{sec:functionals}
   
    Combining Theorem \ref{thm:equalitystationary} with the results of \cite{schiavo2023wasserstein} recalled in Section \ref{sec:distanceDSHS} we can now easily prove that 
    the Ornstein--Uhlenbeck semigroup is the gradient flow of the specific relative entropy w.r.t.\ $\mathcal{W}_s$, establish 1-geodesic convexity of the specific relative entropy along $\mathcal{W}_s$-geodesics, and derive various functional inequalities such as the specific Talagrand inequality and the specific HWI inequality.

\subsection{The Specific Relative Entropy}
      Let $\dP\in\PPP_1(\Gamma)$ s.t.\ $\dP_{|B}\ll\Poi_{|B}$ for any $B\in\BBB_0(\R^d).$ The \textit{specific relative entropy} w.r.t\ $\Poi$ is defined by
\begin{align}\label{eq:27}
    \mathcal{E}(\dP):=\lim_{n\to\infty}\frac{1}{\lambda(\Lambda_n)}\mathrm{Ent}(\dP_{|\Lambda_n}|\Poi_{|\Lambda_n}).
\end{align}
 The  proper domain of the specific relative entropy is $\mathcal{D}(\mathcal{E}):=\{\dP\in\PPP_s(\Gamma):\mathcal{E}(\dP)<\infty\}$. It satisfies the following properties.
\begin{lem}[\cite{fekete, serfaty2017microscopicdescriptionlogcoulomb, erbar2018onedimensionalloggasfreeenergy}]\label{lem:entropy}
Let $\dP\in\PPP_s(\Gamma)$.
\begin{enumerate}
      \item[${(i)}$]The limit in \eqref{eq:27} exists in $[0,\infty]$.
    \item[${(ii)}$] The map $\dP\mapsto\mathcal{E}(\dP)$ is affine and lower semi-continuous.
    \item[${(iii)}$] The specific relative entropy vanishes if and only if $\dP = \Poi$.
    \item[${(iv)}$] The limit in the definition of the specific entropy can be replaced by a supremum, i.e.
\begin{align*}
    \mathcal{E}(\dP) = \sup_{n\in\N}\frac{1}{\lambda(\Lambda_n)}\mathrm{Ent}(\dP_{|\Lambda_n}|\Poi_{|\Lambda_n}).
\end{align*}
\end{enumerate}
\end{lem}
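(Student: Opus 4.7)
The plan is to reduce all four assertions to two structural facts for the finite-volume quantity $a_\Lambda := \mathrm{Ent}(\dP_{|\Lambda}|\Poi_{|\Lambda})$ with $\Lambda \in \BBB_0(\R^d)$: (a) super-additivity, $a_{\Lambda\cup\Lambda'} \geq a_\Lambda + a_{\Lambda'}$ for disjoint $\Lambda, \Lambda'$, and (b) shift-invariance, $a_{\Lambda+z} = a_\Lambda$ for every $z \in \R^d$ whenever $\dP$ is stationary. For (a), one uses the independence property $\Poi_{|B\cup D} = \Poi_{|B} \otimes \Poi_{|D}$ to derive the chain-rule decomposition
\[\mathrm{Ent}(\dP_{|B\cup D}|\Poi_{|B\cup D}) = \mathrm{Ent}(\dP_{|B}|\Poi_{|B}) + \mathrm{Ent}(\dP_{|D}|\Poi_{|D}) + \mathrm{Ent}(\dP_{|B\cup D}|\dP_{|B}\otimes\dP_{|D}),\]
whose last term is non-negative, in direct analogy with the computation underlying Lemma~\ref{lem:superadditivity}. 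Property (b) follows from $\Theta_z$-invariance of both $\dP$ and $\Poi$ together with the change-of-variables formula for relative entropy. Claims (i) and (iv) then follow at once from Fekete's Lemma (Lemma~\ref{lem:fekete}) applied to $\Lambda \mapsto a_\Lambda$.

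For (ii), lower semi-continuity of $\mathcal{E}$ follows from (iv) and the standard lower semi-continuity of $\dP \mapsto a_{\Lambda_n}$, since a supremum of lower semi-continuous functions is again lower semi-continuous. Convexity of $\mathcal{E}$ is inherited from joint convexity of $(x,y) \mapsto x\log(x/y)$ at the finite-volume level. The non-trivial point is concavity, for which I would use the pointwise ratio bound $\rho_i/(t\rho_0 + (1-t)\rho_1) \leq 1/t_i$ with $t_0 = t,\ t_1 = 1-t$, to obtain
\[\mathrm{Ent}(t\dP_{0|\Lambda_n}+(1-t)\dP_{1|\Lambda_n}|\Poi_{|\Lambda_n}) \geq t\,\mathrm{Ent}(\dP_{0|\Lambda_n}|\Poi_{|\Lambda_n}) + (1-t)\,\mathrm{Ent}(\dP_{1|\Lambda_n}|\Poi_{|\Lambda_n}) + t\log t + (1-t)\log(1-t).\]
Since $(t\log t + (1-t)\log(1-t))/\lambda(\Lambda_n) \to 0$, dividing by $\lambda(\Lambda_n)$ and passing to the limit yields the reverse inequality and hence affinity.

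For (iii), $\dP = \Poi$ trivially gives $\mathcal{E}(\dP) = 0$. Conversely, if $\mathcal{E}(\dP) = 0$, then (iv) forces $a_{\Lambda_n} = 0$ for every $n$, so $\dP_{|\Lambda_n} = \Poi_{|\Lambda_n}$ for all $n$; matching all finite-dimensional marginals on the generating $\sigma$-algebra of $\Gamma$, by the consistency argument already used at the end of the proof of Lemma~\ref{lem:action}(iii), forces $\dP = \Poi$. The main technical subtlety is the concavity step in (ii): it requires a quantitative reverse inequality for relative entropy whose defect is of constant order rather than of order $\lambda(\Lambda_n)$, so that the bulk terms survive the volume normalisation. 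The remaining steps are direct consequences of Fekete's Lemma and standard properties of relative entropy.
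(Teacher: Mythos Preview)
The paper does not prove this lemma; it is stated with citations to \cite{fekete, serfaty2017microscopicdescriptionlogcoulomb, erbar2018onedimensionalloggasfreeenergy} as a known result, so there is no in-paper proof to compare against. Your sketch is correct and reproduces the standard argument from that literature: the chain-rule identity $\mathrm{Ent}(\dP_{|B\cup D}|\Poi_{|B\cup D}) = \mathrm{Ent}(\dP_{|B}|\Poi_{|B}) + \mathrm{Ent}(\dP_{|D}|\Poi_{|D}) + \mathrm{Ent}(\dP_{|B\cup D}|\dP_{|B}\otimes\dP_{|D})$ (valid since $\Poi_{|B\cup D}=\Poi_{|B}\otimes\Poi_{|D}$) gives super-additivity, which together with shift-invariance feeds into Lemma~\ref{lem:fekete} to yield (i) and (iv); lower semi-continuity follows from (iv) and the lsc of each $\dP\mapsto\mathrm{Ent}(\dP_{|\Lambda_n}|\Poi_{|\Lambda_n})$; your concavity step with the bounded defect $t\log t+(1-t)\log(1-t)$ is exactly the standard device for affinity of specific entropies; and (iii) is immediate from (iv) and strict positivity of relative entropy. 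One small remark: Lemma~\ref{lem:fekete} as stated gives $\lim_n a_{\Lambda_n}/\lambda(\Lambda_n)=\sup_{\Lambda\in\mathcal R} a_\Lambda/\lambda(\Lambda)$, so to land on $\sup_{n\in\N}$ as in (iv) you should note that the limit is trivially sandwiched between $\sup_n$ and $\sup_{\Lambda\in\mathcal R}$, forcing equality.
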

The transport distance $\mathcal{W}_s$ and the specific relative entropy $\mathcal{E}$ are related by the following Talagrand inequality. Let us highlight, that this inequality implies that $\mathcal{W}_s$ is finite on $\mathcal{D}(\mathcal{E})$.

    \begin{satz}[Talagrand inequality\label{thm:Talagrand}]
    Let $\dP\in\PPP_s(\Gamma).$ It holds that 
    \begin{align*}
        \mathcal{W}_s^2(\dP,\Poi)\leq \mathcal{E}(\dP).
    \end{align*}
\end{satz}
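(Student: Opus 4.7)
The plan is to reduce the specific Talagrand inequality to its local counterpart (Theorem \ref{thm:talagrandDSHS}) via the reformulation of $\mathcal{W}_s$ as a rescaled limit of $\mathcal{W}_0$ provided by Theorem \ref{thm:equalitystationary}. First, I would dispatch the trivial case $\mathcal{E}(\dP)=\infty$, where the inequality holds vacuously. So assume $\mathcal{E}(\dP)<\infty$. Then by Lemma \ref{lem:entropy} $(iv)$ the specific entropy can be written as a supremum, and in particular $\mathrm{Ent}(\dP_{|\Lambda_n}|\Poi_{|\Lambda_n})<\infty$ for every $n\in\N$; this in turn ensures $\dP_{|\Lambda_n}\ll \Poi_{|\Lambda_n}$, so the assumptions of Theorem \ref{thm:equalitystationary} are satisfied (noting that $\Poi_{|\Lambda_n}$ trivially has zero entropy against itself).

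Next, I would apply Theorem \ref{thm:equalitystationary} with $\dP_0=\dP$ and $\dP_1=\Poi$ to obtain
\begin{align*}
    \mathcal{W}_s^2(\dP,\Poi)=\lim_{n\to\infty}\frac{1}{\lambda(\Lambda_n)}\mathcal{W}_0^2(\dP_{|\Lambda_n},\Poi_{|\Lambda_n}).
\end{align*}
For each $n$, the local Talagrand inequality of Theorem \ref{thm:talagrandDSHS} applied to the Polish bounded set $\Lambda_n$ yields
\begin{align*}
    \mathcal{W}_0^2(\dP_{|\Lambda_n},\Poi_{|\Lambda_n})\leq \mathrm{Ent}(\dP_{|\Lambda_n}|\Poi_{|\Lambda_n}).
\end{align*}
Dividing by $\lambda(\Lambda_n)$ and passing to the limit along the sequence $(\Lambda_n)_{n\in\N}$ gives, using Lemma \ref{lem:entropy} $(i)$ to identify the limit of the right-hand side with $\mathcal{E}(\dP)$,
\begin{align*}
    \mathcal{W}_s^2(\dP,\Poi)\leq \lim_{n\to\infty}\frac{1}{\lambda(\Lambda_n)}\mathrm{Ent}(\dP_{|\Lambda_n}|\Poi_{|\Lambda_n})=\mathcal{E}(\dP),
\end{align*}
which is the desired inequality.

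There is essentially no hard step here: the entire substance is bundled inside Theorem \ref{thm:equalitystationary}, whose proof is where the real work lies (in particular Lemma \ref{lem:boundstat}, which handles the nontrivial inequality $\mathcal{W}_s^2(\dP_0^{\mathrm{stat}},\dP_1^{\mathrm{stat}})\leq \lambda(\Lambda_k)^{-1}\mathcal{W}_0^2(\dP_0,\dP_1)$ via the tiling/stationarizing construction). The only minor care needed is the case distinction $\mathcal{E}(\dP)=\infty$ versus $\mathcal{E}(\dP)<\infty$, since Theorem \ref{thm:equalitystationary} presupposes finite entropy on all restrictions; this is precisely what Lemma \ref{lem:entropy} $(iv)$ guarantees in the finite-entropy regime.
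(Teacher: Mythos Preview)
Your proof is correct and follows essentially the same approach as the paper: reduce to the local Talagrand inequality (Theorem~\ref{thm:talagrandDSHS}) on each $\Lambda_n$, divide by the volume, pass to the limit, and invoke Theorem~\ref{thm:equalitystationary} to identify the left-hand side with $\mathcal{W}_s^2(\dP,\Poi)$. You are slightly more explicit than the paper in justifying the finite-entropy hypothesis of Theorem~\ref{thm:equalitystationary} via Lemma~\ref{lem:entropy}~$(iv)$, which is a good habit.
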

    \begin{proof}
      Assume that $\mathcal{E}(\dP)<\infty$, since otherwise the claim is empty.  By Theorem \ref{thm:talagrandDSHS} $(i)$, it holds that
        \begin{align*}
    \mathcal{W}_0^2(\dP_{|\Lambda_n},\Poi_{|\Lambda_{n}})\leq\mathrm{Ent}(\dP_{|\Lambda_n}|\Poi_{|\Lambda_{n}})
        \end{align*}
     for all $n\in\N.$   Dividing both sides of the inequality by  $\lambda(\Lambda_n)$, taking the limit, and using Theorem \ref{thm:equalitystationary} yields
        \begin{align*}
            \mathcal{W}_s^2(\dP,\Poi)&= \lim_{n\to\infty} \frac{1}{\lambda(\Lambda_{n})}\mathcal{W}_0^2(\dP_{|\Lambda_n},\Poi_{|\Lambda_n})\leq \lim_{n\to\infty}\frac{1}{\lambda(\Lambda_n)}\mathrm{Ent}(\dP_{|\Lambda_n}|\Poi_{|\Lambda_n})=\mathcal{E}(\dP).
        \end{align*}
    \end{proof}

\subsection{The Evolution Variational Inequality}
In this subsection, we prove that the Ornstein--Uhlenbeck semigroup is the gradient flow of the specific relative entropy w.r.t.\ $\mathcal{W}_s$ in terms of an Evolution Variational Inequality with parameter 1 (EVI(1)). Recall that the Ornstein--Uhlenbeck semigroup $(\mathrm{S}_t)_t$  is defined by
 \begin{align*}
        \mathrm{S}_t^B\dP= \dP^{(e^{-t})}\oplus\Poi_{|B}^{(1-e^{-t})}, \quad \dP\in\PPP_1(\Gamma_B),\ t\geq0.
    \end{align*}
If $B=\Lambda_n,$ we briefly write $\mathrm{S}_t^n$ and if $B=\R^d$, we write $\mathrm{S}_t$. The following lemma shows that the Ornstein--Uhlenbeck semigroup commutes with tiling and stationarizing.
\begin{lem}\label{lem:OUcommutative}
    Let $\dP\in\PPP_1(\Gamma).$ Then 
    \begin{align*}
        (\mathrm{S}_t^n(\dP_{|\Lambda_n}))^{\mathrm{til}}=((\mathrm{S}_t\dP)_{|\Lambda_n})^{\mathrm{til}}= \mathrm{S}_t((\dP_{|\Lambda_n})^{\mathrm{til}})
    \end{align*}
    and
    \begin{align*}
      (\mathrm{S}_t^n(\dP_{|\Lambda_n}))^{\mathrm{stat}}=((\mathrm{S}_t\dP)_{|\Lambda_n})^{\mathrm{stat}}= \mathrm{S}_t((\dP_{|\Lambda_n})^{\mathrm{stat}})
    \end{align*}
    for all $n\in\N.$
\end{lem}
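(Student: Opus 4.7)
The plan is to reduce the entire statement to three elementary commutation properties of the Ornstein--Uhlenbeck semigroup, each of which is immediate from the definition $\mathrm{S}_t^B \dP = \dP^{(e^{-t})} \oplus \Poi_{|B}^{(1-e^{-t})}$ and standard properties of thinning and Poisson processes:
\textbf{(a)} compatibility with restriction, $(\mathrm{S}_t \dP)_{|B} = \mathrm{S}_t^B(\dP_{|B})$ for any Polish $B \in \BBB(\R^d)$ (thinning acts independently per point, the homogeneous Poisson on $\R^d$ restricts to the homogeneous Poisson on $B$, and superposition commutes with restriction);
\textbf{(b)} shift equivariance, $\mathrm{S}_t \circ (\Theta_z)_\# = (\Theta_z)_\# \circ \mathrm{S}_t$ (both thinning and the homogeneous Poisson are shift-invariant, and superposition is equivariant under a joint shift);
\textbf{(c)} independence on disjoint boxes: if $\dP = \dP_{|B} \otimes \dP_{|D}$ for disjoint Polish $B,D \in \BBB(\R^d)$, then $\mathrm{S}_t \dP$ factors as $(\mathrm{S}_t^B \dP_{|B}) \otimes (\mathrm{S}_t^D \dP_{|D})$, since the added homogeneous Poisson is independent across disjoint Borel sets and thinning preserves independence of configurations.

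For the first chain, the equality $(\mathrm{S}_t^n(\dP_{|\Lambda_n}))^{\mathrm{til}} = ((\mathrm{S}_t\dP)_{|\Lambda_n})^{\mathrm{til}}$ is a direct consequence of \textbf{(a)} followed by tiling. For the second equality $((\mathrm{S}_t\dP)_{|\Lambda_n})^{\mathrm{til}} = \mathrm{S}_t((\dP_{|\Lambda_n})^{\mathrm{til}})$, I note that $(\dP_{|\Lambda_n})^{\mathrm{til}}$ is, by construction, the law of a process on $\R^d$ whose restrictions to the translates $\Lambda_n + n z$, $z \in \Z^d$, are independent and each equal in distribution to $(\Theta_{-nz})_\# (\dP_{|\Lambda_n})$. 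Property \textbf{(c)}, applied across the tiling (the pairwise boundary overlaps have zero Lebesgue measure and thus carry no points almost surely), preserves the across-box independence under $\mathrm{S}_t$, while \textbf{(a)} and \textbf{(b)} identify the restriction of $\mathrm{S}_t((\dP_{|\Lambda_n})^{\mathrm{til}})$ to $\Lambda_n + n z$ as $(\Theta_{-nz})_\# \mathrm{S}_t^n(\dP_{|\Lambda_n})$. This is precisely the definition of $(\mathrm{S}_t^n(\dP_{|\Lambda_n}))^{\mathrm{til}}$.

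For the stationarizing chain, I exploit that $\mathrm{S}_t$, viewed as acting on probability measures, is affine (in fact linear as a Markov operator on signed measures), so it commutes with the averaging integral over $\Lambda_n$:
\begin{equation*}
\mathrm{S}_t((\dP_{|\Lambda_n})^{\mathrm{stat}}) = \frac{1}{\lambda(\Lambda_n)}\int_{\Lambda_n} \mathrm{S}_t((\Theta_u)_\#(\dP_{|\Lambda_n})^{\mathrm{til}})\,\lambda(\mathrm{d}u).
\end{equation*}
Property \textbf{(b)} pulls $\mathrm{S}_t$ past $(\Theta_u)_\#$ and the tiling step just established replaces $\mathrm{S}_t((\dP_{|\Lambda_n})^{\mathrm{til}})$ by $(\mathrm{S}_t^n(\dP_{|\Lambda_n}))^{\mathrm{til}}$, so the integral assembles to $(\mathrm{S}_t^n(\dP_{|\Lambda_n}))^{\mathrm{stat}}$. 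The remaining equality $((\mathrm{S}_t \dP)_{|\Lambda_n})^{\mathrm{stat}} = (\mathrm{S}_t^n(\dP_{|\Lambda_n}))^{\mathrm{stat}}$ is another application of \textbf{(a)}. The only mildly delicate step is making \textbf{(c)} rigorous across infinitely many boxes; I would do this at the level of Laplace functionals, writing $L_{\mathrm{S}_t((\dP_{|\Lambda_n})^{\mathrm{til}})}(f)$ for a compactly supported $f$ as a finite product of per-box contributions via the explicit Laplace formulas for thinning and homogeneous Poisson, and matching it with the Laplace functional of $(\mathrm{S}_t^n(\dP_{|\Lambda_n}))^{\mathrm{til}}$.
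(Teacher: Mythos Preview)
Your proposal is correct and takes essentially the same approach as the paper, which simply states that the claim ``follows directly by the construction of the Ornstein--Uhlenbeck semigroup and the stationarity and independence property of the Poisson point process.'' You have carefully unpacked exactly those ingredients into the three commutation properties \textbf{(a)}--\textbf{(c)} and applied them systematically; this is a more detailed version of the paper's one-line justification rather than a different argument.
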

\begin{proof}
    The proof follows directly by the construction of the Ornstein--Uhlenbeck semigroup and the stationarity and independence property of the Poisson point process.
\end{proof}
\begin{bem}
     The stationarity of a measure is preserved under the action of the Ornstein--Uhlenbeck semigroup, i.e.\
     if $\dP\in\PPP_s(\Gamma)$, then $\mathrm{S}_t\dP \in\PPP_s(\Gamma)$. This can be deduced from the facts that the homogeneous $t$-thinning of a stationary point process and the superposition of two stationary point processes are stationary.
   
\end{bem}
\begin{satz}[1-Contractivity of the Ornstein--Uhlenbeck semigroup]
      Let $\dP_0,\dP_1\in \mathcal{D}(\mathcal{E}).$ It holds that
    \begin{align*} \mathcal{W}_s(\mathrm{S}_t\dP_0,\mathrm{S}_t\dP_1)\leq e^{-t}\mathcal{W}_s(\dP_0,\dP_1)
    \end{align*}
   for all $t\geq0$. Furthermore, 
    \begin{align*}
        \mathcal{W}_s(\mathrm{S}_t\dP_0,\Poi)\leq e^{-t}\mathcal{E}(\dP_0)
    \end{align*}
    for all $t\geq0$.
\end{satz}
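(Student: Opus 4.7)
\medskip

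The strategy is to pull both inequalities back to the bounded-box setting via Theorem \ref{thm:equalitystationary} and then invoke the known 1-contractivity of the Ornstein--Uhlenbeck semigroup with respect to $\mathcal{W}_0$ provided by Theorem \ref{thm:mainthmDSHS}~$(i)$. The assumption $\dP_0,\dP_1\in\mathcal{D}(\mathcal{E})$ plays two roles: by Lemma \ref{lem:entropy}~$(iv)$ it guarantees $\mathrm{Ent}(\dP_{i|\Lambda_n}|\Poi_{|\Lambda_n})<\infty$ for every $n$, which is the hypothesis of Theorem \ref{thm:equalitystationary}; and since the relative entropy is non-increasing along the Ornstein--Uhlenbeck semigroup (de Bruijn's identity, Theorem \ref{thm:mainthmDSHS}~$(iv)$), the same bound holds for $\mathrm{S}_t\dP_i$, so the theorem is applicable on the left-hand side as well.

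First I would use Lemma \ref{lem:OUcommutative} to deduce the key commutation identity
\begin{align*}
    (\mathrm{S}_t\dP)_{|\Lambda_n}=\mathrm{S}_t^{n}(\dP_{|\Lambda_n}), \qquad n\in\N.
\end{align*}
Indeed, the lemma states that the tilings of these two measures agree, and the tiling map is injective (this can be checked on Laplace functionals, or more directly by noting that restricting the tiled measure back to $\Lambda_n$ returns the original measure). Combining this identity with Theorem \ref{thm:mainthmDSHS}~$(i)$ yields, for every $n\in\N$,
\begin{align*}
    \frac{1}{\lambda(\Lambda_n)}\mathcal{W}_0^2\bigl((\mathrm{S}_t\dP_0)_{|\Lambda_n},(\mathrm{S}_t\dP_1)_{|\Lambda_n}\bigr)
    &=\frac{1}{\lambda(\Lambda_n)}\mathcal{W}_0^2\bigl(\mathrm{S}_t^n(\dP_{0|\Lambda_n}),\mathrm{S}_t^n(\dP_{1|\Lambda_n})\bigr) \\
    &\leq e^{-2t}\,\frac{1}{\lambda(\Lambda_n)}\mathcal{W}_0^2(\dP_{0|\Lambda_n},\dP_{1|\Lambda_n}).
\end{align*}
Taking $n\to\infty$ and applying Theorem \ref{thm:equalitystationary} on both sides then gives $\mathcal{W}_s^2(\mathrm{S}_t\dP_0,\mathrm{S}_t\dP_1)\leq e^{-2t}\mathcal{W}_s^2(\dP_0,\dP_1)$, which is the first claim.

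For the second inequality, I would observe that $\Poi$ is invariant under the Ornstein--Uhlenbeck semigroup (an independent $e^{-t}$-thinning of $\Poi$ superposed with an independent Poisson point process of intensity $1-e^{-t}$ is again $\Poi$ by Remark following the definition of $\mathrm{S}_t$ and the independence properties of the Poisson process), so that $\mathrm{S}_t\Poi=\Poi$. The first claim applied to $\dP_1=\Poi$ together with the specific Talagrand inequality from Theorem \ref{thm:Talagrand} then yields
\begin{align*}
    \mathcal{W}_s(\mathrm{S}_t\dP_0,\Poi)=\mathcal{W}_s(\mathrm{S}_t\dP_0,\mathrm{S}_t\Poi)\leq e^{-t}\mathcal{W}_s(\dP_0,\Poi)\leq e^{-t}\sqrt{\mathcal{E}(\dP_0)},
\end{align*}
i.e.\ $\mathcal{W}_s^2(\mathrm{S}_t\dP_0,\Poi)\leq e^{-2t}\mathcal{E}(\dP_0)$, as desired.

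No step is genuinely hard: the only delicate point is confirming the commutation of restriction with $\mathrm{S}_t$, which is essentially the statement that one may thin/superpose globally and restrict afterwards, or restrict first and then thin/superpose on the box, and this is immediate from the independence of Poisson increments on disjoint regions. Once this is in hand, the argument is a clean reduction to the bounded-domain result of \cite{schiavo2023wasserstein} via the reformulation Theorem \ref{thm:equalitystationary}.
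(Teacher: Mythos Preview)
Your argument for the first inequality is correct and mirrors the paper's proof exactly: apply the bounded-domain contractivity of Theorem~\ref{thm:mainthmDSHS}~$(i)$ on each $\Lambda_n$, use Lemma~\ref{lem:OUcommutative} to identify $(\mathrm{S}_t\dP_i)_{|\Lambda_n}$ with $\mathrm{S}_t^n(\dP_{i|\Lambda_n})$, and pass to the limit via Theorem~\ref{thm:equalitystationary}. Your explicit check that $\mathrm{S}_t\dP_i$ retains finite box entropy (so that Theorem~\ref{thm:equalitystationary} applies on the left) is a detail the paper leaves implicit.

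For the second inequality the paper gives no details beyond ``similar'', and your route through $\mathrm{S}_t\Poi=\Poi$ plus Talagrand is the natural one. Note, however, that what you actually obtain is $\mathcal{W}_s(\mathrm{S}_t\dP_0,\Poi)\leq e^{-t}\sqrt{\mathcal{E}(\dP_0)}$, equivalently $\mathcal{W}_s^2\leq e^{-2t}\mathcal{E}(\dP_0)$; this does \emph{not} imply the displayed bound $\mathcal{W}_s\leq e^{-t}\mathcal{E}(\dP_0)$ when $\mathcal{E}(\dP_0)<1$. The discrepancy is almost certainly a typo in the stated theorem (either $\mathcal{W}_s$ should be squared or $\mathcal{E}$ should carry a square root), and your version is the dimensionally consistent one.
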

\begin{proof}
 We will only prove the first inequality, as the proof of the second inequality is similar.   By Theorem \ref{thm:mainthmDSHS} $(i)$ we have that 
 \begin{align*}
     \mathcal{W}_0(\mathrm{S}_t^n({\dP_0}_{|\Lambda_n}),\mathrm{S}_t^n({\dP_1}_{|\Lambda_n}))\leq e^{-t}\mathcal{W}_0({\dP_0}_{|\Lambda_n},{\dP_1}_{|\Lambda_n}).
 \end{align*}
Dividing by the volume of $\Lambda_n,$ taking the limit on both sides, and using Lemma \ref{lem:OUcommutative} yields
\begin{align*}
\lim_{n\to\infty}\frac{1}{\lambda(\Lambda_n)}\mathcal{W}_0(({\mathrm{S}_t\dP_0})_{|\Lambda_n},{(\mathrm{S}_t\dP_1)}_{|\Lambda_n})\leq \lim_{n\to\infty}e^{-t}\frac{1}{\lambda(\Lambda_n)}\mathcal{W}_0({\dP_0}_{|\Lambda_n},{\dP_1}_{|\Lambda_n}). 
\end{align*}
Finally, using Theorem \ref{thm:equalitystationary} proves the claim.
\end{proof}
Let us continue by proving that the Ornstein--Uhlenbeck semigroup is the EVI(1)-gradient flow of the specific relative entropy $\mathcal{E}$ w.r.t.\ $\mathcal{W}_s$.
\begin{satz}[EVI]\label{thm:Evi}
    Let $\dP,\dR\in \mathcal{D}(\mathcal{E})$. Then the following Evolution Variational Inequality holds
    \begin{align*}
        \dt \mathcal{W}_s^2(\mathrm{S}_t \dP,\dR)+\mathcal{W}_s^2(\mathrm{S}_t \dP,\dR)\leq 2(\mathcal{E}(\dR)-\mathcal{E}(\mathrm{S}_t\dP)),\quad  t\geq0. 
    \end{align*}
\end{satz}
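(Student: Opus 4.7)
The strategy is to derive the EVI for $\mathcal{W}_s$ by passing to the large-volume limit in the EVI for $\mathcal{W}_0$ on the boxes $\Lambda_n$. The three key ingredients are Theorem~\ref{thm:mainthmDSHS}~$(ii)$, which furnishes an EVI on $\Lambda_n$; Theorem~\ref{thm:equalitystationary}, which identifies $\mathcal{W}_s^2(\dP_0,\dP_1)$ with $\lim_n \lambda(\Lambda_n)^{-1}\mathcal{W}_0^2({\dP_0}_{|\Lambda_n},{\dP_1}_{|\Lambda_n})$ whenever both arguments lie in $\mathcal{D}(\mathcal{E})$; and Lemma~\ref{lem:OUcommutative}, which together with the injectivity of tiling yields $\mathrm{S}_r^n\dP_{|\Lambda_n}=(\mathrm{S}_r\dP)_{|\Lambda_n}$. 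A preliminary observation is that the standing hypothesis $\dP\in\mathcal{D}(\mathcal{E})$ is preserved under the semigroup: since the relative entropy on each box is non-increasing along $(\mathrm{S}_r^n)_r$, we obtain $\mathcal{E}(\mathrm{S}_r\dP)\leq \mathcal{E}(\dP)<\infty$ for every $r\geq 0$, so Theorem~\ref{thm:equalitystationary} is applicable to every pair $(\mathrm{S}_r\dP,\dR)$.

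First I would pass to the equivalent integrated form of the $\mathcal{W}_0$-EVI. Multiplying the differential inequality in Theorem~\ref{thm:mainthmDSHS}~$(ii)$ by $2e^t$, integrating over $[s,t]$, and choosing $B=\Lambda_n$, $\dP_0=\dP_{|\Lambda_n}$, $\dP_1=\dR_{|\Lambda_n}$, then invoking Lemma~\ref{lem:OUcommutative}, gives
\begin{align*}
    e^t \mathcal{W}_0^2\bigl((\mathrm{S}_t\dP)_{|\Lambda_n},\dR_{|\Lambda_n}\bigr) - e^s \mathcal{W}_0^2\bigl((\mathrm{S}_s\dP)_{|\Lambda_n},\dR_{|\Lambda_n}\bigr)
    \leq 2\int_s^t e^r\bigl[\mathrm{Ent}(\dR_{|\Lambda_n}|\Poi_{|\Lambda_n}) - \mathrm{Ent}((\mathrm{S}_r\dP)_{|\Lambda_n}|\Poi_{|\Lambda_n})\bigr]\,\mathrm{d}r.
\end{align*}
Dividing both sides by $\lambda(\Lambda_n)$ and sending $n\to\infty$ propagates the inequality to $\mathcal{W}_s$: the boundary terms converge by Theorem~\ref{thm:equalitystationary}, while for each fixed $r$ the integrand converges pointwise to $2e^r[\mathcal{E}(\dR)-\mathcal{E}(\mathrm{S}_r\dP)]$ by Lemma~\ref{lem:entropy}.

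The exchange of limit and integral is the main technical point. I would handle it via dominated convergence, using the uniform-in-$r,n$ bound $\lambda(\Lambda_n)^{-1}\mathrm{Ent}((\mathrm{S}_r\dP)_{|\Lambda_n}|\Poi_{|\Lambda_n})\leq \mathcal{E}(\mathrm{S}_r\dP)\leq \mathcal{E}(\dP)$ provided by Lemma~\ref{lem:entropy}~$(iv)$ and the entropy decrease along OU, together with $\lambda(\Lambda_n)^{-1}\mathrm{Ent}(\dR_{|\Lambda_n}|\Poi_{|\Lambda_n})\leq \mathcal{E}(\dR)$. Both bounds are finite on the compact interval $[s,t]$. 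This yields the integrated EVI
\begin{align*}
    e^t \mathcal{W}_s^2(\mathrm{S}_t\dP,\dR) - e^s \mathcal{W}_s^2(\mathrm{S}_s\dP,\dR) \leq 2\int_s^t e^r\bigl[\mathcal{E}(\dR)-\mathcal{E}(\mathrm{S}_r\dP)\bigr]\,\mathrm{d}r.
\end{align*}

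Finally, dividing by $t-s$ and letting $s\uparrow t$ recovers the stated differential form, where $\frac{\mathrm{d}}{\mathrm{d}t}$ is interpreted either a.e., or as an upper right Dini derivative at every $t\geq 0$. The latter interpretation needs right-continuity of $r\mapsto \mathcal{E}(\mathrm{S}_r\dP)$, which can be extracted from the $\PPP_1(\Gamma)$-continuity of the semigroup combined with the lower semi-continuity of $\mathcal{E}$ and the uniform upper bound $\mathcal{E}(\mathrm{S}_r\dP)\leq\mathcal{E}(\dP)$. Thus the whole argument is unlocked by the finiteness of specific entropy in the hypothesis $\dP,\dR\in\mathcal{D}(\mathcal{E})$, which simultaneously grants applicability of Theorem~\ref{thm:equalitystationary} and the dominating function for the integral passage.
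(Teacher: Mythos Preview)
Your proof is correct and follows the same overall strategy as the paper: pass the finite-volume EVI on $\Lambda_n$ to the large-volume limit via Theorem~\ref{thm:equalitystationary} and the commutation identity $\mathrm{S}_r^n\dP_{|\Lambda_n}=(\mathrm{S}_r\dP)_{|\Lambda_n}$.

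The execution differs in a way worth noting. You work with the \emph{integrated} form of the $\mathcal{W}_0$-EVI (multiply by $e^t$, integrate, then let $n\to\infty$ using dominated convergence with the uniform bound $\lambda(\Lambda_n)^{-1}\mathrm{Ent}\leq\mathcal{E}$, and finally differentiate). The paper instead reaches back into the \emph{proof} of Theorem~\ref{thm:mainthmDSHS}~$(ii)$ to extract a discrete-increment inequality involving the factor $a(\delta)=(1-e^{-2\delta})/2\delta$, passes that to the limit in $n$ (using Fatou for the residual integral), and then lets $\delta\to 0$. Your route has the advantage of using only the statement of the cited EVI rather than its proof, and dominated convergence is cleanly justified by the sup-representation of $\mathcal{E}$; the paper's route avoids having to re-differentiate at the end and hence the continuity discussion of $r\mapsto\mathcal{E}(\mathrm{S}_r\dP)$. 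One small slip: for the \emph{upper right} Dini derivative at $t$ you should fix $s=t$ and let the upper endpoint decrease to $t$, not ``$s\uparrow t$''; your right-continuity argument (lower semi-continuity of $\mathcal{E}$ plus monotonicity along the semigroup) then applies exactly as you wrote.
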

 \begin{proof}[Proof of Theorem \ref{thm:Evi}]
    Due to the semigroup property of $\mathrm{S}_t$ it suffices to prove the claim for $t=0.$
        Note that the specific relative entropy is decreasing along the Ornstein--Uhlenbeck semigroup by Jensen´s inequality. Thus, as  $\mathcal{E}(\dP)<\infty$, we note that $\mathcal{E}(\mathrm{S}_t\dP)<\infty$ for all $t\geq0$. By Theorem \ref{thm:Talagrand} we deduce that  $\mathcal{W}_s(\mathrm{S}_t\dP,\dR),\mathcal{W}_s(\dP,\dR)<\infty.$

    Let $\delta>0$ and define $a(\delta)=(1-e^{-2\delta})/2\delta.$ By the proof of \cite[Theorem 5.27]{schiavo2023wasserstein}, we have that 
        \begin{align*}
\mathcal{W}^{2}_0(\mathrm{S}_\delta^n\dP_{|\Lambda_n},\dR_{|\Lambda_n}) - \mathcal{W}^{2}_0(\dP_{|\Lambda_n},\dR_{|\Lambda_n}) &\leq (a(\delta) - 1) \mathcal{W}^{2}_0(\dP_{|\Lambda_n},\dR_{|\Lambda_n}) \\
&\quad+ 2 \delta a(\delta) ( \mathrm{Ent}(\dR_{|\Lambda_n}|\Poi_{|\Lambda_n}) - e^{2\delta} \mathrm{Ent}( \mathrm{S}_{\delta}^n\dP_{|\Lambda_n}|\Poi_{|\Lambda_n} ) ) \\
&\quad+ 4 a(\delta) \delta^{2} \int_{0}^{1} e^{2t\delta} \mathrm{Ent}( \mathrm{S}_{t\delta}^n \dP_{|\Lambda_n} |\Poi_{|\Lambda_n})\mathrm{d} t,
\end{align*}
for all $n\in\N$. Dividing by the volume of $\Lambda_n$ and taking the limit gives 
\begin{align*}
    &\lim_{n\to\infty}\frac{1}{\lambda(\Lambda_n)}\mathcal{W}^{2}_0(\mathrm{S}_\delta^n\dP_{|\Lambda_n},\dR_{|\Lambda_n}) - \lim_{n\to\infty}\frac{1}{\lambda(\Lambda_n)}\mathcal{W}^{2}_0(\dP_{|\Lambda_n},\dR_{|\Lambda_n})\\
    &\leq (a(\delta) - 1) \lim_{n\to\infty}\frac{1}{\lambda(\Lambda_n)}\mathcal{W}^{2}_0(\dP_{|\Lambda_n},\dR_{|\Lambda_n}) \\
&\quad+ \lim_{n\to\infty}\frac{1}{\lambda(\Lambda_n)}2 \delta a(\delta) ( \mathrm{Ent}(\dR_{|\Lambda_n}|\Poi_{|\Lambda_n}) - e^{2\delta} \mathrm{Ent}( \mathrm{S}_{\delta}^n\dP_{|\Lambda_n}|\Poi_{|\Lambda_n} ) ) \\
&\quad+ 4 a(\delta) \delta^{2} \lim_{n\to\infty}\frac{1}{\lambda(\Lambda_n)}\int_{0}^{1} e^{2t\delta} \mathrm{Ent}( \mathrm{S}_{t\delta}^n \dP_{|\Lambda_n} |\Poi_{|\Lambda_n})\mathrm{d} t.
\end{align*}
By Theorem \ref{thm:equalitystationary} and Lemma \ref{lem:OUcommutative}, we note that 
\begin{align*}
    \lim_{n\to\infty}\frac{1}{\lambda(\Lambda_n)}\mathcal{W}^{2}_0(\mathrm{S}_\delta^n\dP_{|\Lambda_n},\dR_{|\Lambda_n})=\lim_{n\to\infty}\frac{1}{\lambda(\Lambda_n)}\mathcal{W}^{2}_0((\mathrm{S}_\delta\dP)_{|\Lambda_n},\dR_{|\Lambda_n})=\mathcal{W}_s^2(\mathrm{S}_\delta\dP,\dR)
    \end{align*}
    and 
    \begin{align*}
    \lim_{n\to\infty}\frac{1}{\lambda(\Lambda_n)}\mathcal{W}^{2}_0(\dP_{|\Lambda_n},\dR_{|\Lambda_n})=\mathcal{W}_s^2(\dP,\dR).
    \end{align*}
    Moreover, using Fatou's Lemma, we obtain
    \begin{align*}
        \lim_{n\to\infty}\frac{1}{\lambda(\Lambda_n)}\int_{0}^{1} e^{2t\delta} \mathrm{Ent}( \mathrm{S}_{t\delta}^n \dP_{|\Lambda_n} |\Poi_{|\Lambda_n})\mathrm{d} t&\leq \limsup_{n\to\infty}\frac{1}{\lambda(\Lambda_n)}\int_{0}^{1} e^{2t\delta} \mathrm{Ent}( \mathrm{S}_{t\delta}^n \dP_{|\Lambda_n} |\Poi_{|\Lambda_n})\mathrm{d} t\\
        &\leq  \int_{0}^{1} e^{2t\delta} \mathcal{E}( \mathrm{S}_{t\delta} \dP)\mathrm{d} t.
    \end{align*}
    Inserting this in the above equation and using the definition of the specific relative entropy yields 
        \begin{align*}
    \mathcal{W}^{2}_s(\mathrm{S}_\delta\dP,\dR) - \mathcal{W}^{2}_s(\dP,\dR) &\leq (a(\delta) - 1) \mathcal{W}^{2}_s(\dP,\dR) \\
&\quad+2 \delta a(\delta) ( \mathcal{E}(\dR) - e^{2\delta} \mathcal{E}( \mathrm{S}_{\delta}\dP  )) \\
&\quad+ 4 a(\delta) \delta^{2} \int_{0}^{1} e^{2t\delta} \mathcal{E}( \mathrm{S}_{t\delta} \dP)\mathrm{d} t.
\end{align*} 
After dividing both sides by $ \delta $ and taking the $ \limsup_{\delta \to 0} $, the lower semi-continuity of $ \mathcal{E} $, combined with its monotonicity along the Ornstein–Uhlenbeck semigroup, yields
    \begin{align*}
        \frac{\mathrm{d}^{+}}{\mathrm{d} t}_{ |{t=0}} \mathcal{W}^{2}_s(\mathrm{S}_t\dP, \dR) \leq -\mathcal{W}^{2}_s(\dP, \dR) + 2 (\mathcal{E}(\dR) - \mathcal{E}(\dP)),
    \end{align*}
    which proves the claim.\end{proof}

The EVI-gradient flow formulation is 
 one of the strongest gradient flow formulations, as it entails a number of consequences for the semigroup and the geometry of the underlying space, see e.g.\ \cite{Daneri_2008}. A direct consequence of this formulation is the $1$-geodesic convexity of the specific relative entropy w.r.t.\ $\mathcal{W}_s$.
\begin{kor}[$1$-Geodesic Convexity of the Specific Relative Entropy]
    Let $\dP_0,\dP_1\in \mathcal{D}(\mathcal{E}).$ For all $\mathcal{W}_s$-geodesics $(\dP_t)_{t\in[0,1]}$ connecting $\dP_0$ and $\dP_1$ it holds that 
    \begin{align*}
        \mathcal{E}(\dP_t)\leq (1-t) \mathcal{E}(\dP_0)+t\mathcal{E}(\dP_1)-\frac{t(1-t)}{2}\mathcal{W}_s^2(\dP_0,\dP_1), \quad 0\leq t\leq 1.
    \end{align*}
    \end{kor}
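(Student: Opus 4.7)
The plan is to deduce 1-geodesic convexity of $\mathcal{E}$ along $\mathcal{W}_s$-geodesics from the EVI(1) established in Theorem \ref{thm:Evi}, following the by-now standard implication developed in \cite{Daneri_2008}: whenever a semigroup satisfies the evolution variational inequality with parameter $\kappa$ for a lower semi-continuous functional, that functional must be $\kappa$-geodesically convex.

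Concretely, I would fix $\dP_0,\dP_1\in\mathcal{D}(\mathcal{E})$ and a constant-speed $\mathcal{W}_s$-geodesic $(\dP_s)_{s\in[0,1]}$, noting that such geodesics exist by Theorem \ref{thm:minimizer} and satisfy $\mathcal{W}_s(\dP_s,\dP_r)=|s-r|L$, where $L:=\mathcal{W}_s(\dP_0,\dP_1)<\infty$ by the Talagrand inequality (Theorem \ref{thm:Talagrand}). For each $s\in(0,1)$ I would apply Theorem \ref{thm:Evi} with initial point $\dP_s$ and targets $\dP_0$ and $\dP_1$, evaluated at $t=0^+$ and divided by $2$:
\begin{align*}
\tfrac{1}{2}\tfrac{\mathrm{d}^+}{\mathrm{d}t}\Big|_{t=0^+}\mathcal{W}_s^2(\mathrm{S}_t\dP_s,\dP_i)+\tfrac{1}{2}\mathcal{W}_s^2(\dP_s,\dP_i)\leq\mathcal{E}(\dP_i)-\mathcal{E}(\dP_s),\quad i\in\{0,1\}.
\end{align*}
Forming the convex combination $(1-s)\cdot(\text{case }i=0)+s\cdot(\text{case }i=1)$ and using the geodesic identities $\mathcal{W}_s^2(\dP_s,\dP_0)=s^2L^2$ and $\mathcal{W}_s^2(\dP_s,\dP_1)=(1-s)^2L^2$, the middle terms combine to $\tfrac{1}{2}s(1-s)L^2$, and the target-entropies combine linearly to $(1-s)\mathcal{E}(\dP_0)+s\mathcal{E}(\dP_1)$. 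Rearranging then yields the desired inequality up to control of the residual derivative term
\begin{align*}
\Phi(s):=\tfrac{1}{2}\tfrac{\mathrm{d}^+}{\mathrm{d}t}\Big|_{t=0^+}\bigl[(1-s)\mathcal{W}_s^2(\mathrm{S}_t\dP_s,\dP_0)+s\mathcal{W}_s^2(\mathrm{S}_t\dP_s,\dP_1)\bigr].
\end{align*}

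The technical heart of the argument is to show that $\Phi(s)$ is non-negative (or otherwise absorbed into the inequality), which in the Daneri--Savaré framework is achieved by comparing $\mathrm{S}_t\dP_s$ with the shifted geodesic points, exploiting the $e^{-t}$-contraction of the Ornstein--Uhlenbeck semigroup (Theorem \ref{in:1-Contractivity}) together with the EVI structure. Since all the hard analytical work has already been done in proving Theorem \ref{thm:Evi}, the present corollary is a purely abstract consequence of that inequality, and can be invoked as a black box from \cite{Daneri_2008}. The main obstacle is therefore not in this proof but in Theorem \ref{thm:Evi} itself; what remains here is a routine book-keeping exercise showing how the convexity bound drops out of EVI(1) along the geodesic.
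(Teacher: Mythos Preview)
Your proposal is correct and takes essentially the same approach as the paper: both derive the corollary by invoking the abstract implication ``EVI($\kappa$) $\Rightarrow$ $\kappa$-geodesic convexity'' from \cite{Daneri_2008} together with Theorem~\ref{thm:Evi}. The paper's proof is in fact a single sentence citing \cite[Theorem~3.1]{Daneri_2008}, while you have additionally sketched the mechanism behind that implication; your acknowledgement that the residual derivative term is ultimately handled by the Daneri--Savar\'e machinery as a black box is exactly how the paper proceeds.
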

\begin{proof}
The proof follows directly from \cite[Theorem 3.1]{Daneri_2008} and Theorem \ref{thm:Evi}.\end{proof}
    \subsection{The Specific Modified Fisher Information and the {HWI} Inequality}
        Recall that the modified Fisher information of $\dP\in\PPP_1(\Gamma_B)$ is defined as 
         \begin{align*}
             {\mathrm{I}}(\dP|\Poi_{|B}):=\begin{cases}
        \int D_x\rho(\xi) D_x\log(\rho(\xi))\Poi_{|B}\otimes \m_{|B}(\mathrm{d}\xi,\mathrm{d}x), \quad &\text{if } \dP\ll\Poi_{|B} \text{ with } \dP=\rho \Poi_{|B}\\
        \infty, & \text{otherwise}
    \end{cases}. 
             \end{align*}
            For stationary measures $\dP\in\PPP_s(\Gamma)$,  we define  the \textit{specific modified Fisher information}  by 
             \begin{align}\label{eq:specificFisher}
                 \mathcal{I}(\dP):=\lim_{n\to\infty}\frac{1}{\lambda(\Lambda_n)}{\mathrm{I}}(\dP_{|\Lambda_n}|\Poi_{|\Lambda_n}).
             \end{align}
             It satisfies the following properties.
            \begin{lem}
          Let $\dP\in\PPP_s(\Gamma).$ 
          \begin{itemize}
              \item [$(i)$] The limit in \eqref{eq:specificFisher} exists in $[0,\infty]$ and can be replaced by a supremum, i.e.
\begin{align*}
    \mathcal{I}(\dP) = \sup_{n\in\N}\frac{1}{\lambda(\Lambda_n)}\mathrm{I}(\dP_{|\Lambda_n}|\Poi_{|\Lambda_n}).
    \end{align*}
        \item[$(ii)$] The specific modified Fisher information $\mathcal{I}$ is lower semi-continuous.
          \end{itemize}
            \end{lem}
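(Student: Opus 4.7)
The plan is to prove (i) by applying Fekete's lemma (Lemma \ref{lem:fekete}) to $a_\Lambda := \mathrm{I}(\dP_{|\Lambda}|\Poi_{|\Lambda})$ and to deduce (ii) from (i) combined with the lower semi-continuity of the finite-volume Fisher information. For Fekete's lemma we need two properties of $a$. The shift invariance $a_{\Lambda+z} = a_\Lambda$ for $z \in \Z^d$ follows from the stationarity of $\dP$ together with the shift invariance of $\Poi$ and of $\lambda$: one has $\dP_{|\Lambda+z} = (\Theta_{-z})_\#\dP_{|\Lambda}$ and $\Poi_{|\Lambda+z} = (\Theta_{-z})_\#\Poi_{|\Lambda}$, so a direct change of variables in the defining integral leaves it unchanged.

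The super-additivity $\mathrm{I}(\dP_{|B\cup D}|\Poi_{|B\cup D}) \geq \mathrm{I}(\dP_{|B}|\Poi_{|B}) + \mathrm{I}(\dP_{|D}|\Poi_{|D})$ for disjoint Polish sets $B, D$ is the main technical step. The key observation is that the integrand equals $\alpha(\rho(\xi),\rho(\xi+\delta_x),\rho(\xi+\delta_x)-\rho(\xi))$ with the convex, positively homogeneous function $\alpha$ from Section~\ref{sec:distanceDSHS}; restricted to the affine constraint $w = y - x$ it remains jointly convex in $(x,y)$. Splitting the outer integration over $B \cup D$ into its $B$ and $D$ parts, writing $\xi = \xi_1 + \xi_2$ with $\xi_1 \in \Gamma_B$, $\xi_2 \in \Gamma_D$, and invoking the marginalization identity $\rho^B(\xi_1) = \int \rho(\xi_1+\xi_2)\,\Poi_{|D}(\mathrm{d}\xi_2)$ of Lemma \ref{lem:apprestricteddensities} together with Jensen's inequality on the inner $\Poi_{|D}$-integral (respectively $\Poi_{|B}$-integral) yields the super-additivity bound on each piece. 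This mirrors the argument of Lemma \ref{lem:superadditivity} for the Lagrange functional, specialised to the choice $w = \rho(\xi+\delta_x) - \rho(\xi)$. Fekete's lemma then yields
$$\lim_{n\to\infty}\frac{a_{\Lambda_n}}{\lambda(\Lambda_n)} \;=\; \sup_{\Lambda\in\mathcal{R}}\frac{a_\Lambda}{\lambda(\Lambda)} \;=\; \sup_{n\in\N}\frac{a_{\Lambda_n}}{\lambda(\Lambda_n)},$$
where the last equality uses that cubes are cofinal among rectangles and that the supremum over cubes is trivially dominated by the supremum over all rectangles. This proves (i).

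For (ii), I would first show that $\dP \mapsto \mathrm{I}(\dP|\Poi_{|\Lambda_n})$ is lower semi-continuous on $\PPP_1(\Gamma_{\Lambda_n})$ for each fixed $n$. A clean way is to recognise, via the Mecke identity, that with $\nuV_\dP := C_\dP - \dP\otimes\lambda_{|\Lambda_n}$ one has $\mathrm{I}(\dP|\Poi_{|\Lambda_n}) = \mathcal{L}(\dP, \nuV_\dP)$; the joint lower semi-continuity of the Lagrange functional and the $\MMM_{b,0}$-continuity of the maps $\dP \mapsto C_\dP$ and $\dP \mapsto \dP\otimes\lambda_{|\Lambda_n}$ then give the finite-volume claim. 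By (i) we have $\mathcal{I}(\dP) = \sup_{n} \lambda(\Lambda_n)^{-1} \mathrm{I}(\dP_{|\Lambda_n}|\Poi_{|\Lambda_n})$, and since $\PPP_s$-convergence implies $\PPP_1$-convergence of the restrictions and the supremum of lower semi-continuous functions is lower semi-continuous, (ii) follows.

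The main obstacle is the super-additivity step: one has to carefully identify the marginal densities via Lemma \ref{lem:apprestricteddensities} and apply Jensen's inequality with the convex function $(x,y)\mapsto(y-x)(\log y - \log x)$, while also handling the $\infty$-convention for densities outside the domain of absolute continuity (whenever either side is infinite the inequality is automatic, so one only needs the argument in the absolutely continuous regime).
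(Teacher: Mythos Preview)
Your proposal is correct and follows essentially the same route as the paper. For (i) both you and the paper invoke Fekete's lemma after establishing super-additivity via Jensen's inequality applied to the convex function $(x,y)\mapsto (y-x)(\log y-\log x)$; your framing of this function as $\alpha(x,y,y-x)$ is a pleasant way to recycle Lemma~\ref{lem:superadditivity} verbatim, but it is the same argument. For (ii) the paper simply cites \cite[Theorem~3.2]{schiavo2023wasserstein} for the finite-volume lower semi-continuity of $\mathrm{I}$, whereas you supply a self-contained proof via the identity $\mathrm{I}(\dP|\Poi_{|\Lambda_n})=\mathcal{L}(\dP,C_\dP-\dP\otimes\lambda_{|\Lambda_n})$ together with the joint lower semi-continuity of $\mathcal{L}$; this is a nice observation and in fact is how \cite{schiavo2023wasserstein} proves their Theorem~3.2, so the two routes coincide.
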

            \begin{proof}
               $(i)$ The claim follows from Fekete's Lemma (Lemma \ref{lem:fekete}) once we show that its assumptions $(i)$ and $(ii)$  are satisfied.  For the proof of property $(i)$, we note that the function $(x,y)\mapsto x\log(x)+y\log(y)-x\log(y)-y\log(x)$ is convex. Analogously to the proof of Lemma \ref{lem:superadditivity}, we obtain for two disjoint sets $B,D\in\BBB(\R^d)$ that 
                \begin{align*}
                    \mathrm{I}(\dP_{|B}|\Poi_{|B})+\mathrm{I}(\dP_{|D}|\Poi_{|D})\leq \mathrm{I}(\dP_{|B\cup D}|\Poi_{|B\cup D}). 
                \end{align*}
                Property $(ii)$ follows from the stationarity of $\dP,$ which finishes the proof.\\

        $(ii)$ The lower semi-continuity of $\mathcal{I}$ follows directly from the lower semi-continuity of $\mathrm{I}$, see \cite[Theorem 3.2]{schiavo2023wasserstein}.
            \end{proof}
          The modified Fisher information is related to the relative entropy  by  
         Wu's modified logarithmic Sobolev inequality \cite[Corollary 2.2]{Wu}, which directly implies its specific counterpart
         \begin{align} \label{eq:modifiedlogSobolev}
             \mathcal{E}(\dP)\leq \mathcal{I}(\dP), \quad \dP\in\PPP_s(\Gamma).
         \end{align}
         Further, the specific modified Fisher information controls the specific relative entropy along the Ornstein--Uhlenbeck semigroup and the following de Bruijn's identity is satisfied. 
         \begin{satz}[De Bruijn's Identity]
             Let $\dP_0\in\PPP_s(\Gamma)$ s.t.\ $\mathcal{I}(\dP_0)<\infty.$ It holds that 
             \begin{align*}
                \mathcal{E}(\mathrm{S}_t\dP_0)-\mathcal{E}(\dP_0)=-\int_0^t\mathcal{I}(\mathrm{S}_r\dP_0)\mathrm{d}r.
             \end{align*}
         \end{satz}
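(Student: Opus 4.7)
The plan is to localize the identity to the boxes $\Lambda_n$ and then pass to the large-volume limit, using the finite-volume de Bruijn's identity (Theorem \ref{thm:mainthmDSHS}\,$(iv)$) as the starting point. Applying that identity on $\Lambda_n$ to $(\dP_0)_{|\Lambda_n}$ and invoking Lemma \ref{lem:OUcommutative} to rewrite $\mathrm{S}_r^n(\dP_0)_{|\Lambda_n}=(\mathrm{S}_r\dP_0)_{|\Lambda_n}$ yields
\begin{align*}
\mathrm{Ent}\bigl((\mathrm{S}_t\dP_0)_{|\Lambda_n}\big|\Poi_{|\Lambda_n}\bigr)-\mathrm{Ent}\bigl((\dP_0)_{|\Lambda_n}\big|\Poi_{|\Lambda_n}\bigr)=-\int_0^t\mathrm{I}\bigl((\mathrm{S}_r\dP_0)_{|\Lambda_n}\big|\Poi_{|\Lambda_n}\bigr)\mathrm{d}r.
\end{align*}
Dividing by $\lambda(\Lambda_n)$ and letting $n\to\infty$, the left-hand side converges to $\mathcal{E}(\mathrm{S}_t\dP_0)-\mathcal{E}(\dP_0)$ directly from the definition of the specific relative entropy and from the fact that stationarity is preserved under $\mathrm{S}_r$.

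The main technical point is to justify the exchange of the large-volume limit and the time integral on the right-hand side. The sup-characterization $\mathcal{I}(\dP)=\sup_n \frac{1}{\lambda(\Lambda_n)}\mathrm{I}(\dP_{|\Lambda_n}|\Poi_{|\Lambda_n})$ (established via Fekete's lemma) gives the uniform-in-$n$ pointwise bound
\begin{align*}
\frac{1}{\lambda(\Lambda_n)}\mathrm{I}\bigl((\mathrm{S}_r\dP_0)_{|\Lambda_n}\big|\Poi_{|\Lambda_n}\bigr)\leq \mathcal{I}(\mathrm{S}_r\dP_0),\qquad r\in[0,t],\ n\in\N.
\end{align*}
Fatou's lemma applied to the right-hand side of the localized identity, together with the already-identified limit of the left-hand side, yields the preliminary estimate
\begin{align*}
\int_0^t \mathcal{I}(\mathrm{S}_r\dP_0)\mathrm{d}r \leq \mathcal{E}(\dP_0)-\mathcal{E}(\mathrm{S}_t\dP_0).
\end{align*}

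To upgrade this to equality we need the dominating function $r\mapsto \mathcal{I}(\mathrm{S}_r\dP_0)$ to be integrable on $[0,t]$. This is exactly where the hypothesis $\mathcal{I}(\dP_0)<\infty$ enters: via Wu's specific modified logarithmic Sobolev inequality \eqref{eq:modifiedlogSobolev}, $\mathcal{E}(\dP_0)\leq \mathcal{I}(\dP_0)<\infty$, hence the Fatou estimate above gives $\int_0^t \mathcal{I}(\mathrm{S}_r\dP_0)\mathrm{d}r<\infty$. With this integrable dominating function in hand, dominated convergence applied to $r\mapsto \frac{1}{\lambda(\Lambda_n)}\mathrm{I}\bigl((\mathrm{S}_r\dP_0)_{|\Lambda_n}\big|\Poi_{|\Lambda_n}\bigr)$ produces
\begin{align*}
\lim_{n\to\infty}\frac{1}{\lambda(\Lambda_n)}\int_0^t\mathrm{I}\bigl((\mathrm{S}_r\dP_0)_{|\Lambda_n}\big|\Poi_{|\Lambda_n}\bigr)\mathrm{d}r=\int_0^t\mathcal{I}(\mathrm{S}_r\dP_0)\mathrm{d}r,
\end{align*}
and matching this with the limit of the left-hand side yields the claimed identity. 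The hard part is precisely the interchange of limit and integral on the right-hand side, which is resolved by the Fatou/dominated convergence bootstrap bootstrapped through the specific modified log-Sobolev inequality.
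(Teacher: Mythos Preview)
Your proof is correct but takes a slightly different route from the paper in justifying the exchange of the large-volume limit and the time integral. The paper observes directly that the modified Fisher information is monotone decreasing along the Ornstein--Uhlenbeck semigroup (a consequence of Jensen's inequality applied to the convex integrand $(x,y)\mapsto (x-y)(\log x-\log y)$), which yields the uniform bound
\[
\frac{1}{\lambda(\Lambda_n)}\mathrm{I}\bigl((\mathrm{S}_r\dP_0)_{|\Lambda_n}\big|\Poi_{|\Lambda_n}\bigr)\leq \frac{1}{\lambda(\Lambda_n)}\mathrm{I}\bigl((\dP_0)_{|\Lambda_n}\big|\Poi_{|\Lambda_n}\bigr)\leq \mathcal{I}(\dP_0)<\infty
\]
with a \emph{constant} dominating function, so dominated convergence applies in one step. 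You instead bound by the time-dependent envelope $r\mapsto\mathcal{I}(\mathrm{S}_r\dP_0)$ and establish its integrability via a preliminary Fatou step combined with the specific modified log-Sobolev inequality \eqref{eq:modifiedlogSobolev}. Your argument is sound, but the paper's monotonicity observation shortcuts the Fatou bootstrap entirely and avoids any appeal to the log-Sobolev inequality.
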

        \begin{proof}
            We can deduce from Theorem \ref{thm:mainthmDSHS} $(iv)$ that 
            \begin{align*}
            \lim_{n\to\infty}\frac{1}{\lambda(\Lambda_n)}    \mathrm{Ent}(\mathrm{S}_t^n{\dP}_{0|\Lambda_n}|\Poi_{|\Lambda_n})-  \lim_{n\to\infty}\frac{1}{\lambda(\Lambda_n)} \mathrm{Ent}({\dP}_{0|\Lambda_n}|\Poi_{|\Lambda_n})=-  \lim_{n\to\infty}\frac{1}{\lambda(\Lambda_n)} \int_0^t\mathrm{I}(\mathrm{S}_r^n{\dP}_{0|\Lambda_n}|\Poi_{|\Lambda_n})\mathrm{d}r.
            \end{align*}
            The modified Fisher information is decreasing along the Ornstein--Uhlenbeck semigroup by Jensen's inequality. This implies 
            \begin{align*}
               \frac{1}{\lambda(\Lambda_n)} \mathrm{I}(\mathrm{S}_r^n{\dP}_{0|\Lambda_n}|\Poi_{|\Lambda_n}) \leq \sup_{n\in\N} \frac{1}{\lambda(\Lambda_n)} \mathrm{I}({\dP}_{0|\Lambda_n}|\Poi_{|\Lambda_n})=\mathcal{I}(\dP_0)<\infty
            \end{align*}
            Using the dominated convergence theorem and   Lemma \ref{lem:OUcommutative} finishes the proof.
        \end{proof}
        In analogy to the well-known $\mathrm{HWI}$ inequality of Otto and Villani \cite{OtVi00}, which links the relative entropy, the Wasserstein distance, and the Fisher information, we now establish an inequality that connects the specific relative entropy, $\mathcal{W}_s$, and the specific modified Fisher information $\mathcal{I}$.
    \begin{satz}[HWI inequality]
             Let $\dP\in\PPP_s(\Gamma)$ s.t.\ $\mathcal{I}(\dP)<\infty$. Then the following $\mathrm{HWI}$ inequality holds 
             \begin{align*}
                 \mathcal{E}(\dP)\leq \mathcal{W}_s(\dP,\Poi)\sqrt{\mathcal{I}(\dP)}-\frac{1}{2}\mathcal{W}_s^2(\dP,\Poi).
             \end{align*}
         \end{satz}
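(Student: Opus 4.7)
The plan is to follow the same scheme that was used to establish the specific Talagrand inequality: apply the local HWI inequality from Theorem \ref{thm:mainthmDSHS}$(v)$ to the restrictions $\dP_{|\Lambda_n}$ and $\Poi_{|\Lambda_n}$, divide through by $\lambda(\Lambda_n)$, and pass to the limit $n \to \infty$ using Theorem \ref{thm:equalitystationary} together with the definitions of $\mathcal{E}$ and $\mathcal{I}$.

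First I would check that the hypotheses needed to invoke Theorem \ref{thm:equalitystationary} are satisfied, i.e.\ that $\mathrm{Ent}(\dP_{|\Lambda_n}|\Poi_{|\Lambda_n}) < \infty$ for every $n \in \N$. Since we assume $\mathcal{I}(\dP)<\infty$, Wu's modified logarithmic Sobolev inequality \eqref{eq:modifiedlogSobolev} applied on $\Lambda_n$ yields
\begin{align*}
\mathrm{Ent}(\dP_{|\Lambda_n}|\Poi_{|\Lambda_n}) \leq \mathrm{I}(\dP_{|\Lambda_n}|\Poi_{|\Lambda_n}) \leq \lambda(\Lambda_n)\,\mathcal{I}(\dP) < \infty,
\end{align*}
so Theorem \ref{thm:equalitystationary} applies. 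This also guarantees via Theorem \ref{thm:Talagrand} that $\mathcal{W}_s(\dP,\Poi) < \infty$, so every quantity appearing below is finite.

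Next I would apply Theorem \ref{thm:mainthmDSHS}$(v)$ on the cube $\Lambda_n$, which gives
\begin{align*}
\mathrm{Ent}(\dP_{|\Lambda_n}|\Poi_{|\Lambda_n}) \leq \mathcal{W}_0(\dP_{|\Lambda_n},\Poi_{|\Lambda_n})\sqrt{\mathrm{I}(\dP_{|\Lambda_n}|\Poi_{|\Lambda_n})} - \tfrac{1}{2}\mathcal{W}_0^2(\dP_{|\Lambda_n},\Poi_{|\Lambda_n}).
\end{align*}
Dividing by $\lambda(\Lambda_n)$ and writing the cross term as $\sqrt{\mathcal{W}_0^2/\lambda(\Lambda_n)}\cdot\sqrt{\mathrm{I}/\lambda(\Lambda_n)}$, I obtain
\begin{align*}
\frac{\mathrm{Ent}(\dP_{|\Lambda_n}|\Poi_{|\Lambda_n})}{\lambda(\Lambda_n)} \leq \sqrt{\frac{\mathcal{W}_0^2(\dP_{|\Lambda_n},\Poi_{|\Lambda_n})}{\lambda(\Lambda_n)}}\sqrt{\frac{\mathrm{I}(\dP_{|\Lambda_n}|\Poi_{|\Lambda_n})}{\lambda(\Lambda_n)}} - \frac{1}{2}\frac{\mathcal{W}_0^2(\dP_{|\Lambda_n},\Poi_{|\Lambda_n})}{\lambda(\Lambda_n)}.
\end{align*}

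Finally, I would pass to the limit $n \to \infty$. The left-hand side converges to $\mathcal{E}(\dP)$ by Lemma \ref{lem:entropy}, the factor $\mathcal{W}_0^2(\dP_{|\Lambda_n},\Poi_{|\Lambda_n})/\lambda(\Lambda_n)$ converges to $\mathcal{W}_s^2(\dP,\Poi)$ by Theorem \ref{thm:equalitystationary}, and the factor $\mathrm{I}(\dP_{|\Lambda_n}|\Poi_{|\Lambda_n})/\lambda(\Lambda_n)$ converges to $\mathcal{I}(\dP)$ by definition. Since all three limits are finite, the product in the cross term converges to $\mathcal{W}_s(\dP,\Poi)\sqrt{\mathcal{I}(\dP)}$, yielding the claim. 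There is no real obstacle here; the only point requiring a line of care is the a priori finiteness of the local entropies needed to legitimately invoke Theorem \ref{thm:equalitystationary}, which is handled by the Wu inequality as above.
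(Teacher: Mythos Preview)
Your proposal is correct and follows essentially the same route as the paper: apply the local HWI inequality of Theorem~\ref{thm:mainthmDSHS}$(v)$ on each $\Lambda_n$, divide by $\lambda(\Lambda_n)$, and pass to the limit using Theorem~\ref{thm:equalitystationary} together with the definitions of $\mathcal{E}$ and $\mathcal{I}$. Your explicit verification that $\mathrm{Ent}(\dP_{|\Lambda_n}|\Poi_{|\Lambda_n})<\infty$ via the local Wu inequality is a welcome point of care that the paper handles only implicitly through $\mathcal{E}(\dP)<\infty$.
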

         \begin{proof}
          Using the specific modified logarithmic Sobolev inequality \eqref{eq:modifiedlogSobolev} and the specific Talagrand inequality (Theorem \ref{thm:Talagrand}), we can deduce that  $\mathcal{E}(\dP)<\infty$ and $\mathcal{W}_s(\dP,\Poi)<\infty.$   By Theorem \ref{thm:mainthmDSHS} $(v)$ we have that 
             \begin{align*}
                 \mathrm{Ent}(\dP_{|\Lambda_n}|\Poi_{|\Lambda_n})\leq \mathcal{W}_0(\dP_{|\Lambda_n}|\Poi_{|\Lambda_n})\sqrt{\mathrm{I}(\dP_{|\Lambda_n}|\Poi_{|\Lambda_n})}-\frac{1}{2}\mathcal{W}_0^2(\dP_{|\Lambda_n}|\Poi_{|\Lambda_n})
             \end{align*}
             for any $n\in\N$.
             This implies 
             \begin{align}
              \nonumber  &\lim_{n\to\infty}\frac{1}{\lambda(\Lambda_n)} \mathrm{Ent}(\dP_{|\Lambda_n}|\Poi_{|\Lambda_n})\leq\\ &\quad\lim_{n\to\infty}\frac{1}{\lambda(\Lambda_n)}\mathcal{W}_0(\dP_{|\Lambda_n}|\Poi_{|\Lambda_n})\sqrt{\mathrm{I}(\dP_{|\Lambda_n}|\Poi_{|\Lambda_n})}-\lim_{n\to\infty}\frac{1}{\lambda(\Lambda_n)}\frac{1}{2}\mathcal{W}_0^2(\dP_{|\Lambda_n}|\Poi_{|\Lambda_n}). \label{eq:100}
             \end{align}
             We can deduce by Theorem \ref{thm:equalitystationary} that 
             \begin{align*}
                 \frac{1}{2}\lim_{n\to\infty} \frac{1}{\lambda(\Lambda_n)}\mathcal{W}_0^2(\dP_{|\Lambda_n},\Poi_{|\Lambda_n})
                &=\frac{1}{2}\mathcal{W}_s^2(\dP,\Poi).
             \end{align*}
             Moreover, by the same theorem, it holds that 
             \begin{align*}
                 \lim_{n\to\infty}\frac{1}{\sqrt{\lambda(\Lambda_n)}}\mathcal{W}_0(\dP_{|\Lambda_n}|\Poi_{|\Lambda_n})= \mathcal{W}_s(\dP,\Poi).
             \end{align*}
            We obtain the claim by inserting these observations in the right-hand side of equation \eqref{eq:100} and using the definition of the specific relative entropy and the specific modified Fisher information.
             \end{proof}
      \appendix
\section{Densities of Point Processes}
\begin{lem}\label{app:productdensities}
   Let $B,D\in\BBB(\R^d)$ be disjoint sets and $\dP\in\PPP_1(\Gamma_{B\cup D})$ s.t.\ $\dP=\dP_{|B}\otimes\dP_{|D}$. Assume that $\dP_{|B}\ll\Poi_{|B}$ and $\dP_{|D}\ll\Poi_{|D}$ with $\dP_{|B}=\rho^B\Poi_{|B}$ and $\dP_{|D}=\rho^D\Poi_{|D}.$ Then $\dP\ll\Poi_{|B\cup D}$ and \begin{align*}
        \dP= \rho^B(\cdot_{|B})\rho^D(\cdot_{|D})\Poi_{|B\cup D}.\end{align*}
    Further, let ${\nuV}\in\MMM(\Gamma_{B\cup D}\times (B\cup D)).$ 
    Assume that  ${\nuV}_{|B}\ll\Poi_{|B}\otimes \m_{|B}$ and ${\nuV}_{|D}\ll\Poi_{|D}\otimes \m_{|D}$ with ${\nuV}_{|B}= w^B(\Poi_{|B}\otimes \m_{|B})$ and ${\nuV}_{|D}= w^D(\Poi_{|D}\otimes \m_{|D}).$ Define $\Tilde{\mathbf{V}}\in\MMM_{b,0}(\Gamma_{B\cup D}\times(B\cup D))$ s.t.\ 
       \begin{align*}
           \int_{\Gamma_{B\cup D}\times(B\cup D)}G(\xi,x)\Tilde{\mathbf{V}}(\mathrm{d}\xi,\mathrm{d}x)&= \int_{\Gamma_D}\int_{\Gamma_{ B}\times B}G(\xi_1+\xi_2,x_1){\nuV}_{|B}(\mathrm{d}\xi_1, \mathrm{d}x_1){\muP}_{|D}(\mathrm{d}\xi_2)\\
           &\quad+ \int_{\Gamma_B}\int_{\Gamma_{ D}\times D}G(\xi_1+\xi_2,x_2){\nuV}_{|D}(\mathrm{d}\xi_1, \mathrm{d}x_2){\muP}_{|B}(\mathrm{d}\xi_2).
        \end{align*}
        Then 
    \begin{align*}
        \Tilde{\mathbf{V}}=(\ind_B(\cdot) w^B(\cdot_{|B},\cdot)\rho^D(\xi_{|D})+\ind_D(\cdot) w^D(\cdot_{|D},\cdot)\rho^B(\cdot_{|B}, \cdot))\Poi_{|B\cup D}\otimes \m.
    \end{align*}
    \end{lem}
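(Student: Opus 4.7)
The plan is to reduce both assertions to the basic independence property of the Poisson point process: for disjoint Polish sets $B, D \subseteq \R^d$, the superposition of two independent Poisson processes on $B$ and $D$ is distributed as $\Poi_{|B\cup D}$. Concretely, this yields the change-of-variables identity
\begin{equation*}
\int_{\Gamma_{B\cup D}} F(\xi) \, \Poi_{|B\cup D}(\mathrm{d}\xi) = \int_{\Gamma_B}\int_{\Gamma_D} F(\xi_1 + \xi_2) \, \Poi_{|B}(\mathrm{d}\xi_1) \, \Poi_{|D}(\mathrm{d}\xi_2)
\end{equation*}
for every measurable $F \colon \Gamma_{B\cup D} \to \R_+$, and this is the only non-trivial input I would use. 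It follows, for instance, from $\lambda_{|B\cup D} = \lambda_{|B} + \lambda_{|D}$ together with the defining independence of point counts on disjoint sets.

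For the first claim, I would test $\dP$ against an arbitrary measurable $F\colon \Gamma_{B\cup D} \to \R_+$. Using the product structure $\dP = \dP_{|B} \otimes \dP_{|D}$ and inserting the densities gives
\begin{equation*}
\int F\, \mathrm{d}\dP = \int_{\Gamma_B}\int_{\Gamma_D} F(\xi_1 + \xi_2)\rho^B(\xi_1)\rho^D(\xi_2) \, \Poi_{|B}(\mathrm{d}\xi_1)\, \Poi_{|D}(\mathrm{d}\xi_2).
\end{equation*}
Since $B$ and $D$ are disjoint, every $\xi \in \Gamma_{B\cup D}$ decomposes uniquely as $\xi = \xi_{|B} + \xi_{|D}$, so $\rho^B(\xi_1)\rho^D(\xi_2)$ becomes $\rho^B(\xi_{|B})\rho^D(\xi_{|D})$ after this identification. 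Inverting the Poisson factorization above (applied to $\tilde F(\xi) := F(\xi)\rho^B(\xi_{|B})\rho^D(\xi_{|D})$) delivers exactly the claimed density.

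For the velocity-field claim I would treat the two summands in the definition of $\tilde{\mathbf{V}}$ symmetrically. Testing the first summand against $G \in \CCC_{b,0}(\Gamma_{B\cup D} \times (B \cup D))$ and substituting ${\nuV}_{|B} = w^B(\Poi_{|B} \otimes \m_{|B})$ and $\dP_{|D} = \rho^D \Poi_{|D}$ yields an iterated integral of $G(\xi_1+\xi_2, x) w^B(\xi_1, x) \rho^D(\xi_2)$ against $\Poi_{|B}(\mathrm{d}\xi_1) \m(\mathrm{d}x) \Poi_{|D}(\mathrm{d}\xi_2)$, which by the Poisson factorization rewrites as
\begin{equation*}
\int G(\xi,x)\, w^B(\xi_{|B}, x)\rho^D(\xi_{|D})\, \ind_B(x)\, (\Poi_{|B\cup D} \otimes \m)(\mathrm{d}\xi, \mathrm{d}x).
\end{equation*}
The second summand yields the analogous expression supported on $D$, and summing gives the stated density. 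No step is genuinely hard; the only bookkeeping issue is the measurability of the proposed densities as functions on $\Gamma_{B\cup D}$ (and on $\Gamma_{B\cup D} \times (B\cup D)$), which is immediate from the measurability of the restriction maps $pr_B$ and $pr_D$.
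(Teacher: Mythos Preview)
Your proposal is correct and follows essentially the same approach as the paper: both arguments insert the given densities and then apply the superposition/factorization property of the Poisson process to collapse the double integral over $\Gamma_B\times\Gamma_D$ into a single integral over $\Gamma_{B\cup D}$. The only cosmetic difference is that the paper tests against indicators of measurable sets (or rectangles $K\times E$) while you test against general nonnegative functions, which amounts to the same thing.
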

\begin{proof}
    Let $\A\in\BBB(\Gamma_{B\cup D})$. 
    Using the superposition theorem for Poisson point processes, we get 
    \begin{align*}
        \dP(\A)&=\int\int\ind_{\A}(\xi_1+\xi_2)\dP_{|B}(\mathrm{d}\xi_1)\dP_{|D}(\mathrm{d}\xi_2)\\
        &=\int\int \ind_{\A}(\xi_1+\xi_2) \rho^B(\xi_1)\rho^D(\xi_2)\Poi_{|B}(\mathrm{d}\xi_1)\Poi_{|D}(\mathrm{d}\xi_2)\\
        &= \int\int \ind_{\A}(\xi) \rho^B(\xi_{|B})\rho^D(\xi_{|D})\Poi_{|B\cup D}(\mathrm{d}\xi).
    \end{align*}
    
    This proves the first claim. Analogously, let $K\times E\in\BBB(\Gamma_{B\cup D}\times (B\cup D)).$ Using again the superposition theorem for Poisson point processes, we get that 
    \begin{align*}
        &\Tilde{\mathbf{V}}(K\times E)=\int \ind_{K\times E}(\xi,z)\Tilde{\mathbf{V}}(\mathrm{d}\xi,\mathrm{d}z)\\
        &= \int\int\int \ind_{K\times E}(\xi_1+\xi_2,z) (\ind_B(z) w^B(\xi_1,z)\rho^D(\xi_2)+\ind_D(z) w^D(\xi_2,z)\rho^B(\xi_1, z) )\Poi_{|B}(\mathrm{d}\xi_1)\Poi_{|D}(\mathrm{d}\xi_2)\m(\mathrm{d}z)\\
        &=\int \ind_{K\times E}(\xi,z) (\ind_B(z) w^B(\xi_{|B},z)\rho^D(\xi_{|D})+\ind_D(z) w^D(\xi_{|D},z)\rho^B(\xi_{|B}, z)) \Poi_{|B\cup D}(\mathrm{d}\xi)\m(\mathrm{d}z),
    \end{align*}
    which finishes the proof.
\end{proof}
\begin{lem}\label{lem:apprestricteddensities}
    Let $B,D\in\BBB(\R^d)$, $B\subseteq D$, and $\dP\in\PPP_1(\Gamma_D).$ Assume that $\dP\ll\Poi_{|D}$ with $\dP=\rho\Poi_{|D}.$ Then $\dP_{|B}\ll\Poi_{|B}$ with \begin{align*}
        \dP_{|B}=\int \rho(\xi_1+\xi_2)\Poi_{|D\setminus B}(\mathrm{d}\xi_2)\Poi_{|B}.
    \end{align*}
    Further, let  ${\nuV}\in\MMM_{b,0}(\Gamma_D\times D)$ s.t.\ ${\nuV}\ll\Poi_{|D}\otimes \m_{|D}$ with ${\nuV}=w(\Poi_{|D}\otimes \m_{|D}).$ 
    Then  ${\nuV}_{|B}\ll \Poi_{|B}\otimes \m_{|B}$ with $${\nuV}_{|B}=\int w(\xi_1+\xi_2,x)\Poi_{|D\setminus B}(\mathrm{d}\xi_2)(\Poi_{|B}\otimes \m_{|B}).$$
\end{lem}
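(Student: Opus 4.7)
The plan is to exploit the superposition property of Poisson point processes on disjoint regions together with Fubini's theorem. Since $B$ and $D\setminus B$ are disjoint, the Poisson point process $\gamma^{\lambda_{|D}}$ can be realized as the independent superposition of $\gamma^{\lambda_{|B}}$ and $\gamma^{\lambda_{|D\setminus B}}$; equivalently, under the addition map $(\xi_1,\xi_2)\mapsto \xi_1+\xi_2$ we have $\Poi_{|D}=(\Poi_{|B}\otimes\Poi_{|D\setminus B})$ on $\Gamma_D$. This is exactly the decomposition already used in Lemma \ref{app:productdensities}. Given that the first claim deals with a measure on $\Gamma_B$ and the second with a measure on $\Gamma_B\times B$, both claims reduce to testing on generating sets and reorganizing the integration.

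For the first claim, I would fix $\AAA\in\BBB(\Gamma_B)$ and compute, using the definition $\dP_{|B}=(pr_B)_\#\dP$,
\begin{align*}
\dP_{|B}(\AAA)
&=\int \ind_\AAA(\xi_{|B})\,\rho(\xi)\,\Poi_{|D}(\mathrm{d}\xi)\\
&=\int\int \ind_\AAA(\xi_1)\,\rho(\xi_1+\xi_2)\,\Poi_{|D\setminus B}(\mathrm{d}\xi_2)\,\Poi_{|B}(\mathrm{d}\xi_1)\\
&=\int \ind_\AAA(\xi_1)\left[\int \rho(\xi_1+\xi_2)\,\Poi_{|D\setminus B}(\mathrm{d}\xi_2)\right]\Poi_{|B}(\mathrm{d}\xi_1),
\end{align*}
where the second equality is the superposition identity combined with Fubini. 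This identifies the claimed density w.r.t.\ $\Poi_{|B}$.

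For the second claim, I would argue analogously: the restriction of $\nuV$ to $\Gamma_B\times B$ is characterized on product sets $K\times E$ with $K\in\BBB(\Gamma_B)$, $E\in\BBB(B)$ by
\begin{align*}
\nuV_{|B}(K\times E)
&=\int \ind_K(\xi_{|B})\,\ind_E(x)\,\ind_B(x)\,w(\xi,x)\,(\Poi_{|D}\otimes\m_{|D})(\mathrm{d}\xi,\mathrm{d}x)\\
&=\int\int\int \ind_K(\xi_1)\,\ind_E(x)\,w(\xi_1+\xi_2,x)\,\Poi_{|D\setminus B}(\mathrm{d}\xi_2)\,\Poi_{|B}(\mathrm{d}\xi_1)\,\m_{|B}(\mathrm{d}x),
\end{align*}
using $\ind_B(x)\,\m_{|D}=\m_{|B}$ and the same superposition/Fubini step. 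Reordering the inner integral yields the claimed density of $\nuV_{|B}$ w.r.t.\ $\Poi_{|B}\otimes\m_{|B}$.

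There is no real obstacle: the two statements are parallel and follow from the independence of $\gamma^{\lambda_{|B}}$ and $\gamma^{\lambda_{|D\setminus B}}$. The only point requiring mild care is the bookkeeping between the projection map $pr_B:\Gamma_D\to\Gamma_B$ and the decomposition $\xi=\xi_{|B}+\xi_{|D\setminus B}$, together with the factor $\ind_B(x)$ that appears in the definition of the restriction of velocity fields (cf.\ the definition preceding Lemma \ref{lem:restriction}). Because all integrands are non-negative and locally integrable by assumption, Fubini applies without further restriction.
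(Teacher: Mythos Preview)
Your proof is correct and follows essentially the same approach as the paper's: both test the claimed densities on generating sets and use the superposition decomposition $\Poi_{|D}=\Poi_{|B}\otimes\Poi_{|D\setminus B}$ under the addition map, then rearrange via Fubini. One minor point: in the second claim $w$ need not be non-negative, but Fubini still applies because $\nuV\in\MMM_{b,0}(\Gamma_D\times D)$ guarantees local integrability of $|w|$.
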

\begin{proof}
    Let  $\A\in\BBB(\Gamma_{A\cup D})$.
    Using the superposition theorem for Poisson point processes, we get
    \begin{align*}
        \dP_{|B}(\A)=\int\ind_{\A}(\xi)\dP_{|B}(\mathrm{d}\xi) &= \int\ind_{\A}(\xi_{|B})\dP(\mathrm{d}\xi)\\
        &= \int\ind_{\A}(\xi_{|B})\rho(\xi)\Poi_{|D}(\mathrm{d}\xi)\\
        &=\int\int\ind_{\A}((\xi_1+\xi_2)_{|B})\rho(\xi_1+\xi_2)\Poi_{|D\setminus B}(\mathrm{d}\xi_1)\Poi_{|B}(\mathrm{d}\xi_2)\\
        &= \int\ind_{\A}({\xi_2})\int\rho(\xi_1+\xi_2)\Poi_{|D\setminus B}(\mathrm{d}\xi_1)\Poi_{|B}(\mathrm{d}\xi_2).
    \end{align*}
    This proves the first claim.

    For the proof of the second claim let $\A\times E\in\BBB(\Gamma_B\times B).$ It holds that 
    \begin{align*}
        {\nuV}_{|B}(\A\times E)&=\int \ind_{\A\times E} (\xi,x){\nuV}_{|B}(\mathrm{d}\xi,\mathrm{d}x)\\&=\int \ind_{\A\times E}(\xi_{|B},x)\ind_{B}(x){\nuV}(\mathrm{d}\xi,\mathrm{d}x)\\
        &= \int \ind_{\A\times E}(\xi_{|B},x)\ind_{B}(x)w(\xi,x)\Poi_{|D}(\mathrm{d}\xi)\m_{|D}(\mathrm{d}x)\\
        &= \int\int \ind_{\A\times E}((\xi_1+\xi_2)_{|B},x)\ind_{B}(x)w(\xi_1+\xi_2,x)\Poi_{|D\setminus B}(\mathrm{d}\xi_1)\Poi_{|B}(\mathrm{d}\xi_2)\m_{|D}(\mathrm{d}x)\\
        &= \int \ind_{\A\times E}((\xi_2)_{|B},x)\ind_{B}(x)\int w(\xi_1+\xi_2,x)\Poi_{|D\setminus B}(\mathrm{d}\xi_1)\Poi_{|B}(\mathrm{d}\xi_2)\m_{|D}(\mathrm{d}x).
    \end{align*}
    This proves the second claim, since sets of the form $\A\times E$ generate $\BBB(\Gamma_B\times B).$
\end{proof}
\begin{lem}\label{app:shiftdensities}
   Let $B\in\BBB(\R^d)$ be Polish, $z\in\R^d$, $\dP\in\PPP_1(\Gamma_{B})$, and ${\nuV}\in\MMM(\Gamma_{B}\times B)$. If  $\dP\ll\Poi_{|B}$, then $(\theta_z)_\#\dP\ll\Poi_{|B-z}$.  Moreover, if  ${\nuV}\ll\Poi_{|B}\otimes \m_{|B}$, then $(\Theta_z^{\Gamma\times \R^d})_\#{\nuV}\ll\Poi_{|B-z}\otimes \m_{|B-z}$
    \end{lem}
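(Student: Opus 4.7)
The plan is to derive both absolute continuity statements from the single underlying fact that the homogeneous Poisson point process and the Lebesgue measure are both shift-invariant, hence the restriction commutes with pushforward up to a shift of the domain. Concretely, for any Polish $B\subseteq\R^d$ and any $z\in\R^d$ one has
\begin{align*}
(\Theta_z)_\#\Poi_{|B}=\Poi_{|B-z},\qquad (\Theta_z)_\#\m_{|B}=\m_{|B-z},
\end{align*}
which follows directly from the stationarity of the homogeneous Poisson point process (e.g.\ Remark \ref{bem:limstat}$(i)$ applied with $\dP=\Poi$) and the translation invariance of Lebesgue measure.

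For the first claim, I would write $\dP=\rho\,\Poi_{|B}$ and, for an arbitrary $\A\in\BBB(\Gamma_{B-z})$, compute
\begin{align*}
(\Theta_z)_\#\dP(\A) = \int \ind_{\A}(\Theta_z\xi)\,\rho(\xi)\,\Poi_{|B}(\mathrm{d}\xi)= \int\ind_{\A}(\eta)\,\rho(\Theta_{-z}\eta)\,\Poi_{|B-z}(\mathrm{d}\eta),
\end{align*}
where the second equality uses $(\Theta_z)_\#\Poi_{|B}=\Poi_{|B-z}$. This identifies the density of $(\Theta_z)_\#\dP$ w.r.t.\ $\Poi_{|B-z}$ as $\rho\circ\Theta_{-z}$, in particular proving absolute continuity.

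For the second claim, I would apply the same change of variables to the product $\Poi_{|B}\otimes \m_{|B}$. Writing $\nuV=w\,(\Poi_{|B}\otimes\m_{|B})$ and recalling $\Theta_z^{\Gamma\times\R^d}(\xi,x)=(\Theta_z\xi,x-z)$, for any $\A\times E\in\BBB(\Gamma_{B-z}\times(B-z))$ one has
\begin{align*}
(\Theta_z^{\Gamma\times\R^d})_\#\nuV(\A\times E)&=\int \ind_{\A}(\Theta_z\xi)\ind_{E}(x-z)\,w(\xi,x)\,\Poi_{|B}(\mathrm{d}\xi)\,\m_{|B}(\mathrm{d}x)\\
&=\int \ind_{\A}(\eta)\ind_{E}(y)\,w(\Theta_{-z}\eta,y+z)\,\Poi_{|B-z}(\mathrm{d}\eta)\,\m_{|B-z}(\mathrm{d}y),
\end{align*}
using shift-invariance of $\Poi$ and $\m$ coordinate-wise. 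This exhibits the density as $w(\Theta_{-z}\cdot,\cdot+z)$, yielding the claimed absolute continuity.

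There is no real obstacle; the content is a bookkeeping computation whose only subtlety is keeping track of the direction of the shift in the arguments of $\Theta_z^{\Gamma\times\R^d}$ and of the densities. Generating sets of the form $\A\times E$ as above generate $\BBB(\Gamma_{B-z}\times(B-z))$, so the identities extend to the full $\sigma$-algebra by a standard monotone class argument, concluding the proof.
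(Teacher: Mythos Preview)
Your proof is correct and is precisely an explicit unpacking of the paper's one-line argument, which simply invokes the shift-invariance of the Poisson distribution and the Lebesgue measure. One minor point: the reference to Remark~\ref{bem:limstat}$(i)$ for the stationarity of $\Poi$ is misplaced (that remark concerns tiled and stationarized measures), but the underlying fact $(\Theta_z)_\#\Poi_{|B}=\Poi_{|B-z}$ is standard and your computation is sound.
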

 \begin{proof}
     The proof directly follows from the shift-invariance of the Poisson distribution and the Lebesgue measure.
 \end{proof}

\end{document}